\newtheorem{definition}{Definition}[section]
\newtheorem{remark}[definition]{Remark}
\newtheorem{example}[definition]{Example}
\newtheorem{problem}[definition]{Problem}
\newtheorem*{proof}{Proof}
\newtheorem{Lemma}[definition]{Lemma}
\newtheorem{Theorem}[definition]{Theorem}
\newtheorem{inttheorem}{Theorem}
\newtheorem{lemma}[definition]{Lemma}
\newtheorem{proposition}[definition]{Proposition}
\newtheorem{theorem}[definition]{Theorem}
\newtheorem{corollary}[definition]{Corollary}
\newenvironment{tabsection}{}{}
\newcommand{\argument}{\ensuremath{\mathinner{\;\cdot\;}}}
\newcommand{\op}[1]{\ensuremath{\operatorname{#1}}}
\newcommand{\wt}[1]{\ensuremath{\widetilde{#1}}}
\newcommand{\wh}[1]{\ensuremath{\widehat{#1}}}
\newcommand{\ol}[1]{\ensuremath{\overline{#1}}}
\newcommand{\cB}{\ensuremath{\mathcal{B}}}
\newcommand{\cC}{\ensuremath{\mathcal{C}}}
\newcommand{\cG}{\ensuremath{\mathcal{G}}}
\newcommand{\cH}{\ensuremath{\mathcal{H}}}
\newcommand{\cL}{\ensuremath{\mathcal{L}}}
\newcommand{\cP}{\ensuremath{\mathcal{P}}}
\newcommand{\cR}{\ensuremath{\mathcal{R}}}
\newcommand{\cU}{\ensuremath{\mathcal{U}}}
\newcommand{\cV}{\ensuremath{\mathcal{V}}}
\newcommand{\cW}{\ensuremath{\mathcal{W}}}
\newcommand{\fk}{\ensuremath{\mathfrak{k}}}
\newcommand{\fa}{\ensuremath{\mathfrak{a}}}
\newcommand{\fg}{\ensuremath{\mathfrak{g}}}
\newcommand{\fh}{\ensuremath{\mathfrak{h}}}
\newcommand{\mf}[1]{\ensuremath{\mathfrak{#1}}}
 \newcommand{\Q}{\ensuremath{\mathbb{Q}}}
 \newcommand{\R}{\ensuremath{\mathbb{R}}}
 \newcommand{\F}{\ensuremath{\mathbb{F}}}
 \newcommand{\N}{\ensuremath{\mathbb{N}}}
 \newcommand{\Z}{\ensuremath{\mathbb{Z}}}
 \newcommand{\T}{\ensuremath{\mathbb{T}}}
\newcommand{\bS}{\ensuremath{\mathbb{S}}}
\newcommand{\id}{\ensuremath{\operatorname{id}}}
\newcommand{\pr}{\ensuremath{\operatorname{pr}}}
\newcommand{\ev}{\ensuremath{\operatorname{ev}}}
\newcommand{\Ad}{\ensuremath{\operatorname{Ad}}}
\newcommand{\cat}[1]{\ensuremath{\mathsf{\mathop{#1}}}}
\newcommand{\Aut}{\ensuremath{\operatorname{Aut}}}
\newcommand{\Diff}{\ensuremath{\operatorname{Diff}}}
\newcommand{\im}{\ensuremath{\operatorname{im}}}
\newcommand{\SO}{\ensuremath{\operatorname{SO}}}
\newcommand{\res}{\ensuremath{\operatorname{res}}}
\newcommand{\per}{\ensuremath{\operatorname{per}}}
\newcommand{\se}{\ensuremath{\nobreak\subseteq\nobreak}}
\newcommand{\from}{\ensuremath{\nobreak\colon\nobreak}}
\renewcommand{\to}{\ensuremath{\nobreak\rightarrow\nobreak}}
\newcommand{\toto}{\ensuremath{\nobreak\rightrightarrows\nobreak}}
\newcommand{\coloneq}{\colonequals}
\DeclareMathOperator{\A}{\Sigma}
\DeclareMathOperator{\evol}{evol}
\DeclareMathOperator{\Evol}{Evol}
\DeclareMathOperator{\one}{{\bf 1}}
\newcommand\opn{\ensuremath{\mathrel{\mathpalette\opncls\circ}}}
\newcommand{\opncls}[2]{
  \ooalign{$#1\subseteq$\cr
  \hidewidth\raisefix{#1}\hbox{$#1{\stylefix{#1}#2}\mkern2mu$}\cr}}
\def\raisefix#1{
  \ifx#1\displaystyle
    \raise.39ex
  \else
    \ifx#1\textstyle
      \raise.39ex
    \else
      \ifx#1\scriptstyle
        \raise.275ex
      \else
        \raise.150ex
      \fi
    \fi
  \fi
}
\def\stylefix#1{
  \ifx#1\displaystyle
    \scriptstyle
  \else
    \ifx#1\textstyle
      \scriptstyle
    \else
      \ifx#1\scriptstyle
        \scriptscriptstyle
      \else
        \scriptscriptstyle
      \fi
    \fi
  \fi
}
\DeclareFontFamily{U}{mathx}{\hyphenchar\font45}
\DeclareFontShape{U}{mathx}{m}{n}{
      <5> <6> <7> <8> <9> <10>
      <10.95> <12> <14.4> <17.28> <20.74> <24.88>
      mathx10
      }{}
\DeclareSymbolFont{mathx}{U}{mathx}{m}{n}
\DeclareMathAccent{\widecheck}{0}{mathx}{"71}
\DeclareMathAccent{\wideparen}{0}{mathx}{"75}
\newcommand{\xyhookrightarrow}[1]{\ar@{}[r]|-*[@]{\xhookrightarrow{\hphantom{\hspace{#1}}}}}
\newcommand{\xyhookrrightarrow}[1]{\ar@{}[rr]|-*[@]{\xhookrightarrow{\hphantom{\hspace{#1}}}}}
\newcommand{\Symp}{\ensuremath{\op{Symp}}}
\newcommand{\symp}{\ensuremath{\op{\mf{symp}}}}
\newcommand{\ham}{\ensuremath{\op{\mf{ham}}}}
\newcommand{\TGamma}[1]{\ensuremath{\mathbb{T}_{#1}}}
\setlist[enumerate]{label={\alph*})}%
\newcommand{\Bis}{\ensuremath{\op{Bis}}}
\newcommand{\Bisf}[1]{\Bis_{#1}}
\newcommand{\Lf}{\ensuremath{\mathbf{L}}}
\newcommand{\Stab}[1]{\ensuremath{K_{#1}}}
\newcommand{\Vtx}[1]{\ensuremath{\op{Vert}_{#1}}}
\newcommand{\Loop}[1]{\ensuremath{\op{Loop}_{#1}}}
\DeclareMathOperator*{\Ker}{ker}
\begin{document}

\begin{flushright}
   {\sf ZMP-HH/15-15}\\
   {\sf Hamburger$\;$Beitr\"age$\;$zur$\;$Mathematik$\;$Nr.$\;$552}\\[2mm]
\end{flushright}

\title{(Re)constructing Lie groupoids from their bisections and applications to prequantisation}
\author{Alexander Schmeding\footnote{NTNU Trondheim, Norway
\href{mailto:alexander.schmeding@math.ntnu.no}{alexander.schmeding@math.ntnu.no}}~
and Christoph
Wockel\footnote{University of Hamburg, Germany
\href{mailto:christoph@wockel.eu}{christoph@wockel.eu}}} {\let\newpage\relax\maketitle}

\begin{abstract}
  This paper is about the relation of the geometry of Lie groupoids over a fixed
  compact manifold $M$ and the geometry of their (infinite-dimensional)
  bisection Lie groups. In the first part of the paper we investigate the
  relation of the bisections to a given Lie groupoid, where the second part is
  about the construction of Lie groupoids from candidates for their bisection
  Lie groups. The procedure of this second part becomes feasible due to some
  recent progress in the infinite-dimensional Frobenius theorem, which we heavily
  exploit. The main application to the prequantisation of (pre)symplectic
  manifolds comes from an integrability constraint of closed Lie subalgebras to
  closed Lie subgroups. We characterise this constraint in terms of a modified
  discreteness conditions on the periods of that manifold.
\end{abstract}

\medskip

\textbf{Keywords:} global analysis, Lie groupoid, infinite-dimensional Lie group, mapping space,
bisection, prequantisation, homotopy groups of diffeomorphism groups

\medskip

\textbf{MSC2010:} 58H05  (primary); %
22E65, %
46T10, %
58D19,  %
58D05,  %
57T20  	%
(secondary)

\tableofcontents

\section*{Introduction}

\begin{tabsection}
 The Lie group structure on bisection Lie groups was established in
 \cite{Rybicki02A-Lie-group-structure-on-strict-groups,SchmedingWockel14},
 along with a smooth action of the bisections on the arrow manifold of a Lie
 groupoid. Is this paper, we develop a tight relation between Lie groupoids and
 their bisection Lie groups by making use of this action. As a first step, we
 show how this action can be used to reconstruct a Lie groupoid from its
 bisections. In general, this will not be possible, since there may not be
 enough bisections for a reconstruction to work. However, under mild
 assumptions on the Lie groupoid, e.g., the groupoid being source connected,
 the reconstruction works quite well. It is worthwhile to note that in this
 chapter the analytical tools we need are of moderate complexity, all results
 follow from a thorough usage of the concept of a submersion.
 
 In an intermediate step, we analyse in some more detail the structure that
 bisections have in addition to merely being Lie groups. For instance, they act
 on $M$ and the stabilisers are closely related to the vertex groups of the Lie
 groupoid. To this end, we use the results of Gl\"ockner on submersion
 properties in infinite-dimensions \cite{Glokcner06Implicit,hg2015}.
 
  The next step is then to take the insights from the previous sections in order
  to formulate structures and conditions on a Lie group that turn it into a Lie
  group of bisections of a Lie groupoid. This becomes analytically more
  challenging, and the procedure is only possible due to a heavy usage of the
  recent results from \cite{hg2015}, building on a generalised Frobenius Theorem
  of integrable co-Banach distributions
  \cite{Eyni14,Hiltunen00}.
  The result is the concept of a transitive pair, which describes in an
  efficient way the necessary structure that is needed on a Lie group in order
  to relate it in a natural way to the bisection group of a Lie groupoid.
  However, we restrict here to transitive (respectively locally trivial) Lie
  groupoids, the general theory will be part of future research.
 
 In the final section we then apply the integration theory for abelian
 extensions from
 \cite{Neeb04Abelian-extensions-of-infinite-dimensional-Lie-groups} in order to
 derive a transitive pair from an integration of an extension of $\cV(M)$,
 given by a closed 2-form $\omega\in \Omega^{2}(M)$ to an extension of
 $\Diff(M)_{0}$. The crucial point here is the integration of a certain Lie
 subalgebra to a closed Lie subgroup of the integrated extension, for which we
 derive a new discreteness condition in terms of the associated period
 groups.\medskip
 
 We now go into some more detail and explain the main results. Suppose
 $\cG = (G \toto M)$ is a Lie groupoid. This means that $G,M$ are smooth
 manifolds, equipped with submersions $\alpha,\beta\from G\to M$ and an
 associative and smooth multiplication $G\times _{\alpha,\beta}G\to G$ that
 admits a smooth identity map $1\from M\to G$ and a smooth inversion
 $\iota\from G\to G$. Then the bisections $\Bis(\cG)$ of $\cG$ are the sections
 $\sigma\from M\to G$ of $\alpha$ such that $\beta \circ \sigma$ is a
 diffeomorphism of $M$. This becomes a group with respect to
 \begin{equation*}
  (\sigma \star \tau ) (x) \coloneq \sigma ((\beta \circ \tau)(x))\tau(x)\text{ for }  x \in M.
 \end{equation*}
 This group is an (infinite-dimensional) Lie group (cf.\
 \cite{SchmedingWockel14}) if $M$ is compact, $G$ is modelled on a metrisable
 space and the groupoid $\cG$ admits a local addition adapted to the source
 projection $\alpha$, i.e.\ it restricts to a local addition on each fibre
 $\alpha^{-1} (x)$ for $ x \in M$ (cf.\ \cite{michor1980,SchmedingWockel14}).
 
 By construction of the Lie group structure, we obtain a natural action
 $  \Bis (\cG) \times M \rightarrow M$, $(\sigma,m) \mapsto \beta (\sigma   (m)) $
 of $\Bis (\cG)$ on $M$. This action gives rise to the bisection action groupoid
 $\cB (\cG) \coloneq (\Bis (\cG) \ltimes M \toto M)$. Observe that the action
 is constructed from the the joint evaluation map
 \begin{displaymath}
  \ev \colon \Bis (\cG) \times M \rightarrow G,\quad (\sigma,m)\mapsto \sigma
  (m)
 \end{displaymath}
 and the target projection of the groupoid $\cG$. While the target projection
 is a feature of the groupoid $\cG$, the evaluation map yields a groupoid
 morphism over $M$ from $\cB (\cG)$ to $\cG$. Thus information on the groupoid
 can be recovered from the group of bisections and the base manifold via the
 joint evaluation map. The idea is to recover the smooth structure of the arrow
 manifold from the evaluation map based on the following result: \medskip
 
 \begin{inttheorem}\label{thm:A}
  {Let $\cG = (G \toto M)$ be a Lie groupoid, where $M$ is compact, $G$ is
  modelled on a metrisable space and $\cG$ admits an $\alpha$-adapted local
  addition. Then the joint evaluation $\ev $ is a submersion.}
  {If $\cG$ is in addition source-connected, i.e.\ the source fibres are
  connected manifolds, then $\ev$ is surjectiv.}
 \end{inttheorem}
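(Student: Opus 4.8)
The plan is to treat both assertions by local analysis near a point of the domain, using the $\alpha$-adapted local addition as the device that turns fibrewise data into honest bisections. First I would set up the infinitesimal picture. Fix $(\sigma_0,m_0)\in\Bis(\cG)\times M$ and put $g_0\coloneq\sigma_0(m_0)$. Since a bisection is in particular a section of the submersion $\alpha$, every variation of $\sigma_0$ is $\alpha$-vertical, so the tangent space of $\Bis(\cG)$ at $\sigma_0$ is $\Gamma(\sigma_0^{*}T^\alpha G)$, the sections of the pullback of the $\alpha$-vertical bundle $T^\alpha G=\ker T\alpha$. The derivative of $\ev$ at $(\sigma_0,m_0)$ then sends $(X,v)$ to $X(m_0)+T_{m_0}\sigma_0(v)\in T_{g_0}G$. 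Here $X(m_0)$ ranges over all of the fibre $T^\alpha_{g_0}G$ as $X$ ranges over the sections, while $T_{m_0}\sigma_0$ maps $T_{m_0}M$ isomorphically onto a complement of $T^\alpha_{g_0}G$ in $T_{g_0}G$, because $T\alpha\circ T\sigma_0=\id$. Hence $T\ev$ is onto; applying $T\alpha$ to the kernel equation forces $v=0$ and then $X(m_0)=0$, so the kernel is the complemented space of sections vanishing at $m_0$. This identifies $\ev$ as a candidate submersion, and it remains to produce genuine local sections.

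For the first assertion I would use the $\alpha$-adapted local addition $\Sigma$ to write the canonical chart of $\Bis(\cG)$ near $\sigma_0$ as $X\mapsto\sigma_X$, $\sigma_X(m)\coloneq\Sigma(X(m))$, for $X$ in a zero-neighbourhood of $\Gamma(\sigma_0^{*}T^\alpha G)$; that $\Sigma$ restricts to each $\alpha$-fibre is exactly what keeps $\sigma_X$ a section of $\alpha$, and smallness of $X$ together with compactness of $M$ keeps $\beta\circ\sigma_X$ a diffeomorphism, so $\sigma_X$ is again a bisection. Given $g$ near $g_0$, put $x\coloneq\alpha(g)$; then $g$ lies in the same local-addition neighbourhood as $\sigma_0(x)$ inside $\alpha^{-1}(x)$, so there is a unique small $w(g)\in T^\alpha_{\sigma_0(x)}G$ with $\Sigma(w(g))=g$, depending smoothly on $g$. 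Spreading $w(g)$ to a section $X_g$ of $\sigma_0^{*}T^\alpha G$ by a fixed bump function equal to $1$ near $m_0$ (using a local trivialisation of $T^\alpha G$ to transport the single vector) gives $X_g(x)=w(g)$, and $s(g)\coloneq(\sigma_{X_g},x)$ is a smooth local section of $\ev$ through $(\sigma_0,m_0)$. This exhibits $\ev$ as a submersion.

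For the second assertion I would set $U\coloneq\im(\ev)$, which is open in $G$ because $\ev$ is a submersion, and note $1_x\in U$ for every $x$ via the unit bisection. Since $\alpha(\sigma(m))=m$, one has $U\cap\alpha^{-1}(x)=\{\sigma(x):\sigma\in\Bis(\cG)\}$, and the aim is to show this is open and closed in the fibre $\alpha^{-1}(x)$; source-connectedness then forces it to equal $\alpha^{-1}(x)$, whence $U=G$. Openness is immediate from openness of $U$. For closedness, take $h=\lim_n\sigma_n(x)\in\alpha^{-1}(x)$ with $\sigma_n\in\Bis(\cG)$; for $n$ large, $h$ and $\sigma_n(x)$ lie in a common local-addition chart of the fibre, so there is a small $w_n\in T^\alpha_{\sigma_n(x)}G$ with $\Sigma(w_n)=h$ and $w_n\to0$. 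Spreading $w_n$ by a bump function as above and deforming $\sigma_n$ through $\Sigma$ produces a global bisection $\sigma_n'$ with $\sigma_n'(x)=h$, still a bisection for $n$ large by smallness and compactness of $M$; hence $h\in U$, proving closedness.

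The analytically delicate point I expect to be the main obstacle is the passage from fibrewise, pointwise data to an \emph{honest} element of the infinite-dimensional manifold $\Bis(\cG)$: one must verify that the bump-spread-and-add construction depends smoothly on $g$ as a map into $\Bis(\cG)$, and that each perturbed section still satisfies the open condition that $\beta\circ\sigma$ be a global diffeomorphism. Both points rely essentially on the local addition being $\alpha$-adapted and on $M$ being compact, which is precisely where the hypotheses of the theorem enter.
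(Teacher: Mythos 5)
Your infinitesimal computation of $T\ev$ and the cutoff--spreading construction are both correct, and both ingredients do appear in the paper (the spreading of a single fibre vector to a section by a bump function is exactly Remark \ref{setup: preparation}). The genuine gap is in the final inference for the first assertion. In this paper a submersion is by definition a map that is \emph{locally a projection} in suitable charts, and $G$ is only assumed to be locally metrisable, hence possibly a Fr\'echet manifold; beyond the Banach setting there is no inverse function theorem, so neither surjectivity of the tangent maps with complemented kernels nor the existence of smooth local sections through every point of the domain is known to upgrade to the chart-level projection property. (Gl\"ockner's result that surjectivity of the tangent maps suffices requires the \emph{codomain} to be finite-dimensional, so it applies to $\ev_{m}\from \Diff(M)\to M$ but not to $\ev\from \Bis(\cG)\times M\to G$.) This is precisely why the proof of Proposition \ref{cor:ev_on_section_space_is_a_submersion} is as long as it is: a local section of $\ev_{m_{0}}\from\Diff(M)\to M$ is lifted to fibre-preserving diffeomorphisms of $Q$, producing a diffeomorphism $B$ of $\Gamma(M\xleftarrow{s}Q)\times M$ that conjugates the joint evaluation into $(\text{local diffeomorphism})\circ(\ev_{m_{0}}\times\id_{O})\circ \Xi$; and $\ev_{m_{0}}$, read in the canonical chart $\varphi_{\sigma_{0}}$ and the fibre chart given by the local addition, is a \emph{continuous linear} map with a continuous linear right inverse, hence literally a projection. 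You have located the delicate point in the smoothness of the bump construction and the openness of the bisection condition; in fact the delicate point is the decoupling of the $M$-variable and the linearity of the evaluation in charts, neither of which your argument addresses.

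For the second assertion your clopen-in-the-fibre strategy coincides with the paper's (Theorem \ref{thm: bis:points}), but the closedness step has a uniformity gap: you perturb the \emph{arbitrary} bisections $\sigma_{n}$ by vectors $w_{n}\to 0$, and whether the perturbed section still has $\beta\circ\sigma_{n}'\in\Diff(M)$ depends on a neighbourhood of $\sigma_{n}$ in the section space whose size you do not control, since nothing constrains $\sigma_{n}$ away from $x$; there is no guarantee that $w_{n}$ is small enough \emph{for that particular} $\sigma_{n}$ for any $n$. The paper sidesteps this entirely: the image of $\ev$ is a wide subgroupoid whose $\alpha$-fibres $\cU_{m}$ are open (by the submersion property of $\ev_{m}$), and such fibres are automatically closed by a translation argument --- if $g$ lies in the closure of $\cU_{m}$ in $\alpha^{-1}(m)$, then the open neighbourhood $\cU_{\beta(g)}\cdot g$ of $g$ meets $\cU_{m}$, and the subgroupoid property forces $g\in\cU_{m}$. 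Replace your perturbation either by this algebraic argument or, equivalently, by writing $h=(h\cdot\sigma_{n}(x)^{-1})\cdot\sigma_{n}(x)$ and producing a bisection through the near-unit arrow $h\cdot\sigma_{n}(x)^{-1}$ via Lemma \ref{lem: loc:bis}, which perturbs the \emph{unit} bisection and is therefore uniform.
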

 
 Note that as an interesting consequence of Theorem \ref{thm:A} we obtain also
 information on the evaluation of smooth maps from a compact manifold $M$ into
 a manifold $N$ modelled on a metrisable space. In this case, the evaluation
 map
 \begin{displaymath}
  \ev \colon C^\infty (M,N) \times M \rightarrow N,\quad (f,m) \mapsto f(m)
 \end{displaymath}
 is a surjective submersion.
 
 A crucial point of our approach will be that the joint evaluation map is a
 surjective submersion. Unfortunately, this will not be the case in general as
 there are Lie groupoids without enough bisections (see Remark \ref{rem:
 ev:sur} b) for an example). In this case there is no hope to recover the
 manifold of arrows, whence not all information on the groupoid is contained in
 its group of bisections. However, it turns out that at least the identity
 subgroupoid can always be reconstructed. Moreover, Theorem \ref{thm:A} still
 gives a sufficient criterion for the surjectivity of $\ev$. It is sufficient
 that the Lie groupoid is source connected, and in general this condition can
 not be dispensed with. As a byproduct, we obtain generalisations of some
 results about the existence of global bisections through each point (cf.\
 \cite{MR2511542}) to infinite dimensions. Hence we can reconstruct the
 groupoid from its bisections and the base manifold for a fairly broad class of
 Lie groupoids. Namely, we obtain the following reconstruction result.\medskip
 
 \begin{inttheorem}\label{thm:B}
  {Let $\cG = (G \toto M)$ be a Lie groupoid, where $M$ is compact, $G$ is
  modelled on a metrisable space and $\cG$ admits an $\alpha$-adapted local
  addition. If the joint evaluation map $\ev$ is surjective, e.g.\ $\cG$ is
  source-connected, then the Lie groupoid morphism
  \begin{displaymath}
   \ev\colon \cB (\cG) = (\Bis (\cG) \ltimes M \toto M) \rightarrow \cG
  \end{displaymath}
  is the groupoid quotient of $\cB (\cG)$ by a normal Lie subgroupoid. In this
  case, the Lie group of bisections and the manifold $M$ completely determine
  the Lie groupoid $\cG$.}
 \end{inttheorem}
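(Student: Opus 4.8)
The plan is to realise $\cG$ as the quotient of the action groupoid $\cB(\cG)$ by the kernel of the evaluation morphism. Recall from the construction above that $\ev\colon\cB(\cG)\to\cG$ is a morphism of Lie groupoids over $M$: it is the identity on objects, it satisfies $\alpha(\ev(\sigma,m))=m=s(\sigma,m)$ because $\sigma$ is a section of $\alpha$, it intertwines the target of $\cB(\cG)$ with that of $\cG$ since $\beta(\ev(\sigma,m))=\beta(\sigma(m))=\sigma\cdot m=t(\sigma,m)$, and a short computation with the product $\star$ gives $\ev((\sigma'\star\sigma,m))=\ev(\sigma',t(\sigma,m))\,\ev(\sigma,m)$, so $\ev$ respects composition. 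I therefore set $\cN\coloneq\ev^{-1}(1(M))$ and aim to prove: (i) $\cN$ is a normal Lie subgroupoid of $\cB(\cG)$; (ii) the fibres of $\ev$ are exactly the $\cN$-cosets; and (iii) $\ev$ descends to an isomorphism of Lie groupoids $\cB(\cG)/\cN\xrightarrow{\ \cong\ }\cG$, which is where the surjectivity and submersion clauses of Theorem~\ref{thm:A} enter.

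For (i), the unit embedding $1\colon M\to G$ is a section of $\alpha$ and hence a closed split embedding; since $\ev$ is a submersion by Theorem~\ref{thm:A}, the preimage $\cN=\ev^{-1}(1(M))$ is a closed split submanifold of $\cB(\cG)$. Algebraically $\cN$ is the kernel of a groupoid morphism, hence a wide subgroupoid; it is totally intransitive, i.e.\ a bundle of groups, because $(\sigma,m)\in\cN$ forces $t(\sigma,m)=\beta(1_m)=m=s(\sigma,m)$. Over each $x\in M$ its fibre is $\cN_x=\{\sigma\in\Bis(\cG):\sigma(x)=1_x\}$, a closed subgroup of the stabiliser $K_x=\{\sigma:\beta(\sigma(x))=x\}$ (a short computation with $\star$ shows $\sigma(x)=\tau(x)=1_x$ implies $(\sigma\star\tau)(x)=1_x$ and $\sigma^{-1}(x)=1_x$). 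The identity bisection lies in every $\cN_x$, providing a smooth global unit section of $\cN\to M$, from which, together with the projection structure of the action groupoid, one checks that the anchor $\cN\to M$ is a submersion, so $\cN$ is a Lie group bundle. Normality is immediate from $\ev$ being a morphism: for a loop $k$ at $x$ in $\cN$ and any arrow $h$ from $x$ to $y$ one has $\ev(hkh^{-1})=\ev(h)\,1_x\,\ev(h)^{-1}=1_y$, whence $hkh^{-1}\in\cN$.

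For (ii), let $h_1,h_2$ be arrows of $\cB(\cG)$. Since $\ev$ is a morphism, $\ev(h_1)=\ev(h_2)$ forces $s(h_1)=\alpha(\ev(h_1))=s(h_2)$ and $t(h_1)=\beta(\ev(h_1))=t(h_2)$, and then $\ev(h_2^{-1}h_1)=\ev(h_2)^{-1}\ev(h_1)=1$, so $h_2^{-1}h_1\in\cN$; conversely this condition yields $\ev(h_1)=\ev(h_2)$. As $\cN$ is a normal group bundle its left and right cosets coincide, so the level sets of $\ev$ are exactly the $\cN$-cosets. Concretely the fibre over $g\in G$ is $\{(\sigma,\alpha(g)):\sigma(\alpha(g))=g\}$, a single coset of $\cN_{\alpha(g)}$. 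Hence the set-theoretic quotient $\cB(\cG)/\cN$ is in natural bijection with $\im(\ev)$, which by the surjectivity clause of Theorem~\ref{thm:A} (in particular when $\cG$ is source-connected) is all of $G$.

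For (iii), I would transport the smooth structure along $\ev$. Because $\ev$ is a surjective submersion onto $G$, it admits local smooth sections $s_U\colon U\to\cB(\cG)$ over open $U\se G$; composing these with the quotient projection identifies $U$ with an open piece of $\cB(\cG)/\cN$ and shows that the induced bijective morphism $\overline{\ev}\colon\cB(\cG)/\cN\to\cG$ is a local diffeomorphism on arrows and the identity on $M$, hence a diffeomorphism. Thus $\cB(\cG)/\cN$ inherits a Lie groupoid structure for which $\overline{\ev}$ is an isomorphism over $M$, so in particular the abstract quotient exists as a Lie groupoid. The main obstacle I anticipate is exactly this step in the infinite-dimensional setting: $\cB(\cG)$ and the $\cN$-fibres are infinite-dimensional, so the quotient cannot be produced by a finite-dimensional quotient theorem and must be extracted from the submersion charts and local sections supplied by Theorem~\ref{thm:A}; a secondary point requiring care is the verification that the anchor of the group bundle $\cN$ is itself a submersion. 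Finally, since $\cB(\cG)$ is built solely from the Lie group $\Bis(\cG)$, the manifold $M$ and the action of $\Bis(\cG)$ on $M$, and $\cN$ is the bundle of isotropy subgroups $\cN_x$, the groupoid $\cG\cong\cB(\cG)/\cN$ is recovered from this data, which proves the final assertion.
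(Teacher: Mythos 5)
Your proposal is correct and follows essentially the same route as the paper's proof (Theorem \ref{thm: gpd:quot}): both identify the kernel $\ev^{-1}(1(M))$ as the normal Lie subgroupoid, both rest entirely on the surjective-submersion property of $\ev$ from Theorem \ref{thm:A} together with the resulting local smooth sections, and both ultimately use the pre-existing groupoid $\cG$ itself as the quotient object rather than constructing a new one. The only difference is one of formalisation: the paper phrases the quotient as the coequaliser of the internal equivalence relation $R=\{((\sigma,m),(\tau,m))\mid\sigma(m)=\tau(m)\}\cong K\times\Bis(\cG)$ in $\cat{LieGroupoids}_{M}$ and checks the universal property by composing a local section of $\ev$ with the given morphism, whereas you describe the cosets of the kernel directly and transport the smooth structure from $G$ --- the substance is the same.
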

 
 Note that Theorem \ref{thm:B} really is a \emph{re}construction theorem, i.e.,
 we need a Lie groupoid to begin with as a candidate for the quotient. Hence,
 the original groupoid is needed and Theorem \ref{thm:B} does not provide a way
 to construct $\cG$ without knowing it beforehand. The problem here is twofold.
 At first, we need to know the kernel of $\ev$, or some equivalent information,
 that allows us to determine the groupoid $\cG$ as a quotient of
 $(\Bis (\cG) \ltimes M \toto M)$. The other problem is an analytical problem:
 quotients of (infinite-dimensional) Lie groupoids and Lie groups usually do
 not admit a suitable smooth structure. In particular, it is not known if the
 familiar tools, e.g.\ Godement's criterion, carry over to the
 infinite-dimensional setting beyond the Banach setting.
 
 Nevertheless, one can extract some information on the quotient from Theorem
 \ref{thm:B}. The groupoid quotient is controlled by certain subgroups of the
 group of bisections which arise from the kernel of the joint evaluation.
 Namely, the source fibre over $m$ in the kernel corresponds to the Lie
 subgroup $\Bisf{m} (\cG) = \{\sigma \in \Bis (\cG) \mid \sigma (m) =1_m \}$ of
 all bisections which take $m$ to the corresponding unit. Observe that
 $\Bisf{m} (\cG)$ sits inside the Lie subgroup
 $\Loop{m} (\cG) \coloneq \{\sigma \in \Bis (\cG) \mid \beta (\sigma (m)) = m\}$
 of all elements whose image at $m$ is an element in the vertex group. Both
 subgroups will later turn out to be important in the construction of Lie
 groupoids from their bisections. In this context Lie theoretic properties,
 like regularity in the sense of Milnor, of these subgroups are crucial to our
 approach. Thus we first study them in a separate section. Regularity (in the
 sense of Milnor) of a Lie group roughly means that a certain class of
 differential equations can be solved on the Lie group. Many familiar results
 from finite-dimensional Lie theory carry over only to regular Lie groups (cf.\
 \cite{HGRegLie15} for a survey). Our results then subsume the following
 theorem.
 
 \begin{inttheorem}\label{thm:C}
  {Let $\cG = (G \toto M)$ be a Banach Lie groupoid, then for each $m \in M$
  the inclusions
  \begin{displaymath}
   \Bisf{m} (\cG) \subseteq \Loop{m} (\cG) \subseteq \Bis (\cG)
  \end{displaymath}
  turn $\Bisf{m} (\cG)$ and $\Loop{m} (\cG)$ into split Lie subgroups which are
  regular in the sense of Milnor. As submanifolds these subgroups are even
  co-Banach submanifolds.}
 \end{inttheorem}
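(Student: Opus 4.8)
The plan is to realize $\Bisf{m}(\cG)$ and $\Loop{m}(\cG)$ as preimages under suitable smooth maps and then apply the implicit function theorem together with submersivity of the evaluation map established in Theorem~\ref{thm:A}. Concretely, for a fixed $m \in M$, consider the smooth map $\ev_m \colon \Bis(\cG) \to G$, $\sigma \mapsto \sigma(m)$, obtained by restricting the joint evaluation $\ev$ to the slice $\Bis(\cG) \times \{m\}$. In the Banach setting $\ev$ is a submersion, so its differential is surjective with complemented kernel at every point; one checks that the partial map $\ev_m$ inherits submersivity onto the source fibre $\alpha^{-1}(m)$, which is a Banach submanifold of $G$. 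Then $\Loop{m}(\cG)$ is the preimage $\ev_m^{-1}\bigl(\alpha^{-1}(m)\cap\beta^{-1}(m)\bigr)$, i.e.\ the preimage of the vertex group $\Stab{m} = \alpha^{-1}(m)\cap\beta^{-1}(m)$, and $\Bisf{m}(\cG)$ is the preimage of the single unit $1_m$. Since $\ev_m$ is a submersion, these preimages of (co-Banach) submanifolds of $\alpha^{-1}(m)$ are themselves co-Banach submanifolds of $\Bis(\cG)$, and the complemented kernel of $d\ev_m$ gives the splitting of the tangent spaces. This is where the co-Banach submanifold claim comes from: the vertex group has finite or at least Banach-complemented codimension in the source fibre.

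The next step is to verify that these submanifolds are in fact subgroups, so that they become split Lie subgroups rather than merely submanifolds. This is a direct algebraic computation: from the multiplication law $(\sigma \star \tau)(x) = \sigma((\beta\circ\tau)(x))\,\tau(x)$ one reads off that if $\beta(\sigma(m)) = m$ and $\beta(\tau(m)) = m$, then $(\sigma\star\tau)(m) = \sigma(m)\tau(m)$ lies again in the vertex group $\Stab{m}$, so $\Loop{m}(\cG)$ is closed under the group operation; the unit bisection and inverses are handled analogously. For $\Bisf{m}(\cG)$, if $\sigma(m) = 1_m = \tau(m)$ then $(\sigma\star\tau)(m) = 1_m \cdot 1_m = 1_m$, using that $\beta(\tau(m)) = \beta(1_m) = m$, so the product again fixes the unit. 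Because the group operations on $\Bis(\cG)$ are smooth and restrict to smooth operations on these submanifolds, and the submanifolds are split, the standard criterion (a split submanifold that is an abstract subgroup is a Lie subgroup) applies.

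Finally, for the regularity statement I would argue that $\Loop{m}(\cG)$ and $\Bisf{m}(\cG)$, being closed co-Banach subgroups sitting inside the regular Lie group $\Bis(\cG)$, inherit regularity. The cleanest route is to exhibit them as the kernels (or preimages of regular subgroups) of smooth group homomorphisms and invoke the permanence properties of regularity: a split closed Lie subgroup of a regular Banach Lie group is regular provided one can solve the defining differential equation within the subalgebra. Here the Lie algebras $\Lh \coloneq \op{Lie}(\Loop{m}(\cG))$ and its analogue for $\Bisf{m}(\cG)$ are the kernels of the linearised evaluation $d\ev_m$, which are complemented, and the evolution of a curve in $\Bis(\cG)$ starting with initial data in these subalgebras stays in the subgroup by uniqueness of solutions together with the subgroup being integral to its subalgebra.

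The main obstacle I expect is the transition from the finite-dimensional-flavoured submersion argument to the genuinely infinite-dimensional analytic setting, specifically making precise that the kernel of $d\ev_m$ is complemented (so that ``split'' and ``co-Banach'' are justified) and that the vertex group is a co-Banach submanifold of the source fibre. In the Banach category one has the implicit function theorem and Glöckner's submersion theorems at one's disposal, so the hypothesis that $\cG$ is a \emph{Banach} Lie groupoid is exactly what keeps these complementation and regularity arguments tractable; beyond the Banach setting the splitting of kernels and the permanence of regularity are far more delicate, which is presumably why the statement is restricted to Banach Lie groupoids.
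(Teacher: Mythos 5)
Your proposal follows essentially the same route as the paper: realise $\Loop{m}(\cG)=\ev_m^{-1}(\Vtx{m}(\cG))$ and $\Bisf{m}(\cG)=\ev_m^{-1}(1_m)$ as preimages under the submersion $\ev_m\from\Bis(\cG)\to\alpha^{-1}(m)$, apply Gl\"ockner's preimage/regular value theorems to get split (and, in the Banach case, co-Banach) submanifolds, check the subgroup property algebraically, and deduce regularity from semiregularity plus the closed-subgroup permanence lemma. The only soft spot is your appeal to ``uniqueness of solutions together with the subgroup being integral to its subalgebra'' for semiregularity: in the infinite-dimensional setting this integrality is not something you may assume, and the paper instead verifies directly that $(\ev_m\circ\gamma_\eta)'(t)=0$ (respectively $(\beta\circ\ev_m\circ\gamma_\eta)'(t)=0$) for the evolution $\gamma_\eta$ of a curve in the subalgebra, which is an explicit computation rather than an invocation of a general principle; with that computation supplied, your argument matches the paper's.
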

 
 These Lie subgroups are of interest, as the quotient
 $\Bis (\cG) / \Bisf{m} (\cG)$ reconstructs the source fibre of $\cG$ over the
 point $m$ and $\Loop{m} (\cG) / \Bisf{m} (\cG)$ reconstructs the vertex group
 at $m$. Moreover, Theorem \ref{thm:C} enables us to construct a natural smooth
 structure on these quotients which coincides a posteriori with the manifold
 structure on the fibre and the vertex group, respectively.
 
 We now use the results obtained so far to turn the reconstruction result given
 Theorem \ref{thm:B} into a construction result, at least in the locally
 trivial case. This means that we start with Lie groups, some extra structure
 on them and then produce a Lie groupoid such that the groups are related to
 the Lie group of bisections. Recall that a locally trivial Lie groupoid is
 completely determined by its source fibre and the vertex group over a given
 point. Hence, if we fix a point $m\in M$, the problem to construct a Lie
 groupoid reduces in the locally trivial case to reconstructing a manifold
 (modelling the source fibre) and a Lie group (modelling the vertex group).
 This motivates the notion of a \emph{transitive pair} (cf.\ Definition
 \ref{defn: tgpair}). A transitive pair $(\theta , H)$ consists of a transitive
 Lie group action $\theta \colon K \times M \rightarrow M$ and a normal
 subgroup $H$ of the $m$-stabiliser $\Stab{m}$ of $\theta$, such that $H$ is a
 regular and co-Banach Lie subgroup of $\Stab{m}$. The guiding example is here
 the transitive pair
 $(\beta_\cG \circ \ev \colon \Bis (\cG) \times M \rightarrow M, \Bisf{m} (\cG))$
 induced by the bisections of a locally trivial Lie groupoid $\cG$ with
 connected base $M$. The notion of transitive pair can be thought of as a
 generalisation of a Klein geometry for fibre-bundles (see Remark
 \ref{rem:klein_geometries_vs_transitive_pairs} for further information). We
 then obtain the following construction principle.
 
 \begin{inttheorem}\label{thm:D}
  {Let $(\theta \colon K \times M \rightarrow M,H)$ be a transitive pair. Then
  there is a locally trivial Banach Lie groupoid $\cR (\theta, H)$ together
  with a Lie group morphism
  $a_{\theta , H } \colon K \rightarrow \Bis (\cR (\theta,H))$}. \emph{If
  $(\theta,H) = (\beta_\cG \circ \ev, \Bisf{m} (\cG))$ for some locally trivial
  Banach Lie groupoid $\cG$, then $a_{\theta,H}$ is an isomorphism.}
 \end{inttheorem}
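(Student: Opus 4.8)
The plan is to realise $\cR(\theta,H)$ as the gauge (Atiyah) groupoid of a principal bundle manufactured from the pair $(\theta,H)$, and then to identify its bisection group with $K$. Write $Q \coloneq \Stab{m}/H$ and $P \coloneq K/H$. Since $\theta$ is transitive, the orbit map $\ev_m \from K \to M$, $k \mapsto \theta(k,m)$, is constant on right $\Stab{m}$-cosets and induces a bijection $K/\Stab{m} \to M$. First I would show that $\ev_m$ is a submersion admitting local sections, so that $K \to M$ becomes a locally trivial principal $\Stab{m}$-bundle; here the compactness of $M$ and the submersion results of \cite{Glokcner06Implicit,hg2015} enter. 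The hypotheses that $H$ is normal, regular and co-Banach in $\Stab{m}$ make $Q$ a Banach Lie group, and---this is the analytic heart---allow one to pass to the quotient $P = K/H$ as a Banach manifold: the Lie subalgebra $\Lf(H)$ is co-Banach in $\Lf(K)$ (its cokernel is an extension of the finite-dimensional $T_mM$ by the Banach cokernel of $\Lf(H)$ in $\Lf(\Stab{m})$), the associated left-invariant distribution on $K$ is integrable with leaves the $H$-cosets, and the generalised Frobenius theorem of \cite{hg2015,Eyni14,Hiltunen00} supplies charts transverse to these leaves. Combined with the local sections of $K \to M$, this exhibits $P = K/H \to M$ as a locally trivial principal $Q$-bundle with Banach model.

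I then define $\cR(\theta,H) \coloneq \op{Gau}(P) = (P \times P)/Q \toto M$, the gauge groupoid of this bundle; being the gauge groupoid of a Banach principal bundle over a compact base, it is a locally trivial Banach Lie groupoid. Recall that the bisections of a gauge groupoid are precisely the principal-bundle automorphisms, $\Bis(\op{Gau}(P)) \cong \Aut(P)$, a bisection attached to $\Phi \in \Aut(P)$ being $\pi(p) \mapsto [\Phi(p),p]$. The left translation action of $K$ on $P = K/H$ commutes with the right $Q$-action and covers $\theta(k,\argument) \in \Diff(M)$, so each $k \in K$ yields $L_k \in \Aut(P)$; I set $a_{\theta,H}(k) \coloneq L_k$. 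That this is a group homomorphism is immediate from $L_{k_1k_2} = L_{k_1}L_{k_2}$, and its smoothness follows from smoothness of the $K$-action on $P$ together with the Lie group structure on $\Aut(P) = \Bis(\cR(\theta,H))$.

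For the second assertion, suppose $(\theta,H) = (\beta_\cG\circ\ev, \Bisf{m}(\cG))$ for a locally trivial Banach Lie groupoid $\cG$, so that $\Stab{m} = \Loop{m}(\cG)$. By the reconstruction results of the preceding sections, $P = \Bis(\cG)/\Bisf{m}(\cG)$ is diffeomorphic to the source fibre $\alpha^{-1}(m)$ and $Q = \Loop{m}(\cG)/\Bisf{m}(\cG)$ is isomorphic to the vertex group $\Vtx{m}$, compatibly with the $Q$-actions; hence $P \cong \alpha^{-1}(m)$ as principal bundles. Since a locally trivial groupoid is canonically the gauge groupoid of its source fibre, $\cR(\theta,H) = \op{Gau}(P) \cong \op{Gau}(\alpha^{-1}(m)) \cong \cG$, and under this identification $\Bis(\cR(\theta,H)) \cong \Bis(\cG) = K$. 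Tracing definitions, the bisection $\sigma_{L_k}$ attached to $L_k$ corresponds to $k$ itself, so $a_{\theta,H}$ becomes the identity of $\Bis(\cG)$. Concretely, under $\tau\Bisf{m} \mapsto \tau(m)$ the endomorphism $L_\sigma$ acts by $g \mapsto \sigma(\beta(g))\,g$ on $\alpha^{-1}(m)$, so $L_\sigma = \id$ forces $\sigma(\beta(g)) = 1_{\beta(g)}$ for all $g \in \alpha^{-1}(m)$; as $\beta$ maps $\alpha^{-1}(m)$ onto $M$ this gives $\sigma = 1$, proving injectivity, while surjectivity is the identification $\Bis(\cG) = \Aut(\alpha^{-1}(m)) = \Bis(\op{Gau}(\alpha^{-1}(m)))$.

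The decisive difficulty is the first step: producing $P = K/H$ as a genuine Banach principal bundle. In infinite dimensions quotients of Lie groups need not carry a manifold structure, and it is exactly here that the co-Banach hypothesis and the generalised Frobenius theorem are indispensable---both to realise $K \to K/H$ as a submersion and to harvest the local sections making $P \to M$ locally trivial. Once the bundle is in hand, the gauge groupoid and the morphism $a_{\theta,H}$ are comparatively formal, and the identification in the groupoid case reduces to the reconstruction statements already established for $\Bis(\cG)/\Bisf{m}(\cG)$ and $\Loop{m}(\cG)/\Bisf{m}(\cG)$.
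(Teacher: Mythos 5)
Your proposal follows the paper's own route essentially step for step: the paper constructs the $\Lambda_m=\Stab{m}/H$-principal bundle $\pi\from K/H\to M$ in Proposition \ref{prop: const:pbund} (invoking Gl\"ockner's quotient and submersion theorems, which encapsulate exactly the co-Banach/regularity/Frobenius mechanism you describe), defines $\cR(\theta,H)$ as its gauge groupoid in Definition \ref{defn: Liegp:loctgpd}, obtains $a_{\theta,H}$ from left translation on $K/H$ in Lemma \ref{lem:canonical_morphism_into_bisections}, and proves the isomorphism assertion in Lemma \ref{lem: relchi:a} using the identifications from Proposition \ref{prop: quot:fibre} and Example \ref{ex: Bis:pbun}. (Note that the submersion property of $\ev_m=\theta(\cdot,m)$ is part of the definition of a transitive pair, so it need not be re-derived.)

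One caution on the second assertion: the transitive-pair hypothesis only gives transitivity of $\beta_\cG\circ\ev$ on $M$, i.e.\ surjectivity of $\beta\circ\ev_m$; it does not force $\ev_m\from\Bis(\cG)\to\alpha^{-1}(m)$ to be surjective. Hence your identifications $P\cong\alpha^{-1}(m)$ and $\cR(\theta,H)\cong\cG$, on which your surjectivity argument rests, are only valid when $\cG$ admits a bisection through each arrow; in general $P$ is only an open subbundle of $\alpha^{-1}(m)$ with the possibly smaller structure group $\ev_m(\Loop{m}(\cG))\leq\Vtx{m}(\cG)$. The paper sidesteps this by showing that $\chi_\cG$ is an isomorphism merely onto the open wide subgroupoid $\ev(\Bis(\cG)\times M)$ and then verifying $\Bis(\chi_\cG)\circ a_{\theta,H}=\id$ and $a_{\theta,H}\circ\Bis(\chi_\cG)=\id$ by direct computation, which yields the isomorphism of bisection groups even in that case. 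Your injectivity argument (effectiveness of the left action, as in Lemma \ref{lem: eff:act2}) is unaffected.
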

 
 One should think of the construction principle from Theorem \ref{thm:D} as an
 analogue of the reconstruction result in Theorem \ref{thm:B} for locally
 trivial Lie groupoids. The crucial difference here is that Theorem \ref{thm:D}
 makes no reference to the Lie groupoid, but constructs it purely from the
 given transitive pair. Note that the Lie group morphism $a_{\theta,H}$ in
 Theorem \ref{thm:D} will in general not be an isomorphism. However, the
 morphisms are interesting in their own right due to the fact that the
 definition of a transitive pair is quite flexible. It allows us to construct
 for a wide range of Lie groups with transitive actions on $M$ Lie group
 morphisms into the Lie group of bisections $\Bis (\cR (\theta, H))$. Moreover,
 these Lie group morphisms carry geometric information and thus connect the
 group actions of both Lie groups.
 
 In the last section we then invoke the integration theory of abelian
 extensions of infinite-dimensional Lie algebras from
 \cite{Neeb04Abelian-extensions-of-infinite-dimensional-Lie-groups} to
 construct transitive pairs from a closed 2-form $\omega$ on a 1-connected and
 compact manifold $M$. We formulate the results here for $\Diff(M)_{0}$,
 whereas in the text we allow for more general $K\leq \Diff(M)$. If $\omega$ is
 prequantisable, then the prequantisation provides a gauge groupoid and thus an
 integration of the Lie algebroid extension. By the results from
 \cite{SchmedingWockel14} and
 \cite{Neeb04Abelian-extensions-of-infinite-dimensional-Lie-groups}, the
 associated Lie algebra extension also integrates to an extension of Lie
 groups. In the other direction, we show that the integration of the extensions
 of Lie algebras to transitive pairs is in fact a two-step process. The first
 step is concerned with the integration of the extension
 \begin{equation}\label{eqn19}
  C^{\infty}(M)\to C^{\infty}(M)\oplus_{\ol{\omega}}\cV(M)\to \cV(M)
 \end{equation}
 of Lie algebras to an extension of Lie groups, where $\ol{\omega}$ is the
 abelian cocycle $(X,Y)\mapsto \omega(X,Y)$. By the results of
 \cite{Neeb04Abelian-extensions-of-infinite-dimensional-Lie-groups}, the
 integration of \eqref{eqn19} is governed by the discreteness of the
 \emph{primary} periods, i.e., the periods of the Lie group $\Diff(M)_{0}$ for
 the equivariant extension $\ol{\omega}^{\op{eq}}$ of $\ol{\omega}$. The second
 step is then the integration of the Lie subalgebra of
 $C^{\infty}(M)\oplus_{\ol{\omega}}\cV(M)$, that corresponds to the vector
 fields vanishing in the base-point, to a closed Lie subgroup. This is governed
 by the discreteness of the \emph{secondary} periods, i.e., the periods of
 $(M,\omega)$ modulo the periods of $(\Diff(M)_{0},\ol{\omega}^{\op{eq}})$ (see
 Remark \ref{rem:prequant} and Remark \ref{rem:secondary_periods} for a precise
 definition).

 \begin{inttheorem}
  Let $M$ be a compact and 1-connected manifold with base-point $m$ and
  $\omega\in \Omega^{2}(M)$ be closed. If the extension \eqref{eqn19}
  integrates to an extension of Lie groups, then $(M,\omega)$ is prequantisable
  if and only if the secondary periods are discrete. The latter is equivalent
  to the integrability of the subalgebra
  $C^{\infty}_{m}(M)\oplus _{\ol{\omega}}\cV_{m}(M)$ to a closed Lie subgroup,
  where $C^{\infty}_{m}(M)$ and $\cV_{m}(M)$ denote the functions (respectively
  vector fields) that vanish in $m$.
 \end{inttheorem}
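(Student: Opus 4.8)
Write $\wh{G}$ for the Lie group extension of $\Diff(M)_{0}$ by $C^{\infty}(M)/\Pi_{1}$ obtained by integrating \eqref{eqn19}, the existence of which is the standing hypothesis, supplied by \cite{Neeb04Abelian-extensions-of-infinite-dimensional-Lie-groups}. Here $\Pi_{1}$ is the group of \emph{primary} periods; it sits inside $C^{\infty}(M)$ as constant functions, because the equivariant period of a class $\sigma\in\pi_{2}(\Diff(M)_{0})$ is the function $p\mapsto\int_{(\ev_{p})_{*}\sigma}\omega$, which is constant as $M$ is connected and $\omega$-periods are homotopy invariants. Since $M$ is compact, $\Diff(M)_{0}$ and $C^{\infty}(M)/\Pi_{1}$ are regular in the sense of Milnor, hence so is $\wh{G}$ (cf.\ \cite{HGRegLie15}). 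Let $\wh{G}_{m}$ be the stabiliser of $m$ for the action $\theta=\beta_{\cG}\circ\ev$ of $\wh{G}$ on $M$; as $\theta$ factors through $\Diff(M)_{0}$ it is the preimage of $\{\phi:\phi(m)=m\}$, has Lie algebra $\fk\coloneq C^{\infty}(M)\oplus_{\ol{\omega}}\cV_{m}(M)$, and is connected because $M$ is $1$-connected. The key algebraic observation is that $\lambda\from\fk\to\R$, $\lambda(f,X)\coloneq f(m)$, is a continuous Lie algebra homomorphism onto the abelian $\R$ with kernel $\fh\coloneq C^{\infty}_{m}(M)\oplus_{\ol{\omega}}\cV_{m}(M)$: the first component of $[(f,X),(g,Y)]$ is $\cL_{X}g-\cL_{Y}f+\omega(X,Y)$, each summand of which vanishes at $m$ as $X(m)=Y(m)=0$. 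Thus $\fh$ is a closed codimension-one ideal of $\fk$, and since $\wh{G}_{m}$ is connected and regular, integrating $\fh$ to a closed Lie subgroup $H\trianglelefteq\wh{G}_{m}$ is equivalent to integrating $\lambda$ to a morphism $\Lambda\from\wh{G}_{m}\to\R/\Gamma$ with $\Gamma\se\R$ discrete (take $H=(\ker\Lambda)_{0}$; conversely $\wh{G}_{m}\to\wh{G}_{m}/H$ is such a $\Lambda$). By the integration theory for regular Lie groups \cite{Neeb04Abelian-extensions-of-infinite-dimensional-Lie-groups} such a $\Lambda$ exists precisely when the image of the period homomorphism $\per_{\lambda}\from\pi_{1}(\wh{G}_{m})\to\R$ is discrete.

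The main step is to identify $\im(\per_{\lambda})$, read modulo $\Pi_{1}$, with the secondary periods. On the fibre $C^{\infty}(M)/\Pi_{1}$ of $\wh{G}\to\Diff(M)_{0}$ the homomorphism $\lambda$ restricts to $\ev_{m}$, which descends to $\R/\Pi_{1}$; hence every integration of $\lambda$ factors through $\R/\Pi_{1}$, so $\Gamma\supseteq\Pi_{1}$ and the true obstruction is $\im(\per_{\lambda})\bmod\Pi_{1}$. To compute it, recall that $\ev_{m}\from\Diff(M)_{0}\to M$ is a surjective submersion by Theorem \ref{thm:A}, so $\widehat{\ev}_{m}\from\wh{G}\to M$ is a fibration with fibre $\wh{G}_{m}$; as $M$ is $1$-connected, its homotopy exact sequence provides a connecting map $\partial\from\pi_{2}(M)\to\pi_{1}(\wh{G}_{m})$ whose image, together with the primary contributions lifted from $\pi_{1}(\wh{G})$, generates $\pi_{1}(\wh{G}_{m})$. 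A Stokes argument, resting on $\ol{\omega}$ being the Lie-theoretic avatar of the curvature $\omega$, then gives $\per_{\lambda}(\partial[\Sigma])=\int_{\Sigma}\omega$. Hence $\im(\per_{\lambda})\bmod\Pi_{1}=\Pi_{\omega}\bmod\Pi_{1}$ with $\Pi_{\omega}\coloneq\{\int_{\Sigma}\omega:\Sigma\in\pi_{2}(M)\}$, and by definition this quotient is the group of secondary periods. Combined with the reduction above, this establishes ``$\fh$ integrates to a closed Lie subgroup $\iff$ the secondary periods are discrete''.

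It remains to match these with prequantisability, which I do in two complementary ways. Numerically, since $M$ is $1$-connected Hurewicz gives $\pi_{2}(M)\cong H_{2}(M;\Z)$, so $\Pi_{\omega}$ is the group of integral periods of $\omega$; the Weil--Kostant criterion (in the normalisation that lets the structure circle be $\R/\Gamma$) characterises prequantisability of $(M,\omega)$ by discreteness of $\Pi_{\omega}$, and as $\Pi_{1}\se\Pi_{\omega}$ is discrete by hypothesis this holds if and only if $\Pi_{\omega}\bmod\Pi_{1}$ is discrete, i.e.\ the secondary periods are. Geometrically, when they are discrete the subgroup $H$ above is a co-Banach, regular normal subgroup of $\wh{G}_{m}$ with $\wh{G}_{m}/H\cong\R/\Gamma$, so $(\theta,H)$ is a transitive pair and Theorem \ref{thm:D} produces a locally trivial, finite-dimensional Banach Lie groupoid $\cR(\theta,H)$ whose source fibre is a principal $\R/\Gamma$-bundle $P\to M$; its Lie algebroid is the extension \eqref{eqn19}, so the induced principal connection on $P$ has curvature $\omega$ and $P$ is a prequantisation of $(M,\omega)$.

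The step I expect to be genuinely delicate is the computation of $\per_{\lambda}$ in the Fréchet setting: one must verify that $\per_{\lambda}$ is well defined, perform the homotopy reduction of loops in $\wh{G}_{m}$ to spheres in $M$ along the submersion $\ev_{m}$, account precisely for the contribution of $\pi_{1}$ of the infinite-dimensional fibre $C^{\infty}(M)/\Pi_{1}$, and check that the period formula of \cite{Neeb04Abelian-extensions-of-infinite-dimensional-Lie-groups} really evaluates to $\int_{\Sigma}\omega$. The passage from the infinitesimal datum $\lambda$ to the closed subgroup $H$, and the regularity of $H$ needed for Theorem \ref{thm:D}, rest on the co-Banach and regularity techniques behind Theorem \ref{thm:C} and on the Frobenius theorem of \cite{hg2015}.
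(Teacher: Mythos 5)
Your architecture is essentially the paper's: integrate the extension, reduce integrability of the pointed subalgebra to a discreteness condition read off from the long exact homotopy sequence of the evaluation fibration, and match that with the Weil--Kostant criterion. Your mechanism differs in one respect: you integrate the codimension-one ideal $\fh=C^\infty_m(M)\oplus_{\ol{\omega}}\cV_m(M)$ by integrating the character $\lambda(f,X)=f(m)$ to a morphism into $\R/\Gamma$, whereas the paper builds the candidate subgroup directly as $\varphi_m(K^\sharp_m)\cdot\Pi/\Gamma$, the image of the separately integrated pointed extension $\fa_m\to K_m^\sharp\to\wt{K_m}$; this has the advantage of exhibiting the subgroup as a reduction of the restricted bundle $\left.K^\sharp\right|_{\wt{K_m}/\Delta}$ and hence immediately as a \emph{closed Lie} subgroup, a point your character argument leaves to the regular value theorem. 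Your observation that $\lambda$ is a Lie algebra character (each term of $X.g-Y.f+\ol{\omega}(X,Y)$ vanishes at $m$ when $X(m)=Y(m)=0$) is correct and is implicitly what makes $\fa_m\oplus_{\ol{\omega}_m}\fk_m$ a subalgebra in the paper as well.

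The genuine gap is the identification $\per_\lambda(\partial[\Sigma])=\int_\Sigma\omega$, which you dispatch with ``a Stokes argument \ldots then gives'' and then, candidly, list as the step you have not carried out. This is exactly Lemma \ref{lem:secondary_discreteness}, and its proof is not a formality: one must build a connection on the principal $A_\Gamma$-bundle $K^\sharp\to\wt{K}$ from the canonical linear splitting $\fk\to\fa\oplus_{\ol{\omega}}\fk$, identify its curvature with $\ol{\omega}^{\op{eq}}$, observe that over the subgroup $\wt{K_m}/\Delta$ the curvature takes values in the subspace $\fa_m$ so that the connection is flat modulo $\fa_m$, and only then compute the holonomy of the boundary loop as the integral of the curvature over a filler. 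A second, related gap: you compute $\per_\lambda$ only on $\im(\partial)\se\pi_1(\wh{G}_m)$, but loops in the stabiliser that survive to $\pi_1(\wh{G})$ also contribute to $\im(\per_\lambda)$, and nothing in your argument controls those contributions; the claimed equality $\im(\per_\lambda)\bmod\Pi_{1}=\Pi_\omega\bmod\Pi_{1}$ therefore does not follow as stated. The paper sidesteps this by working over the universal cover $\wt{K}$ and defining the secondary period homomorphism $\per^{\flat}_{[\omega]}$ only on $\Delta=\ker(\pi_1(K_m)\to\pi_1(K))$, which by exactness is precisely the image of the connecting map; that restriction is what makes the secondary periods well defined and is needed before one can invoke the Weil--Kostant criterion in the final step.
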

 
 Finally, we would like to remark that the constructions of Lie groupoids given
 in the present paper yield functors on suitable categories of Lie groups and
 Lie groupoids. These functors are closely connected to the bisection functor.
 However, there is no need for these results in the present paper as we are
 only concerned with the (re-)construction of Lie groupoids. Thus we will
 largely avoid categorical language and postpone a detailed investigation of
 these functors to \cite{SW15Func}.
\end{tabsection}

\section{Locally convex Lie groupoids and the Lie group of bisections}
\label{sec:locally_convex_lie_groupoids_and_lie_groups}

\begin{tabsection}
 In this section we recall basic facts and conventions on Lie groupoids and bisections used in this paper. 
 We refer to \cite{Mackenzie05General-theory-of-Lie-groupoids-and-Lie-algebroids} for an
 introduction to (finite-dimensional) Lie groupoids and the associated group of
 bisections. The notation for Lie groupoids and their structural maps also
 follows \cite{Mackenzie05General-theory-of-Lie-groupoids-and-Lie-algebroids}.
 However, we do not restrict our attention to finite dimensional Lie groupoids.
 Hence, we have to augment the usual definitions with several comments. Note
 that we will work all the time over a fixed base manifold $M$.
\end{tabsection}

\begin{definition}
 Let $\cG = (G \toto M)$ be a groupoid over $M$ with source projection
 $\alpha \colon G \rightarrow M$ and target projection
 $\beta \colon G \rightarrow M$. Then $\cG$ is a \emph{(locally convex and
 locally metrisable) Lie groupoid over $M$}\footnote{See Appendix
 \ref{Appendix: MFD} for references on differential calculus in locally convex
 spaces.} if
 \begin{itemize}
  \item the objects $M$ and the arrows $G$ are locally convex and locally
        metrisable manifolds,
  \item the smooth structure turns $\alpha$ and $\beta$ into surjective
        submersions, i.e., they are locally projections\footnote{This implies
        that the occurring fibre-products are submanifolds of the
        direct products, see \cite[Appendix
        C]{Wockel13Infinite-dimensional-and-higher-structures-in-differential-geometry}.}
  \item multiplication
        $m \colon G \times_{\alpha,\beta} G \rightarrow G$, object
        inclusion $1 \colon M \rightarrow G$ and inversion
        $\iota \colon G \rightarrow G$ are smooth.
 \end{itemize}
 \end{definition}
 
 \begin{definition}
 The \emph{group of bisections} $\Bis (\cG)$ of $\cG$ is given as the set of
 sections $\sigma \colon M \rightarrow G$ of $\alpha$ such that
 $\beta \circ \sigma \colon M \rightarrow M$ is a diffeomorphism. This is a
 group with respect to
 \begin{equation}\label{eq: BISGP1}
  (\sigma \star \tau ) (x) \coloneq \sigma ((\beta \circ \tau)(x))\tau(x)\text{ for }  x \in M.
 \end{equation}
 The object inclusion $1 \colon M \rightarrow G$ is then the neutral element
 and the inverse element of $\sigma$ is
 \begin{equation}\label{eq: BISGP2}
  \sigma^{-1} (x) \coloneq \iota( \sigma ((\beta \circ\sigma)^{-1} (x)))\text{ for } x \in M.
 \end{equation}
\end{definition}

\begin{tabsection}
 In \cite{SchmedingWockel14} we have established a Lie group structure on the group of bisections of a (locally convex) Lie groupoid which admits a certain type of local addition.
 To understand the Lie group structure on $\Bis(\cG)$ one uses local additions (cf.\ Definition \ref{def:local_addition}) which respect the fibres of a submersion. 
 This is an adaptation of the construction of
 manifold structures on mapping spaces
 \cite{Wockel13Infinite-dimensional-and-higher-structures-in-differential-geometry,conv1997,michor1980}
 (see also Appendix \ref{Appendix: MFD}).
\end{tabsection}

\begin{definition}
 We say that that a Lie groupoid $\cG=(G\toto M)$ admits an \emph{adapted local
 addition} if $G$ admits a local addition which is adapted to the source
 projection $\alpha$ (or, equivalently, to the target projection $\beta$). 
\end{definition}

Recall from \cite[Section 3]{SchmedingWockel14} the following facts on the Lie group structure of the group of bisections:

\begin{Theorem} \label{theorem: A} 
Suppose $M$ is compact and $\cG=(G  \toto M)$ is a locally convex and
 locally metrisable Lie groupoid over $M$ which admits an adapted local
 addition. Then $\Bis (\cG)$ is a submanifold of $C^{\infty}(M,G)$ (with the
 manifold structure from Theorem \ref{thm: MFDMAP}). Moreover, the induced
 manifold structure and the group multiplication
 \begin{equation*}
  (\sigma \star \tau ) (x) \coloneq \sigma ((\beta \circ \tau)(x))\tau(x)\text{ for }  x \in M
 \end{equation*}
 turn $\Bis(\cG)$ into a Lie group modelled on $E_1 \coloneq   \{\gamma\in C^{\infty}(M,TG) \mid \forall x \in M, \  \gamma(x)\in T_{1_{x}}\alpha^{-1}(x) \}$.
\end{Theorem}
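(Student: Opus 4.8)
The plan is to realise $\Bis(\cG)$ as an open subset of a submanifold of $C^{\infty}(M,G)$ and then to verify that the multiplication \eqref{eq: BISGP1} and the inversion \eqref{eq: BISGP2} are smooth for the induced structure. Throughout I would work with the manifold structure on $C^{\infty}(M,G)$ from Theorem \ref{thm: MFDMAP}, whose charts are built from a local addition $\Sigma \from \Omega \to G$ on $G$ (with $\Omega \se TG$ an open neighbourhood of the zero section): around $f \in C^{\infty}(M,G)$ one uses $\gamma \mapsto \Sigma \circ \gamma$, defined on those sections $\gamma$ of the pullback bundle $f^{*}TG$ whose image lies in $\Omega$, so that the model space at $f$ is the space of smooth sections $\Gamma(f^{*}TG)$. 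Compactness of $M$ is what turns these section spaces into (metrisable) locally convex spaces and makes the chart changes smooth.

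First I would identify the submanifold of sections of $\alpha$. Let $\Gamma_\alpha \se C^{\infty}(M,G)$ be the set of $\sigma$ with $\alpha \circ \sigma = \id_M$. Because the local addition is adapted to $\alpha$, it restricts to a local addition on each fibre $\alpha^{-1}(x)$; consequently, for a chart centred at some $\sigma_0 \in \Gamma_\alpha$, one has $\alpha \circ \Sigma \circ \gamma = \id_M$ if and only if $\gamma$ takes values in the vertical subbundle $\ker(T\alpha)$. Since $\alpha$ is a submersion this subbundle is complemented in $TG$ (a complement being $\cong \alpha^{*}TM$), so $\Gamma(\sigma_0^{*}\ker(T\alpha))$ is a complemented closed subspace of $\Gamma(\sigma_0^{*}TG)$, and the chart identifies $\Gamma_\alpha$ near $\sigma_0$ with an open subset of this subspace. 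Hence $\Gamma_\alpha$ is a submanifold, and at the unit section $\sigma_0 = 1$ its model space is precisely $E_1$. It then remains to impose that $\beta \circ \sigma$ be a diffeomorphism: the assignment $\sigma \mapsto \beta \circ \sigma$ is smooth from $\Gamma_\alpha$ into $C^{\infty}(M,M)$ (push-forward by the smooth map $\beta$), and for compact $M$ the group $\Diff(M)$ is open in $C^{\infty}(M,M)$; therefore $\Bis(\cG)$ is the preimage of this open set, hence an open submanifold of $\Gamma_\alpha$, modelled near $1$ on $E_1$.

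The substantial part is the smoothness of the group operations. Writing $(\sigma \star \tau)(x) = \sigma((\beta \circ \tau)(x))\,\tau(x)$, I would decompose the multiplication into (i) $\tau \mapsto \beta \circ \tau \in \Diff(M)$, (ii) the pre-composition $\sigma \mapsto \sigma \circ (\beta\circ\tau)$, and (iii) the fibrewise groupoid multiplication, i.e.\ push-forward along the smooth map $m \from G \times_{\alpha,\beta} G \to G$ applied to the pair $(\sigma \circ (\beta \circ \tau),\, \tau)$, which lands in the fibre product since $\alpha(\sigma((\beta\circ\tau)(x))) = (\beta\circ\tau)(x) = \beta(\tau(x))$. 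Step (i) is smooth because $\beta_{*}$ is smooth and lands in the open set $\Diff(M)$, and step (iii) is smooth because push-forward by a smooth fibrewise map between these mapping manifolds is smooth. The delicate ingredient is step (ii): joint smoothness of the composition map $(\sigma,\phi) \mapsto \sigma \circ \phi$ on $C^{\infty}(M,G) \times \Diff(M)$. Here I would invoke the exponential law (cartesian closedness for these mapping manifolds, valid in this setting because $M$ is compact) together with the fact that $\phi = \beta \circ \tau$ ranges in the Lie group $\Diff(M)$, where composition and inversion are smooth. Smoothness of the inversion \eqref{eq: BISGP2} follows by the same toolkit, using in addition smoothness of inversion in $\Diff(M)$ to handle the factor $(\beta\circ\sigma)^{-1}$ together with push-forward by the groupoid inversion $\iota$.

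I expect step (ii) — the joint smoothness of pre-composition with the varying diffeomorphism $\beta \circ \tau$ — to be the main obstacle, since composition on infinite-dimensional mapping spaces is only well behaved when the inner map is a diffeomorphism, and one must control how the pullback charts depend on this inner map. Once this is handled via the exponential law and the Lie group structure of $\Diff(M)$, the remaining verifications reduce to routine applications of the push-forward smoothness results underlying Theorem \ref{thm: MFDMAP}.
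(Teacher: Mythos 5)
The paper does not prove this theorem itself but recalls it from \cite[Section 3]{SchmedingWockel14}, and your sketch follows essentially the same route as that reference: realise $\Gamma(M\xleftarrow{\alpha}G)$ as a split submanifold of $C^{\infty}(M,G)$ using the adapted local addition, cut out $\Bis(\cG)$ as the open preimage of $\Diff(M)\opn C^{\infty}(M,M)$ under $\beta_{*}$, and verify smoothness of $\star$ and inversion via the exponential law. The only remark worth making is that the step you single out as delicate is not: joint smoothness of $(\sigma,\phi)\mapsto\sigma\circ\phi$ on $C^{\infty}(M,G)\times C^{\infty}(M,M)$ follows at once by uncurrying to $(\sigma,\phi,x)\mapsto\sigma(\phi(x))$, which is a composite of the two joint evaluation maps (each smooth by Theorem \ref{thm: MFDMAP} \ref{thm:manifold_structure_on_smooth_mapping_d} applied to the identity), so no separate control of how the pullback charts depend on the inner diffeomorphism is needed.
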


As the Lie group $\Bis (\cG)$ is a submanifold of $C^\infty(M.G)$ the exponential law for smooth maps \ref{thm:  MFDMAP} \ref{thm:manifold_structure_on_smooth_mapping_d} applies to maps defined on $\Bis (\cG)$.
In particular, as in \cite[Proposition 3.11]{SchmedingWockel14} one easily concludes that the natural action of the bisections on the arrows and the evaluation of bisections are smooth:  

\begin{proposition}\label{prop: LGP:Bis}
 Let $\cG = (G \toto M)$ be a Lie groupoid such that $M$ is a compact manifold and $\cG$ admits an adapted local addition.
 Then
 \begin{enumerate} 
  \item the natural action $\gamma \colon \Bis (\cG) \times G \rightarrow G, (\psi , g) \mapsto \psi (\beta (g)) \cdot g$ is smooth and for $g \in G$ the restricted action 
    \begin{displaymath}
     \gamma_g \colon \Bis (\cG) \rightarrow \alpha^{-1}(\alpha (g)) , \psi \mapsto \psi (\beta (g))\cdot g
    \end{displaymath}
    is smooth.
  \item the evaluation map $\ev \colon \Bis (\cG) \times M \rightarrow G , (\sigma , m) \mapsto \sigma (m)$ is smooth and satisfies 
    \begin{equation}\label{eq: ev:act}
     \ev (\sigma , m) = \sigma (m) = \gamma (\sigma , 1_m).
    \end{equation}
    Furthermore, the map $\ev_m \colon \Bis (\cG) \rightarrow \alpha^{-1} (m), \sigma \mapsto \sigma(m)$ is smooth for each $m \in M$.
 \end{enumerate}
\end{proposition}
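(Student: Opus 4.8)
The plan is to obtain both statements as formal consequences of two facts already at our disposal: that $\Bis(\cG)$ is a submanifold of $C^\infty(M,G)$ (Theorem \ref{theorem: A}), and the exponential law for the manifold structure on mapping spaces (Theorem \ref{thm: MFDMAP}\,\ref{thm:manifold_structure_on_smooth_mapping_d}), applied to maps defined on the submanifold $\Bis(\cG)$. Accordingly I would treat part b) first, since part a) uses it.

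By the exponential law, the evaluation $\ev\colon \Bis(\cG)\times M \to G$ is smooth if and only if its adjoint $\Bis(\cG)\to C^\infty(M,G)$, $\sigma\mapsto \ev(\sigma,\argument)=\sigma$, is smooth; but this adjoint is precisely the inclusion $\iota\colon \Bis(\cG)\hookrightarrow C^\infty(M,G)$ of the submanifold, which is smooth by Theorem \ref{theorem: A}. Hence $\ev$ is smooth. Fixing $m\in M$ and precomposing with the smooth map $\sigma\mapsto(\sigma,m)$ then yields smoothness of $\ev_m$ into $G$.

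For part a) I would write the action through the groupoid multiplication, $\gamma(\psi,g)=\psi(\beta(g))\cdot g$, and factor it as $\Phi\colon (\psi,g)\mapsto(\psi(\beta(g)),g)$ followed by multiplication. The map $\Phi$ is smooth into $G\times G$ because its components are: the first is the composition of $\id\times\beta$ with the evaluation just shown to be smooth, namely $(\psi,g)\mapsto(\psi,\beta(g))\mapsto\ev(\psi,\beta(g))=\psi(\beta(g))$, and the second is a projection. Since $\psi$ is a section of $\alpha$ we have $\alpha(\psi(\beta(g)))=\beta(g)$, so $(\psi(\beta(g)),g)$ lies in the composable fibre product $G\times_{\alpha,\beta}G$; as this is a submanifold of $G\times G$, the map $\Phi$ is smooth into it, and composing with the smooth multiplication $G\times_{\alpha,\beta}G\to G$ gives smoothness of $\gamma$. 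Restricting to $\Bis(\cG)\times\{g\}$ produces $\gamma_g$ (smooth into $G$), and the computation $\gamma(\sigma,1_m)=\sigma(\beta(1_m))\cdot 1_m=\sigma(m)\cdot 1_m=\sigma(m)$, using $\beta(1_m)=m$ and the right unit law, establishes \eqref{eq: ev:act}.

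The only genuinely non-formal point is the corestriction of the target: $\ev_m$ is asserted to be smooth into the source fibre $\alpha^{-1}(m)$, and $\gamma_g$ into $\alpha^{-1}(\alpha(g))$. Here I would note that $\alpha(\sigma(m))=m$ by the section property, and $\alpha(\psi(\beta(g))\cdot g)=\alpha(g)$ since the source of a product equals the source of its right factor, so the images do lie in the claimed fibres; as $\alpha$ is a submersion these fibres are split submanifolds of $G$, and a smooth map into $G$ whose image lies in a submanifold is smooth as a map into that submanifold. I expect this corestriction, together with the verification that the relevant pairs are composable, to be the only thing requiring care; everything else is an application of the exponential law exactly as in \cite[Proposition 3.11]{SchmedingWockel14}.
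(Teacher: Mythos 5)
Your argument is correct and follows exactly the route the paper indicates: it deduces smoothness of $\ev$ from the exponential law of Theorem \ref{thm: MFDMAP} \ref{thm:manifold_structure_on_smooth_mapping_d} applied to the submanifold inclusion $\Bis(\cG)\hookrightarrow C^{\infty}(M,G)$, and then obtains $\gamma$ by composing with the groupoid multiplication on the fibre product, just as in the cited \cite[Proposition 3.11]{SchmedingWockel14}. The corestriction issues (into $G\times_{\alpha,\beta}G$ and into the split source fibres) that you single out are indeed the only points needing care, and you resolve them correctly.
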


\section{Reconstruction of the Lie groupoid from its group of bisections}
\label{sec:reconstruction_theorem}

\begin{tabsection}
 In this section, the close link between Lie groupoids and their
 Lie groups of bisections is established. More precisely, we will show that each Lie groupoid
 is the quotient of its Lie group of bisections. In the end, we discuss some
 examples and applications of this link.
 
 Throughout this section assume that $\cG=(G\toto M)$ is a locally metrisable Lie groupoid
 that admits an adapted local addition and that has a compact space of objects $M$.
\end{tabsection}

\begin{definition}
 The Lie group $\Bis(\cG)$ acts on $M$, via the action induced by $(\beta_{\cG})_{*}\from \Bis(\cG)\to \Diff(M)$ and the natural action of
 $\Diff(M)$ on $M$. This gives rise to the action Lie groupoid
 $\cB(\cG)\coloneq \Bis(\cG)\ltimes M$, with source and target projections defined by
 $\alpha_{\cB}(\sigma,m)=m$ and $\beta_{\cB}(\sigma,m)=\beta_{\cG}(\sigma(m))$.
 The multiplication on $\cB(\cG)$ is defined by
 \begin{equation*}
  (\sigma,\beta_{\cG}(\tau(m)))\cdot (\tau,m)\coloneq(\sigma \star \tau,m).
 \end{equation*}
 Clearly, any morphism $f\from \cG\to \cH$ of Lie groupoids over $M$ induces a
 morphism $f_{*}\times \id_{M}\from \cB(\cG)\to \cB(\cH)$ of Lie groupoids.
\end{definition}

\begin{remark}
\label{setup: bis:act}
 The Lie groupoid $\cB(\cG)$  admits an adapted local addition (for $\alpha_{\cB}$
 and thus also for $\beta_{\cB}$). In fact, this is the case for the Lie group
 $\Bis(\cG)$ and the finite-dimensional manifold $M$ separately (cf.\ \cite[p. 441]{conv1997}), and on
 $\Bis(\cG)\times M$ one can simply take the product of these local additions.
  In addition, the evaluation map $\ev \from \Bis(\cG)\times M\to G$ is a
 morphism of Lie groupoids over $M$
 \begin{equation*}
  \left(\vcenter{  \xymatrix{
  \Bis(\cG)\times M \ar@<1.2ex>[d]^{\beta _{\cB}}\ar@<-1ex>[d]_{\alpha_{\cB}} \\
  M
  }}\right)\xrightarrow{\ev}
  \left(\vcenter{\xymatrix{
  G \ar@<1.2ex>[d]^{\beta _{\cG}}\ar@<-1ex>[d]_{\alpha _{\cG}}\\
  M       
  }}\right).
 \end{equation*}
 \end{remark}

We will now study the analytic properties of the morphism $\ev\from \Bis(\cG)\times M\to G$. 
It will turn out that under mild assumptions on $\cG$ the groupoid is a groupoid quotient of the action groupoid $\cB (\cG)$ via the evaluation map.
The key point to establish this result is to prove that $\ev$ actually induces a quotient map, i.e.\ we need $\ev$ to be a surjective submersion.
Let us first deal with some preparations:

\begin{remark}\label{setup: preparation}
 Let $\cG = (G \toto M)$ be a Lie groupoid and fix $m \in M$ and $\tau \in \Bis (\cG)$. 
 Our goal is to split the space of sections $\Gamma (\tau^* T^\alpha G)$ into a product of (closed) subspaces.
 To this end define	
  \begin{displaymath} 
   \Gamma (\tau^* T^\alpha G)_{m} \coloneq \{X \in \Gamma (\tau^* T^\alpha G \mid X (m) = 0_{\tau (m)}\}.
  \end{displaymath}
 Choose a bundle trivialisation $\lambda \colon M_\lambda \rightarrow \tau^*\pi^\alpha (M_\lambda) \times E$ of the bundle $\tau^*\pi^\alpha \colon \tau^* T^\alpha G \rightarrow M$ such that $m \in \tau^*\pi^\alpha (M_\lambda)$.
 Since $M$ is compact, we can choose a smooth cut-off function $\rho \colon M \rightarrow [0,1]$ with $\rho_\lambda (m) = 1$ and $\rho|_{M \setminus \pi^\alpha (M_\lambda)} \equiv 0$. 
 Then we obtain a (non-canonical) isomorphism of topological vector spaces
  \begin{equation} \label{eq: split:iso}
   I_\lambda \colon \Gamma (\tau^* T^\alpha G) \rightarrow \Gamma (\tau^* T^\alpha G)_{m} \times T^\alpha_{\tau (m)} G ,\quad X \mapsto (X- (\rho \circ \tau^*\pi^\alpha) \cdot \lambda^{-1} (\tau^*\pi^\alpha,  \pr_2 \circ \lambda (X(m))) , X(m)).
  \end{equation}
 its inverse is given by 
 \begin{equation}\label{eq: nci:inv}
  I_\lambda^{-1} (X_0 , y) \coloneq X_0 + (\rho \circ \tau^*\pi^\alpha ) \cdot \lambda^{-1} (\tau^*\pi^\alpha , \pr_2 \circ \lambda (y)) 
 \end{equation}
 This turns $\Gamma (\tau^*T^\alpha G)_m$ into a complemented subspace of $\Gamma (\tau^*T^\alpha G)$. 
 Moreover, if $\cG$ is a Banach-Lie groupoid then $T^\alpha_{\tau (m)} G$ is a Banach space and thus $\Gamma (\tau^*T^\alpha G)_m$ turns into a co-Banach subspace of $\Gamma (\tau^*T^\alpha G)$.
\end{remark}

Let us establish the submersion property for the restricted action of the group of bisections on the manifold of arrows.

\begin{proposition} \label{prop: act:subm}
 Let $\cG = (G \toto M)$ be a Lie groupoid and $g \in G$. 
 Then the restricted action 
  \begin{displaymath}
   \gamma_g \colon \Bis (\cG) \rightarrow \alpha^{-1} (\alpha (g)) , \sigma \mapsto \gamma (\sigma,g) = \sigma (\beta (g)) .g
  \end{displaymath}
 is a submersion.
\end{proposition}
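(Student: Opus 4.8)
The plan is to factor $\gamma_g$ through the evaluation map at the single point $m \coloneq \beta(g)$ followed by a right translation, thereby reducing the claim to a submersion statement about $\ev_m$ that can be read off directly in charts coming from the adapted local addition.

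First I would note that, writing $m = \beta(g)$, every $\sigma \in \Bis(\cG)$ satisfies $\sigma(m) \in \alpha^{-1}(m) = \alpha^{-1}(\beta(g))$ because $\sigma$ is a section of $\alpha$. Hence right translation by $g$, namely $R_g \colon \alpha^{-1}(\beta(g)) \to \alpha^{-1}(\alpha(g))$, $h \mapsto h \cdot g$, is defined, and it is a diffeomorphism with inverse $R_{\iota(g)}$ (both are restrictions of the smooth groupoid multiplication to source fibres). Since $\gamma_g(\sigma) = \sigma(\beta(g)) \cdot g = R_g(\ev_m(\sigma))$, we have $\gamma_g = R_g \circ \ev_m$, where $\ev_m \colon \Bis(\cG) \to \alpha^{-1}(m)$ is smooth by Proposition \ref{prop: LGP:Bis}. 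As $R_g$ is a diffeomorphism, it suffices to prove that $\ev_m$ is a submersion.

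To see this, fix $\sigma_0 \in \Bis(\cG)$. By Theorem \ref{theorem: A} and the construction in \cite{SchmedingWockel14}, the adapted local addition $\Sigma$ on $G$ furnishes a chart of $\Bis(\cG)$ around $\sigma_0$ identifying a neighbourhood of $\sigma_0$ with an open $0$-neighbourhood in $\Gamma(\sigma_0^* T^\alpha G)$ via $X \mapsto \sigma_X$, $\sigma_X(x) = \Sigma(X(x))$. Adaptedness of $\Sigma$ to $\alpha$ guarantees both that $\sigma_X$ is again a section of $\alpha$ and that $\Sigma$ restricts to a local addition on the fibre $\alpha^{-1}(m)$, giving a chart of $\alpha^{-1}(m)$ around $\sigma_0(m)$ via $v \mapsto \Sigma(v)$ for $v$ in a $0$-neighbourhood of $T^\alpha_{\sigma_0(m)} G$. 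In these two charts the map $\ev_m$ reads $X \mapsto \sigma_X(m) = \Sigma(X(m))$, i.e.\ it becomes the continuous linear evaluation $\Gamma(\sigma_0^* T^\alpha G) \to T^\alpha_{\sigma_0(m)} G$, $X \mapsto X(m)$, independently of the base point of the chart. It then remains to observe that this linear evaluation is a split surjection, which is precisely the content of Remark \ref{setup: preparation} applied with $\tau = \sigma_0$: the isomorphism $I_\lambda$ has $X \mapsto X(m)$ as its second component and identifies $\Gamma(\sigma_0^* T^\alpha G)$ with $\Gamma(\sigma_0^* T^\alpha G)_m \times T^\alpha_{\sigma_0(m)} G$, so that evaluation is, up to this isomorphism, the projection onto the second factor, with complemented (even co-Banach, in the Banach case) kernel $\Gamma(\sigma_0^* T^\alpha G)_m$. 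A smooth map that in charts is a complemented-kernel linear projection is a submersion, so $\ev_m$ is a submersion at $\sigma_0$; as $\sigma_0$ was arbitrary, $\ev_m$, and therefore $\gamma_g = R_g \circ \ev_m$, is a submersion.

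The only genuinely delicate point is the compatibility of the two local additions: one must ensure that the chart of the fibre $\alpha^{-1}(m)$ induced by restricting $\Sigma$ is exactly the one through which $\sigma_X(m)$ factors, so that $\ev_m$ turns into the honest linear evaluation map rather than merely a map with that differential. This is where the adaptedness of $\Sigma$ to $\alpha$ is really used; once it is in place, the submersion property is immediate from the linear splitting of Remark \ref{setup: preparation} and no further analysis is needed.
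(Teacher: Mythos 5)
Your proposal is correct and follows essentially the same route as the paper: the paper's local computation \eqref{eq: form:ind} is exactly your factorisation $\gamma_g = R_g\circ\ev_{\beta(g)}$ read in the canonical chart, and both arguments rest on the splitting $I_\lambda$ of Remark \ref{setup: preparation} together with the fact that the adapted local addition restricts to a chart of the source fibre. The only difference is the order of deduction (you establish the submersion property of $\ev_m$ first and then translate by $R_g$, whereas the paper treats $\gamma_g$ directly and obtains $\ev_m = \gamma_{1_m}$ as a corollary), which changes nothing of substance.
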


\begin{proof}
 From Proposition \ref{prop: LGP:Bis} a) we infer that $\gamma_g$ is smooth. 
 Hence, we only have to prove that $\gamma_g$ is locally a projection.  
 To see this fix $\tau \in \Bis (\cG)$ and a chart $\kappa \colon U_\kappa \rightarrow V_\kappa \subseteq E$ of the manifold $\alpha^{-1} (\alpha (g))$ such that $\gamma_g (\tau) \in U_\kappa$.
 Furthermore, choose a bundle trivialisation $\lambda$ of $\tau^* T^\alpha G$ and construct the vector space isomorphism \eqref{eq: split:iso} for $\lambda$.
 
 Now consider the canonical chart $(O_\tau , \varphi_\tau)$ of $\Bis (\cG)$.
 The set $I_\lambda \circ \varphi_\tau (O_\tau)$ is an open zero-neighbourhood in $\Gamma (\tau^* T^\alpha G)_{\beta (g)} \times T_{\tau (\beta (g))}^\alpha G$.
 Shrinking $O_\tau$, we can assume that $O_\tau \subseteq \gamma_g^{-1} (U_\kappa)$ and that there are open zero-neighbourhoods $U \subseteq \Gamma (\tau^* T^\alpha G)_{\beta (g)}$ and $W \subseteq T_{\tau (\beta (g))}^\alpha G$ such that $U\times W = I_\lambda \circ \varphi_\tau (O_\tau)$. 
 In conclusion, we obtain a commutative diagram 
  \begin{displaymath}
   \begin{xy}
  \xymatrix{
    \Bis (\cG) \ar[r]^-{\gamma_g} \supseteq O_\tau \ar[d]_{I_\lambda \circ \varphi_\tau}   &  \alpha^{-1} (\alpha (g)) \ar[r]^-{\kappa} & E  \ar@2{-}[d]\\
    U \times W  \ar[rr]^{\tilde{\gamma}_g}  & & E
  }
\end{xy}
  \end{displaymath}
 with $\tilde{\gamma}_g \coloneq \kappa \circ \gamma_g \circ (I_\lambda\circ \varphi_\tau)^{-1}|_{U \times W}$.
 Denote by $\A$ the adapted local addition of $\cG$ and consider the right translation $R_g \colon \alpha^{-1} (\beta (g)) \rightarrow \alpha^{-1} (\alpha (g)), h \mapsto hg$.
 Then \eqref{eq: nci:inv} and the definition of $\varphi_\tau^{-1}$ yield
 \begin{equation}\label{eq: form:ind} \begin{aligned}
  \tilde{\gamma}_g (X,y) &=  \kappa (\gamma_g (\A (X + (\rho \circ \pi^\alpha ) \cdot \lambda^{-1} (\pi^\alpha , \pr_2 \lambda (y)))) = \kappa (\A (\underbrace{X(\beta (g))}_{=0}  + \underbrace{(\rho (\beta (g)))}_{=1} \cdot y).g) \\
			 &= \kappa (\A (y).g) = \kappa \circ R_g \circ \A (y).
  \end{aligned}
 \end{equation}
 Note that  by \eqref{eq: form:ind} the map $\tilde{\gamma}_g$ does neither depend on the choice of the trivialisation $\lambda$ nor on the the cut-off function $\rho_\lambda$.
 
  By definition of the adapted local addition, $\A$ restricts to a diffeomorphism $W \rightarrow \A (W) \subseteq \alpha^{-1} (\beta (g))$.
 Moreover, $\kappa$ is a chart and the right translation $R_g$ is a diffeomorphism.
 Hence $\psi \coloneq \kappa \circ R_g \circ \A|_{W} \colon W \rightarrow E$ is a diffeomorphism onto its (open) image, mapping $W \subseteq T_{\tau (\beta (g))}^\alpha G$ to an open subset in $E$.
 Now \eqref{eq: form:ind} yields a commutative diagram 
 \begin{displaymath}
   \begin{xy}
  \xymatrix{
    \Bis (\cG) \ar[r]^-{\gamma_g} \supseteq O_\tau \ar[d]_{(\id_U , \psi) \circ I_\lambda \circ \varphi_\tau}  &  \alpha^{-1} (\alpha (g)) \ar[r]^-{\kappa} & E  \ar@2{-}[d]\\
    U \times \psi (W)  \ar[rr]^{\pr_2} & & E
  }
\end{xy}
  \end{displaymath}
from which we conclude that $\gamma_g$ is on ${O_\tau}$ a projection. 
As $\tau \in \Bis (\cG)$ was chosen arbitrarily, $\gamma_g$ is a submersion.
 \end{proof}

 The following corollary is now an immediate consequence of \eqref{eq: ev:act} and Proposition \ref{prop: act:subm}: 
 
\begin{corollary}\label{cor: evx:subm}
 Let $\cG = (G \toto M)$ be a Lie groupoid, then for each $m \in M$ the evaluation map 
  \begin{displaymath}
   \ev_m \colon \Bis (\cG) \rightarrow \alpha^{-1} (m) ,\quad \sigma \mapsto \sigma(m)
  \end{displaymath}
 is a submersion.
\end{corollary}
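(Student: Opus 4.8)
The plan is to recognise $\ev_m$ as a special instance of the restricted action $\gamma_g$ treated in Proposition \ref{prop: act:subm}, so that the submersion property transfers directly without any further analysis.

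First I would fix $m \in M$ and consider the unit arrow $1_m \coloneq 1(m) \in G$. Since the object inclusion satisfies $\alpha \circ 1 = \beta \circ 1 = \id_M$, this arrow has $\alpha(1_m) = \beta(1_m) = m$; in particular its source fibre $\alpha^{-1}(\alpha(1_m))$ coincides with $\alpha^{-1}(m)$, which is exactly the codomain of $\ev_m$. Next I would compute the restricted action at $g = 1_m$, obtaining $\gamma_{1_m}(\sigma) = \sigma(\beta(1_m)) \cdot 1_m = \sigma(m)\cdot 1_m$. Because $\sigma$ is a section of $\alpha$ we have $\alpha(\sigma(m)) = m = \beta(1_m)$, so this product is defined, and since $1_m$ is the right unit for arrows with source $m$ it equals $\sigma(m)$. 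Combined with the identity $\ev(\sigma,m) = \sigma(m) = \gamma(\sigma,1_m)$ from \eqref{eq: ev:act}, this yields the equality of maps $\ev_m = \gamma_{1_m}$.

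With this identification in hand the statement is immediate: Proposition \ref{prop: act:subm} applied to the arrow $g = 1_m$ asserts precisely that $\gamma_{1_m} \colon \Bis(\cG) \to \alpha^{-1}(m)$ is a submersion, and hence so is $\ev_m$. There is no genuine obstacle in this argument; the only point that requires (minor) care is checking that $1_m$ lies over $m$ on both the source and the target side, so that the codomains of $\ev_m$ and $\gamma_{1_m}$ agree and the product $\sigma(m)\cdot 1_m$ collapses to $\sigma(m)$. Everything else is supplied by the already-established smoothness from Proposition \ref{prop: LGP:Bis} and the local-projection normal form constructed in the proof of Proposition \ref{prop: act:subm}.
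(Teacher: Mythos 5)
Your proposal is correct and is exactly the paper's argument: the paper deduces the corollary as an immediate consequence of the identity $\ev(\sigma,m)=\sigma(m)=\gamma(\sigma,1_m)$ from \eqref{eq: ev:act} together with Proposition \ref{prop: act:subm} applied to $g=1_m$. You have merely spelled out the routine verification that $\ev_m=\gamma_{1_m}$, which the paper leaves implicit.
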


  Furthermore, we can establish the existence of bisections through certain arrows which coincide with the object inclusion outside of pre-chosen open sets.  

\begin{lemma}\label{lem: loc:bis}
  Let $\cG = (G \toto M)$ be a Lie groupoid and $m\in M$ fixed. 
  For any $m$-neighbourhood $U \opn M$ there exists a $1_m$-neighbourhood $W \opn \alpha^{-1} (m) \cap \beta^{-1} (U)$ such that for each $g \in W$ there is a bisection $\sigma_g \in \Bis (\cG)$ with $\sigma_g (m) = g$ and $\sigma_g (y) = 1_y$ for all $y \in M \setminus U$.
 \end{lemma}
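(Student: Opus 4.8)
The plan is to produce every $\sigma_g$ inside one canonical chart of $\Bis(\cG)$ centred at the unit bisection $1\colon M\to G$, building it from a section of $1^*T^\alpha G$ whose support lies in $U$. The key observation is that the adapted local addition $\A$ satisfies $\A(0_g)=g$; hence, writing a bisection near $1$ in the canonical chart as $\varphi_1^{-1}(X)=\A\circ X$ with $X\in\Gamma(1^*T^\alpha G)$ (cf.\ Theorem \ref{theorem: A}), one has $(\A\circ X)(y)=\A(0_{1_y})=1_y$ for every $y\notin\supp X$. Thus the locality condition $\sigma_g(y)=1_y$ on $M\setminus U$ is automatic once $X$ is supported in $U$, and only the value $\sigma_g(m)=g$ has to be installed by hand.

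To do so I would proceed exactly as in Remark \ref{setup: preparation}: choose a bundle trivialisation $\lambda$ of $1^*T^\alpha G$ over an open neighbourhood $M_\lambda\se U$ of $m$, normalised so that over $m$ it is the identity of the fibre $F\coloneq T^\alpha_{1_m}G$, together with a cut-off $\rho\colon M\to[0,1]$ with $\rho(m)=1$ and $\supp\rho\se M_\lambda$. The restriction of $\A$ to $F$ is a diffeomorphism from a zero-neighbourhood onto a $1_m$-neighbourhood in $\alpha^{-1}(m)$ (the fibrewise content of the adapted local addition that also underlies Corollary \ref{cor: evx:subm}), so for $g$ near $1_m$ we may set $v(g)\coloneq(\A|_F)^{-1}(g)\in F$ and
\begin{equation*}
 X_g(y)\coloneq\rho(y)\,\lambda^{-1}(y,v(g))\ (y\in M_\lambda),\qquad X_g(y)\coloneq 0_{1_y}\ (y\notin M_\lambda).
\end{equation*}
Then $X_g\in\Gamma(1^*T^\alpha G)$ with $\supp X_g\se U$ and $X_g(m)=v(g)$, so the bisection $\sigma_g\coloneq\A\circ X_g$ satisfies $\sigma_g(m)=\A(v(g))=g$ and $\sigma_g(y)=1_y$ for $y\notin U$.

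It remains to fix the domain and to confirm that $\sigma_g$ really is a bisection. The assignment $g\mapsto v(g)\mapsto X_g$ is continuous and carries $1_m$ to the zero section, so there is a $1_m$-neighbourhood $W_0\se\alpha^{-1}(m)$ with $X_g\in\varphi_1(O_1)$ for all $g\in W_0$; for such $g$ the element $\sigma_g=\varphi_1^{-1}(X_g)$ genuinely lies in $\Bis(\cG)$, and in particular $\beta\circ\sigma_g\in\Diff(M)$. Since $\sigma_g$ equals the object inclusion off $U$, this diffeomorphism fixes $M\setminus U$ pointwise and hence maps $U$ onto itself; evaluating at $m$ gives $\beta(g)=(\beta\circ\sigma_g)(m)\in U$. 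This both explains why the target must lie in $\beta^{-1}(U)$ and tells us to set $W\coloneq W_0\cap\beta^{-1}(U)$, an open $1_m$-neighbourhood in $\alpha^{-1}(m)\cap\beta^{-1}(U)$ with the asserted property.

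The one point that needs real care — the main obstacle — is the tension between meeting the exact boundary value $g$ at $m$ and keeping $\sigma_g$ a legitimate bisection: the cut-off $\rho$ resolves the locality for free, but increasing $v(g)$ moves $X_g$ and may push it out of the chart domain, so the smallness of $W$ is essential and rests on the continuity of $g\mapsto X_g$ together with the openness of $\Diff(M)$ in $C^\infty(M,M)$ guaranteeing that $\beta\circ\sigma_g$ stays a diffeomorphism. Everything else is the bookkeeping of the adapted local addition already carried out in the proof of Proposition \ref{prop: act:subm}.
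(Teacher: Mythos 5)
Your proof is correct, but it takes a genuinely different route from the paper's. The paper does not construct the section $X_g$ by hand: it first invokes the submersion property of $\ev_m$ (Corollary \ref{cor: evx:subm}) to obtain, for every $g$ in the open set $\ev_m(\varphi_1(P))$, \emph{some} bisection $s_g$ in the canonical chart with $s_g(m)=g$, and then localises $s_g$ by multiplying its chart representative by a scalar cut-off $\lambda\from M\to[0,1]$ with $\lambda(m)=1$ and $\lambda|_{M\setminus U}\equiv 0$; the prescribed value at $m$ survives because $\lambda(m)=1$, and the locality off $U$ is immediate. You instead build the chart representative from scratch: a bundle trivialisation of $1^*T^\alpha G$ near $m$, a cut-off, and the inverse of the fibrewise diffeomorphism $\A|_F$ yield a continuous assignment $g\mapsto X_g$ that is in effect an explicit local section of $\ev_m$, so you re-derive rather than cite the openness that the paper imports from Corollary \ref{cor: evx:subm}. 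Both arguments ultimately rest on the same two facts — the adapted local addition restricts to a diffeomorphism on each source fibre, and $\Bis(\cG)$ is open in the space of sections of $\alpha$ (so staying in a small chart neighbourhood keeps $\beta\circ\sigma_g$ a diffeomorphism) — and your observation that $\beta(g)\in U$ holds automatically for the $g$ you produce is a small bonus the paper does not record. The trade-off is that the paper's version is shorter because it delegates the analytic work to the already-proved submersion statement, whereas yours is more self-contained and makes the dependence on the trivialisation and on the fibrewise property of $\A$ explicit.
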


 \begin{proof}
  Choose a $C^\infty$-function $\lambda \colon M \rightarrow [0,1]$ such that $\lambda (m) = 1$ and $\lambda|_{M \setminus U} \equiv 0$.
  We denote by 
    \begin{displaymath}
     \varphi_1 \colon \Gamma (1^*T^\alpha G) \supseteq \Omega \rightarrow \Bis (\cG), X \mapsto \A \circ X
    \end{displaymath}
  the canonical manifold chart of $\Bis (\cG)$ (see Theorem \ref{theorem: A}).
  Computing with local representatives, it is easy to see that the map $m_\lambda \colon \Gamma (1^*T^\alpha G) \rightarrow \Gamma (1^*T^\alpha G) , X \mapsto (x \mapsto \lambda (x ) \cdot X(x))$ is continuous linear.
  Hence, there is an open zero-neighbourhood $P \opn \Omega \subseteq \Gamma (1^*T^\alpha G)$ with $m_\lambda (P) \subseteq \Omega$.
  
  Now define $W \coloneq \ev_m (\varphi_1 (P)) \cap \beta^{-1} (U)$. 
  Observe that $W$ is an open $1_m$-neighbourhood in $\alpha^{-1} (m) \cap \beta^{-1} (U) \opn \alpha^{-1} (m)$ since $P$ is open and $\ev_m$ is a submersion by Corollary \ref{cor: evx:subm}.
  Moreover, for $g \in W$ we have $g = s_g (m)$ for some $s_g \in \varphi_1 (P)$.
  Define $X_g \coloneq m_\lambda \circ \varphi_1^{-1} (s_g)$ for each $g \in \cW$. 
  By construction $X_g$ is contained in $\Omega$ since $m_\lambda$ takes the section $\varphi_1^{-1} (s_g) \in P$ to $\Omega$. 
  Hence $\sigma_g \coloneq \varphi_1 (X_g)$ makes sense and is a bisection of $\cG$.
  From $\lambda (m) =1$ we derive that $\sigma_g (m) = s_g (m) = g$.
  Moreover, for $x \in M \setminus U$ we obtain by definition of a local addition
    \begin{displaymath}
     \sigma_g (x) = \A (\underbrace{\lambda (x)}_{=0} \varphi_1^{-1} (s_g) (x)) = \A (0_{T^\alpha_{1_x} G}) =  1_x.
    \end{displaymath}
  Hence $\sigma_g (x) = 1_x$ for all $x \in M \setminus U$. 
 \end{proof}
 
 We can now prove a variant of \cite[Theorem 3.2]{MR2511542} for infinite-dimensional Lie groupoids over compact base.  
 The proof of loc.cit. carries over verbatim if one uses Lemma \ref{lem: loc:bis}, whence we omit it.
 
 \begin{proposition}\label{prop: nuff:bis}
  Let $\cG = (G \toto M)$ be a Lie groupoid and $g \in \alpha^{-1} (m)$ for $m\in M$. Suppose that $W \opn \alpha^{-1} (m)$ is connected and contains $g$ and $1_m$ and there is $U \subseteq M$ with $\beta (W) \subseteq U$. Then there exists $\sigma_g \in \Bis (\cG)$ with $\sigma_g (m) = g$ and $\sigma_g (x) = 1_x$ for all $x \in M \setminus U$.
 \end{proposition}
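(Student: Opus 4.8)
The plan is to run a connectedness argument on the source fibre $\alpha^{-1}(m)$, using Lemma~\ref{lem: loc:bis} as the local input and the group structure of $\Bis(\cG)$ to transport it. First I would fix notation: assume without loss of generality that $U$ is open (this is the case of interest, and openness is what the support bookkeeping below requires), and introduce the subgroup
\[
  \Bis_U(\cG) \coloneq \{\sigma \in \Bis(\cG) \mid \sigma(x) = 1_x \text{ for all } x \in M \setminus U\}.
\]
A short check shows this is a subgroup: if $\sigma,\tau$ are trivial off $U$, then $\beta\circ\sigma$ and $\beta\circ\tau$ fix $M\setminus U$ pointwise, hence so do $\beta\circ(\sigma\star\tau)$ and $(\beta\circ\sigma)^{-1}$, and formulas \eqref{eq: BISGP1}--\eqref{eq: BISGP2} then give $(\sigma\star\tau)(x)=1_x$ and $\sigma^{-1}(x)=1_x$ off $U$. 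Recall from \eqref{eq: ev:act} the smooth left action $\gamma(\sigma,g)=\sigma(\beta(g))\cdot g$ of $\Bis(\cG)$ on $\alpha^{-1}(m)$ (note $\alpha(\gamma(\sigma,g))=\alpha(g)$), and that $\gamma(\sigma,1_m)=\sigma(m)$. I would then set
\[
  A \coloneq \bigl(\Bis_U(\cG)\cdot 1_m\bigr)\cap W,
\]
the part of the $\Bis_U(\cG)$-orbit of $1_m$ lying in $W$; by construction $h\in A$ precisely when $h\in W$ is the value at $m$ of a bisection that is trivial off $U$, so the goal is exactly $g\in A$. Since $1_m\in A$ and $W$ is connected, it suffices to prove $A$ is open and closed in $W$.

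The heart of the matter, and the step I expect to be the main obstacle, is a homogeneity statement: for every $h\in\alpha^{-1}(m)$ with $\beta(h)\in U$, a whole neighbourhood of $h$ in $\alpha^{-1}(m)$ lies in the single orbit $\Bis_U(\cG)\cdot h$. To see this, observe that for $h'$ near $h$ the arrow $k\coloneq h'h^{-1}$ runs from $\beta(h)$ to $\beta(h')$, depends continuously on $h'$ (multiplication and inversion being smooth), satisfies $kh=h'$, and tends to $1_{\beta(h)}$ as $h'\to h$. Now I apply Lemma~\ref{lem: loc:bis} \emph{at the point $\beta(h)$}, choosing there an open neighbourhood $U_h\subseteq U$ of $\beta(h)$ (this uses $\beta(h)\in U$ and $U$ open): it yields a neighbourhood $N$ of $1_{\beta(h)}$ in $\alpha^{-1}(\beta(h))$ such that every $k\in N$ equals $\tau_k(\beta(h))$ for some $\tau_k\in\Bis_{U_h}(\cG)\subseteq\Bis_U(\cG)$. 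For $h'$ close enough, $k\in N$, whence $\gamma(\tau_k,h)=\tau_k(\beta(h))\cdot h=kh=h'$, so $h'\in\Bis_U(\cG)\cdot h$. The delicate point concealed here is that the local datum of Lemma~\ref{lem: loc:bis} must be invoked at the \emph{moving} base point $\beta(h)$ and its support kept inside $U$; this is exactly where the hypothesis $\beta(W)\subseteq U$ is used, ensuring the construction is available at every $h\in W$.

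Granting the homogeneity statement, both topological claims follow formally. For openness: if $h\in A$ then a neighbourhood of $h$ lies in $\Bis_U(\cG)\cdot h=\Bis_U(\cG)\cdot 1_m$, and intersecting with $W$ keeps it inside $A$. For closedness: if $h\in W\setminus A$, then $h$ lies in a $\Bis_U(\cG)$-orbit distinct from that of $1_m$; by homogeneity (available since $\beta(h)\in\beta(W)\subseteq U$) a neighbourhood of $h$ lies entirely in that other orbit, which is disjoint from $\Bis_U(\cG)\cdot 1_m$ and hence from $A$, so $W\setminus A$ is open. As $A$ contains $1_m$ (the value at $m$ of the unit bisection) and is open and closed in the connected set $W$, we conclude $A=W$; in particular $g\in A$, which produces the desired $\sigma_g$. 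This is the same mechanism as in \cite[Theorem 3.2]{MR2511542}, with Lemma~\ref{lem: loc:bis} replacing the finite-dimensional local existence of bisections.
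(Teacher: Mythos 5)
Your proof is correct and is essentially the argument the paper intends: the paper omits the proof and defers to \cite[Theorem 3.2]{MR2511542}, whose proof is exactly this clopen/connectedness argument on the source fibre, with Lemma \ref{lem: loc:bis} supplying the local transport at the moving base point $\beta(h)$ and right translation by $h^{-1}$ converting nearness of $h'$ to $h$ into nearness of $h'h^{-1}$ to the unit. Your observation that $U$ must be taken open is well spotted --- without it the statement fails (continuity of $\sigma_g$ would force $\sigma_g(m)=1_m$ whenever $m\notin\operatorname{int}U$), so this is the intended reading of the hypothesis rather than a genuine loss of generality.
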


 \begin{tabsection}
 Having dealt with the pointwise evaluation, we will now use the results
 obtained so far to also show that the \emph{joint} evaluation is a submersion.
\end{tabsection}

\begin{proposition}\label{cor:ev_on_section_space_is_a_submersion}
 Let $M$ be a compact manifold, $Q$ be a locally metrisable manifold and
 $s\from Q\to M$ be a submersion such that $Q$ admits a local addition that
 is adapted to $s$. Then the joint evaluation map
 \begin{equation}\label{eqn5}
  \ev\from \Gamma(M \xleftarrow{s} Q)\times M\to Q,\quad (\sigma,m)\mapsto \sigma(m)
 \end{equation}
 is a submersion. Here,
 $\Gamma(M \xleftarrow{s} Q) \coloneq \{ \sigma \in C^\infty (M,Q) \mid s \circ   \sigma = \id_M \}$
 is the space of sections of $s$, which is a submanifold of $C^{\infty}(M,Q)$
 by \cite[Proposition 3.6]{SchmedingWockel14}.
\end{proposition}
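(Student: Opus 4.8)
The plan is to fix an arbitrary point $(\sigma_0,m_0)$ in the domain, set $q_0\coloneq\sigma_0(m_0)$, and produce charts around $(\sigma_0,m_0)$ and around $q_0$ in which $\ev$ becomes a projection. The guiding observation is that $\ev$ is a morphism over $M$: since $\sigma$ is a section of $s$ we have $s(\ev(\sigma,m))=s(\sigma(m))=m$, so $s\circ\ev=\pr_M$. Consequently, once I choose a submersion chart $\Psi\from U_Q\to \Omega\times V$ for $s$ near $q_0$ (so that $s$ corresponds to the projection $\pr_V$, with $\Omega$ open in the fibre model $F\cong T^s_{q_0}Q$ and $V\se\R^d$ the image of a chart $\theta$ of $M$ near $m_0$), the $V$-component of $\Psi\circ\ev$ automatically records only the base point $m$. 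It therefore remains to understand the $\Omega$-component.

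On the domain side I would use the canonical chart $\varphi_{\sigma_0}$ of $\Gamma(M\xleftarrow{s}Q)$, whose inverse sends a section $X\in\Gamma(\sigma_0^*T^sQ)$ to $m\mapsto \A(X(m))$ (here $\A$ is the $s$-adapted local addition, so that $\A\circ X$ is again a section of $s$), together with the chart $\theta$ of $M$ at $m_0$. Using Remark \ref{setup: preparation} I split $\Gamma(\sigma_0^*T^sQ)\cong \Gamma(\sigma_0^*T^sQ)_{m_0}\times T^s_{q_0}Q$ via $I_\lambda$, writing $X\leftrightarrow(X_0,y)$ with $X_0(m_0)=0$ and $X(m)=X_0(m)+\rho(m)\,L_y(m)$, where $y\mapsto L_y(m)$ is the linear isomorphism onto $T^s_{\sigma_0(m)}Q$ coming from the trivialisation $\lambda$. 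In these coordinates $\ev$ reads $(X_0,y,m)\mapsto\big(g(X_0,y,m),\,\theta(m)\big)$ with $g(X_0,y,m)\coloneq\pr_\Omega\,\Psi\big(\A(X(m))\big)$; the second component is just the base coordinate, by the observation above. At $m=m_0$ one has $X(m_0)=y$ and $g(X_0,y,m_0)=\pr_\Omega\,\Psi(\A(y))$, which is a local diffeomorphism in $y$ because $\A$ restricts to a local addition on the fibre $s^{-1}(m_0)$ and $\pr_\Omega\circ\Psi$ is a chart of that fibre. This is the section-space analogue of the computation behind Corollary \ref{cor: evx:subm}.

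Finally I would straighten the $\Omega$-component by the coordinate change $\Phi(X_0,y,m)\coloneq(X_0,\,g(X_0,y,m),\,m)$ on the domain; in the new chart $\ev$ becomes the projection $(X_0,\tilde y,m)\mapsto(\tilde y,\theta(m))$, which exhibits $\ev$ as locally a projection at $(\sigma_0,m_0)$, and hence as a submersion since the point was arbitrary. The main obstacle is precisely to verify that $\Phi$ is a diffeomorphism near the base point: as the fibre model $F$ may be infinite-dimensional, the inverse function theorem is unavailable, so I would not appeal to invertibility of the derivative but instead write $\Phi^{-1}$ explicitly. For fixed $(X_0,m)$ near $(0,m_0)$ the map $y\mapsto g(X_0,y,m)$ is the composition of the affine isomorphism $y\mapsto X(m)=X_0(m)+\rho(m)L_y(m)$ (invertible since $\rho(m_0)=1$) with the fibrewise diffeomorphism $\pr_\Omega\circ\Psi\circ\A$; both depend smoothly on the parameters $(X_0,m)$, and the inverses $\A^{-1}$ and $\Psi^{-1}$ are globally smooth, so the resulting formula for $y$ in terms of $(X_0,\tilde y,m)$ is smooth. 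This yields a genuine smooth inverse of $\Phi$ and completes the argument, after the usual shrinking of the chart domains to keep all values inside $U_Q$ and inside the relevant trivialisation.
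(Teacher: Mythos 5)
Your argument is correct, but it takes a genuinely different route from the paper's. The paper does not straighten the fibre coordinate by a chart-level change of variables; instead it reduces the \emph{joint} evaluation to the \emph{pointwise} evaluation $\ev_{m_{0}}$: it takes a local smooth section $\beta$ of the submersion $\ev_{m_{0}}\from \Diff(M)\to M$ with $\beta(m)(m_{0})=m$, extends it to all of $M$ by a rather delicate cut-off argument, lifts each $\beta(m)$ through a local trivialisation of $s$ to a fibre-preserving diffeomorphism $\wt{\beta}(m)$ of $Q$, and uses the resulting diffeomorphism $B(\sigma,m)=((\wt{\beta}(m))^{-1}\circ\sigma\circ\beta(m),m)$ of the domain to exhibit $\ev$ as $\ev_{m_{0}}\times\id_{O}$ up to local diffeomorphisms; the pointwise map $\ev_{m_{0}}$ is then handled with the splitting of Remark \ref{setup: preparation}, exactly as you do. Your proof replaces this $\Diff(M)$ detour by the direct observation that $s\circ\ev=\pr_{M}$ trivialises the base component in a submersion chart, together with the explicit fibred coordinate change $\Phi(X_{0},y,m)=(X_{0},g(X_{0},y,m),m)$; writing down $\Phi^{-1}$ by hand (via $(\pi\times\A)^{-1}$, $\Psi^{-1}$ and the affine inverse) is indeed the right substitute for the unavailable inverse function theorem, and the joint smoothness of $(X_{0},m)\mapsto X_{0}(m)$ that you implicitly need is supplied by the exponential law of Theorem \ref{thm: MFDMAP}. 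Your route is shorter, is the natural generalisation of the proof of Proposition \ref{prop: act:subm}, and avoids both the global extension of $\beta$ and the prior knowledge that $\ev_{m_{0}}\from\Diff(M)\to M$ is a submersion; what the paper's factorisation buys in exchange is the explicit reduction to $\ev_{m_{0}}\times\id_{O}$ and the accompanying commutative diagram, which isolates the pointwise evaluation already established in Corollary \ref{cor: evx:subm} and reappears elsewhere in the text.
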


\begin{proof}
 Let $(\sigma_{0},m_{0})\in \Gamma(M \xleftarrow{s} Q)\times M$ be arbitrary,
 but fixed from now on.
 Set $q_{0}:=\sigma_{0}(m_{0})$. Then there exist open neighbourhoods $O\se M$
 of $m_{0}$ and $P\se Q$ of $q_{0}$ such that $\sigma_{0}(M)\se P$ and
 $\left.s\right|_{s^{-1}(O)}\cong \pr_{1}$. Indeed, there exist open
 neighbourhoods $P'$ of $q_{0}$ and $O'$ of $m_{0}$ such that
 $\left.s\right|_{P'}\cong \pr_{1}$ and $s(P')=O'$. After shrinking $O'$ if
 necessary, we may assume that $\sigma(O')\se P'$. Then we choose $O$ to be an
 open neighbourhood $O$ with $\ol{O}\se O'$ and set
 $P:= s^{-1}(M\setminus \ol{O})\cup P'$.
 
 From this it follows that $C^{\infty}(M,P)\cap \Gamma(M \xleftarrow{s} Q)$ is
 an open neighbourhood of $M$. Observe that $\left.s\right|_{P}\from P\to M$ is
 also a submersion and that the adapted local addition on $Q$ restricts to an
 adapted local addition on $P$. Consequently, the manifold structure on
 \begin{equation*}
  C^{\infty}(M,P)\cap \Gamma(M \xleftarrow{s} Q)=\Gamma(M\xleftarrow{\left.s\right|_{P}}P)
 \end{equation*}
 that is induced from $ \Gamma(M \xleftarrow{s} Q)$ on the left hand side
 coincides with the manifold structure on the right hand side that is induced
 from applying \cite[Proposition 3.6]{SchmedingWockel14} to the submersion
 $\left.s\right|_{P}$. Thus it suffices to consider the case where $Q=P$ and
 $s=\left.s\right|_{P}$.
 
 We now reduce the claimed submersion property of \eqref{eqn5} to the case of
 the evaluation in $m_{0}$. 
 Since the evaluation map
 $\ev_{m_{0}}\from \Diff(M) \to M$ is also a submersion, there exists a local
 smooth section of it, i.e., an open neighbourhood $U_{0}\se M$ of $m_{0}$ and
 a smooth map $\beta\from U_0\to \Diff(M)$ such that $\beta(m_{0})=\id_{M}$ and
 $\beta(m)(m_{0})=m$ for all $m\in U_{0}$. Moreover, we may assume that
 $U_{0}\se O$ and $\beta$ takes values in the identity
 neighbourhood $\varphi_{\id} (\Omega)$ where $\varphi_{\id}$ is the chart of $\Diff (M)$ from Theorem \ref{thm: MFDMAP}
 \ref{thm:manifold_structure_on_smooth_mapping_a}.
 
 Consider now $\varphi_{\id} \circ \beta \colon U_0 \rightarrow \Gamma (TM)$. 
 We choose a and a compact $m_0$-neighbourhood $K$ and a neighbourhood $\Omega \opn TO$ of the zero-section which is mapped by the local addition on $M$ (which was used to define $\varphi_{\id})$ to $O$.
 For later use, we shrink $\Omega$ to achieve that $\Omega \cap T_x M$ is convex for each $x \in O$.
 Then the open set $\lfloor K , \Omega\rfloor \coloneq \{X \in \Gamma (TM) \mid X (K) \subseteq \Omega\} \opn \Gamma (TM)$ is a zero-neighbourhood.
 Further, $\varphi_{\id}^{-1}$ maps $\lfloor K, \Omega\rfloor \cap (\varphi_{\id}^{-1})^{-1} (\Diff (M))$ to the set of diffeomorphisms which map $K$ into $O$.
 Shrinking $U_0$ we can achieve that $\varphi_{\id} \circ \beta$ takes its image in $\lfloor K , \Omega\rfloor$, i.e.\ the diffeomorphisms in the image of $\beta$ map $K$ into $O$. 
 
 Apply now the exponential law \cite[Corollary 7.5]{Wockel13Infinite-dimensional-and-higher-structures-in-differential-geometry} (cf.\ Theorem \ref{thm:   MFDMAP} \ref{thm:manifold_structure_on_smooth_mapping_d}) to obtain a smooth map 
 \begin{displaymath}
  \gamma \coloneq (\varphi_{\id} \circ \beta)^\vee \colon U_0 \times M \rightarrow TM, (u,m) \mapsto \varphi_{\id} \circ \beta (u) (m) \text{ with }\gamma (\cdot , m) \in T_m M \quad \forall m \in M.
 \end{displaymath}
 Note that $\gamma (m_0, \cdot)$ coincides with the zero-section as $\beta (m_0) = \id_M$.
 Choose a smooth cutoff function $\rho \colon M \rightarrow [0,1]$ which maps a $m_0$-neighbourhood $N \subseteq K$ to $1$ and vanishes near the boundary $\partial K$. 
 Multiplying fibre-wise we obtain a smooth map $\tilde{\gamma} \colon U_0 \times M \rightarrow TM, (u,m) \mapsto \rho(m) \cdot \gamma(u,m)$ which vanishes near the boundary of $K$.
 Apply the exponential law in reverse to obtain a smooth map $\tilde{\gamma}^\wedge \colon U_0 \rightarrow \Gamma (TM)$.
 
 Now recall that $(\varphi_{\id}^{-1})^{-1} (\Diff (M))$ is an open set in the compact open $C^1$-topology on $\Gamma (TM)$ (see \cite[4.3]{michor1980} and \cite[proof of Theorem 43.1]{conv1997}).
 Hence we can choose suitable convex zero-neighbourhoods which control only the values of $X \in \Gamma(TM)$ and $TX$ on suitable compact sets, such that their intersection is contained in $(\varphi_{\id}^{-1})^{-1} (\Diff (M))$. 
 Since $\gamma (m_0,\cdot)$ is the zero-section, we can thus shrink $U_0$ to achieve that $\tilde{\gamma}^\wedge$ is still contained in $\lfloor K , \Omega \rfloor \cap (\varphi_{\id}^{-1})^{-1} (\Diff (M))$.
 As $\tilde{\gamma}^\wedge$ vanishes near the boundary of $K$ (and outside of $K$), we can replace $\beta$ with a map which satisfies $\beta (m) (O) \subseteq O$ for all $m \in U_0$.
  
 Choose another smooth cutoff function $\delta$ which vanishes near the boundary $\partial U_0$ and takes an open $m_0$-neighbourhood $U \subseteq N$ to $1$. 
 Multiplying (in local charts) $\varphi_{\id} \circ \beta$ with $\delta$, we can assume that $\beta (m) = \id_M$ for all $m$ near the boundary $\partial U_0$.
 Thus we can extend $\beta$ to a smooth function $\beta\from M\to \Diff(M)$ by setting
 $\beta(m)=\id_{M}$ if $m\notin U_{0}$.
 By construction this map satisfies $\beta (m) (O) \subseteq O$ for all $m \in M$.
 Moreover, since $\rho$ and $\delta$ take the $m_0$-neighbourhood $U \cap N$ to $1$, $\beta$ satisfies $\beta(m)(m_{0})=m$ for all $m\in U \cap N$. 
  
 Choose a
 diffeomorphism $\xi \from s^{-1}(O)\to O \times W$ that makes
 \begin{equation*}
  \xymatrix@=1em{
  s^{-1}(O)\ar[dr]_{s} \ar[rr]^{\xi} && O\times W\ar[dl]^{\pr_{1}}\\
  & O &
  }
 \end{equation*}
 commute, we may lift each $\beta(m)$ to a diffeomorphism
 \begin{equation*}
  \wt{\beta}_{m}\from Q\to Q,\quad q\mapsto \begin{cases}
  \xi^{-1}(\beta(m)(o),w) & \text{ if }q=\xi^{-1}(o,w)\in s^{-1}(O)\\
  q &\text{ if }q\notin s^{-1}(O)
  \end{cases}.
 \end{equation*}
 From this explicit construction it follows in particular, that the map
 \begin{equation*}
  Q\times M\to Q,\quad (q,m)\mapsto \wt{\beta}(m)(q)
 \end{equation*}
 is smooth. This then gives rise to the diffeomorphism
 \begin{equation*}
  B\from  \Gamma(M \xleftarrow{s} Q)\times M\to  \Gamma(M \xleftarrow{s} Q)\times M,\quad (\sigma,m)\mapsto ((\wt{\beta}(m))^{-1}\circ \sigma \circ \beta(m),m).
 \end{equation*}
 with inverse
 $(\sigma,m)\mapsto (\wt{\beta}(m)\circ \sigma \circ (\beta(m))^{-1},m)$.
 Moreover, $\beta(m_{0})=\id_{M}$ implies
 $B(\sigma _{0},m_{0})=(\sigma _{0},m_{0})$.
 
 For the next step, let $\Sigma\from TQ \supseteq \Omega \to Q$ be an $s$-adapted
 local addition with corresponding open $\sigma_0$-neighbourhood $O_{\sigma_{0}}\se C^{\infty}(M,Q)$ and let
 $\varphi_{\sigma _{0}}\from O_{\sigma _{0}}\to \Gamma( \sigma _{0}^{*}(TQ))$
 be the chart from Theorem \ref{thm: MFDMAP}
 \ref{thm:manifold_structure_on_smooth_mapping_a}. Recall from the proof of
 \cite[Proposition 3.6]{SchmedingWockel14} that $\varphi_{\sigma_{0}}$ also is
 a submanifold chart for $\Gamma(M\xleftarrow {s} Q)$ that maps the open
 neighbourhood $ O_{\sigma_{0}}\cap \Gamma(M \xleftarrow{s} Q)$ onto an open
 neighbourhood of the zero section in the closed subspace
 $\{ \tau\in \Gamma(\sigma_{0}^{*}(TQ))\mid \tau(m)\in T_{\sigma_{0}(m)}s^{-1}(m)\text{ for all }m\in M \}$
 of $\Gamma(\sigma_{0}^{*}(TQ))$.

 Denote
 $ R_{0}:= \varphi _ { \sigma _{0}}(O_{\sigma_{0}}\cap \Gamma(M \xleftarrow{s} Q))$.
 Then we define the diffeomorphism
 \begin{equation*}
  \Xi \from   B^{-1}((O_{\sigma_{0}}\cap \Gamma(M \xleftarrow{s} Q))\times O)\to R_{0}\times O,\quad (f,m)\mapsto (\varphi_{\sigma _{0}}( (\wt{\beta}(m))^{-1}\circ f \circ \beta(m)),m).
 \end{equation*}
 By shrinking $B^{-1}(O_{\sigma _{0}}\times O)$ if necessary, we may assume
 that $\ev$ maps the open neighbourhood $(B^{-1}(O_{\sigma _{0}}\times O))$ of
 $(\sigma _{0},m_{0})$ into the open neighbourhood $s^{-1}(O)$ of $q_{0}$.
 Moreover, the following diagram commutes by the definitions of
 $\varphi_{\sigma _{0}}$ and of $\wt{\beta}(m)$
 \begin{equation*}
  \xymatrix{
  B^{-1}(O_{\sigma _{0}}\times O)\ar[d]_{\Xi} \ar[r]^-{\ev} & s^{-1}(O) \ar[r]^{\xi}& W\times O %
  \\
  R_{0}\times O\ar[rr]^-{\ev_{m_{0}}\times \id_{O}} &	& T_{q_{0}}s^{-1}(m_{0})\times O \ar[u]_{\Sigma \times \id_{O}}.
  }
 \end{equation*}
 Since $W$ is diffeomorphic to $s^{-1}(m_{0})$ and $\Sigma$ restricts to
 a local diffeomorphism of a zero-neighbourhood in $T_{q_{0}}s^{-1}(m_{0})$
 onto an open neighbourhood of $q_{0}$ in $s^{-1}(m_{0})$, it follows that the
 arrow on the right is a local diffeomorphism. Hence each morphism except $\ev$
 and $\ev_{m_{0}}\times \id_{O}$ in this diagram is a local diffeomorphism. It
 will thus follow from showing that $\ev_{m_{0}}$ is a submersion and \cite[Lemma 1.6]{hg2015} that $\ev$ is
 one.
 
 The map $\ev_{m_0}$ is defined on an open zero neighbourhood of the space
 \begin{equation*}
  E_{\sigma_0}\coloneq \{ \tau\in \Gamma(\sigma_{0}^{*}(TQ))\mid \tau(m)\in T_{\sigma_{0}(m)}s^{-1}(m)\text{ for all }m\in M \}
 \end{equation*}
 and takes values in $T_{q_{0}}s^{-1}(m_{0})$. 
 
 By using the diffeomorphism
 $\xi\from s^{-1}(O)\to W\times O$ we have the following identifications
 \begin{equation*}
  T_{q_{0}}s^{-1}(m_{0})\cong T_{(w_{0},m_{0})} \xi (s^{-1} (m_0))\cong F
 \end{equation*}
 where $F$ is the modelling space of $W$ and $(w_{0},m_{0}):=\xi(q_{0})$. Using
 these and a cutoff function, one can build as in Remark \ref{setup: preparation} a
 continuous inverse to $\ev_{m_0}$ that takes $T_{q_{0}}s^{-1}(m_{0})$ into $E_{\sigma_0}$.
 Thus $\ev_{m_{0}}$ is a submersion, finishing the proof.
\end{proof}

\begin{corollary}\label{cor:ev_on_mapping_space_is_a_submersion}
 Let $M$ be a compact manifold and $N$ be a locally metrisable manifold that
 admits a local addition. Then the joint evaluation map
 \begin{equation}\label{eqn6}
  \ev\from C^{\infty}(M,N)\times M\to N,\quad (f,m)\mapsto f(m)
 \end{equation}
 is a submersion.
\end{corollary}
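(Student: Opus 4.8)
The idea is to present the mapping space as a section space of a trivial bundle and then quote Proposition~\ref{cor:ev_on_section_space_is_a_submersion}. First I would set $Q \coloneq M \times N$ and take $s \coloneq \pr_1 \colon Q \to M$, the projection onto the first factor. Being the product of the compact finite-dimensional manifold $M$ with the locally metrisable manifold $N$, the manifold $Q$ is again locally metrisable, and $s$ is manifestly a surjective submersion. It thus remains to produce an $s$-adapted local addition on $Q$, to identify $\Gamma(M \xleftarrow{s} Q)$ with $C^\infty(M,N)$, and to match up the two evaluation maps.

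For the local addition, I would fix any local addition $\Sigma_M$ on $M$ (which exists since $M$ is a finite-dimensional manifold) and use the given local addition $\Sigma_N$ on $N$; their product $\Sigma_M \times \Sigma_N$ is a local addition on $TQ = TM \times TN$. Since the vertical subbundle satisfies $T^s_{(m,n)}Q = \{0_m\} \times T_n N$ and $\Sigma_M(0_m) = m$, the product restricts on each fibre $s^{-1}(m) = \{m\} \times N$ to (a copy of) $\Sigma_N$. Hence it is adapted to $s$, and all hypotheses of Proposition~\ref{cor:ev_on_section_space_is_a_submersion} are satisfied.

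The identification is given by the graph map: every section of $s$ has the form $m \mapsto (m,f(m))$ for a unique $f \in C^\infty(M,N)$, so $G \colon f \mapsto \sigma_f \coloneq (\id_M , f)$ is a bijection $C^\infty(M,N) \to \Gamma(M \xleftarrow{s} Q)$. Invoking the standard splitting $C^\infty(M, M\times N) \cong C^\infty(M,M) \times C^\infty(M,N)$ of mapping-space manifolds (Theorem~\ref{thm: MFDMAP}) together with the submanifold chart for the section space from \cite[Proposition~3.6]{SchmedingWockel14}, one sees that $\Gamma(M \xleftarrow{s} Q)$ corresponds to $\{\id_M\} \times C^\infty(M,N)$; thus $G$ is in fact a diffeomorphism.

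It remains to factor the evaluation. For $(f,m) \in C^\infty(M,N) \times M$ one computes
\begin{equation*}
 \ev(f,m) = f(m) = \pr_2\bigl(m,f(m)\bigr) = \pr_2\bigl(\sigma_f(m)\bigr) = \bigl(\pr_2 \circ \ev_Q \circ (G \times \id_M)\bigr)(f,m),
\end{equation*}
where $\ev_Q \colon \Gamma(M \xleftarrow{s} Q) \times M \to Q$ is the section-space evaluation. Here $G \times \id_M$ is a diffeomorphism, $\ev_Q$ is a submersion by Proposition~\ref{cor:ev_on_section_space_is_a_submersion}, and $\pr_2 \colon M \times N \to N$ is a submersion as a projection. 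Since submersions (being locally projections) are stable under composition and under precomposition with diffeomorphisms, $\ev$ is a submersion. The only points demanding genuine care are the verification that the product local addition is $s$-adapted and the compatibility of smooth structures under $G$; the concluding composition argument is the routine part.
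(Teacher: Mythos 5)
Your proof is correct and takes essentially the same route as the paper's: identify $C^{\infty}(M,N)$ with the section space of a trivial bundle over $M$ (the paper uses $N\times M\xrightarrow{\pr_{2}}M$ rather than $M\times N\xrightarrow{\pr_{1}}M$, an immaterial difference), apply Proposition~\ref{cor:ev_on_section_space_is_a_submersion}, and compose with the projection onto $N$. You merely spell out the adaptedness of the product local addition and the compatibility of smooth structures under the graph map, details the paper compresses into the phrase ``natural identification''.
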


\begin{proof}
 We have the natural identification
 $C^{\infty}(M,N)\cong \Gamma (M\xleftarrow{\pr_{2}} N\times M)$. Thus
 Proposition \ref{cor:ev_on_section_space_is_a_submersion} shows that
 \begin{displaymath}%
  C^{\infty}(M,N)\times M\to N\times M,\quad (f,m)\mapsto (f(m),m)
 \end{displaymath}
 is a submersion. Now \eqref{eqn6} is a submersion, since it is the
 composition to two submersions.
\end{proof}

\begin{corollary}\label{cor:ev_subm}
 Let $\cG = (G \toto M)$ be a Lie groupoid. Then the joint evaluation
 map
 \begin{displaymath}
  \ev \colon \Bis (\cG)\times M \rightarrow G ,\quad (\sigma,m) \mapsto
  \sigma(m)
 \end{displaymath}
 is a submersion.
\end{corollary}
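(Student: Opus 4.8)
The plan is to exhibit the joint evaluation on $\Bis(\cG)\times M$ as the restriction of the section-space evaluation from Proposition \ref{cor:ev_on_section_space_is_a_submersion}, which already carries out the analytically difficult part. Recall the standing assumptions of this section: $M$ is compact, $\cG$ is locally metrisable and admits an $\alpha$-adapted local addition. Accordingly, I would set $Q\coloneq G$ and take $s\coloneq \alpha\from G\to M$, which is a submersion by the definition of a Lie groupoid, and observe that an $\alpha$-adapted local addition on $G$ is precisely an $s$-adapted local addition on $Q$. Proposition \ref{cor:ev_on_section_space_is_a_submersion} then applies verbatim and yields that
\begin{displaymath}
 \ev\from \Gamma(M \xleftarrow{\alpha} G)\times M\to G,\quad (\sigma,m)\mapsto\sigma(m)
\end{displaymath}
is a submersion.

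It then remains to compare $\Bis(\cG)$ with the full section space $\Gamma(M \xleftarrow{\alpha} G)$. By definition $\Bis(\cG)=\{\sigma\in \Gamma(M \xleftarrow{\alpha} G)\mid \beta\circ\sigma\in\Diff(M)\}$, so I would identify it as the preimage of $\Diff(M)$ under the postcomposition map $\Gamma(M \xleftarrow{\alpha} G)\to C^{\infty}(M,M)$, $\sigma\mapsto\beta\circ\sigma$, which is continuous (indeed smooth, being the restriction of $\beta_{*}$). Since $M$ is compact, $\Diff(M)$ is open in $C^{\infty}(M,M)$ (the same fact already used in the proof of Proposition \ref{cor:ev_on_section_space_is_a_submersion}), so this preimage is open; hence $\Bis(\cG)$ is an open subset of $\Gamma(M \xleftarrow{\alpha} G)$. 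One should also record that the resulting open-submanifold structure coincides with the Lie group structure of Theorem \ref{theorem: A}, since both arise as submanifold structures induced from $C^{\infty}(M,G)$.

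Finally, $\Bis(\cG)\times M$ is then an open subset of $\Gamma(M \xleftarrow{\alpha} G)\times M$, and the restriction of a submersion to an open subset of its domain is again a submersion, as being locally a projection is a local property. Applying this to the submersion displayed above gives the claim. The only genuinely nontrivial input is Proposition \ref{cor:ev_on_section_space_is_a_submersion} itself; the remaining ingredients—openness of $\Diff(M)$ in $C^{\infty}(M,M)$ for compact $M$ and the stability of the submersion property under restriction to open domains—are standard. I therefore expect the main (and essentially only) obstacle to be the bookkeeping needed to match the hypotheses of that proposition, which is immediate once $Q=G$ and $s=\alpha$ are chosen.
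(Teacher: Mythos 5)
Your proposal is correct and is essentially the paper's own argument: the paper likewise deduces the corollary from Proposition \ref{cor:ev_on_section_space_is_a_submersion} applied to $s=\alpha\from G\to M$, using that $\Bis(\cG)$ is an open submanifold of $\Gamma(M\xleftarrow{\alpha}G)$. You have merely spelled out the openness and restriction steps that the paper leaves implicit.
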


\begin{proof}
 This is implied by Proposition \ref{cor:ev_on_section_space_is_a_submersion}
 since $\Bis (\cG)$ is an open submanifold of $\Gamma(M\xleftarrow{\alpha}G)$.
\end{proof}

\begin{corollary}
   Let $\cG= (G \toto M)$ be a Lie groupoid. 
   Then the \emph{division morphism} 
	\begin{displaymath}
	 \delta \colon \Bis (\cG) \times \Bis (\cG) \times M \rightarrow G, (\sigma , \tau , m) \mapsto (\sigma \star \tau^{-1}) (m)
	\end{displaymath}
       and for $m \in M$ the restricted division $\delta_m \colon \Bis (\cG ) \times \Bis (\cG) \rightarrow \alpha^{-1} (m) , \delta_m (\sigma ,\tau) \coloneq \delta (\sigma, \tau, m)$  are submersions.
  \end{corollary}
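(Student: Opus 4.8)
The plan is to factor both the joint division $\delta$ and the restricted division $\delta_m$ through the evaluation maps, whose submersion properties have already been established, and to exploit that division on a Lie group is itself a submersion. Write $\operatorname{div}\from \Bis(\cG)\times \Bis(\cG)\to \Bis(\cG)$, $(\sigma,\tau)\mapsto \sigma\star\tau^{-1}$, for the division map of the Lie group $\Bis(\cG)$ (which is smooth since $\Bis(\cG)$ is a Lie group by Theorem \ref{theorem: A}). With this notation one has the factorisations
\[
 \delta = \ev\circ(\operatorname{div}\times\id_{M})\qquad\text{and}\qquad \delta_{m}=\ev_{m}\circ\operatorname{div},
\]
because $\ev(\sigma\star\tau^{-1},m)=(\sigma\star\tau^{-1})(m)=\delta(\sigma,\tau,m)$ and likewise $\ev_{m}(\sigma\star\tau^{-1})=(\sigma\star\tau^{-1})(m)=\delta_{m}(\sigma,\tau)$.

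First I would verify that $\operatorname{div}$ is a submersion. The smooth map $\Phi\from \Bis(\cG)\times\Bis(\cG)\to \Bis(\cG)\times\Bis(\cG)$, $(\sigma,\tau)\mapsto(\sigma\star\tau^{-1},\tau)$, is a diffeomorphism with smooth inverse $(\rho,\tau)\mapsto(\rho\star\tau,\tau)$, both maps being smooth because the multiplication and inversion of $\Bis(\cG)$ are smooth. Since $\operatorname{div}=\pr_{1}\circ\Phi$ is the composition of this diffeomorphism with the projection $\pr_{1}$, and projections are submersions, it follows that $\operatorname{div}$ is a submersion. Consequently $\operatorname{div}\times\id_{M}$ is a submersion as well, being the product of the submersion $\operatorname{div}$ with the identity.

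I would then assemble the two claims by composing submersions. For $\delta_{m}$, the factorisation $\delta_{m}=\ev_{m}\circ\operatorname{div}$ exhibits it as the composite of the submersion $\operatorname{div}$ with the submersion $\ev_{m}\from \Bis(\cG)\to \alpha^{-1}(m)$ from Corollary \ref{cor: evx:subm}; since a composition of submersions is again a submersion (cf.\ \cite[Lemma 1.6]{hg2015}), $\delta_{m}$ is a submersion. For $\delta$, the factorisation $\delta=\ev\circ(\operatorname{div}\times\id_{M})$ exhibits it as the composite of the submersion $\operatorname{div}\times\id_{M}$ with the joint evaluation $\ev$, which is a submersion by Corollary \ref{cor:ev_subm}; again by \cite[Lemma 1.6]{hg2015} the composite $\delta$ is a submersion. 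The only genuinely non-formal ingredient is the permanence of the submersion property under composition (and under products) in the locally convex setting, which is precisely what \cite[Lemma 1.6]{hg2015} supplies; everything else reduces to the group structure on $\Bis(\cG)$ and the already-proven submersion property of $\ev$ and $\ev_{m}$, so I do not anticipate any serious obstacle.
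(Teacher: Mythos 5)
Your proposal is correct and follows essentially the same route as the paper: both factor $\delta$ and $\delta_m$ through the evaluation maps via the division map of the Lie group $\Bis(\cG)$, and then compose submersions, citing Corollary \ref{cor: evx:subm} and Corollary \ref{cor:ev_subm}. The only difference is that you spell out explicitly (via the shear diffeomorphism $\Phi$) why the group division is a submersion, a step the paper simply asserts from the Lie group structure.
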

  
  \begin{proof}
   Note that we can write $\delta (\sigma, \tau , m) = \ev (\sigma \star \tau^{-1} , m)$ and $\delta_m (\sigma ,\tau) = \ev_m (\sigma \star \tau^{-1})$. 
   Since $\ev$ and $\ev_m$ are submersions by Corollary \ref{cor: evx:subm} and Corollary \ref{cor:ev_subm}, it suffices to prove that the map 
    \begin{displaymath}
     f \colon \Bis (\cG) \times \Bis (\cG) \rightarrow \Bis (\cG) , (\sigma, \tau) \mapsto \sigma \star \tau^{-1} 
    \end{displaymath}
   is a submersion. 
    However, as $(\Bis (\cG), \star)$ is a Lie group the map $f$ is a submersion 
  \end{proof}

 We have now established that the evaluation map from the bisections onto the manifold of arrows is a submersion. 
 However, to completely determine the manifold of arrows, we need $\ev$ to be surjective.
 Note that this means that there is a (global) bisection through each point in $G$. 
 Consider first an easy example 
 
\begin{example}\label{ex:
 ev:actgpd} Let $H$ be a Lie group modelled on a metrisable space which acts on
 the compact manifold $M$, i.e.\ the associated action groupoid $H\ltimes M$
 admits an adapted local addition by Remark \ref{setup: bis:act}. Then the evaluation
 $\ev \colon \Bis (H\ltimes M ) \times M \rightarrow H \times M$,
 $(\sigma ,m) \mapsto (\sigma (m),m)$ is a surjective submersion. We already
 know from Corollary \ref{cor:ev_subm} that $\ev$ is a submersion and thus have
 to establish only surjectivity. For each pair $(h,m) \in H \times M$ we can
 define the constant bisection
 $c_{h} \colon M \rightarrow H \times M,  n \mapsto (h,n)$ which is contained
 in $\Bis (H\ltimes M)$. Hence $\ev (c_h,m)=(h,m)$ and thus $\ev$ is
 surjective. In particular, for each arrow $g \in H \times M$ in the action
 groupoid $\cG$ there is a global bisection $\sigma_g$ with
 $\sigma_g (\alpha (g))= g$.
 
 The structure of $\Bis(H\ltimes M)$ is interesting in its own. Since the
 second component of a bisection $\sigma\from M\to H\times M$ has to be the
 identity, $\Bis(H\ltimes M)$ can be identified with the subset
 \begin{equation*}
  \{\gamma\in C^{\infty}(M,H)\mid m\mapsto \gamma(m).m\text{ is a diffeomorphism of }M\}
 \end{equation*}
 of $C^{\infty}(M,H)$. Since $C^{\infty}(M,H)\to C^{\infty}(M,M)$,
 $\gamma\mapsto (m\mapsto \gamma(m).m)$ is smooth and
 $\Diff(M)\se C^{\infty}(M,M)$ is open, it follows that $\Bis(M\ltimes H)$ is
 an open submanifold of $C^{\infty}(M,H)$ that contains the constant maps.
 However, the group structure on the functions from $M$ to $H$ is not given by
 the pointwise multiplication, but by
 \begin{equation*}%
  (\gamma\star \eta)(m):= \gamma(\eta(m).m)\cdot \eta(m).
 \end{equation*}
 One effect of this is that $\gamma\mapsto (m\mapsto \gamma(m).m)$ is a
 homomorphism $\Bis(H\ltimes M)\to \Diff(M)$.
\end{example}

In general there will not be a bisection through each arrow of a given groupoid (see Remark \ref{rem: ev:sur} b) below).
Nevertheless, for source connected finite-dimensional Lie groupoids it is known (see \cite{MR2511542}) that bisections through each arrow exist.
We will now generalise this result to our infinite-dimensional setting.  

\begin{definition}
 Let $\cG = (G \toto M)$ be a Lie groupoid. 
 \begin{enumerate}
  \item Denote for $m \in M$ by $C_m$ the connected component of $1_m$ in $\alpha^{-1} (m)$. 
  Then we define the subset $C(\cG) \coloneq \bigcup_{m \in M} C_m$. 
  By \cite[Proposition 1.5.1]{Mackenzie05General-theory-of-Lie-groupoids-and-Lie-algebroids} we obtain a wide subgroupoid $C(\cG) \toto M$ of $\cG$, called the \emph{identity-component subgroupoid} of $\cG$.\footnote{Note that at this stage we do not know that $C(\cG)$ is a Lie subgroupoid. In particular, the proof for finite dimensional Lie groupoids (see  \cite[Proposition 1.5.1]{Mackenzie05General-theory-of-Lie-groupoids-and-Lie-algebroids}) does not carry over to our setting. Compare however Theorem \ref{thm: bis:points} b).}
  \item The groupoid $\cG$ is called $\alpha$- or \emph{source connected} if for each $m \in M$ the fibre $\alpha^{-1} (m)$ is connected.  
 \end{enumerate}
Observe that for an $\alpha$-connected groupoid $\cG$ we have $C(\cG) = \cG$.
\end{definition}

 Note that Proposition \ref{prop: nuff:bis} implies that for $\alpha$-connected Lie groupoids there is for every arrow a bisection whose image contains the given arrow.
 However, we give an alternative proof in the following theorem, which also yields more information:

\begin{theorem}
 \label{thm: bis:points} Let $\cG = (G \toto M)$ be a locally metrisable Lie
 groupoid with compact $M$ that admits an adapted local addition.
 \begin{enumerate}
  \item The image of the evaluation $\ev$ is an open and wide Lie subgroupoid
        which contains the identity subgroupoid $C(\cG)$.
  \item The identity subgroupoid $C(\cG)$ coincides with
        $\ev(\Bis(\cG)_{0}\times M)$, where $\Bis(\cG)_{0}$ is the identity
        component of $\Bis (\cG)$. Hence, $C(\cG)$ is an open Lie subgroupoid of
        $\cG$.
 \end{enumerate}
 Assume in addition that $\cG$ is $\alpha$-connected, then
 \begin{enumerate}
  \item[c)] For each $g \in G$ there is a bisection
        $\sigma_g \in \Bis (\cG)_{0}$ with $\sigma_g (\alpha (g)) = g$. In
        particular, $\ev$ is surjective.
 \end{enumerate}
\end{theorem}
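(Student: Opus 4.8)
\emph{Proof plan.} The plan is to treat the two sets $\im(\ev)$ and $\ev(\Bis(\cG)_{0}\times M)$ in parallel, and to deduce c) as the special case of b) in which $\alpha$-connectedness forces $C_m=\alpha^{-1}(m)$ and hence $C(\cG)=\cG$. First I would record the purely algebraic fact that both sets are \emph{wide subgroupoids}. Widness is immediate since $1_m=\ev(1,m)$. Closure under the groupoid operations is a direct translation of the group law \eqref{eq: BISGP1} and the inversion \eqref{eq: BISGP2}: if $g=\sigma(p)$ and $h=\tau(q)$ are composable, then $p=(\beta\circ\tau)(q)$ and $(\sigma\star\tau)(q)=\sigma((\beta\circ\tau)(q))\,\tau(q)=g\cdot h$, while for $g=\sigma(p)$ with $\beta(g)=r:=(\beta\circ\sigma)(p)$ one has $\sigma^{-1}(r)=\iota(\sigma((\beta\circ\sigma)^{-1}(r)))=\iota(g)=g^{-1}$. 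Since products and inverses of elements of $\Bis(\cG)_{0}$ stay in $\Bis(\cG)_{0}$, the same computations apply to the second set. Combined with Corollary \ref{cor:ev_subm} (so that $\ev$ is an open map and both images are open in $G$), this already yields the open wide Lie subgroupoid claim of a), the only missing piece being the inclusion $C(\cG)\subseteq\im(\ev)$, which will follow from b).

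The core of the argument is to prove, for each $m\in M$, that $S_m:=\ev_m(\Bis(\cG)_{0})$ equals the identity component $C_m$ of $1_m$ in $\alpha^{-1}(m)$. The inclusion $S_m\subseteq C_m$ is soft: $\Bis(\cG)_{0}$ is an open, hence path-connected, submanifold of $\Bis(\cG)$, and a path from $1$ to $\sigma$ is carried by the smooth map $\ev_m$ to a path from $1_m$ to $\sigma(m)$ inside $\alpha^{-1}(m)$. Openness of $S_m$ in $\alpha^{-1}(m)$ is immediate from Corollary \ref{cor: evx:subm}, as $\ev_m$ is a submersion and $\Bis(\cG)_{0}$ is open. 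It remains to show that $S_m$ is \emph{closed} in $\alpha^{-1}(m)$; granting this, $S_m$ is a non-empty clopen subset of $\alpha^{-1}(m)$ contained in $C_m$ and containing $1_m$, hence equal to $C_m$. Summing over $m$ then gives $\ev(\Bis(\cG)_{0}\times M)=C(\cG)$, proving b) and, since $\ev$ is open, that $C(\cG)$ is an open Lie subgroupoid; this also supplies the outstanding inclusion in a).

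The closedness of $S_m$ is the one genuinely analytic step, and I expect it to be the main obstacle. Here I would use the local bisections produced by Lemma \ref{lem: loc:bis}, observing first that they lie in $\Bis(\cG)_{0}$, since they are built via the canonical chart $\varphi_1$ on a connected zero-neighbourhood of the identity bisection. Given $g\in\overline{S_m}$, I would apply Lemma \ref{lem: loc:bis} at the point $\beta(g)$ to obtain a $1_{\beta(g)}$-neighbourhood $W\opn\alpha^{-1}(\beta(g))$ each of whose elements $k$ has the form $k=\sigma_k(\beta(g))$ with $\sigma_k\in\Bis(\cG)_{0}$. As the right translation $R_g\from\alpha^{-1}(\beta(g))\to\alpha^{-1}(m)$ is a diffeomorphism taking $1_{\beta(g)}$ to $g$, the set $R_g(W)$ is a neighbourhood of $g$, so one may pick $g'\in S_m\cap R_g(W)$ and write $g'=k\cdot g$ with $k=\sigma_k(\beta(g))\in W$, as well as $g'=\tau(m)$ with $\tau\in\Bis(\cG)_{0}$. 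The decisive point is a clean cancellation: setting $\rho:=\sigma_k^{-1}\star\tau\in\Bis(\cG)_{0}$ and using $\beta(g')=\beta(k\cdot g)=(\beta\circ\sigma_k)(\beta(g))$ together with \eqref{eq: BISGP2}, one computes $\rho(m)=\sigma_k^{-1}(\beta(g'))\,g'=\iota(k)\,g'=k^{-1}(k\,g)=g$, so that $g=\ev_m(\rho)\in S_m$. Note that the identity $\sigma_k^{-1}(\beta(g'))=\iota(k)$ holds \emph{exactly}, the closeness of $g'$ to $g$ serving only to guarantee $g'\in R_g(W)$; this is what makes the cancellation go through. Finally, c) is the case $C_m=\alpha^{-1}(m)$ of b): $\alpha$-connectedness gives $C(\cG)=\cG$, so every $g$ lies in $\ev(\Bis(\cG)_{0}\times M)$, i.e.\ $g=\sigma_g(\alpha(g))$ for some $\sigma_g\in\Bis(\cG)_{0}$, and in particular $\ev$ is surjective.
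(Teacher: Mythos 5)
Your proposal is correct, and its skeleton (wide subgroupoid, openness from the submersion property of $\ev_m$, then a clopen argument in each source fibre) is the same as the paper's. The one genuine difference is the decisive closedness step: the paper simply invokes \cite[Proposition 1.5.7]{Mackenzie05General-theory-of-Lie-groupoids-and-Lie-algebroids} (an open wide subgroupoid meets each $\alpha$-fibre in a closed set), noting that its proof carries over verbatim, whereas you prove closedness of $S_m=\ev_m(\Bis(\cG)_0)$ directly via Lemma \ref{lem: loc:bis} and the exact cancellation $\sigma_k^{-1}(\beta(g'))=\iota(k)$. Your computation is sound, and it makes the argument self-contained; in fact it is essentially Mackenzie's right-translation proof, re-derived with the extra feature that the identity neighbourhood $W$ consists of points realised by bisections in $\Bis(\cG)_0$ — which you could also have obtained without Lemma \ref{lem: loc:bis} by taking $W=\ev_{\beta(g)}(\Bis(\cG)_0)$ itself, open by Corollary \ref{cor: evx:subm}, since $S_{\beta(g)}$ is a subgroupoid fibre by your own part a). The only point worth making explicit is your parenthetical claim that the local bisections of Lemma \ref{lem: loc:bis} lie in $\Bis(\cG)_0$: this needs the chart domain $\Omega$ of $\varphi_1$ to be connected (e.g.\ shrink it to a convex zero-neighbourhood, so that $t\mapsto\varphi_1(tX_g)$ is a path from $1$ to $\sigma_g$); this is harmless but should be said.
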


\begin{proof} 
  \begin{enumerate}
   \item The image $\cU \coloneq \ev (\Bis (\cG) \times M)$ contains the image of the object inclusion $1 \colon M \rightarrow G$, i.e.\ $1_m \in \cU$ for all $m \in M$.
   Define for $m \in M$ the set $\cU_m = \cU \cap \alpha^{-1} (m)$ and note that $\cU_m = \ev_m (\Bis (\cG))$.
   As $\ev_m \colon \Bis (\cG) \rightarrow \alpha^{-1} (m)$ is a submersion by Corollary \ref{cor: evx:subm} we infer that $\cU_m$ is an open subset of $\alpha^{-1} (m)$. 
     
   By construction of the group operations of $\Bis (\cG)$ the set $\cU$ yields a wide subgroupoid $\cU \toto M$ of $\cG$ such that $\cU_m$ is open in $\alpha^{-1} (m)$.
   Hence the image of $\ev$ is an open and wide Lie subgroupoid.
   Now \cite[Proposition 1.5.7]{Mackenzie05General-theory-of-Lie-groupoids-and-Lie-algebroids}\footnote{Loc.cit. considers only finite-dimensional Lie groupoids. 
   However, the proof of this result carries over verbatim to the infinite-dimensional setting.} shows that $\cU_m$ is also closed in $\alpha^{-1} (m)$.
   Thus the clopen set $\cU_m$ contains the connected component of $1_m \in \alpha^{-1} (m)$.
   Since this holds for each $m$, we see that $C(\cG) \subseteq \cU$.
   \item Set $B_0:=\Bis (\cG)_{0}$.
   As $B_0$ is an open subgroup, an argument as in a) shows that $\ev (B_0 \times M)$ is a subgroupoid of $\cG$  which contains $C(\cG)$.
    
   Furthermore, for each $m \in D_i$ the set $\ev_m (B_0) \subseteq \alpha^{-1} (m)$ is connected and contains $1_m$.
   Thus by definition of the connected set $C_m \subseteq \alpha^{-1} (m)$ we have $\ev_m (B_0) \subseteq C_m \subseteq C(\cG)$.
   Hence, $\ev (B_0 \times M) = C(\cG)$ and since $\ev$ is a submersion by Corollary \ref{cor:ev_subm}, $C(\cG)$ is open in $G$. 
   In particular, $C(\cG)$ is an open subgroupoid of $\cG$, i.e.\ it is an open Lie subgroupoid. 
   \item If $\cG$ is $\alpha$-connected then $C(\cG) = G$ whence the assertion follows from b).
  \end{enumerate}
\end{proof}

\begin{definition}
 We say that a for a Lie groupoid $\cG = (G \toto M)$ there exists a \emph{bisection through each arrow} if $\cG$ satisfies the condition of Theorem \ref{thm: bis:points} c), 
 i.e.\ for each $g \in G$ there exists $\sigma_g \in \Bis (\cG)$ with $\sigma_g (\alpha (g))= g$.   
\end{definition}

 Note that Part c) of Theorem \ref{thm: bis:points} yields \cite[Theorem 3.1]{MR2511542} as a corollary for (finite-dimensional) Lie groupoids over a compact base.

\begin{corollary}\label{cor: bis:pts}
 In a finite-dimensional source-connected Lie groupoid with compact space of objects there exist bisections through each arrow.
\end{corollary}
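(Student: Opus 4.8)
The plan is to deduce the statement directly from part~c) of Theorem~\ref{thm: bis:points} by checking that every finite-dimensional Lie groupoid over a compact base meets its standing hypotheses. Writing $\cG = (G \toto M)$ for such a groupoid, Theorem~\ref{thm: bis:points} requires that $\cG$ be locally metrisable, that $M$ be compact, and that $\cG$ admit an $\alpha$-adapted local addition; the additional source-connectedness hypothesis of part~c) is, word for word, the hypothesis of the corollary. So the entire task reduces to verifying these three structural conditions in finite dimensions.

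The first two are immediate: a finite-dimensional (smooth, Hausdorff, second-countable) manifold is metrisable, so $G$ and $M$ are in particular locally metrisable, and compactness of $M$ is assumed. The only point carrying genuine content is therefore the existence of a local addition on $G$ that is adapted to the source projection $\alpha$, i.e.\ one that restricts to a local addition on each fibre $\alpha^{-1}(x)$. I expect this to be the main---indeed the sole---obstacle, everything else being bookkeeping.

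To produce such an adapted local addition I would use that $\alpha\colon G\to M$ is a surjective submersion between finite-dimensional manifolds. Locally $\alpha$ looks like a projection $U\times F\to U$, and on each such chart a product spray (a spray on $U$ together with a spray on $F$) is tangent to the $\alpha$-fibres. Since sprays form an affine space and fibre-tangency is a linear condition, these local sprays can be glued by a partition of unity subordinate to a chart cover of $G$ into a global spray on $G$ that is still tangent to every $\alpha$-fibre. Its exponential map is then a local addition on $G$ which sends each vertical vector into its own fibre and restricts there to the fibrewise exponential, hence to a local addition on $\alpha^{-1}(x)$; thus it is $\alpha$-adapted. This is the standard finite-dimensional construction and is exactly the input supplied by~\cite{SchmedingWockel14}.

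With all hypotheses in place, Theorem~\ref{thm: bis:points}~c) applies verbatim: for every arrow $g\in G$ it furnishes a bisection $\sigma_g\in\Bis (\cG)_{0}$ with $\sigma_g(\alpha(g))=g$. This is precisely the assertion that there exists a bisection through each arrow, which completes the argument.
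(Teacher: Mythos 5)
Your proposal is correct and matches the paper's (implicit) argument: the corollary is stated there as an immediate consequence of Theorem \ref{thm: bis:points}~c), with the existence of an $\alpha$-adapted local addition on a finite-dimensional Lie groupoid taken from the cited references rather than reproved. Your spray/partition-of-unity construction of the adapted local addition is exactly the standard input being invoked, so the only difference is that you spell out what the paper leaves to a citation.
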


We also obtain the following well known result on the natural action of $\Diff(M)$ on $M$ (cf.\ \cite{Banyaga97The-structure-of-classical-diffeomorphism-groups,MichorVizman94n-transitivity-of-certain-diffeomorphism-groups}).

\begin{corollary}\label{cor: diff0:trans}
 If $M$ is a compact and connected manifold, then $\Diff(M)_{0}$ acts
 transitively on $M$.
\end{corollary}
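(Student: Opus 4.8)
The plan is to derive Corollary \ref{cor: diff0:trans} from Theorem \ref{thm: bis:points} c) by applying it to a suitable action groupoid. First I would observe that the natural action of $\Diff(M)_{0}$ on the compact connected manifold $M$ gives rise to the action groupoid $\cG = \Diff(M)_{0}\ltimes M \toto M$, which by Remark \ref{setup: bis:act} admits an adapted local addition (being a product of the local addition on the Lie group $\Diff(M)_{0}$ and the finite-dimensional manifold $M$); moreover $\Diff(M)_{0}$ is modelled on a metrisable space and $M$ is compact, so all hypotheses of Theorem \ref{thm: bis:points} are in force.

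The key step is to verify that this action groupoid is $\alpha$-connected. The source fibre $\alpha^{-1}(m)$ of the action groupoid is a copy of the group $\Diff(M)_{0}$ itself, and $\Diff(M)_{0}$ is connected by definition (it is the identity component of $\Diff(M)$). Hence every source fibre is connected and $\cG$ is source-connected, so Theorem \ref{thm: bis:points} c) applies and tells us that $\ev \colon \Bis(\cG)\times M \to G = \Diff(M)_{0}\times M$ is surjective.

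It then remains to translate the surjectivity of $\ev$ into transitivity of the action. For this I would use the explicit description of bisections of an action groupoid from Example \ref{ex: ev:actgpd}: a bisection $\sigma$ corresponds to a smooth map $\gamma\in C^{\infty}(M,\Diff(M)_{0})$, and the target projection is $\beta_{\cG}(\sigma(m)) = \gamma(m).m$ (the natural action of $\Diff(M)_{0}$ on $M$). Given two points $m,m'\in M$, surjectivity of $\ev$ produces a bisection hitting an arrow of $\cG$ over $m$ whose target is $m'$; unwinding the definitions, this yields some $\phi\in \Diff(M)_{0}$ with $\phi(m)=m'$. Thus $\Diff(M)_{0}$ acts transitively.

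The only genuine subtlety — and the step I would be most careful about — is the connectedness of the source fibre: one must note that the source fibre of the action groupoid over $m$ is all of the acting group $\Diff(M)_{0}$, which is connected precisely because we took the identity component. Everything else is a routine unwinding of the action-groupoid formalism from Example \ref{ex: ev:actgpd}, together with the observation that the conclusion of Theorem \ref{thm: bis:points} c) (a bisection through each arrow with target reaching an arbitrary point) is exactly a restatement of transitivity in the action-groupoid picture. Compactness and connectedness of $M$ enter only to guarantee the Lie group structure on $\Bis(\cG)$ and to make $\Diff(M)_{0}$ a single connected group acting on all of $M$.
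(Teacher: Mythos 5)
Your argument is circular. Applying Theorem \ref{thm: bis:points} c) to the action groupoid $\Diff(M)_{0}\ltimes M$ only tells you that every \emph{existing} arrow lies in the image of a bisection; it does not produce any new arrows. In the action groupoid, an arrow with source $m$ and target $m'$ is by definition a pair $(\varphi,m)$ with $\varphi\in\Diff(M)_{0}$ and $\varphi(m)=m'$, so the existence of such an arrow is precisely the transitivity statement you are trying to prove. Your phrase ``surjectivity of $\ev$ produces a bisection hitting an arrow of $\cG$ over $m$ whose target is $m'$'' presupposes that such an arrow exists. Note also that both of your hypotheses are automatic for \emph{any} connected Lie group acting on \emph{any} compact manifold: the source fibre of an action groupoid is always a copy of the acting group (so source-connectedness is free), and Example \ref{ex: ev:actgpd} shows that $\ev$ is always surjective for action groupoids via constant bisections. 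Since not every connected group acts transitively, these two facts cannot imply transitivity; you have conflated ``there is a bisection through each arrow'' with ``there is an arrow between each pair of objects''.

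The groupoid to which the theorem should be applied is the \emph{pair} groupoid $\cP(M)=(M\times M\toto M)$, which is transitive by construction: for any $m,m'$ there is the arrow $(m',m)$ with source $m$ and target $m'$. Its source fibre $\alpha^{-1}(m)\cong M$ is connected because $M$ is connected; this is where the connectedness hypothesis genuinely enters, not merely to make $\Diff(M)_{0}$ ``a single connected group acting on all of $M$''. Theorem \ref{thm: bis:points} c) then yields a bisection $\sigma\in\Bis(\cP(M))_{0}$ with $\sigma(m)=(m',m)$, and under the Lie group isomorphism $\Bis(\cP(M))\cong\Diff(M)$ (which matches identity components) this gives $\varphi\in\Diff(M)_{0}$ with $\varphi(m)=m'$. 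The refinement $\sigma_{g}\in\Bis(\cG)_{0}$ in part c) is exactly what lands you in the identity component $\Diff(M)_{0}$ rather than merely in $\Diff(M)$.
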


\begin{remark}\label{rem: ev:sur}
\begin{enumerate}
 \item Note that \cite[Theorem 3.1]{MR2511542} holds for arbitrary finite-dimensional $\alpha$-connected Lie groupoids whereas Corollary \ref{cor: bis:pts} is limited to groupoids over compact base. 
 \item  The assumption on $\cG$ to be source connected cannot be dispensed
 with. For instance, if $N,N'$ are non-diffeomorphic compact manifolds (of the
 same dimension), then the pair groupoid $\cP(M):=(M\times M\toto M)$ of
 $M\coloneq N \sqcup N'$ has $\Bis(\cP(M))\cong \Diff(M)$ and the action of
 $\Bis(\cP(M))$ on the source fibre naturally identifies with the natural
 action of $\Diff(M)$ on $M$. But since $N,N'$ are not diffeomorphis, there
 cannot exist a diffeomorphism of $M$ that interchanges the points $n$ and $n'$
 if $n\in N$ and $n'\in N'$. Consequently, there cannot exist a bisection
 through the morphism $((n,n'),(n',n))$ of $\cP(M)$.
 
 However, as we have seen in  Example \ref{ex: ev:actgpd}, there exist non-source connected Lie groupoids, for which there exist
 bisections through each point. 
 For another example consider the gauge groupoid $\cG=\op{Gauge}(M\times K)$ of the trivial principal bundle $M\times K\to M$, then
 $\Bis(\op{Gauge}(M\times K))\cong \Aut(M\times K)\cong C^{\infty}(M,K)\rtimes \Diff(M)$.
 If $M$ is connected, then $\Diff(M)$ acts transitively on $M$ and
 $C^{\infty}(M,K)$ always acts transitively on $K$. Thus there exist a
 bisection through each arrow of $\op{Gauge}(M\times K)$, even if $K$ is
 not connected.
\end{enumerate}
\end{remark}

 Before we continue with our investigation of the bisection action groupoid, note the following interesting consequences of Corollary \ref{cor: evx:subm}.
 
 \begin{Lemma} \label{lem: BG:loctriv}
 Let $\cG$ be locally trivial and denote by $\theta \colon \Bis (\cG) \times M \rightarrow M, (\sigma,x) \mapsto \beta_\cG \circ \sigma (m)$ the canonical Lie group action. 
 Then 
  \begin{enumerate}
   \item $\theta$ restricts for each $m \in M$ to a submersion $\theta_m \colon \Bis (\cG) \times \{m\} \rightarrow M$.
  \end{enumerate}
 If in addition $\cG$ admits bisections through each arrow or $M$ is connected, then 
  \begin{enumerate}
   \item[b)] $\cB(\cG)$ is locally trivial and $\theta$ is transitive.
  \end{enumerate}
 \end{Lemma}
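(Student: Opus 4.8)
The plan is to prove part a) directly from Corollary~\ref{cor: evx:subm}, and then derive part b) by combining local triviality with the results of Theorem~\ref{thm: bis:points}.

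For part a), observe that the map $\theta_m \colon \Bis(\cG) \to M$ factors as $\theta_m = \beta_\cG \circ \ev_m$, where $\ev_m \colon \Bis(\cG) \to \alpha^{-1}(m)$ is the pointwise evaluation. By Corollary~\ref{cor: evx:subm}, $\ev_m$ is a submersion. Since $\cG$ is locally trivial, the restriction $\beta_\cG|_{\alpha^{-1}(m)} \colon \alpha^{-1}(m) \to M$ is precisely the bundle projection of the source fibre, which is itself a surjective submersion (indeed, local triviality means $\alpha^{-1}(m)$ is the total space of a principal bundle over $M$ with $\beta_\cG$ as projection). A composition of submersions is a submersion, so $\theta_m$ is a submersion. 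I would take a moment to confirm that the relevant local-section or local-projection property of $\beta_\cG|_{\alpha^{-1}(m)}$ is available in the infinite-dimensional setting from the definition of local triviality used in the paper.

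For part b), transitivity of $\theta$ is the key point, and it splits into the two hypotheses. If $\cG$ admits bisections through each arrow, then given any two points $x, y \in M$, pick an arrow $g$ with $\alpha(g) = x$ and $\beta(g) = y$ (such $g$ exists since $\cG$ is locally trivial, hence transitive as a groupoid); by hypothesis there is a bisection $\sigma_g$ with $\sigma_g(x) = g$, whence $\theta(\sigma_g, x) = \beta_\cG(\sigma_g(x)) = \beta_\cG(g) = y$. This shows $\theta$ is transitive. If instead $M$ is connected, then by part a) each orbit map $\theta_m$ is a submersion, so its image (the $\Bis(\cG)$-orbit of $m$) is open in $M$; distinct orbits partition $M$ into open sets, and connectedness forces a single orbit, giving transitivity. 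Once $\theta$ is transitive, local triviality of the action groupoid $\cB(\cG) = \Bis(\cG) \ltimes M$ follows from the submersion property in a): a transitive action groupoid whose orbit maps are submersions admits local sections of $\theta_m$, which furnish the local trivialisations of $\cB(\cG)$ over $M$.

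\emph{The main obstacle} I anticipate is verifying in the connected-$M$ case that the orbits are genuinely \emph{open} and that openness plus connectedness cleanly yields a single orbit, since this is the standard ``open orbits partition a connected space'' argument but must be run with the infinite-dimensional submersion $\theta_m$ supplying openness of the image. The bisections-through-each-arrow case is more elementary once one invokes that local triviality makes $\cG$ transitive on objects. Assembling these into local triviality of $\cB(\cG)$ requires extracting local smooth sections from the submersion $\theta_m$ and checking they trivialise the action groupoid, which is where I would spend the most care.
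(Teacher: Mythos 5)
Your proof is correct, and part a) together with the bisections-through-each-arrow case of b) follow the paper's own argument essentially verbatim: write $\theta_m=\beta_\cG|_{\alpha^{-1}(m)}\circ\ev_m$, use Corollary~\ref{cor: evx:subm} and local triviality, and compose submersions. Where you genuinely diverge is the connected-$M$ case. The paper factors $\beta_{\cB}$ as $\widetilde{\ev_m}\circ(\beta_\cG)_*$, invokes the fact (from \cite[Example 3.16]{SchmedingWockel14}) that the image of $(\beta_\cG)_*\from\Bis(\cG)\to\Diff(M)$ contains $\Diff(M)_0$, and then quotes Corollary~\ref{cor: diff0:trans} that $\Diff(M)_0$ acts transitively on a compact connected $M$. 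You instead run the open-orbit argument directly on $\theta$: each orbit map $\theta_x$ is a submersion by part a), hence open, so the orbits form an open partition of $M$ and connectedness forces a single orbit. This is sound in the paper's setting (submersions are locally projections, hence open maps), it is more self-contained since it bypasses both the external input on $(\beta_\cG)_*$ and Corollary~\ref{cor: diff0:trans}, and it is in fact the same device the paper itself deploys later for $\Symp(M,\omega)_0$. What the paper's route buys in exchange is slightly more information, namely that transitivity is already achieved by the image of $(\beta_\cG)_*$ inside $\Diff(M)$. For the final step, both you and the paper reduce local triviality of $\cB(\cG)$ to the surjective-submersion property of $\beta_{\cB}|_{\alpha_{\cB}^{-1}(m)}$ via \cite[Proposition 1.3.3]{Mackenzie05General-theory-of-Lie-groupoids-and-Lie-algebroids}; you merely re-sketch why that proposition holds (local sections of $\theta_m$ give the trivialisations) rather than citing it, which is fine.
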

 
 \begin{proof}
 As $\cG$ is locally trivial $\left.\beta_{\cG}\right|_{\alpha^{-1}(m)}\from \alpha^{-1}(m)\to M$ is a
 surjective submersion. Since the $\alpha_{\cB}$-fibre is $\Bis(\cG)$ and $\ev_{m}\from  \Bis(\cG)\to \alpha^{-1}(m)$ is a submersion by Corollary \ref{cor: evx:subm} we see that $\beta_{\cB}|_{\alpha_\cB^{-1} (m)} = \beta_\cG \circ \ev_m$ is a submersion. 
 We claim that $\beta_\cB$ is surjective. If this is true then $\cB (\cG)$ is locally trivial by \cite[Proposition 1.3.3]{Mackenzie05General-theory-of-Lie-groupoids-and-Lie-algebroids} which carries over verbatim to our infinite-dimensional setting.
 Moreover, we derive that the canonical action of $\Bis (\cG)$ is transitive and restricts to a submersion $\Bis (\cG) \times \{m\} \rightarrow M$ for all $m \in M$ 
 
 To prove the claim we have to treat both cases separately.
 Assume first that $\cG$ admits bisections through each arrow, then $\ev_m$ is surjective and thus $\beta_\cB$ is surjective.
 On the other hand let now $M$ be connected.
 Then we note that $\beta_\cB = \widetilde{\ev_m} \circ (\beta_\cG)_*$, where $(\beta_\cG)_* \colon \Bis (\cG) \rightarrow \Diff (M), \sigma \mapsto \beta_\cG \circ \sigma$ and $\widetilde{\ev_m} \colon \Diff (M) \rightarrow M, \varphi \mapsto \varphi (m)$.
 Now the image of $(\beta_\cG)_*$ contains the identity component $\Diff (M)_0$ of $\Diff (M)$ (by \cite[Example 3.16]{SchmedingWockel14}) and $\Diff (M)_0$ acts transitively on the connected manifold $M$ by Corollary \ref{cor: diff0:trans}.
 Thus $\beta_\cB$ is surjective.
 
 We conclude that in both cases the assertion holds.
 \end{proof}

\begin{remark}
 Quotient constructions for Lie groupoids (and already for Lie groups) are
 quite tricky. In fact, Lie groupoids are a tool to circumvent badly behaved
 quotients (for instance for non-free group actions). However, each category
 carries a natural notion of quotient object for an internal equivalence
 relation. If $\cC$ is a category with finite products and $ R\se E\times E$ is
 an internal equivalence relation, then the quotient $E\to E/R$ in $\cC$
 (uniquely determined up to isomorphism) is, if it exists, the coequaliser of
 the diagram
 \begin{equation}\label{eqn1}
  \xymatrix{ R \ar@<.8ex>[r]^{\pr_{1}} \ar@<-.8ex>[r]_{\pr_{2}} & E}.
 \end{equation}
 If, in the case that the quotient exists, \eqref{eqn1} is also the pull-back
 of $E\to E/R$ along itself, then the quotient $E\to E/R$ is called
 \emph{effective} (see \cite[Appendix.1]{Mac-LaneMoerdijk94Sheaves-in-geometry-and-logic} for
 details). 
We want to apply this to the category $\cat{LieGroupoids}_{M}$, whose objects are locally
 convex and locally metrisable Lie groupoids over $M$ and whose morphisms are
 smooth functors that are the identity on $M$. Note that the
 product of two Lie groupoids $(G\toto M)$ and $(H\toto M)$ is given by restricting the product Lie groupoid $(G\times H\toto M\times M)$ to the diagonal $M\cong \Delta M\se M\times M$.
\end{remark}

\begin{theorem}\label{thm:
 gpd:quot} If $\cG=(G\toto M)$ is a Lie groupoid with a bisection through each
 arrow in $G$, e.g.\ $\cG$ is source connected, then the morphism
 $\ev\from \cB(\cG)\to \cG$ is the quotient of $\cB(\cG)$ in
 $\cat{LieGroupoids}_{M}$ by
 \begin{equation*}
  R=\{(\sigma,m),(\tau,m)\in \Bis(\cG)\times \Bis(\cG)\times M\mid \sigma(m)=\tau(m) \}.
 \end{equation*}
\end{theorem}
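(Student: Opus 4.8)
The plan is to recognise $R$ as the kernel pair of $\ev$ and then to verify the universal property of a coequaliser by hand, the whole argument being powered by the fact that $\ev$ is a surjective submersion of Lie groupoids (Corollary \ref{cor:ev_subm} together with Theorem \ref{thm: bis:points} c)).

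First I would check that $R$ is an internal equivalence relation in $\cat{LieGroupoids}_{M}$. The point is that $R$ is exactly the fibre product $\cB(\cG)\times_{\cG}\cB(\cG)$ of $\ev$ with itself: the equation $\sigma(m)=\tau(m')$ forces $m=\alpha(\sigma(m))=\alpha(\tau(m'))=m'$, so the condition defining $R$ coincides with $\ev(\sigma,m)=\ev(\tau,m')$. Since $\ev$ is a submersion on arrows and the identity on objects, this fibre product is a submanifold of $(\Bis(\cG)\times M)^{2}$ and carries a Lie groupoid structure over $M$ with componentwise source, target and multiplication; thus $R\toto M$ lies in $\cat{LieGroupoids}_{M}$ and the projections $\pr_{1},\pr_{2}\from R\to \cB(\cG)$ are morphisms over $M$. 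As a kernel pair it is automatically reflexive (the diagonal is a common section of $\pr_{1},\pr_{2}$), symmetric (swap the factors) and transitive, hence an internal equivalence relation; this also shows the resulting quotient is effective.

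That $\ev$ coequalises $\pr_{1}$ and $\pr_{2}$ is immediate: both composites are the identity on objects and send an arrow $((\sigma,m),(\tau,m))\in R$ to $\sigma(m)=\tau(m)$. The substance is the universal property. Given a morphism $f\from \cB(\cG)\to \cH$ in $\cat{LieGroupoids}_{M}$ with $f\circ \pr_{1}=f\circ \pr_{2}$, the latter says precisely that $f(\sigma,m)=f(\tau,m)$ whenever $\sigma(m)=\tau(m)$, i.e.\ $f(\sigma,m)$ depends only on $\ev(\sigma,m)=\sigma(m)$. Because $\cG$ has a bisection through each arrow, $\ev$ is surjective, so every $g\in G$ equals $\sigma(m)$ for some bisection $\sigma$ with $m=\alpha(g)$, and I would set $\bar f(g)\coloneq f(\sigma,\alpha(g))$; the invariance just recorded makes this independent of the chosen $\sigma$, yielding the unique set map with $\bar f\circ \ev=f$. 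Functoriality of $\bar f$ over $M$ then follows formally from surjectivity of $\ev$ together with the fact that both $f$ and $\ev$ are groupoid morphisms: for composable $g_{1},g_{2}$ one chooses preimages under $\ev$, uses that $\ev$ respects composition (Remark \ref{setup: bis:act}) to see their product evaluates to $g_{1}g_{2}$, and applies $f$; units and source/target are handled in the same way.

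The one genuinely analytic step, which I expect to be the main obstacle, is the smoothness of $\bar f$. Here I would use that $\ev$ is not merely surjective but a submersion, hence locally a projection: around any $g_{0}\in G$ there is a local smooth section $s$ of $\ev$, and on that neighbourhood $\bar f=(\bar f\circ \ev)\circ s=f\circ s$ is smooth as a composite of smooth maps. Covering $G$ by such neighbourhoods gives smoothness of $\bar f$ globally, and uniqueness is clear from surjectivity of $\ev$. This verifies the universal property of the coequaliser of $R\toto \cB(\cG)$, so $\ev$ is the quotient of $\cB(\cG)$ by $R$ in $\cat{LieGroupoids}_{M}$.
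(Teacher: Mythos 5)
Your proposal is correct and follows essentially the same route as the paper: identify $R$ with the kernel pair of the surjective submersion $\ev$, verify it is an internal equivalence relation, and obtain the induced morphism on the quotient via local smooth sections of $\ev$. The only cosmetic difference is that the paper exhibits the submanifold structure on $R$ through an explicit diffeomorphism $R\cong K\times \Bis(\cG)$ with $K=\ev^{-1}(M)$ (which also makes $\pr_{2}$ visibly a surjective submersion, so that the pullback $R*R$ needed for internal transitivity is a manifold), whereas you appeal directly to the fibre product of $\ev$ with itself.
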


\begin{proof}
 We first note that $R$ is isomorphic to $  K\times \Bis(\cG)$, where
 \begin{equation}\label{eqn2}
  K\coloneq\{(\sigma,m)\in \Bis(\cG)\times M\mid \sigma(m)=1_{m}\}=\ev^{-1}(M)
 \end{equation}
 is the kernel of $\ev$. 
 To see this note that as $M\se G$ is a closed submanifold and $\ev$ is a
 submersion by Corollary \ref{cor:ev_subm}, it follows that $K$ is a closed
 submanifold of $\Bis(\cG)\times M$. 
 Now  
 \begin{equation*}
  \Bis(\cG)\times \Bis(\cG)\times M\to \Bis (\cG) \times M \times \Bis(\cG),\quad ((\sigma,m),(\tau,m))\mapsto (\sigma \star \tau^{-1},m,\tau)
 \end{equation*}
 is a diffeomorphism which maps $R$ onto
 $K\times \Bis(\cG)$. Consequently, $R$ is a closed submanifold
 of $\Bis(\cG)\times\Bis(\cG)\times M$.
 
 We now argue that $R$ is in fact an internal equivalence relation. It is clear
 that the relation is reflexive and symmetric. After applying the
 diffeomorphism \eqref{eqn2}, the second projection
 $\pr_{2}\from R\to \Bis(\cG)\times M$, $((\sigma,m),(\tau,m))\mapsto (\tau , m)$
 is an actual projection. So $\pr_{2}$ is a surjective submersion. Thus the
 pull-back
 \begin{equation*}
  R*R:=R\times _{(\Bis(\cG)\times M)}R=\{(((\sigma,m),(\tau,m)),((\sigma',m'),(\tau',m')))\mid m=m', \tau=\sigma'\}
 \end{equation*}
 is a submanifold of $R\times R$ and
 \begin{equation*}
  R*R\to \Bis(\cG)\times \Bis(\cG)\times M,\quad
  (((\sigma,m),(\tau,m)),((\tau,m),(\tau',m)))\mapsto ((\sigma,m),(\tau',m))
 \end{equation*}
 clearly factors through $R$. Consequently, $R$ is an internal equivalence
 relation.
 
 Finally, if $f\from \Bis(\cG)\times M\to H$ is a morphism of Lie groupoids
 that satisfies $f \circ \pr_{1}=f \circ \pr_{2}$ (for
 $\pr_{i}\from R\to \Bis(\cG)\times M$ the canonical maps), then we construct a
 smooth map $\varphi \from G\to H$ by taking a local smooth section of $\ev$
 and composing it with $f$. Since $f \circ \pr_{1}=f \circ \pr_{2}$, two
 possible pre-images of an element from $G$ in $\Bis(\cG)\times M$ are mapped
 to the same element in $H$, and thus $\varphi$ is well-defined and smooth by
 construction. One directly checks that it also defines a morphism of Lie
 groupoids (i.e., $\varphi$ is compatible with the structure maps, see also
 \cite[Proposition
 2.2.3]{Mackenzie05General-theory-of-Lie-groupoids-and-Lie-algebroids}).
\end{proof}
 
\begin{remark}\label{rem:
 Re:inconstruction} We have seen in Theorem \ref{thm: gpd:quot} that a source
 connected Lie groupoid $\cG$ with compact base is the quotient of its
 associated bisection action Lie groupoid $\cB (\cG)$. However, Theorem
 \ref{thm: gpd:quot} already uses that a candidate for the quotient, namely
 $\cG$ exists. Thus the theorem does not provide the existence of the quotient
 without using $\cG$.
\end{remark}

In the proof of Theorem \ref{thm: gpd:quot} it is visible that the quotient of $\cB(\cG)$ was taken with respect to the kernel\footnote{i.e.\ the set $\{g \in \cB (\cG) \mid \ev (g) = 1_x \text{ for some } x \in M\}$ which is a wide subgroupoid of the inner subgroupoid, called a normal Lie subgroupoid [see \cite[Definition 2.2.2]{Mackenzie05General-theory-of-Lie-groupoids-and-Lie-algebroids}).} of the base-preserving morphism $\ev$.
 We refer to \cite[2.2]{Mackenzie05General-theory-of-Lie-groupoids-and-Lie-algebroids} for details on the groupoid quotient by a normal subgroupoid.
 Since $\ev$ is a surjective submersion in the situation of Theorem \ref{thm: gpd:quot}, its kernel is a Lie subgroupoid of $\cB (\cG)$. 
 Finally, the $\alpha$-fibre of the kernel over $m\in M$ is given by 
  \begin{displaymath}
  \ev (\cdot ,m)^{-1} \{1_m\} = \{\sigma \in \Bis (\cG) \mid \sigma (m) = 1_m\}.
  \end{displaymath}
 In the next section we will study the subgroups of $\Bis (\cG)$ which arise from this construction.
 Later on these information will allow us to investigate the groupoid quotients in more detail.

\section{Subgroups and quotients associated to the bisection group}

In this section we study subgroups of the bisections which are associated to a fixed point in the base manifold. 
These subgroups are closely related to the reconstruction result outlined in Theorem \ref{thm: gpd:quot} and will become an important tool to study locally trivial Lie groupoids in Section \ref{sect: loc:triv} and \ref{sec:application_to_integration_of_extensions}.

As before, (unless stated explicitly otherwise) we shall assume that $\cG = (G \toto M)$ is a locally metrisable Lie groupoid over a compact base $M$ which admits an adapted local addition.

\begin{definition}\label{setup: pt:sbgp} Let
 $\cG = (G \toto M)$ be a locally convex Lie groupoid (we require neither that $M$ is be compact or finite dimensional nor that $G$ admits
 a local addition). Fix $m \in M$ and denote by $\Vtx{m}(\cG)$ the vertex subgroup
 of the groupoid $\cG$. There are now two subsets of $\Bis (\cG)$
 whose elements are characterised by their value at $m$
 \begin{align*}
  \Loop{m} (\cG) &\coloneq \{\sigma \in \Bis (\cG) \mid \sigma (m) \in \Vtx{m}(\cG) = \alpha^{-1} (m) \cap \beta^{-1} (m)\} ,\\
  \Bisf{m}(\cG) &\coloneq \{\sigma \in \Bis (\cG) \mid \sigma (m) = 1_m\}.
 \end{align*}
 Clearly $\Bisf{m}(\cG) \subseteq \Loop{m} (\cG)$ and both are subgroups of
 $\Bis (\cG)$ by definition of the group operation (see \eqref{eq: BISGP1} and \eqref{eq: BISGP2}).
\end{definition}

 Note that $\Bisf{m}(\cG)$
 is a normal subgroup of $\Loop{m} (\cG)$ as for $\sigma \in \Bisf{m}(\cG)$ and
 $\tau \in \Loop{m} (\cG)$ we have
 \begin{align*}
  \tau \star \sigma \star \tau^{-1} (m) &= \tau \star (\sigma \circ \beta \circ \tau \cdot \tau) (m) = \tau (\underbrace{\beta \circ \sigma \circ \beta \circ \tau (m)}_{=m} )\cdot (\sigma ( \underbrace{\beta \circ \tau (m)}_{=m}) \cdot \tau (m) \\
  &= \tau (m) \cdot 1_m \cdot \tau^{-1} (m) = 1_m.
 \end{align*}

 We will now investigate the subgroups from Definition \ref{setup: pt:sbgp} in the case that $\Bis (\cG)$ is a Lie
 group. 
 Thus $M$ will be assumed to be compact, whence Lemma \ref{lem: vertex:Lie} implies that the vertex group $\Vtx{m} (\cG)$ of $\cG = (G \toto M)$ is a submanifold of
 $G$ and in particular a Lie group.
 
 \begin{proposition}\label{prop: subgp:vertex}
 Fix some $m \in M$.  
  \begin{enumerate}
   \item The group $\Loop{m} (\cG)$ is a Lie subgroup of $\Bis (\cG)$ and as a submanifold in $\Bis (\cG)$ it is of finite codimension.
   \item The map $\ev_m \colon \Bis (\cG) \rightarrow \alpha^{-1} (m)$ restricts to a Lie group morphism $\psi_m \colon \Loop{m} (\cG) \rightarrow \Vtx{m}(\cG)$ whose kernel is $\Bisf{m}(\cG)$.
   Moreover, $\psi_m$ is a submersion.
   \item The group $\Bisf{m}(\cG)$ is a split Lie subgroup of $\Loop{m} (\cG)$ and a split Lie subgroup of $\Bis (\cG)$.
         It is modelled on $\Gamma (1^*T^\alpha G)_{m} = \{X \in \Gamma (1^* T^\alpha G) \mid X (1_m) = 0_{1_m}\}$.
   \item If $\cG$ is a Banach-Lie groupoid then $\Bisf{m}(\cG)$ is a co-Banach submanifold in $\Loop{m} (\cG)$ and also in $\Bis (\cG)$. 
  \end{enumerate}
\end{proposition}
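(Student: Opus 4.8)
The plan is to derive all four assertions formally from two facts established earlier: that $\ev_m\from \Bis(\cG)\to \alpha^{-1}(m)$ is a submersion (Corollary \ref{cor: evx:subm}), and the explicit complemented splitting of the section space provided by Remark \ref{setup: preparation} for the identity bisection $\tau=1$. Once these are in hand, each statement is an instance of the standard behaviour of submersions in the locally convex category, where ``submersion'' means ``locally a projection'': preimages of split submanifolds are split submanifolds of the same codimension, and fibres are split submanifolds whose model is the kernel of the derivative.

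For a) I would write $\Loop{m}(\cG)=\ev_m^{-1}(\Vtx{m}(\cG))$ and note that $\Vtx{m}(\cG)$ is a submanifold of $\alpha^{-1}(m)$ by Lemma \ref{lem: vertex:Lie}; its codimension is finite because the normal space at $1_m$ injects via $T_{1_m}\beta$ into the finite-dimensional $T_mM$, and a closed subspace of finite codimension is automatically complemented. Hence $\Loop{m}(\cG)$ is a split submanifold of $\Bis(\cG)$ of finite codimension, and since it is already known to be a subgroup it is a Lie subgroup. For b), the containment $\ev_m(\Loop{m}(\cG))\subseteq \Vtx{m}(\cG)$ is built into the definition, so $\ev_m$ corestricts to a smooth map $\psi_m$; the homomorphism property comes from $(\sigma\star\tau)(m)=\sigma(\beta(\tau(m)))\tau(m)=\sigma(m)\tau(m)$, which holds because $\beta(\tau(m))=m$ for $\tau\in \Loop{m}(\cG)$, and the kernel is visibly $\Bisf{m}(\cG)$. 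That $\psi_m$ is a submersion is the general fact that a submersion restricts to a submersion from the preimage of a submanifold onto that submanifold.

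For c) and d) I would identify $\Bisf{m}(\cG)=\ev_m^{-1}(1_m)=\psi_m^{-1}(1_m)$. As a fibre of a submersion it is a split submanifold of both $\Bis(\cG)$ and $\Loop{m}(\cG)$, hence a split Lie subgroup of each. To exhibit the model space I would pass to the canonical chart $\varphi_1$ at the identity, where the derivative of $\ev_m$ is the evaluation $X\mapsto X(m)$ on $\Gamma(1^*T^\alpha G)$; its kernel is exactly $\Gamma(1^*T^\alpha G)_m$, and Remark \ref{setup: preparation} with $\tau=1$ furnishes the complement $T^\alpha_{1_m}G$, thereby identifying the model. For d), in the Banach case the same remark shows this complement is a Banach space, so $\Gamma(1^*T^\alpha G)_m$ is co-Banach in the model of $\Bis(\cG)$; replacing the complement $T^\alpha_{1_m}G$ by the Banach Lie algebra $T_{1_m}\Vtx{m}(\cG)$ of the vertex group yields the co-Banach statement inside $\Loop{m}(\cG)$.

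The group-theoretic identities are routine; everything genuinely rests on the single analytic input that preimages and fibres of a submersion are \emph{split} submanifolds with the expected complements. The point I would treat most carefully, and the main potential obstacle, is to keep ``submersion'' in the strong sense of a local projection onto a complemented subspace, so that these constructions actually yield split (and, in the Banach case, co-Banach) submanifolds rather than merely immersed or non-split ones; with the complements supplied explicitly by Remark \ref{setup: preparation}, the finite-codimension and co-Banach refinements then follow by inspecting the respective complementary spaces.
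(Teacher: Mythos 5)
Your proposal is correct and follows essentially the same route as the paper: both write $\Loop{m}(\cG)=\ev_m^{-1}(\Vtx{m}(\cG))$ and $\Bisf{m}(\cG)=\psi_m^{-1}(1_m)$, apply the preimage/regular value theorems for submersions in the locally convex setting (the paper cites Theorems C and D of \cite{hg2015}, which is exactly the ``split preimage'' principle you invoke), verify the homomorphism identity by the same one-line computation, and identify the model space as $\ker T_1\ev_m\cong\Gamma(1^*T^\alpha G)_m$ with the complement supplied by Remark \ref{setup: preparation}. Your emphasis on keeping ``submersion'' in the strong local-projection sense is precisely the point the paper's citations to \cite{hg2015} are handling.
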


\begin{proof}
 \begin{enumerate}
  \item Recall that by Corollary \ref{cor: evx:subm} the map $\ev_m \colon \Bis (\cG) \rightarrow \alpha^{-1} (m)$ is a submersion.
  Moreover, the vertex group $\Vtx{m}(\cG)$ is a submanifold of finite codimension of $\alpha^{-1} (m)$ by Lemma \ref{lem: vertex:Lie},
  Thus $\Loop{m} (\cG) = \ev_m^{-1} (\Vtx{m}(\cG))$ is a submanifold of finite codimension in $\Bis (\cG)$ by \cite[Theorem C]{hg2015}.
   Now $\Loop{m} (\cG)$ is a subgroup and a split submanifold of $\Bis (\cG)$ whence a split Lie subgroup.
  \item 
  To see that $\psi_m$ is also a group homomorphism we pick $\sigma, \tau \in \Loop{m} (\cG)$ and compute 
    \begin{displaymath}
     \psi_m (\sigma \star \tau) = \psi_m ((\sigma \circ \beta \circ \tau) \cdot \tau) = \sigma (\beta (\tau (m)) \cdot \tau (m) = \sigma (m) \cdot \tau (m) = \psi_m (\sigma) \cdot \psi_m (\tau).  
    \end{displaymath}
  As $\ev_m (\sigma) = 1_m$ if and only if $\sigma (m) = 1_m$, we see that $\Bisf{m}(\cG)$ is the kernel of $\psi_m$.  
  Having applied \cite[Theorem C]{hg2015} in part a), we observe that this also entails that $\psi_m$ is a submersion. 
  \item By part b) the subgroup $\Bisf{m}(\cG)$ is the preimage of a singleton under a submersion, whence a split submanifold of $\Loop{m} (\cG)$ by the regular value theorem \cite[Theorem D]{hg2015}.
  In particular, $\Bisf{m}(\cG)$ becomes a split Lie subgroup of $\Loop{m} (\cG)$.
  Since $\Loop{m} (\cG)$ is a split submanifold in $\Bis (\cG)$, \cite[Lemma 1.4]{hg2015} yields that $\Bisf{m}(\cG)$ is a split submanifold of $\Bis (\cG)$ and thus a split Lie subgroup of $\Bis (\cG)$.
  
  Recall that by the regular value theorem the tangent space of $\Bisf{m}(\cG)$ at $1$ is the kernel $\Ker T_1 \ev_m \subseteq T_1 \Bis (\cG) \cong \Gamma(1^*T^\alpha G)$.
  Taking identifications we compute $\Ker T_1 \ev_m$ as a subspace of $\Gamma (1^*T^\alpha G)$.
  On the level of isomorphism classes of curves the isomorphism $\varphi_\cG \colon T_1 \Bis (\cG) \rightarrow \Gamma (1^*T\alpha G)$ is given by $\varphi_\cG ([t \mapsto c(t)]) = (m \mapsto [t \mapsto c^\wedge (t,m)])$ (cf.\cite[Remark 4.1]{SchmedingWockel14}).
  Hence $\varphi_\cG$ identifies $T_1\ev_m$ with $\Gamma (1^*T^\alpha G) \rightarrow G, X \mapsto X(m)$ whose kernel is $\Gamma (1^*T^\alpha G)_{m}$.  
  \item If $\cG$ is a Banach-Lie groupoid then $\alpha^{-1} (m)$ and thus also $\Vtx{m}(\cG)$ are manifolds modelled on Banach-spaces.
  In this situation the regular value theorem implies that $\Bisf{m}(\cG)$ is a co-Banach submanifold of $\Loop{m} (\cG)$.
  Since $\Loop{m} (\cG)$ is of finite codimension in $\Bis (\cG)$, we deduce that $\Bisf{m}(\cG)$ is a co-Banach submanifold of $\Bis (\cG)$. 
 \end{enumerate}
\end{proof}

An important property of the Lie subgroups constructed in Proposition \ref{prop: subgp:vertex} is that they are regular as Lie groups 
(we recall the definition of regularity for Lie groups in Appendix \ref{Appendix:  MFD}).
Namely, the subgroups $\Bisf{m}(\cG)$ and $\Loop{m} (\cG)$ will be regular Lie groups if $\Bis (\cG)$ is a regular Lie group.
Let us first recall when bisection groups are regular.\footnote{At the moment, no example of a non-regular Lie group modelled on a space with suitable completeness properties (i.e.\ Mackey completeness) is known.}

\begin{remark}\label{rem: reg:sbgp}
 In \cite[Section 5]{SchmedingWockel14} we established $C^k$-regularity of $\Bis (\cG)$ if $\cG$ is either 
  \begin{enumerate}
   \item a Banach-Lie groupoid (then $\Bis (\cG)$ is $C^0$-regular).
   \item or a locally-trivial Lie groupoid whose vertex groups are locally exponential $C^k$-regular Lie groups (then $\Bis (\cG)$ is $C^k$-regular) 
  \end{enumerate}
  \end{remark}

 We will now prove that $\Bisf{m}(\cG)$ and $\Loop{m} (\cG)$ inherits the regularity properties from $\Bis (\cG)$.

\begin{proposition}\label{prop: stabsubgp:reg}
 Let $\cG = (G \toto M)$ be a Lie groupoid and fix $m \in M$. \
 Assume that $\Bis (\cG)$ is $C^k$-regular for some $k \in \N_0 \cup \{\infty\}$, e.g.\ in the situation of Remark \ref{rem: reg:sbgp}.
 Let $H$ be either $\Bisf{m}(\cG)$ and  $\Loop{m} (\cG)$ and consider $\eta \in C^k ([0,1],\Lf (H))$.
 \begin{enumerate}
  \item The solution $\gamma_\eta$ of the initial value problem
  \begin{equation}\label{eq: reg:eq}
      \begin{cases}
       \gamma' (t) &= \gamma (t).\eta (t) \quad \forall t \in [0,1] \\
       \gamma (0) &= 1 
       \end{cases}
     \end{equation} 
   in $G$ takes its image in $H$.   
  \item The Lie groups $\Bisf{m}(\cG)$ and  $\Loop{m} (\cG)$ are $C^k$-regular
  \end{enumerate} 
\end{proposition}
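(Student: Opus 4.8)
The plan is to deduce (b) from (a) essentially for free from the regularity of the ambient group $\Bis(\cG)$, and to prove (a) by a \emph{constraint-function} argument that never requires solving the evolution equation inside $H$ (which would be circular). For each of the two cases I would introduce a smooth map $\Phi\from \Bis(\cG)\to N$ with $H=\Phi^{-1}(p_{0})$: for $H=\Bisf{m}(\cG)$ take $N=\alpha^{-1}(m)$, $\Phi=\ev_{m}$ and $p_{0}=1_{m}$; for $H=\Loop{m}(\cG)$ take $N=M$, $\Phi=\beta_{\cG}\circ\ev_{m}$ and $p_{0}=m$. The decisive observation is that $\Phi$ is invariant under right multiplication by $H$, i.e.\ $\Phi(g\star\sigma)=\Phi(g)$ for all $g\in\Bis(\cG)$ and all $\sigma\in H$. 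This follows directly from the bisection law \eqref{eq: BISGP1} together with the groupoid identities $\beta(h\cdot h')=\beta(h)$ and $h\cdot 1_{\alpha(h)}=h$: for $\sigma\in\Bisf{m}(\cG)$ one gets $(g\star\sigma)(m)=g(\beta(\sigma(m)))\cdot\sigma(m)=g(m)\cdot 1_{m}=g(m)$, and for $\sigma\in\Loop{m}(\cG)$ one gets $\beta((g\star\sigma)(m))=\beta(g(m)\cdot\sigma(m))=\beta(g(m))$. In the second case this is just the statement that $\Phi$ is the orbit map of the $\Bis(\cG)$-action on $M$ through $m$ and that $\Loop{m}(\cG)$ is the corresponding stabiliser.

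For (a) I would then let $\gamma=\gamma_{\eta}$ be the solution of $\gamma'(t)=\gamma(t).\eta(t)$, $\gamma(0)=1$, which exists and is unique in $\Bis(\cG)$ because $\Bis(\cG)$ is $C^{k}$-regular, and consider the curve $c(t):=\Phi(\gamma(t))$ in $N$. Since $\gamma'(t)=T_{1}L_{\gamma(t)}\eta(t)$ with $\eta(t)\in\Lf(H)=T_{1}H$, the chain rule gives $c'(t)=T_{1}(\Phi\circ L_{\gamma(t)})(\eta(t))$. Differentiating the coset-invariance $\Phi\circ L_{g}|_{H}\equiv\Phi(g)$ at $1$ in a direction of $\Lf(H)$ shows that $T_{1}(\Phi\circ L_{g})$ vanishes on $\Lf(H)$ for \emph{every} $g\in\Bis(\cG)$; hence $c'\equiv 0$, and since $c(0)=\Phi(1)=p_{0}$, constancy of $c$ on $[0,1]$ forces $\gamma(t)\in\Phi^{-1}(p_{0})=H$ for all $t$. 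This is exactly (a). The step I would flag as the crux — and what makes the argument non-circular — is precisely that the vanishing $T_{1}(\Phi\circ L_{g})|_{\Lf(H)}=0$ holds for arbitrary $g$, not merely for $g\in H$; it rests solely on the right-$H$-invariance of $\Phi$, so at no stage does one need an evolution map for $H$.

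For (b) I would promote (a) to a statement about evolution maps. The inclusion $\Lf(H)\hookrightarrow\Lf(\Bis(\cG))$ is continuous linear, hence induces a smooth map $C^{k}([0,1],\Lf(H))\to C^{k}([0,1],\Lf(\Bis(\cG)))$; post-composing with the smooth evolution of the $C^{k}$-regular group $\Bis(\cG)$ yields a smooth map $C^{k}([0,1],\Lf(H))\to\Bis(\cG)$ whose image lies in $H$ by (a), and likewise a smooth assignment $\eta\mapsto\gamma_{\eta}\in C^{k+1}([0,1],\Bis(\cG))$ landing in $C^{k+1}([0,1],H)$. Because $H$ is a split Lie subgroup of $\Bis(\cG)$ by Proposition \ref{prop: subgp:vertex}, it is an initial submanifold, so these maps corestrict to smooth maps into $H$. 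One checks that the corestriction is the evolution of $H$: its values solve the $H$-equation since $\gamma'(t)=T_{1}L_{\gamma(t)}\eta(t)\in T_{\gamma(t)}H$, using that left translation by $\gamma(t)\in H$ preserves the subgroup $H$ and hence maps $\Lf(H)=T_{1}H$ into $T_{\gamma(t)}H$. Thus $\Evol$ and $\evol$ exist and are smooth for both $\Bisf{m}(\cG)$ and $\Loop{m}(\cG)$, which are therefore $C^{k}$-regular.
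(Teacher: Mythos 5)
Your proposal is correct and follows essentially the same route as the paper: part a) is the paper's computation (showing $(\Phi\circ\gamma_\eta)'\equiv 0$ by representing $\eta(t)$ through a curve in $H$ and using that $\ev_m$, resp.\ $\beta_\cG\circ\ev_m$, is invariant under right $\star$-multiplication by $H$), merely packaged uniformly via the constraint map $\Phi$ instead of two separate case computations. For part b) you reprove inline what the paper delegates to Lemma \ref{lem: semisub:reg} (a $C^k$-semiregular closed Lie subgroup of a $C^k$-regular group is $C^k$-regular), using the splitness of $H$ from Proposition \ref{prop: subgp:vertex} to corestrict the ambient evolution map; this is the same argument in substance.
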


\begin{proof}
 \begin{enumerate}
  \item Fix $\eta \in C^{k} ([0,1] , \Lf (H))$ together with the evolution $\gamma_\eta \colon [0,1] \rightarrow \Bis (\cG)$ of $\eta$.
  The composition $\ev_m \circ \gamma_\eta$ yields a smooth curve in $\alpha^{-1} (m)$.
  As $\gamma_\eta$ solves \eqref{eq: reg:eq} we already know that $\ev_m \circ \gamma_\eta (0) = 1_m$. 
  Now we consider both subgroups separately:\medskip
  
  \textbf{Case $H= \Bisf{m}(\cG)$.} By definition, $\gamma_\eta$ will take its image in $\Bisf{m}(\cG)$ if $\ev_m \circ \gamma_\eta (t) = 1_m, \forall t \in [0,1]$, i.e.\ we have to prove that $(\ev_m \circ \gamma_\eta)'(t) = 0 \in T_{\gamma_\eta (t)}^\alpha G,\ \forall t \in [0,1]$. 
   To this end fix $t \in [0,1]$ and a curve $c_{t,\eta} \colon ]-\varepsilon , \varepsilon [ \rightarrow \Bisf{m}(\cG)$ such that $c_{t,\eta} (0) = 1$ and $\eta (t)$ (as an element in the tangent space $T_1 \Bisf{m}(\cG)$) coincides with the equivalence class $[s \mapsto c_{t,\eta} (s)]$.
  Now we compute 
    \begin{align}
     (\ev_m \gamma_\eta )' (t) &= T\ev_m (\gamma_\eta' (t)) \stackrel{\eqref{eq: reg:eq}}{=} T\ev_m (\gamma_\eta (t) . \eta (t)) = T (\ev_m \circ \lambda_{\gamma_\eta (t)}) (\eta (t)) \notag\\
			       &= [s\mapsto \ev_m (\gamma_\eta (t) \star c_{t,\eta} (s))] =[s\mapsto \gamma_\eta (t) (\beta \circ c_{t,\eta} (s)(m)) \cdot c_{t,\eta} (s)(m)] \label{eq: logderiv} \\
			       &=  [s\mapsto \gamma_\eta (t)(m)\cdot 1_m] = 0 \in T^\alpha_{\gamma_{\eta} (t)(m)}G \notag
    \end{align}
 In passing from the second to the last line, we have used $c_{t,\eta} (s) \in \Bisf{m}(\cG)$, whence $c_{t,\eta} (s)(m) = 1_m$. 
 We finally conclude that $\gamma_\eta (t) \in \Bisf{m}(\cG)$ for all $t \in [0,1]$ if $\eta \in C^k ([0,1], \Lf (\Bisf{m}(\cG))$. 
 \medskip
 
 \textbf{Case $H=\Loop{m} (\cG)$.} We need to show that for all $t \in [0,1]$ we have $\gamma_\eta (t) \in \Vtx{m}(\cG)$, i.e.\ that $\beta \circ \ev_m \circ \gamma_\eta (t) = m$. 
  Since $\ev_m (\gamma_\eta (0))=1_m$ this will follow from $(\beta \circ \ev_m \circ \gamma_\eta)'(t) =0$ for all $t \in [0,1]$.
  To this end fix again $t \in [0,1]$ and a curve $c_{t,\eta} \colon ]-\varepsilon , \varepsilon [ \rightarrow \Loop{m} (\cG)$ with $c_{t,\eta} (0) = 1$ and $\eta (t) = [s\mapsto c_{t,\eta} (s)]$.
  Computing as in \eqref{eq: logderiv} we obtain 
    \begin{align*}
     (\beta \circ \ev_m \circ \gamma_\eta)'(t) &\stackrel{\eqref{eq: logderiv}}{=} [s \mapsto \beta \left(\gamma_\eta (t) (m) \cdot c_{t,\eta} (s) (m)\right)] 
										    = [s \mapsto \beta \left(\gamma_\eta (t) (m)\right)] = 0 \in T_{\beta (\gamma_\eta (t)(m))} M							    . 
    \end{align*}
  We can thus conclude that $\gamma_\eta (t) \in \Loop{m} (\cG)$ for all $t \in [0,1]$ if $\eta \in C^k ([0,1], \Lf (\Loop{m} (\cG)))$.
   \item Proposition \ref{prop: subgp:vertex} c) asserts that the subgroups $\Bisf{m}(\cG)$ and  $\Loop{m} (\cG)$ are closed subgroups of the $C^k$-regular Lie group $\Bis (\cG)$. 
   By part (a), the Lie groups $\Bisf{m}(\cG)$ and  $\Loop{m} (\cG)$ are $C^k$-semiregular.
   Thus Lemma \ref{lem: semisub:reg} proves that $\Bisf{m}(\cG)$ and  $\Loop{m} (\cG)$ are $C^k$-regular.
 \end{enumerate}
\end{proof}

\begin{example}\label{exemp:Diff_m_is_Lie_subgroup}
 Let $M$ be a compact manifold and consider the pair groupoid $\cP (M) = (M \times M \toto M)$. 
 The vertex group $\Vtx{m}(\cP(M))$ for $m \in M$ is just $\{ (m,m)\}$, whence $\Loop{m} (\cP (M)) = \Bis (\cP (M))_m$. 
 Then the map $(\pr_2)_* \colon \Bis (\cP (M)) \rightarrow \Diff (M) , \sigma \mapsto \pr_2 \circ \sigma$ is an isomorphism of Lie groups.
 By construction, this restricts to an isomorphism 
  \begin{displaymath}
   \Loop{m} (\cP(M)) = \Bis (\cP(m))_m \cong \Diff_m (M) \coloneq \{\varphi \in \Diff (M) \mid \varphi (m) = m\}.
  \end{displaymath}
 In particular, we infer from Proposition \ref{prop: subgp:vertex} and Proposition \ref{prop: stabsubgp:reg} that $\Diff_m (M)$ is a regular and split Lie subgroup of $\Diff (M)$. 
\end{example}

\begin{proposition}\label{prop: quot:fibre} 
Let $\cG = (G \toto M)$ be a Lie
 groupoid. For $m \in M$ we endow the right coset space
 $\Bis (\cG) / \Bisf{m}(\cG)$ with the quotient topology induced by
 $\Bis (\cG)$. Then the map $\ev_m$ induces homeomorphisms 
\begin{enumerate}
  \item a homeomorphism $\widetilde{\ev_m}$ of $\Bis (\cG) / \Bisf{m}(\cG)$ onto $\ev_m (\Bis (\cG)) \opn \alpha^{-1} (m)$,
  \item an isomorphism of topological groups $e_m$ of $\Lambda_m \coloneq \Loop{m} (\cG) / \Bis (\cG)$ onto an open subgroup of $\Vtx{m}(\cG)$,
\end{enumerate} 
 Moreover, $\Bis (\cG) / \Bisf{m}(\cG) $ and $\Loop{m} (\cG) / \Bisf{m}(\cG)$ carry unique manifold structures turning the canonical quotient maps into submersions.
 
 If there is a bisection through every arrow in $G$, e.g.\ $\cG$ is $\alpha$-connected, then  $\Bis (\cG)/\Bisf{m}(\cG) \cong \alpha^{-1} (m)$ as manifolds and $\Loop{m} (\cG) / \Bisf{m}(\cG) \cong \Vtx{m}(\cG)$ as Lie groups.
\end{proposition}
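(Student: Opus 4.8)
The plan is to deduce everything from the two submersions already at hand, namely $\ev_m\from\Bis(\cG)\to\alpha^{-1}(m)$ (Corollary \ref{cor: evx:subm}) and its restriction $\psi_m\from\Loop{m}(\cG)\to\Vtx{m}(\cG)$ (Proposition \ref{prop: subgp:vertex} b)), by identifying the two coset spaces with the respective images. First I would check that $\ev_m$ descends to the right-coset space. For $h\in\Bisf{m}(\cG)$ one has $\beta(h(m))=\beta(1_m)=m$, so the group law \eqref{eq: BISGP1} gives $(\sigma\star h)(m)=\sigma(\beta(h(m)))\,h(m)=\sigma(m)\,1_m=\sigma(m)$; hence $\ev_m$ is constant on each coset $\sigma\star\Bisf{m}(\cG)$. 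Conversely, writing $g=\sigma(m)$ and using \eqref{eq: BISGP2}, whenever $\sigma(m)=\tau(m)$ one computes $(\tau^{-1}\star\sigma)(m)=\tau^{-1}(\beta(g))\,g=\iota(g)\,g=1_m$, so the fibres of $\ev_m$ are \emph{exactly} the cosets $\sigma\star\Bisf{m}(\cG)$. Thus $\ev_m$ induces a bijection $\widetilde{\ev_m}$ of $\Bis(\cG)/\Bisf{m}(\cG)$ onto $\ev_m(\Bis(\cG))$; the same computation restricted to $\Loop{m}(\cG)$, together with the fact that $\Bisf{m}(\cG)=\Ker\psi_m$, gives via the first isomorphism theorem a group bijection $e_m$ of $\Lambda_m=\Loop{m}(\cG)/\Bisf{m}(\cG)$ onto $\im\psi_m$.

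Next I would upgrade these bijections to homeomorphisms. The image $\ev_m(\Bis(\cG))$ is open in $\alpha^{-1}(m)$ by Theorem \ref{thm: bis:points} a), and $\ev_m$, being a submersion, is open onto it. Writing $q$ for the quotient map, $\widetilde{\ev_m}\circ q=\ev_m$, so continuity of $\widetilde{\ev_m}$ is the universal property of the quotient topology, while openness of $\widetilde{\ev_m}$ follows because $q$ is a quotient map and $\ev_m$ is open; hence $\widetilde{\ev_m}$ is a homeomorphism, proving a). For b), $\psi_m$ is a submersion by Proposition \ref{prop: subgp:vertex} b), so $\im\psi_m\opn\Vtx{m}(\cG)$ and the identical open-map argument makes $e_m$ a homeomorphism; being a group isomorphism by construction, it is an isomorphism of topological groups.

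To install the manifold structures I would simply transport them along these homeomorphisms: declare $\widetilde{\ev_m}$ (resp. $e_m$) to be a diffeomorphism onto the open submanifold $\ev_m(\Bis(\cG))\opn\alpha^{-1}(m)$ (resp. $\im\psi_m\opn\Vtx{m}(\cG)$). With this structure the canonical quotient map is identified with $\ev_m$ (resp. $\psi_m$), which is a submersion, so the quotient maps are submersions. For uniqueness I would invoke that a surjective submersion admits local smooth sections: if manifold structures $S_1,S_2$ on the quotient both turn $q$ into a submersion, choose a local smooth section $s$ of $q$ for $S_1$; then $q\circ s=\id$ as set maps, and regarding the target with $S_2$ exhibits $\id$ as a locally smooth map $(Q,S_1)\to(Q,S_2)$, and symmetrically, so $S_1=S_2$. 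The genuine analytic content — that $\ev_m$ and $\psi_m$ are submersions and hence possess local smooth sections in the locally convex setting — is already supplied by the earlier results, so the remaining obstacle is only this transport-and-uniqueness bookkeeping rather than any new estimate.

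Finally, assume there is a bisection through every arrow (in particular if $\cG$ is $\alpha$-connected, by Theorem \ref{thm: bis:points} c)). Then $\ev_m$ is surjective, so $\ev_m(\Bis(\cG))=\alpha^{-1}(m)$ and $\widetilde{\ev_m}$ upgrades to a diffeomorphism $\Bis(\cG)/\Bisf{m}(\cG)\cong\alpha^{-1}(m)$. Moreover, for each $g\in\Vtx{m}(\cG)$ the chosen bisection $\sigma_g$ with $\sigma_g(m)=g$ satisfies $\beta(g)=m$, hence $\sigma_g\in\Loop{m}(\cG)$ with $\psi_m(\sigma_g)=g$; thus $\psi_m$ is surjective, $\im\psi_m=\Vtx{m}(\cG)$, and $e_m$ becomes a Lie group isomorphism $\Loop{m}(\cG)/\Bisf{m}(\cG)\cong\Vtx{m}(\cG)$, completing the proof.
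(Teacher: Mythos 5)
Your proposal is correct and follows essentially the same route as the paper's proof: descend $\ev_m$ and $\psi_m$ to the coset spaces via the two coset computations, use the submersion property (openness) to get homeomorphisms, transport the manifold structures from the open images, and obtain uniqueness from the existence of local smooth sections of submersions (the paper cites \cite[Lemma 1.9]{hg2015} for exactly this). The only cosmetic differences are that you verify $\tau^{-1}\star\sigma\in\Bisf{m}(\cG)$ where the paper checks $\sigma^{-1}\star\tau$, and you spell out the uniqueness argument that the paper delegates to Gl\"ockner's lemma.
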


\begin{proof}
 
  \begin{enumerate}
   \item By definition, the quotient topology turns $q_m \colon \Bis (\cG) \rightarrow \Bis (\cG) / \Bisf{m}(\cG) , \sigma \mapsto \sigma \Bisf{m}(\cG)$
 into a quotient map.
 Recall from Corollary \ref{cor: evx:subm}
 that $\ev_m \colon \Bis (\cG) \rightarrow \alpha^{-1} (m)$ is a submersion, whence its image in the $\alpha$-fibre is open. For $\tau \in \Bis (\cG)_{m}$ we observe $\ev_m (\sigma \star \tau) = \sigma \star \tau (m) = \sigma (\beta (\tau (m))) \tau (m) = \sigma (\beta (1_m)) 1_m = \sigma (m) = \ev_m (\sigma)$.
 Hence $\ev_m$ is constant on right cosets and $\ev_m$ factors
 through
 $h_m \colon \Bis (\cG) / \Bisf{m}(\cG) \rightarrow \im \ev_m \opn \alpha^{-1} (m),\ \sigma \Bis (\cG) \mapsto \ev_m (\sigma)$.
 Now $h_m$ is continuous since
 $\ev_m = h_m \circ q_m$ is continuous.
 
 Let us prove that for $\sigma, \tau \in \Bis (\cG)$ with
 $\ev_m (\sigma ) = \ev_m (\tau)$ we have
 $\sigma^{-1} \star \tau \in \Bisf{m}(\cG)$. Using the formulae \eqref{eq:
 BISGP1} and \eqref{eq: BISGP2} for the group operations of $\Bis (\cG)$ we
 obtain
 \begin{align*}
  \sigma^{-1} \star \tau (m) = \sigma^{-1} (\beta (\tau (m))) \tau (m) = \sigma^{-1}( \beta (\sigma (m))) \sigma (m) = \iota (\sigma (m))\sigma(m) = 1_m.
 \end{align*}
 Hence $\sigma^{-1} \star \tau \in \Bisf{m}(\cG)$ if $\sigma (m) = \tau (m)$ and
 in this case we see $\sigma \Bisf{m}(\cG) = \tau \Bisf{m}(\cG)$. This implies
 that $h_m$ is a bijection onto the open set $\im \ev_m$.
 
  We deduce from Corollary \ref{cor: evx:subm}
 that $\ev_m|^{\im \ev_m} \colon \Bis (\cG) \rightarrow \im \ev_m \opn \alpha^{-1} (m)$ is a surjective
 submersion. In particular, $\ev_m$ is open, whence a quotient map and thus
 $q_m = h_m^{-1} \circ \ev_m$ implies that $h_m^{-1}$ is continuous.
 \item By Proposition \ref{prop: subgp:vertex} b) we know that $\ev_m$ induces a Lie group morphism $\psi_m \colon \Loop{m} (\cG) \rightarrow \Vtx{m}(\cG)$ which is a submersion. 
 Its kernel is the normal Lie subgroup $\Bisf{m}(\cG)$.
 Thus $\Loop{m} (\cG) / \Bisf{m}(\cG)$ with the quotient topology becomes a topological group such that $\psi_m$ descents to an isomorphism of topological groups onto $\ev_m (\Loop{m} (\cG)) \opn \Vtx{m}(\cG)$.
 Endow the quotient with the manifold structure turning the isomorphism into an isomorphism of Lie groups. 
 Then the canonical quotient map becomes a submersion as a composition of a diffeomorphism and the submersion $\psi_m$.
 \end{enumerate}
 The manifold structure on the open submanifolds $\im \ev_m \opn \alpha^{-1}(m)$ and $\ev_m (\Loop{m} (\cG)) \opn \Vtx{m}(\cG)$ is uniquely determined up to
 diffeomorphism by the property that $\ev_{m}$ is a submersion. This is due to \cite[Lemma 1.9]{hg2015}.

The last assertion follows from part a) and b), since then $\im \ev_m = \alpha^{-1} (m)$ and $\im \psi_m = \Vtx{m}(\cG)$ hold.
  \end{proof}

\begin{example}\label{exmp:evaluation_for_Aut(P)}
 Suppose $\pi\from P\to M$ is a principal $K$-bundle for some locally
 exponential Lie group $K$ with connected $P$. Then the gauge groupoid
 $\op{Gauge}(P):=((P\times P)/K\toto M)$ admits an adapted local addition
 \cite[Proposition 3.14]{SchmedingWockel14} and $\Bis(\op{Gauge}(P))$ is
 naturally isomorphic to $\Aut(P)$. Assume that $K$ and $P$ are locally metrisable, i.e.\ $\op{Gauge}(P)$ is locally metrisable. 
 The source fibre
 $\alpha^{-1}(m)=(P_{m}\times P)/K$ of $\op{Gauge}(P)$ is diffeomorphic to $P$
 by choosing $o\in P_{m}$ and mapping $\langle p,q\rangle$ to
 $q. (p^{-1}\cdot o)$. Here we use $p^{-1}\cdot o$ as the suggestive notation
 for the element $k\in K$ that satisfies $p.k=o$. With respect to these
 identification the evaluation $\ev_{m}$ turns into the evaluation map
 \begin{equation*}
  \ev_{o}\from \Aut(P)\to P,\quad \varphi\mapsto \varphi(o).
 \end{equation*}
 Consequently,
 \begin{equation*}
 \Aut_{o}(P):=\ev^{-1}(o)=\{f\in \Aut(P)\mid f(o)=o\}
 \end{equation*} 
is a Lie subgroup of $\Aut(P)$ and by
 Proposition \ref{prop: quot:fibre}, $\Aut(P)/\Aut_{o}(P)$ carries a unique smooth
 structure turning the induced map $[\varphi]\mapsto \varphi(p)$ into a
 diffeomorphism. So we may view $P$ a a homogeneous space for its automorphism
 group. In particular, this applies to the trivial bundle, yielding a smooth
 structure on $\Diff(M)/ \Diff_{m}(M)$ and a diffeomorphism
 $\Diff(M)/ \Diff_{m}(M)\cong M$.
\end{example}

By now, the quotients $\Bis (\cG) / \Bisf{m}(\cG)$ and $\Loop{m} (\cG) / \Bisf{m}(\cG)$ carry a manifold structure which was derived from the manifold structure of the $\alpha$-fibre to the quotient.
However, if the Lie groupoid $\cG$ is a Banach-Lie groupoid then the homogeneous space $\Bis (\cG) / \Bisf{m}(\cG)$ already carries a natural manifold structure as a homogeneous space.
This is a consequence of Gl{\"o}ckners inverse function theorem (see the next Lemma for references and details).
Again this manifold structure turns the canonical quotient map into a submersion.

\begin{lemma}
Let $\cG = (G \toto M)$ be a Banach-Lie groupoid and $m \in M$.
Then the homogeneous spaces $\Bis (\cG) / \Bisf{m}(\cG)$ and $\Loop{m} (\cG) / \Bisf{m}(\cG)$ are manifolds and these manifold structures coincides with the ones obtained in Proposition \ref{prop: quot:fibre}.
\end{lemma}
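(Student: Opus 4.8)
The plan is to obtain the homogeneous-space structures from Glöckner's inverse function theorem and then to identify them with the structures of Proposition \ref{prop: quot:fibre} by a uniqueness argument. First I would record the input provided by the previous results: since $\cG$ is a Banach-Lie groupoid, Proposition \ref{prop: subgp:vertex} d) shows that $\Bisf{m}(\cG)$ is a \emph{closed, co-Banach, split} Lie subgroup of both $\Loop{m}(\cG)$ and $\Bis(\cG)$, and by Proposition \ref{prop: stabsubgp:reg} (together with Remark \ref{rem: reg:sbgp}) all three groups are regular. The crucial point is that the complement $F\cong T^\alpha_{1_m}G$ of $\Lf(\Bisf{m}(\cG))$ inside $\Lf(\Bis(\cG))$ furnished by Proposition \ref{prop: subgp:vertex} c) is a \emph{Banach} space, precisely because $\cG$ is Banach.

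Next I would feed this into Glöckner's inverse function theorem \cite{Glokcner06Implicit}. Concretely, one chooses a slice $S$ through the identity transverse to $\Bisf{m}(\cG)$ with $T_{1}S=F$ and considers the multiplication map $S\times \Bisf{m}(\cG)\to \Bis(\cG)$; its differential at $(1,1)$ is the isomorphism $F\oplus \Lf(\Bisf{m}(\cG))\to \Lf(\Bis(\cG))$ coming from the splitting. Because the transverse direction $F$ is Banach, Glöckner's theorem applies and exhibits this map as a local diffeomorphism, so that the projection $q_{m}\from \Bis(\cG)\to \Bis(\cG)/\Bisf{m}(\cG)$ admits local smooth sections and $\Bis(\cG)/\Bisf{m}(\cG)$ inherits a Banach manifold structure modelled on $F$ for which $q_{m}$ is a submersion. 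Running the identical argument inside the regular Lie group $\Loop{m}(\cG)$—in which $\Bisf{m}(\cG)$ is again closed, co-Banach and split—yields the analogous structure on $\Loop{m}(\cG)/\Bisf{m}(\cG)$.

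For the coincidence I would argue by uniqueness. Both the structure just produced and the one from Proposition \ref{prop: quot:fibre} live on the same set endowed with the quotient topology, and for both the canonical projection $q_{m}$ is a submersion. The manifold structure on the target of a (surjective) submersion is, however, uniquely determined—this is \cite[Lemma 1.9]{hg2015}, exactly the principle already invoked in the proof of Proposition \ref{prop: quot:fibre}. Consequently the identity map is a diffeomorphism between the two structures, which gives the assertion; the same reasoning applies verbatim to $\Loop{m}(\cG)/\Bisf{m}(\cG)$.

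The step I expect to be the main obstacle is the application of Glöckner's inverse function theorem itself: one has to check that the mixed situation—a slice modelled on the Banach space $F$ multiplied against the merely locally convex group $\Bisf{m}(\cG)$ inside the locally convex group $\Bis(\cG)$—genuinely meets the hypotheses of the theorem, the co-Banach splitting being precisely the feature that makes it applicable. Once existence is secured, the identification with Proposition \ref{prop: quot:fibre} is a soft consequence of the uniqueness of submersion-compatible smooth structures and requires no further analysis.
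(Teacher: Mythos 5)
Your proposal is correct and follows essentially the same route as the paper: the same inputs (co-Banach splitness from Proposition \ref{prop: subgp:vertex}, regularity from Proposition \ref{prop: stabsubgp:reg} and Remark \ref{rem: reg:sbgp}), a manifold structure on the quotients obtained from Gl\"ockner's results, and the identification via the uniqueness of submersion-compatible structures from \cite[Lemma 1.9]{hg2015}. The only difference is presentational: the paper invokes \cite[Theorem G (a)]{hg2015} as a black box, whereas you unfold the slice construction behind that theorem by hand via the inverse/implicit function theorem.
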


\begin{proof}
 Combining Proposition \ref{prop: stabsubgp:reg} and Remark \ref{rem: reg:sbgp} we see that $\Bisf{m}(\cG)$ is a $C^0$-regular closed Lie subgroup of $\Bis (\cG)$ and of $\Loop{m} (\cG)$.
 Moreover, $\Bisf{m}(\cG)$ is a co-Banach submanifold in both $\Loop{m} (\cG)$ and $\Bis (\cG)$.
 Thus the homogeneous spaces $\Bis (\cG) /\Bisf{m}(\cG)$ and $\Loop{m} (\cG) / \Bis {\cG}_m$ carry manifold structures by \cite[Theorem G (a)]{hg2015}. 
 Furthermore, since $\Bisf{m}(\cG)$ is a normal Lie subgroup, the manifold $\Loop{m} (\cG)/\Bis {\cG}_m$ becomes a Lie group.
 In both cases this manifold structure turns the canonical quotient map into a submersion.
 By the uniqueness assertion in Proposition \ref{prop: quot:fibre} the manifold structures on the homogeneous spaces must coincide. 
\end{proof}

The interesting feature of the manifold structures obtained on the homogeneous spaces $\Bis (\cG) / \Bisf{m}(\cG)$ and $\Loop{m} (\cG)/\Bisf{m}(\cG)$ for Banach-Lie groupoids is exactly that it coincides with the structure induced by the fibre.
Hence, under some assumptions, we can endow the quotient $\Bis (\cG) / \Bis (\cG)_*$ with a manifold structure which does not a priori use the manifold structure on the $\alpha$-fibre.
We will apply these results in the next section after we compile some more facts on natural group actions on the quotient $\Bis (\cG) / \Bisf{m}(\cG)$.

\begin{lemma}\label{lem: eff:act}
 Let $\cG = (G \toto M)$ be a Lie groupoid, fix $m \in M$ and define $\Lambda_m \coloneq \Loop{m} (\cG) / \Bisf{m}(\cG)$.
 Then the quotient $\Bis (\cG) / \Bisf{m}(\cG)$ admits left- and right group actions
 \begin{align*}
  \lambda_{\Bis (\cG)} \colon \Bis (\cG) \times (\Bis (\cG) / \Bisf{m}(\cG)) &\rightarrow \Bis (\cG) / \Bisf{m}(\cG) , (\sigma , \tau \Bisf{m} (\cG)) \mapsto (\sigma \star \tau )\Bisf{m} (\cG) \\
  \rho_{\Lambda_m } \colon (\Bis (\cG) / \Bisf{m}(\cG)) \times \Lambda_m &\rightarrow \Bis (\cG) / \Bisf{m}(\cG) , (\sigma \Bisf{m} (\cG) , \gamma \Bisf{m}(\cG)) \mapsto (\sigma \star \gamma) \Bisf{m} (\cG)
 \end{align*}
 which commute, i.e.\ $\lambda_{\Bis (\cG)} (\sigma , \cdot) \circ \rho_{\Lambda_m} ([\tau], \cdot) = \rho_{\Lambda_m} ([\tau], \cdot) \circ \lambda_{\Bis (\cG)} (\sigma , \cdot)$ for all $\sigma \in \Bis (\cG)$ and $[\tau] \in \Lambda_m$.
\end{lemma}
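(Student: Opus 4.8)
The plan is to regard this as an essentially algebraic statement. Write $H \coloneq \Bisf{m}(\cG)$ and recall that, by the computation following Definition \ref{setup: pt:sbgp}, $H$ is a normal subgroup of $\Loop{m}(\cG)$, so that $\Lambda_m = \Loop{m}(\cG)/H$ is a genuine group; the two maps are then the usual left-translation and right-translation actions on a coset space, and all that must be checked is well-definedness, the action axioms and their compatibility. Well-definedness of $\lambda_{\Bis(\cG)}$ requires no normality: if $\tau H = \tau' H$, i.e.\ $\tau^{-1}\star\tau' \in H$, then $(\sigma\star\tau)^{-1}\star(\sigma\star\tau') = \tau^{-1}\star\tau' \in H$, whence $(\sigma\star\tau)H = (\sigma\star\tau')H$, so $\lambda_{\Bis(\cG)}$ depends only on the coset $\tau H$.

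The main, and only genuinely non-formal, point is the well-definedness of $\rho_{\Lambda_m}$, and here normality is exactly what is used. Independence of the $\Lambda_m$-representative is automatic: replacing $\gamma$ by $\gamma\star h'$ with $h'\in H$ yields $(\sigma\star\gamma\star h')H = (\sigma\star\gamma)H$. For independence of the acted-on representative, replace $\sigma$ by $\sigma\star h$ with $h\in H$; one must show $(\sigma\star h\star\gamma)H = (\sigma\star\gamma)H$, i.e.\ $(\sigma\star\gamma)^{-1}\star(\sigma\star h\star\gamma) = \gamma^{-1}\star h\star\gamma \in H$. Since $\gamma\in\Loop{m}(\cG)$ and $H$ is normal in $\Loop{m}(\cG)$, this indeed holds. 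I expect this to be the crux of the proof; everything else is formal.

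It then remains to verify the axioms and the compatibility, all of which follow from associativity of $\star$. One has $\lambda_{\Bis(\cG)}(\sigma_1, \lambda_{\Bis(\cG)}(\sigma_2,\tau H)) = (\sigma_1\star\sigma_2\star\tau)H = \lambda_{\Bis(\cG)}(\sigma_1\star\sigma_2, \tau H)$ together with $\lambda_{\Bis(\cG)}(1,\tau H) = \tau H$, so $\lambda_{\Bis(\cG)}$ is a left action, and the symmetric computation shows $\rho_{\Lambda_m}$ is a right action of $\Lambda_m$. For the compatibility, both composites send an arbitrary $\upsilon H$ to $(\sigma\star\upsilon\star\tau)H$, again by associativity, so $\lambda_{\Bis(\cG)}(\sigma,\cdot)$ and $\rho_{\Lambda_m}([\tau],\cdot)$ commute. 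Should one also want these actions to be smooth, this follows by descent along the submersion $q_m\from\Bis(\cG)\to\Bis(\cG)/H$ of Proposition \ref{prop: quot:fibre}: the identity $\lambda_{\Bis(\cG)}\circ(\id\times q_m) = q_m\circ\star$ and its analogue for $\rho_{\Lambda_m}$ exhibit the quotient actions as maps obtained by passing smooth maps through surjective submersions, hence smooth.
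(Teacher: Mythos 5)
Your proof is correct and follows essentially the same route as the paper's: left translation descends to the coset space without any normality hypothesis, right translation by $\Loop{m}(\cG)$ descends to a $\Lambda_m$-action precisely because $\Bisf{m}(\cG)$ is normal in $\Loop{m}(\cG)$, and the two commute by associativity of $\star$. You merely write out the coset manipulations that the paper leaves implicit (and your closing remark on smoothness is an optional extra not claimed in the lemma).
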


\begin{proof}
 Observe that $\Bis (\cG)$ acts via left translation on itself and this action descents to a group action on the quotient $\Bis (\cG) / \Bisf{m}(\cG)$.
 Moreover, $\Loop{m} (\cG)$ acts via right translation on $\Bis (\cG)$ and this action descents to a $\Lambda_m = \Loop{m} (\cG) / \Bisf{m}(\cG)$-action on the quotient. 
 The left action by left translation on $\Bis (\cG)$ commutes with the right translation with elements in $\Loop{m} (\cG)$, whence the induced actions on the quotient commute.
\end{proof}
 
 \begin{lemma}\label{lem: eff:act2}
   Let $\cG$ be a transitive Lie groupoid which admits bisections through each arrow. 
   Then $\lambda_{\Bis (\cG)}$ is an effective group action, i.e.\ $\lambda_{\Bis (\cG)} (\sigma,\cdot) = \id_{\Bis (\cG)/\Bisf{m}(\cG)}$ implies $\sigma = 1$.
 \end{lemma}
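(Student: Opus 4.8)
The plan is to translate effectiveness into a statement about the natural action of $\Bis(\cG)$ on the source fibre $\alpha^{-1}(m)$, and then use the two hypotheses to determine $\sigma$ pointwise. First I would recall the identification from Proposition \ref{prop: quot:fibre}: since $\cG$ admits a bisection through each arrow, the map $h_m$ sets up a bijection $\Bis(\cG)/\Bisf{m}(\cG)\cong \alpha^{-1}(m)$ sending $\sigma\Bisf{m}(\cG)$ to $\ev_m(\sigma)=\sigma(m)$, and in particular two bisections represent the same coset exactly when they agree at $m$. Under this identification, the multiplication formula \eqref{eq: BISGP1} shows that $\lambda_{\Bis (\cG)}(\sigma,\argument)$ becomes the map $g\mapsto \sigma(\beta(g))\cdot g$ on $\alpha^{-1}(m)$, that is, the restriction to the fibre of the natural action $\gamma$ of Proposition \ref{prop: LGP:Bis}.

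Next, assuming $\lambda_{\Bis (\cG)}(\sigma,\argument)=\id_{\Bis(\cG)/\Bisf{m}(\cG)}$, the previous step gives $\sigma(\beta(g))\cdot g=g$ for every coset, and since $\im\ev_m=\alpha^{-1}(m)$ this identity holds for \emph{all} $g\in\alpha^{-1}(m)$. I would then cancel $g$ on the right: because $\alpha(\sigma(\beta(g)))=\beta(g)$ the product is composable, and right-multiplying by $g^{-1}$ together with $g\,g^{-1}=1_{\beta(g)}$ collapses the equation to $\sigma(\beta(g))=1_{\beta(g)}$ for every $g\in\alpha^{-1}(m)$.

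Finally I would invoke transitivity of $\cG$: the restriction $\beta|_{\alpha^{-1}(m)}\colon \alpha^{-1}(m)\to M$ is surjective, so each $x\in M$ equals $\beta(g)$ for some $g\in\alpha^{-1}(m)$, whence $\sigma(x)=1_x$ for all $x\in M$, i.e.\ $\sigma=1$. This is exactly effectiveness, and it also makes transparent why both hypotheses are needed: bisections through each arrow are used to sweep out the entire fibre $\alpha^{-1}(m)$ (so that the trivial-action condition applies to every $g$), and transitivity is used to sweep out all of $M$ through $\beta$.

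I do not expect a genuine obstacle here, as the argument needs no analytic input and is purely groupoid-algebraic. The only points requiring care are bookkeeping of the groupoid conventions—verifying that each product $\sigma(\beta(g))\cdot g$ is composable and that left cancellation legitimately yields $\sigma(\beta(g))=1_{\beta(g)}$—and making sure the two surjectivity inputs (onto $\alpha^{-1}(m)$ via $\ev_m$, and onto $M$ via $\beta$) are invoked in the correct places.
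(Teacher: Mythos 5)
Your proposal is correct and is essentially the paper's argument: the paper runs the same computation $(\sigma\star\tau)(m)=\sigma(\beta(\tau(m)))\cdot\tau(m)$ by contradiction at a single point where $\sigma(n)\neq 1_n$, while you phrase it directly for all $g\in\alpha^{-1}(m)$ via the identification $\Bis(\cG)/\Bisf{m}(\cG)\cong\alpha^{-1}(m)$, but the two hypotheses enter in exactly the same places (surjectivity of $\ev_m$ to cover the fibre, transitivity so that $\beta|_{\alpha^{-1}(m)}$ covers $M$) and the cancellation step is identical.
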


 \begin{proof}
 The prerequisites imply that $\ev_m$ is a surjective map.
 Consider $\sigma \in \Bis (\cG) \setminus \{1\}$ and choose $n \in M$ such that $\sigma (n) \neq 1_n$. 
 Now $\cG$ is transitive, whence there is $g_n \in \alpha^{-1} (m)$ with $\beta (g) = n$. 
 As $\ev_m$ is surjective we can choose $\tau \in \Bis (\cG)$ with $\tau (m) = g_n$. 
 Arguing indirectly, we assume that $[\sigma \star \tau] = [\tau]$, i.e.\ there is $s \in \Bisf{m}(\cG)$ with $\sigma \star \tau = \tau \star s$.  
 Evaluating in $m$, we use $s \in \Bisf{m}(\cG)$ to obtain 
  \begin{displaymath}
   \sigma (n) \cdot g = \sigma (\beta (\tau (m)) \cdot \tau (m) = (\sigma \star \tau )(m) = (\tau \star s)(m) = \tau (m)\cdot 1_m = g
  \end{displaymath}
 Hence, $\sigma (n) = g\cdot g^{-1} = 1_n$ follows, contradicting our choice of $n$. 
 We conclude that $\lambda_{\Bis (\cG)}$ is effective.
 \end{proof}

\begin{remark}
 In general, the left action $\Lambda_{\Bis (\cG)}$ will not be effective. 
 To see this, we return to the example given in Remark \ref{rem: ev:sur} b): 
 
 Let $M = N \sqcup N'$ be the disjoint union of two non-isomorphic smooth manifolds. 
 Then the pair groupoid $\cP (M)$ is locally trivial, but there are arrows which are not contained in the image of any bisection.
 Fix $m \in N$ and recall that 
  \begin{displaymath}
   \Bis (\cG) \cong \Diff (M) \cong \Diff (N) \times \Diff (N') \text{ and } \Bisf{m}(\cG) \cong \Diff_m (N) \times \Diff (N').
  \end{displaymath}
 Thus $\Bis (\cG) / \Bisf{m}(\cG) \cong \Diff (N)/\Diff_m (N)$. 
 Hence, for $\varphi \in \Diff (N')$ the bisection $\id_N \times \varphi$ acts trivially on $\Bis (\cG) / \Bisf{m}(\cG)$.
 If $N'$ is not the singleton manifold, choose $\varphi \neq \id_{N'}$, to see that $\lambda_{\Bis (\cG)}$ can not be effective.
\end{remark}

\begin{lemma}\label{lem: ev:equiv}
 Let $\cG = (G \toto M)$ be a Lie groupoid, fix $m \in M$ and define $\Lambda_m \coloneq \Loop{m} (\cG) / \Bisf{m}(\cG)$.
 Then the map 
  \begin{displaymath}
   \widetilde{\ev_m} \colon \Bis (\cG) / \Bisf{m} (\cG) \rightarrow \widetilde{\ev_m} (\Bis (\cG) / \Bisf{m}(\cG)) \opn \alpha^{-1} (m), \sigma \Bisf{m} (\cG) \mapsto \sigma (m)
  \end{displaymath}
 (cf.\ Proposition \ref{prop: quot:fibre}) is equivariant with respect to the right $\Lambda_m$-action $\rho_{\Lambda_m}$ and the right $\Vtx{m}(\cG)$-action, i.e.\ for $\tau \in \Loop{m} (\cG)$ and $\sigma \in \Bis (\cG)$ we obtain the formula 
  \begin{displaymath} 
   \widetilde{\ev_m} ((\sigma \star \tau) \Bisf{m} (\cG))) = \widetilde{\ev_m} (\rho_{\Lambda_m} (\sigma \Bisf{m} (\cG) , \tau \Bisf{m} (\cG))) = \widetilde{\ev_m} (\sigma \Bisf{m} (\cG))) \cdot \tau(m)  
  \end{displaymath}
\end{lemma}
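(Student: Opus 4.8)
The plan is to trace through the definitions, since the statement is a direct computation once one unwinds the group law on $\Bis(\cG)$ together with the defining condition of $\Loop{m}(\cG)$. The first equality in the asserted chain is nothing but the definition of the right $\Lambda_m$-action: by Lemma \ref{lem: eff:act} one has $\rho_{\Lambda_m}(\sigma\Bisf{m}(\cG), \tau\Bisf{m}(\cG)) = (\sigma\star\tau)\Bisf{m}(\cG)$, so applying $\widetilde{\ev_m}$ to either side yields the same element of $\alpha^{-1}(m)$. Hence everything reduces to verifying the remaining equality.

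For that remaining equality I would simply evaluate. By the description of $\widetilde{\ev_m}$ from Proposition \ref{prop: quot:fibre} one has $\widetilde{\ev_m}((\sigma\star\tau)\Bisf{m}(\cG)) = (\sigma\star\tau)(m)$. Inserting the group law \eqref{eq: BISGP1} gives $(\sigma\star\tau)(m) = \sigma((\beta\circ\tau)(m))\cdot\tau(m)$. Now $\tau\in \Loop{m}(\cG)$ means, by Definition \ref{setup: pt:sbgp}, that $\tau(m)\in \Vtx{m}(\cG)$ and in particular $\beta(\tau(m)) = m$, so the first factor collapses to $\sigma(m)$. This yields $(\sigma\star\tau)(m) = \sigma(m)\cdot\tau(m) = \widetilde{\ev_m}(\sigma\Bisf{m}(\cG))\cdot\tau(m)$, which is exactly the claim.

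The only point that must not be overlooked is that every product appearing here is legitimate and interpreted correctly. The right-hand side $\sigma(m)\cdot\tau(m)$ is a composable product in $\cG$ because $\alpha(\sigma(m)) = m = \beta(\tau(m))$, and since $\tau(m)\in \Vtx{m}(\cG)$, this product is precisely the right $\Vtx{m}(\cG)$-action on $\alpha^{-1}(m)$ applied to $\widetilde{\ev_m}(\sigma\Bisf{m}(\cG))$. Thus the formula genuinely exhibits $\widetilde{\ev_m}$ as equivariant for $\rho_{\Lambda_m}$ on the source and the right $\Vtx{m}(\cG)$-action on the target. I do not anticipate any real obstacle: there is no analytic content, only bookkeeping of the groupoid multiplication, and the single decisive step is the use of $\beta(\tau(m)) = m$, which is available precisely because $\tau$ lies in $\Loop{m}(\cG)$ rather than in all of $\Bis(\cG)$.
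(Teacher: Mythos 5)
Your proposal is correct and is essentially the paper's own proof: both reduce the claim to the one-line computation $(\sigma\star\tau)(m)=\sigma(\beta(\tau(m)))\cdot\tau(m)=\sigma(m)\cdot\tau(m)$, with the decisive step being $\beta(\tau(m))=m$ for $\tau\in\Loop{m}(\cG)$. Your additional remarks on well-definedness of $\rho_{\Lambda_m}$ and composability are harmless elaborations of what the paper leaves implicit.
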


\begin{proof}
 Fix $\sigma \in \Bis {\cG}$ and $\tau \in \Loop{m} (\cG)$ and compute 
 \begin{displaymath}
   \widetilde{\ev_m} ((\sigma \star \tau) \Bisf{m} (\cG))) = (\sigma \star \tau )(m) = \sigma (\underbrace{\beta (\tau(m))}_{=m}) \cdot \tau (m) = \sigma (m) \cdot \tau (m) = \widetilde{\ev_m} (\sigma \Bisf{m} (\cG)) \tau (m).
 \end{displaymath}

\end{proof}

\section{Locally trivial Lie groupoids and transitive group actions}\label{sect: loc:triv}

Our aim is now to study the construction of groupoids from their groups of bisections for locally
trivial Lie groupoids. Again we consider in this section only Lie groupoids over a compact manifold
$M$ that admit an adapted local addition. Moreover, we choose and fix a point $m\in M$.

For locally trivial Lie groupoids the $\alpha$-fibre over any point already determines the manifold of arrows.
Hence, the groupoid quotient discussed in Theorem \ref{thm: gpd:quot} of $\cB (\cG)$ is determined by a quotient of the $\alpha$-fibre.
To construct the quotient, one needs to construct the fibre over a point and the vertex group from the group action of $\Bis (\cG)$ on $M$ and the subgroup $\Bisf{m}(\cG)$.
Following Proposition \ref{prop: quot:fibre} these objects can be obtained as certain quotients of the group of bisections.
The idea is now to study similar situation for abstract Lie groups and relate these Lie groups to groups of bisections.
To this end, we define the central notion of this section:

\begin{definition}\label{defn:
 tgpair} Let $\theta \colon K \times M \rightarrow M$ be a transitive
 (left-)Lie group action of a Lie group $K$ modelled on a metrisable space and $H$
 be a subgroup of $K$.
 
 Then we call $(\theta , H)$ a \emph{transitive pair} (over $M$ with base point
 $m$) if the following conditions are satisfied:
 \begin{enumerate}[label=({P\arabic*})]
  \item \label{defn: tgpair_1} the action is smoothly transitive, i.e., the
        orbit map $\theta_m \coloneq \theta (\cdot,m)$ is a surjective
        submersion,
  \item \label{defn: tgpair_2} $H$ is a normal Lie subgroup of the stabiliser
        $\Stab{m}$ of $m$ and this structure turns $H$ into a regular Lie group
        which is co-Banach as a submanifold in $\Stab{m}$.
 \end{enumerate}
 The largest subgroup of $H$ which is a normal subgroup of $K$ is called
 \emph{kernel} of the transitive pair.\footnote{We will see in Proposition
 \ref{prop: kernel} that there exists a kernel for each transitive pair. By
 standard arguments for topological groups, the kernel is a closed subgroup. In
 general this will not entail that the kernel is a closed Lie subgroup (of the
 infinite-dimensional Lie group $K$).} If the action of $K$ on $M$ is also
 $n$-fold transitive, then we call $(\theta,H)$ an \emph{$n$-fold transitive
 pair}.
\end{definition}

\begin{tabsection}
 Transitive pairs are closely related to Klein geometries \cite[Chapter
 3]{Sharpe97Differential-geometry}. Indeed, they can be understood as
 infinite-dimensional Klein geometries for principal bundles. 
 This view motivates the notion of the kernel of a transitive pair.
 We will come back
 to this perspective in Remark \ref{rem:klein_geometries_vs_transitive_pairs}.
 
 A transitive pair will allow us to construct a locally trivial Lie groupoid
 which is related to the group action on $M$. Before we begin with this
 construction, let us first exhibit two examples of transitive pairs.
\end{tabsection}

\begin{example}\label{ex: tgp:bis} 
  \begin{enumerate}
   \item Let $\cG = (G \toto M)$ be a locally trivial Banach-Lie groupoid over a compact manifold $M$.
   By \cite[Proposition 3.12]{SchmedingWockel14} $\cG$ admits an adapted local addition, whence $\Bis (\cG)$ becomes a Lie group.
   Then the Lie group action $\theta \colon \Bis (\cG) \times M \rightarrow M , \theta \coloneq \beta \circ \ev$ induces a submersion $\theta_m = \beta|_{\alpha^{-1} (m)} \circ \ev_m$ by Lemma \ref{lem: BG:loctriv}.
   Observe that $\Stab{m} = \Loop{m} (\cG)$ and $\Bisf{m}(\cG) \subseteq \Loop{m} (\cG)$ is a normal subgroup.
   Combining Proposition \ref{prop: subgp:vertex} and Proposition \ref{prop: stabsubgp:reg} we see that $(\theta , \Bisf{m}(\cG))$ is a transitive pair if the action $\theta$ is transitive.
 
   In general $\theta$ will not be transitive. However, under some mild assumptions, e.g., $M$ being connected or if $\cG$ admits bisections through each arrow (see Lemma \ref{lem: BG:loctriv}), the action will be transitive and we obtain a transitive pair.
 \end{enumerate}
The preceding example motivated the definition of a transitive pair. 
However, one has considerable freedom in choosing the ingredients for such a pair (see also Remark \ref{rem: spc:findim}):
 \begin{enumerate}
   \item[b)] Consider the diffeomorphism group $\Diff (M)$ of a compact and connected manifold $M$.
   Choose a Lie group $B$ modelled on a Banach space and define $K \coloneq \Diff (M) \times B$. 
   Then $K$ becomes a Lie group which acts transitively via $\theta \colon K \times M \rightarrow M , ((\varphi , b),m) \mapsto \varphi (m)$.
   Fix $m \in M$ and observe that $\theta_m$ is a submersion as $\theta_m = \ev_m \circ \pr_1$ and $\ev_m \colon \Diff (M) \rightarrow M$ is a submersion. 
   By construction $\Stab{m} = \Diff_m (M) \times B$ and $H \coloneq K_m$ is a regular (and normal) Lie subgroup of $K_m$ by Proposition \ref{prop: stabsubgp:reg}.\footnote{Here we have used that for the pair groupoid $\cP (M)$ the Lie group $\Diff (M) = \Bis (\cP (M))$ is regular and $\Bisf{m}(\cP(M)) = \Diff_m (M)$. Moreover, $B$ is regular as a Banach Lie group and $\Lf (\Diff (M) \times B) \cong \Lf (\Diff (M)) \times \Lf (B)$.}
   We conclude that $(\ev_m \circ \pr_1 , \Diff_m (M) \times B)$ is a transitive  pair.
  \end{enumerate}
\end{example}

\begin{remark}\label{rem: spc:findim}
 The conditions \ref{defn: tgpair_1} and \ref{defn: tgpair_2} in Definition \ref{defn: tgpair} are quite weak. 
 We illustrate this by rewriting the conditions for finite-dimensional Lie groups:
 
If $K$ is a finite-dimensional Lie group, the conditions \ref{defn: tgpair_1} and \ref{defn: tgpair_2} are equivalent to 
 \begin{enumerate}[label=(Pfin),align=left]
  \item \label{rem: spc:findim_1} $H$ is a normal closed subgroup of the stabiliser $\Stab{m}$ of $m$ under the action $\theta$.
 \end{enumerate}
 
 \begin{proof}
  To see this note that $\Stab{m} = \theta_m^{-1} (m)$ is a closed subgroup of the finite-dimensional Lie group $K$, whence it is a Lie subgroup of of $K$.
  Then $\theta_m$ factors through $K / \Stab{m} \cong M$ and thus \ref{defn: tgpair_1} holds as $K \rightarrow K/\Stab{m}$ is a submersion.
  Note that $H$ is a closed subgroup of $\Stab{m}$ and every Lie subgroup of a finite-dimensional Lie group is co-Banach as a submanifold and a regular Lie group.
  Hence \ref{rem: spc:findim_1} implies \ref{defn: tgpair_2}.
 \end{proof}
 \end{remark}

\begin{remark}\label{rem:base_point_free_version_of_transitive_pairs}
 From a transitive pair $(\theta\from K\times M\to M,H)$ we can construct the
 following normal subgroupoid $N( \theta, H)$ of the action groupoid
 $K\ltimes_{\theta} M$. For each $n\in M$, we choose some $k_{n}\in K$ with
 $\theta(k_{n},m)=n$ and set $H_{n}:=k_{n}\cdot H\cdot k_{n}^{-1}$. Then
 $H_{n}$ is a normal subgroup of $\Stab{m}$ that does not depend on the choice
 of $k_{n}$. Indeed, if $\theta(k'_{n},m)=n$, then we have
 $k_{n}^{-1}k_{n}'\in \Stab{m}$ and thus
 \begin{equation*}
  k_{n}H k_{n}^{-1}=  k_{n}k_{n}^{-1}k_{n}' H (k_{n}')^{-1}k_{n} k_{n}^{-1}= k_{n}' H (k_{n}')^{-1},
 \end{equation*}
 since $H$ is normal in $\Stab{m}$. Moreover,
 \begin{equation*}
  N(\theta, H):=\bigcup_{n\in M} H_{n}\times\{n\}
 \end{equation*}
 is a closed submanifold of $K\times M$, which can bee seen as follows: by \ref{defn: tgpair_1}
 there exist for each $n\in M$ an open neighbourhood $U\se M$ of $n$ such that
 we can choose $k_{p}$ to depend smoothly on $p$ for $p\in U$. Then
 \begin{equation*}
  G\times U\to G\times U,\quad (g,p)\mapsto (k_{p}^{-1}gk_{p},p)
 \end{equation*}
 is a diffeomorphism that maps $N(\theta,N)\cap (G\times U)$ to the submanifold
 $H\times U$ of $G\times M$. Thus $N(\theta,N)$ is a normal Lie subgroupoid of
 $K\ltimes_{\theta} M$.
 
 On the other hand, given a transitive action of $K$ on $M$, each normal Lie
 subgroupoid of $K\ltimes M$ gives rise to a normal subgroup $H$ of $\Stab{m}$,
 and one easily sees that these two constructions are inverse to each other.
 Thus normal Lie subgroupoids of action groupoids are the equivalent
 reformulation of transitive pairs that do not require to fix a point $m\in M$.
 However, it will be analytically much easier to work with transitive pairs
 (see Remark \ref{rem:quotient_of_action_groupoid}) and to have in mind that
 the choice of a base point does not matter.
\end{remark}

 Now we associate a locally trivial Lie groupoid to a transitive  pair. 
 To this end, we will first construct a principal bundle which will then give rise to the desired locally trivial Lie groupoid. 

\begin{proposition}\label{prop: const:pbund}
 Let $(\theta , H)$ be a transitive  pair.  
 Then the quotients $K/H$ and $\Lambda_m \coloneq \Stab{m}/H$ are Banach manifolds. 
 Moreover, the map $\theta_m$ induces a $\Lambda_m$-principal bundle $\pi \colon K/H \rightarrow M, kH \mapsto \theta (k,m)$.
\end{proposition}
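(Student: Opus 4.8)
The plan is to obtain both manifold structures from Gl\"ockner's homogeneous space theorem \cite[Theorem G]{hg2015} and then to read off local trivialisations of $\pi$ from local sections of the orbit map $\theta_m$. To apply the theorem I first verify that $H$ is a co-Banach, regular and closed Lie subgroup of $K$ (and not merely of $\Stab{m}$). By \ref{defn: tgpair_1} the orbit map $\theta_m \colon K \to M$ is a surjective submersion, so $\Stab{m} = \theta_m^{-1}(m)$ is a split closed Lie subgroup of $K$; as $M$ is finite-dimensional its normal space is $T_m M$, so $\Stab{m}$ is even co-Banach (of finite codimension) in $K$. By \ref{defn: tgpair_2}, $H$ is co-Banach in $\Stab{m}$, whence by transitivity of the co-Banach property \cite[Lemma 1.4]{hg2015} it is co-Banach in $K$ as well; being a Lie subgroup it is locally closed, hence closed in $\Stab{m}$ and therefore in $K$; and regularity is part of \ref{defn: tgpair_2}. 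Applying \cite[Theorem G]{hg2015} to $H \se K$ then gives that $K/H$ is a Banach manifold (modelled on the Banach complement of $T_1 H$ in $T_1 K$) with $K \to K/H$ a submersion, and applying it to the normal subgroup $H \se \Stab{m}$ gives that $\Lambda_m = \Stab{m}/H$ is a Banach Lie group.

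It remains to exhibit the $\Lambda_m$-principal bundle structure on $\pi \colon K/H \to M$, $kH \mapsto \theta(k,m)$. The map $\pi$ is well-defined since $H \se \Stab{m}$, and $\Lambda_m$ acts on $K/H$ from the right by $kH \cdot sH \coloneq ksH$; this is well-defined because $H$ is normal in $\Stab{m}$, smooth because it is induced by the multiplication of $K$ together with the quotient submersions, and free with orbits exactly the fibres of $\pi$, since $\pi(kH) = \pi(k'H)$ holds iff $k^{-1}k' \in \Stab{m}$. For local triviality I fix $n \in M$ and use that $\theta_m$ is a submersion to obtain an open neighbourhood $U \se M$ of $n$ and a smooth section $s \colon U \to K$ with $\theta(s(x),m) = x$. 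Since $\theta(s(\theta(k,m))^{-1}k, m) = m$, the map $\Phi \colon \pi^{-1}(U) \to U \times \Lambda_m$, $kH \mapsto (\theta(k,m), s(\theta(k,m))^{-1}k\,H)$ is a $\Lambda_m$-equivariant bijection with inverse $(x,tH) \mapsto s(x)tH$, and smoothness in both directions follows from smoothness of $s$, of the group operations of $K$ and of the quotient maps.

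I expect the main obstacle to be the manifold structure on $K/H$, which rests entirely on \cite[Theorem G]{hg2015}; the delicate hypothesis to secure is that $H$ is co-Banach in $K$, which I obtain by composing the finite codimension of $\Stab{m}$ in $K$ with the co-Banach codimension of $H$ in $\Stab{m}$. Once the quotient maps are known to be submersions, the principal bundle structure is a routine consequence of the existence of local sections of $\theta_m$.
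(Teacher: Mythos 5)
Your proposal is correct and follows essentially the same route as the paper: both establish that $H$ is co-Banach in $K$ by composing the finite codimension of $\Stab{m}=\theta_m^{-1}(m)$ with condition \ref{defn: tgpair_2} via \cite[Lemma 1.4]{hg2015}, invoke \cite[Theorem G]{hg2015} for the quotient manifold structures, and obtain the principal bundle trivialisations from local sections of the submersion $\theta_m$ (your map $\Phi$ is exactly the trivialisation \eqref{eq: pbun:triv} descended from \eqref{eq: psub:triv}). The only cosmetic difference is that the paper first records $\theta_m\from K\to M$ as a $\Stab{m}$-principal bundle in Lemma \ref{lem: pBundle} and checks that $\pi$ is a submersion via \cite[Theorem A]{hg2015}, whereas you build the trivialisations of $K/H\to M$ directly.
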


 To prove Proposition \ref{prop: const:pbund}, the following Lemma deals with some needed technical details first (compare \cite[Example D.4]{Wockel13Infinite-dimensional-and-higher-structures-in-differential-geometry}).
 
 \begin{lemma}\label{lem: pBundle}
  Let $(\theta, H)$ be a transitive  pair. 
  The group action $\theta$ induces a $\Stab{m} = \theta_m^{-1} (m)$-principal bundle $\theta_m \colon K \rightarrow M$. 
  Canonical bundle trivialisations for this bundle are given by 
    \begin{equation}\label{eq: psub:triv}
     \theta_m^{-1} (U_i) \rightarrow U_i \times \Stab{m},\quad g \mapsto (\theta_m (g), \sigma_i (\theta_m (g))^{-1} \cdot g)
    \end{equation}
 where $(\sigma_i \colon U_i \rightarrow \theta_m^{-1} (U_i))_{i \in I}$ is a section atlas of $\theta_m$.
 \end{lemma}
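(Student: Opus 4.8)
The plan is to realise $\theta_m\from K\to M$ as the bundle associated to the free right translation action of $\Stab{m}$ on $K$, and then to read off the trivialisations from a section atlas of the submersion $\theta_m$.

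First I would record the group-theoretic skeleton. By \ref{defn: tgpair_1} the orbit map $\theta_m$ is a surjective submersion, so $\Stab{m}=\theta_m^{-1}(m)$ is the preimage of a point under a submersion, hence a split submanifold of $K$; being a subgroup, it is a split Lie subgroup. Right translation $K\times \Stab{m}\to K$, $(g,s)\mapsto g\cdot s$, is a smooth free right action, and it preserves the fibres of $\theta_m$: for $s\in \Stab{m}$ one has, using that $\theta$ is a \emph{left} action,
\[
 \theta_m(gs)=\theta(g,\theta(s,m))=\theta(g,m)=\theta_m(g).
\]
Conversely, if $\theta_m(g)=\theta_m(g')$ then $\theta(g^{-1}g',m)=m$, so $g^{-1}g'\in \Stab{m}$ and $g'=g\cdot(g^{-1}g')$; thus $\Stab{m}$ acts freely and transitively on each fibre, which gives the identification $K/\Stab{m}\cong M$.

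Next I would produce the trivialisations. Since $\theta_m$ is a submersion it is locally a projection, so there is an open cover $(U_i)_{i\in I}$ of $M$ with smooth local sections $\sigma_i\from U_i\to \theta_m^{-1}(U_i)$ satisfying $\theta_m\circ\sigma_i=\id_{U_i}$ -- this is the section atlas in the statement. For each $i$ I define the candidate map of \eqref{eq: psub:triv} and check it is well defined: writing $x\coloneq\theta_m(g)$, the relation $\theta(\sigma_i(x),m)=x$ yields $\theta(\sigma_i(x)^{-1},x)=m$, whence
\[
 \theta_m\bigl(\sigma_i(x)^{-1}g\bigr)=\theta\bigl(\sigma_i(x)^{-1},\theta(g,m)\bigr)=\theta(\sigma_i(x)^{-1},x)=m,
\]
so the second coordinate lands in $\Stab{m}$. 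The map $(x,s)\mapsto \sigma_i(x)\cdot s$ is a two-sided inverse (both composites collapse using $\theta_m(\sigma_i(x)s)=x$), and both directions are smooth as composites of $\theta_m$, the $\sigma_i$, and the group operations of $K$.

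Finally I would verify $\Stab{m}$-equivariance: for $t\in \Stab{m}$ the identity $\theta_m(gt)=\theta_m(g)$ shows that the map \eqref{eq: psub:triv} intertwines right translation on $\theta_m^{-1}(U_i)$ with the action $(x,s)\cdot t=(x,st)$ on $U_i\times \Stab{m}$. Together with the free principal action from the first step, these equivariant diffeomorphisms equip $\theta_m\from K\to M$ with the structure of a $\Stab{m}$-principal bundle whose canonical trivialisations are exactly those asserted. I do not expect a genuine obstacle in this lemma; the only points that rely on the infinite-dimensional framework are that the submersion $\theta_m$ admits local sections and that $\Stab{m}$ is a (split) Lie subgroup, and both are immediate from \ref{defn: tgpair_1} and the fact that submersions are locally projections.
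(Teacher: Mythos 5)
Your proof is correct and follows essentially the same route as the paper's: both use that $\theta_m$ is a surjective submersion to make $\Stab{m}=\theta_m^{-1}(m)$ a split Lie subgroup (the paper cites Gl\"ockner's regular value theorem for this), identify $K/\Stab{m}$ with $M$, and then verify that the maps \eqref{eq: psub:triv} built from the section atlas are $\Stab{m}$-equivariant diffeomorphisms. The paper dismisses the final verification as ``trivial to check''; you have simply written out those computations, and they are all correct.
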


\begin{proof}
 Note that $\theta_m$ is a surjective submersion. Thus \cite[Theorem D]{hg2015} implies that $\Stab{m}$ is a split Lie subgroup in $K$
 and $\theta_m$ descents to a homeomorphism $K/\Stab{m} \cong M$. Identify $K/\Stab{m}$ with the manifold $M$.
 Clearly by conjugation $\Stab{m} \cong \theta_m^{-1} (n)$ for all $n \in M$. 
 It is now trivial to check that \eqref{eq: psub:triv} yields bundle trivialisations whose trivialisation changes are $\Stab{m}$-torsor isomorphisms.
\end{proof}

\begin{proof}[of Proposition \ref{prop: const:pbund}]
 The manifold $M$ is finite-dimensional, whence $\Stab{m}$ is a Lie subgroup of finite codimension (as a submanifold).
 By \ref{defn: tgpair_2} $H$ is a co-Banach submanifold in $\Stab{m}$ and thus $H$ is also a co-Banach submanifold of $K$ by \cite[Lemma 1.4]{hg2015}.
 In particular, $H$ is a Lie subgroup of $K$.
 As $H$ is a regular Lie group by \ref{defn: tgpair_2}, we can apply \cite[Theorem G (a)]{hg2015} to obtain a manifold structure on the quotients
 \begin{displaymath}
   p_m \colon K \rightarrow K/H \text{ and } q_m \colon \Stab{m} \rightarrow \Stab{m} / H =: \Lambda_m
 \end{displaymath}
 turning the projections into submersions. 
 Moreover, we deduce from \cite[Theorem G]{hg2015} that $K/H$ is a Banach manifold, $\Lambda_m$ is a Banach-Lie group, $q_m$ is a morphism of Lie groups and $\Lambda_m$ acts on $K/H$ via 
  \begin{displaymath}
   \rho_{\Lambda_m} \colon K/H \times \Lambda_m \rightarrow K/H , (gH,\lambda H) \mapsto (g\cdot \lambda)H. 
  \end{displaymath}
 The subgroup $H$ is contained in $\Stab{m}$, whence $\theta_m$ induces a map $\pi \colon K/H \rightarrow M$ which satisfies $\pi \circ p_m = \theta_m$.
Now $p_m$ and $\theta_m$ are submersions, whence $\pi$ is smooth with surjective tangent map at every point (cf.\ \cite[p.2 and Lemma 1.8]{hg2015}). 
 Since $M$ is finite-dimensional, \cite[Theorem A]{hg2015} implies that $\pi$ is a submersion.
 The action $\theta$ is transitive and thus $\pi$ is surjective submersion.
 Note that the $\pi$-fibre over a point $n \in M$ is given by 
  \begin{displaymath}
   \pi^{-1} (n) = p_m (\theta_m^{-1} (n)) = \{gH \in K/H \mid \theta (g,m) = n\}.
  \end{displaymath}
 Now it is easy to see that the $\pi$-fibres coincide with the orbits of the action $\rho_{\Lambda_m}$ and the $\rho_{\Lambda_m}$-action on the fibres is free, i.e.\ $\pi^{-1} (n)$ is a $\Lambda_m$-torsor for each $n \in M$.
 
 To turn $\pi \colon K/H \rightarrow M$ into a $\Lambda_m$-principal bundle we will now prove that the change of trivialisations induce $\Lambda_m$-torsor isomorphisms.
 Recall that by Lemma \ref{lem: pBundle} the bundle $\theta_m \colon K \rightarrow M$ is a $\Stab{m}$-principal bundle.
 The trivialisations \eqref{eq: psub:triv} descent to $K/H$ via 
  \begin{equation}\label{eq: pbun:triv}
   \kappa_i \colon \pi^{-1} (U_i) \rightarrow U_i \times \Lambda_m ,\quad (gH) \mapsto (\pi (gH), ((\sigma_i (\pi (gH)))^{-1} \cdot g)H).
  \end{equation}
  For each $i  \in I$  we obtain a commutative diagram 
 \begin{displaymath}
  \begin{xy}
  \xymatrix{
      K \ar[dd]^{\theta_m} \ar[rd]^{p_m}  &    		& \theta_m^{-1} (U_i) \ar[ll]_{\supseteq} \ar[rr]^{\eqref{eq: psub:triv}} \ar[d] && U_i \times \Stab{m} \ar[ld]^{\id_{U_i} \times q_m} \ar[dd]^{\pr_1}\\
			     & K/H \ar[ld]^\pi 	& \pi^{-1} (U_i) \ar[l]_{\supseteq} \ar[r]^{\kappa_i} \ar[d] & U_i \times \Lambda_i \ar[rd]^{\pr_1} & \\
      M            & &   U_i \ar[ll]_{\supseteq} \ar@2{-}[rr] & & U_i   
  }
   \end{xy}
 \end{displaymath}
 and the sets $\pi^{-1} (U_i), i \in I$ cover $K/H$.  
 Note that the trivialisation changes descent to $\Lambda_m$-torsor isomorphisms, whence the $\kappa_i$ form an atlas of $\Lambda_m$-principal bundle trivialisations for $K/H$.
 Summing up, we have constructed a principal $\Lambda_m$-bundle $\pi \colon K/H \rightarrow M$.
\end{proof}

\begin{example}\label{ex: Bis:pbun}
 Let $\cG = (G\toto M)$ be a locally trivial Banach-Lie groupoid such that there is a bisection through each element in $G$.
 Consider the transitive pair $(\beta \circ \ev_m , \Bisf{m}(\cG))$ discussed in Example \ref{ex: tgp:bis} a).
 The $m$-stabiliser of $\beta \circ \ev$ coincides with $\Loop{m} (\cG)$ and Lemma \ref{lem: pBundle} yields the $\Loop{m} (\cG)$-bundle $\beta \circ \ev_m \colon \Bis (\cG) \rightarrow M$.
 Moreover, the $\Lambda_m$-principal bundle constructed in Proposition \ref{prop: const:pbund} is $\pi \colon \Bis (\cG)/\Bisf{m}(\cG) \rightarrow M, \sigma \Bisf{m}(\cG) \mapsto \beta (\sigma (m))$ with $\Lambda_m = \Loop{m} (\cG) /\Bisf{m}(\cG)$.
 Then the $\alpha$-fibre through $m$ yields a $\Vtx{m}(\cG)$-principal bundle $\beta|_{\alpha^{-1} (m)} \colon \alpha^{-1} (m) \rightarrow M$. 
 Now Proposition \ref{prop: quot:fibre} allows us to identify $\Lambda_m$ and $\Vtx{m}(\cG)$ and Lemma \ref{lem: ev:equiv} shows that $\ev_m \colon \Bis (\cG) \rightarrow \alpha^{-1} (m)$ descends to a $\Lambda_m$-principal bundle isomorphism 
  \begin{equation*}%
  \begin{xy}
  \xymatrix{
     \Bis (\cG) / \Bisf{m} (\cG) \ar[rr]^-{\widetilde{\ev_m}} \ar[rd]_\pi  &     &  \alpha^{-1} (m) \ar[dl]^{\beta|_{\alpha^{-1} (m)}}  \\
                             &  M  &
  }
\end{xy}.
  \end{equation*}
\end{example}

\begin{definition}\label{defn:
 Liegp:loctgpd} Let $(\theta, H)$ be a transitive pair with associated
 $\Lambda_m$-principal bundle $\pi \colon K/H \xrightarrow{\Lambda_m} M$. As
 principal bundles correspond to locally trivial groupoids, this allows us to
 construct a gauge groupoid
 \begin{displaymath}
  \cR(\theta, H) \coloneq \left(\vcenter{ \xymatrix{ \frac{K/H \times
  K/H}{\Lambda_m} \ar@<1.2ex>[d]^{\beta _{\cR}}\ar@<-1ex>[d]_{\alpha_{\cR}} \\
  M. }}\right)
 \end{displaymath}
 with $\alpha_{\cR} (\langle gH, kH\rangle) = \pi (kH)$ and
 $\beta_{\cR} (\langle gH, kH\rangle) = \pi (gH)$. 
\end{definition}

 Note that we will work with the gauge groupoid $\cR (\theta, H)$ associated to the transitive pair $(\theta,H)$ and not with the principal bundle (although many constructions will be carried out in the context of principal bundles).
 The reason for this is that many interesting maps considered later can not be described as morphisms of principal bundles (with fixed structure group).
 However, one can treat these maps as morphisms of (locally trivial) Lie groupoids over the fixed base $M$. 
 Hence we prefer the groupoid perspective.

 Before we continue, let us record some technical details on the construction of the Lie groupoid $\cR (\theta,H)$.
 
 \begin{remark}\label{rem: Liegp:loctgpd} Let $(\theta, H)$ be a transitive pair with $\theta \colon K \times M \rightarrow M$.
 \begin{enumerate}
  \item Observe that as $K/H$ is a Banach manifold, the gauge groupoid $\cR(\theta,H)$ is a Banach-Lie groupoid
 and thus $\cR(\theta, H)$ admits an adapted local addition by \cite[Proposition 3.12]{SchmedingWockel14}.
 Moreover, the gauge groupoid $\cR (\theta, H)$ is source connected if and only if $K/H$ is connected.
  \item Choose a section atlas $(\sigma_i,U_i)_{i \in I}$ of $\theta_m \colon K \xrightarrow{\Stab{m}} M$ as in \eqref{eq: psub:triv}.
  This atlas induces a section atlas $s_i \coloneq p_m \circ \sigma_i \colon U_i \rightarrow \pi^{-1} (U_i) \subseteq K/H$
 of the bundle $\pi \colon K/H \xrightarrow{\Lambda_m} M$ (cf.\ the proof of Proposition \ref{prop: const:pbund}). Using these section,
 we identify the bisections of $\cR (\theta,H)$ with bundle automorphisms
 via the Lie group isomorphism from \cite[Example 3.16]{SchmedingWockel14}
 \begin{equation}\label{eq: ident:AutBis}
  \Aut (\pi \colon K/H \rightarrow M) \rightarrow \Bis (\cR (\theta,H)), \quad f \mapsto (m \mapsto \langle f(s_i (m)) , s_i (m)\rangle , \text{ if } m\in U_i.
 \end{equation}
 For later use we recall that the bundle trivialisations \eqref{eq: pbun:triv}
 induce charts for the manifold $\frac{K/H \times K/H}{\Lambda_m}$ via
 \begin{equation}\label{eq: Gau:triv}
  \frac{\pi^{-1} (U_i) \times \pi^{-1} (U_j)}{\Lambda_m} \rightarrow U_i \times U_j \times \Lambda_m , \quad\langle p_1 , p_2\rangle \mapsto (\pi(p_1) , \pi (p_2), \delta (\sigma_i (\pi (p_1)), p_1) \delta (\sigma_j (\pi(p_2),p_2)^{-1}).
 \end{equation}
 where $\delta \colon K/H \times_\pi K/H \rightarrow \Lambda_m$ is the smooth
 map mapping a pair $(p,q)$ to the element $p^{-1} \cdot q \in \Lambda_m$ which
 maps $p$ to $q$ (via the $\Lambda_m$-right action). 
 \end{enumerate}
\end{remark}

\begin{remark}\label{rem:quotient_of_action_groupoid}
 In the base point free version of transitive pairs from Remark
 \ref{rem:base_point_free_version_of_transitive_pairs}, one constructs the
 groupoid $\cR(\theta,H)$ by taking the quotient
 \begin{equation*}
  (K\ltimes_{\theta} M)/N(\theta, H).
 \end{equation*}
 Indeed, we have the isomorphisms of Lie groupoids over $M$
 \begin{equation*}
  K\ltimes_{\theta}M:=
  \left(\vcenter{  \xymatrix{
  K\times M \ar@<1.2ex>[d]\ar@<-1ex>[d] \\ M
  }}\right)
  \cong 
  \left(\vcenter{  \xymatrix{
  \frac{(K\times K)}{\Stab{m}} \ar@<1.2ex>[d]\ar@<-1ex>[d] \\ M
  }}\right)
  \cong
  \left(\vcenter{  \xymatrix{
  \left.\frac{(K\times K)}{H}\right/\frac{H}{\Stab{m}} \ar@<1.2ex>[d]\ar@<-1ex>[d] \\ M
  }}\right).
 \end{equation*}
 Then $N(\theta,H)$ corresponds exactly to the normal subgroupoid
 $ \left.\frac{(H\times H)}{H}\right/\frac{H}{\Stab{m}}$ on the right hand
 side.
 
 However, it is much harder to construct the smooth structure on the quotient
 $(K\ltimes_{\theta} M)/N(\theta, H)$ directly. For instance, in the
 finite-dimensional case (or also in the case of $K$ being a Banach-Lie group)
 one can use Godement's criterion for the existence of the quotient
 $(K\ltimes_{\theta} M)/N(\theta, H)$ in the category of smooth manifolds (cf.\
 \cite[Theorem
 2.2.4]{Mackenzie05General-theory-of-Lie-groupoids-and-Lie-algebroids}). It is
 not known to the authors whether Godement's criterion extends beyond Banach
 manifolds, whereas the construction of $\cR(\theta,H)$ as in \ref{defn:
 Liegp:loctgpd} is possible in the full generality of our definition of a
 transitive pair.
\end{remark}

So far we have constructed a locally trivial groupoid $\cR(\theta,H)$ associated to a transitive  pair $(\theta,H)$.
Let us now analyse how the Lie group $K$ (i.e.\ the group acting via $\theta$ on $M$) is related to the Lie group $\Bis (\cR(\theta,H))$.  
To this end, we study a natural Lie group morphism $K \rightarrow \Bis (\cR (\theta , H))$ which is closely related to the action of the transitive pair.

\begin{lemma}\label{lem:canonical_morphism_into_bisections}
 Let $(\theta, H)$ be a transitive pair. Then the action of $K$ on $K/H$ by
 left multiplication gives rise to a group homomorphsim
 $K \rightarrow \Aut (\pi \colon K/H \xrightarrow{\Lambda_m} M)$. With respect
 to the canonical isomorphism
 $ \Aut (\pi \colon K/H \xrightarrow{\Lambda_m} M) \cong \Bis (\cR (\theta,H))$
 of Lie groups from \eqref{eq: ident:AutBis} this gives rise to the group
 homomorphism
 \begin{equation*} 
  a_{\theta,H} \colon K \rightarrow \Bis (\cR (\theta,H)) ,\quad k \mapsto (x \mapsto \langle k \cdot s_i (x) , s_i (x) \rangle , \text{ for } x \in U_i),
 \end{equation*}
 where $s_i = p_m \circ \sigma_i, i\in I$ are the sections from \ref{rem:
 Liegp:loctgpd} b). Moreover, $a_{\theta,H}$ is smooth and thus a morphism of Lie groups.
\end{lemma}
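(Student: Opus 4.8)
The plan is to produce $a_{\theta,H}$ as the composite of the obvious left-multiplication homomorphism $K\to \Aut(\pi)$ with the isomorphism \eqref{eq: ident:AutBis}, and then to reduce the smoothness assertion to a smoothness statement for the adjoint map via the exponential law.

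First I would observe that left translation on $K$ descends along the submersion $p_m\colon K\to K/H$: since multiplication on $K$ is smooth, the map $K\times K\to K/H$, $(k,g)\mapsto p_m(kg)$ is smooth and factors through the submersion $\id_K\times p_m$, so it descends to a smooth left action $K\times K/H\to K/H$, $(k,gH)\mapsto (kg)H$. Writing $L_k$ for the resulting diffeomorphism $gH\mapsto (kg)H$ of $K/H$, I would check that $L_k$ commutes with the right $\Lambda_m$-action $\rho_{\Lambda_m}$ of Proposition \ref{prop: const:pbund} (immediate from $(kg\lambda)H=((kg)H)\cdot(\lambda H)$) and that it covers the diffeomorphism $\theta(k,\cdot)$ of $M$, since $\pi(L_k(gH))=\theta(kg,m)=\theta(k,\pi(gH))$. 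Hence each $L_k$ is a $\Lambda_m$-bundle automorphism, and $L_{kk'}=L_k\circ L_{k'}$ shows that $k\mapsto L_k$ is a group homomorphism $K\to \Aut(\pi\colon K/H\to M)$.

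Composing with the Lie group isomorphism \eqref{eq: ident:AutBis} and evaluating at $f=L_k$ gives precisely $a_{\theta,H}(k)(x)=\langle L_k(s_i(x)),s_i(x)\rangle=\langle k\cdot s_i(x),s_i(x)\rangle$ for $x\in U_i$, which is the asserted formula; being a composite of two group homomorphisms, $a_{\theta,H}$ is itself a homomorphism. For the smoothness---the only analytically substantive point---I would use that $\Bis(\cR(\theta,H))$ is a (split) submanifold of $C^\infty(M,(K/H\times K/H)/\Lambda_m)$ by Theorem \ref{theorem: A}, so that by the exponential law (Theorem \ref{thm: MFDMAP} \ref{thm:manifold_structure_on_smooth_mapping_d}) it suffices to show that the adjoint $\hat a\colon K\times M\to (K/H\times K/H)/\Lambda_m$, $(k,x)\mapsto a_{\theta,H}(k)(x)$, is smooth. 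As smoothness is local in $M$, I would verify it on each $K\times U_i$, where $\hat a$ factors as $(k,x)\mapsto (k\cdot s_i(x),s_i(x))\mapsto \langle k\cdot s_i(x),s_i(x)\rangle$: the first arrow is smooth because the left $K$-action and the section $s_i$ (from Remark \ref{rem: Liegp:loctgpd} b)) are smooth, and the second is the quotient submersion $K/H\times K/H\to (K/H\times K/H)/\Lambda_m$. These local expressions agree on overlaps: writing $s_j=s_i\cdot g_{ij}$ with the $\Lambda_m$-valued transition functions $g_{ij}$, the fact that the left and right actions commute gives $\langle k\cdot s_j(x),s_j(x)\rangle=\langle (k\cdot s_i(x))\cdot g_{ij}(x),s_i(x)\cdot g_{ij}(x)\rangle=\langle k\cdot s_i(x),s_i(x)\rangle$.

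The main obstacle is entirely in this last step, and it is one of bookkeeping rather than genuine difficulty: one must reduce to the adjoint correctly, check that the piecewise formula patches to a single globally defined smooth map $\hat a$ on $K\times M$, and then read off smoothness of $a_{\theta,H}$ into the mapping space, hence---because the image lies in the submanifold---into $\Bis(\cR(\theta,H))$.
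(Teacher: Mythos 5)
Your proof is correct and follows essentially the same route as the paper: the homomorphism property is obtained from left multiplication on $K/H$ composed with the isomorphism \eqref{eq: ident:AutBis}, and smoothness is reduced via the exponential law to smoothness of the adjoint map $K\times M\to \frac{K/H\times K/H}{\Lambda_m}$, checked locally over the $U_i$. The only cosmetic difference is that you factor the local expression through the quotient projection $K/H\times K/H\to \frac{K/H\times K/H}{\Lambda_m}$ rather than computing directly in the gauge-groupoid charts \eqref{eq: Gau:triv}; since smoothness of that projection is itself read off from exactly those charts, the content is the same.
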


\begin{proof}
 Consider the smooth group action $\lambda_K \colon K \times K/H \rightarrow K/H , (k , gH) \mapsto \lambda_k (gH) \coloneq (kg)H$. 
 By Lemma \ref{lem: eff:act} this group action commutes with the right action $\rho_{\Lambda_m}$ on $K/H$. 
 Hence for each $k \in K$ the map $\lambda_K (k) \colon K/H \rightarrow K/H$ is a bundle automorphism of the $\Lambda_m$-principal bundle.
 Now $a_{\theta, H} (k)$ is the image of the bundle automorphism $\lambda_K (k)$ under the Lie group isomorphism \eqref{eq: ident:AutBis}.
 Since $\lambda_K (kk') = \lambda_K (k)\lambda_K (k')$, we derive that $a_{\theta , H}$ is a group homomorphism.
 
 Let us now prove that $a_{\theta,H}$ is smooth. 
 To this end recall that $K$ and $\frac{K/H \times K/H}{\Lambda_m}$ are modelled on metrisable spaces and $\Bis (\cR(\theta ,H)) \subseteq C^\infty (M, \frac{K/H \times K/H}{\Lambda_m})$.
 Since $M$ is compact, we apply the exponential law Theorem \ref{thm:  MFDMAP} \ref{thm:manifold_structure_on_smooth_mapping_d} to see that $a_{\theta,H}$ will be smooth if the map 
  \begin{displaymath}
   a_{\theta,H}^\vee \colon K \times M \rightarrow \frac{K/H \times K/H}{\Lambda_m} , (k,x) \mapsto  \langle k \cdot s_i (x) , s_i (x) \rangle  , \text{ for } x \in U_i 
  \end{displaymath}
 is smooth. 
 We work locally around $(k,x) \in K \times M$ . 
 Fix $i \in I$ such that $x \in U_i$ and recall that $s_i = p_m \circ \sigma_i$ and $\theta_m (\sigma_i) = \id_{U_i}$ hold.
 Then we have
  \begin{equation}\label{eq: app:beta}
   \beta_\cR (\langle k \cdot s_i (x) , s_i (x)\rangle) = \pi (k\cdot s_i(x)) = \theta_m (k\cdot \sigma_i (x)) = \theta (k , \theta_m (\sigma_i (x)) = \theta (k,x). 
  \end{equation}
 Now we choose $j \in I$ with $\theta (k , x) \in U_j$ and denote by $\kappa_{ji}$ the manifold charts \eqref{eq: Gau:triv} defined in Remark \ref{rem: Liegp:loctgpd}. 
 Then the composition $\kappa_{ji} \circ a_{\theta,H}$ which is defined at least on the pair $(k,x)$ and we compute: 
 \begin{align*}
  \kappa_{ji} \circ a_{\theta, H}^\vee (k,x) &\stackrel{\eqref{eq: Gau:triv}}{=} (\pi (k \cdot s_i (x)) , \pi (s_i (x)), \delta (s_j (\pi (k \cdot s_i (x))), k\cdot s_i (x))\delta^{-1} (s_i \circ \underbrace{\pi \circ  s_i (x)}_{=x} , s_i (x)))\\
					&\stackrel{\eqref{eq: app:beta}}{=} (\theta (k,x) , x, \delta (s_j (\theta (k,x)), k\cdot s_i (x))) = (\theta (k,x) , x, \delta (s_j (\theta (k,x)), \lambda_K (k, s_i (x))))
 \end{align*}
 Note that the above formula did not depend on $(k,x)$, whence it is valid for all $(g,y)$ with $a_{\theta , H}^\vee (g,y) \in \tfrac{\pi^{-1} (U_j) \times \pi^{-1} (U_i)}{\Lambda_m}$. 
 In particular, we see that $\kappa_{ji} \circ a_{\theta, H}^\vee$ is smooth as a composition of the smooth maps $\theta, \delta$ and $\lambda_K$. 
 Since $\frac{K/H \times K/H}{\Lambda_m}$ carries the identification topology with respect to the atlas $(K_{ji})_{i,j \in I}$, we deduce that $a_{\theta,H}^\vee$ is smooth.
 Summing up, this proves that $a_{\theta ,H}$ is smooth and thus a Lie group morphism.
\end{proof}

Before we clarify the relation of $a_{\theta, H}$ and $\theta$ let us return briefly to the problem of (re-)constructing a Lie groupoid from its group of bisections (see Theorem \ref{thm: gpd:quot} and Remark \ref{rem: Re:inconstruction}).
To obtain a construction principle for Lie groupoids, we would like $a_{\theta,H}$ to be an isomorphism of Lie groups.
Then $a_{\theta,H}$ would identify the Lie group with the group of bisections of $\cR(\theta,H)$ and thus transitive pairs would induce (up to isomorphism) unique locally trivial Lie groupoids.
However, in general for an arbitrary transitive pair $(\theta,H)$ the Lie group morphism $a_{\theta , H}$ will neither be injective nor surjective.
We illustrate this with two examples:

\begin{example} \label{ex: Lhom:bad}
  \begin{enumerate}
   \item Let $K$ be a compact finite-dimensional Lie group. Then $K$ acts on itself transitively via left multiplication $\lambda \colon K \times K \rightarrow K$. 
   Take $m=1_K$ and $H = \{1_K\}$ to obtain the principal bundle $\id_K\colon K \xrightarrow{H} K$.
   The associated gauge groupoid is the pair groupoid $K\times K \toto K$ whose bisections are given by $\Diff (K)$.
   Taking this identification, $a_{\theta,H}$ becomes the map $K \rightarrow \Diff (K), k \mapsto \lambda (k,\cdot)$ which will only be surjective in trivial cases. 
   \item We return to Example \ref{ex: tgp:bis} b): Let $B$ be a Banach Lie group, $M$ a compact connected manifold and $m \in M$. 
   Then $(\ev_m \circ \pr_1 , \Diff_m (M) \times B)$ is a transitive pair.
   Set $H \coloneq \Diff_m (M) \times B$ and observe
    \begin{displaymath}
     (\Diff (M) \times B) / H \cong \Diff (M) / \Diff_m (M) \cong M.
    \end{displaymath}
 Moreover, since $H$ is the $m$-stabiliser of the action $\ev_m \circ \pr_1$, we deduce that $\cR(\ev_m \circ \pr_1,H)$ is isomorphic to the pair groupoid $\cP (M)$. 
 With respect to these identifications, the map $a_{\ev_m \circ \pr_1 ,H}$ becomes 
  \begin{displaymath}
   \Diff (M) \times B \rightarrow \Bis (\cP (M)) \cong \Diff (M) , (\varphi , b) \mapsto \varphi
  \end{displaymath}
 which is surjective but can not be injective for non-trivial $B$.
 Note that this example arose from enlarging $\Diff (M) \cong \Bis (\cP(M))$. 
 Moreover, we record that the action of $\Diff (M) \times B$ by left multiplication on $(\Diff (M) \times B) / H$ is not effective and this causes $a_{\ev_m \circ \pr_1 , H}$ to be not injective (cf.\ Lemma \ref{lem: tp:eff} below). 
  \end{enumerate}
\end{example}

As we have already pointed out, transitive pairs are quite flexible and more general than groups of bisections (of locally trivial Lie groupoids)
However, transitive pairs are a source of Lie group morphisms from Lie groups with transitive actions on $M$ into the bisections of suitable locally trivial Lie groupoids over $M$.
In particular, the morphism $a_{\theta,H}$ is closely related to the action $\theta$ as the following Lemma shows.

\begin{lemma} \label{lem:canonical_morphisms_commute}
 For a transitive pair $(\theta, H)$ the Lie group morphism $a_{\theta,H}$ makes the diagram 
 \begin{displaymath}
      \begin{xy}
      \xymatrix{
	K \ar[rr]^{a_{\theta, H}} \ar[rrd]^{\theta^\wedge} & & \Bis (\cR (\theta , H)) \ar[d]^{(\beta_\cR)_*} \\
	& & \Diff (M)
  }
\end{xy}
    \end{displaymath}
 commutative. If $a_{\theta ,H}$ is an isomorphism of Lie groups, then the map $\theta^\wedge$ is a submersion.
\end{lemma}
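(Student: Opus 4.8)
The plan is to treat the two assertions separately: the commutativity of the triangle is essentially bookkeeping built on a formula already in hand, while the submersion claim reduces to a single property of the gauge groupoid.

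For commutativity I would unwind both composites pointwise. Fix $k\in K$ and $x\in M$, and choose $i\in I$ with $x\in U_i$. By the definition of $(\beta_\cR)_*$ and of $a_{\theta,H}$ from Lemma~\ref{lem:canonical_morphism_into_bisections},
\[
 \bigl((\beta_\cR)_*\circ a_{\theta,H}\bigr)(k)(x)=\beta_\cR\bigl(\langle k\cdot s_i(x), s_i(x)\rangle\bigr)=\pi\bigl(k\cdot s_i(x)\bigr),
\]
where the last equality is the definition of $\beta_\cR$ in Definition~\ref{defn: Liegp:loctgpd}. This expression was already computed in \eqref{eq: app:beta} to equal $\theta(k,x)$, using $s_i=p_m\circ\sigma_i$, $\theta_m\circ\sigma_i=\id_{U_i}$ and the action axiom. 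Since $\theta^\wedge(k)(x)=\theta(k,x)$ by definition of the exponential adjoint (and $\theta^\wedge$ lands in $\Diff(M)$ because $\theta$ is a group action), and since the $U_i$ cover $M$, this gives $(\beta_\cR)_*\circ a_{\theta,H}=\theta^\wedge$, i.e.\ the diagram commutes.

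Now suppose $a_{\theta,H}$ is an isomorphism of Lie groups, hence a diffeomorphism. From the identity just established, $\theta^\wedge=(\beta_\cR)_*\circ a_{\theta,H}$; precomposing with a diffeomorphism neither creates nor destroys the submersion property, so $\theta^\wedge$ is a submersion if and only if $(\beta_\cR)_*\colon \Bis(\cR(\theta,H))\to\Diff(M)$ is one. It therefore remains to prove that $(\beta_\cR)_*$ is a submersion.

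For this I would pass through the identification $\Bis(\cR(\theta,H))\cong\Aut(\pi\colon K/H\to M)$ of \eqref{eq: ident:AutBis}, under which $(\beta_\cR)_*$ becomes the canonical projection sending a $\Lambda_m$-bundle automorphism to the diffeomorphism of $M$ it covers. As this projection is a group homomorphism, it suffices to produce a smooth local section near $\id_M$ together with a complement to its kernel (the gauge bisections). Such a section is obtained by lifting a diffeomorphism $\varphi$ of $M$ sufficiently close to $\id_M$ to a bundle automorphism, smoothly in $\varphi$, using the trivialisations $\kappa_i$ from \eqref{eq: pbun:triv} and a partition of unity subordinate to $(U_i)_{i\in I}$ to glue the local lifts, while the kernel is a split Lie subgroup supplying the complementary directions, so that $(\beta_\cR)_*$ is locally a projection. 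I expect this lifting and gluing construction over the compact base $M$ to be the only genuinely technical step; everything else is formal, resting on the formula \eqref{eq: app:beta} and on $a_{\theta,H}$ being a diffeomorphism.
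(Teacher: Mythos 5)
Your proof is correct and follows essentially the same route as the paper: commutativity is read off pointwise from \eqref{eq: app:beta}, and the submersion claim is reduced, via the fact that $a_{\theta,H}$ is a diffeomorphism, to showing that $(\beta_\cR)_*$ is a submersion. The only difference is in that last step, which the paper settles by citing \cite[Example 3.16]{SchmedingWockel14} (where $(\beta_\cR)_*\colon \Bis(\cG)\to\Diff(M)$ is shown to be a submersion for any locally trivial Lie groupoid), whereas you sketch a direct lifting argument; note that gluing local bundle-automorphism lifts with a partition of unity is not a literal averaging of automorphisms, so this step needs the extra care that the cited example supplies.
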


\begin{proof} 
Observe first that for $x \in M$ we have (after choosing an appropriate section $s_i$) the formula \eqref{eq: app:beta} 
  \begin{displaymath}
  \beta_\cR \circ a_{\theta , H} (k)(x) = \beta_\cR (\langle k \cdot s_i (x) , s_i (x)\rangle) = \theta (k,x) = \theta^\wedge (k)(x). 
  \end{displaymath}
 Hence $(\beta_\cR)_* \circ a_{\theta , H} = \theta^\wedge$ and the diagram commutes.
 This also entails $a_{\theta , H} (H) \subseteq \Loop{m} (\cR (\theta,H))$. 
 
 If $a_{\theta,H}$ is a Lie group isomorphism, $\theta^\wedge$ is a submersion if $(\beta_\cR)_* \colon \Bis (\cR (\theta,H)) \rightarrow \Diff (M)$ is a submersion.
 However, since $\cR(\theta,H)$ is locally trivial, the map $(\beta_\cR)_*$ is a submersion by \cite[Example 3.16]{SchmedingWockel14}. 
\end{proof}

\begin{remark}
 That $\theta^\wedge$ must be a submersion if $a_{\theta,H}$ is an isomorphism of Lie groups can be understood as the statement that the Lie group $K$ needs to be large enough to be eligible to be the bisection group.
 In particular, if $M$ is not a zero-dimensional manifold, this condition rules out every transitive pair which arises by a group action of a finite-dimensional Lie group.
\end{remark}

Building on the observation in Example \ref{ex: Lhom:bad} b) we will now develop a simple criterion which ensures that for a given transitive pair $(\theta, H)$ the morphism $a_{\theta , H}$ is injective.

\begin{lemma}\label{lem: tp:eff}
 Let $(\theta, H)$ be a transitive pair. 
 The Lie group morphism  
    \begin{displaymath}
     a_{\theta, H} \colon K \rightarrow  \Bis (\cR (\theta,H)), g \mapsto (kH \mapsto (g\cdot k)H) 
    \end{displaymath}
 is injective if and only if the action $\lambda_H\colon H \times K/H \rightarrow K/H, (h,gH) \mapsto (hg)H$ is effective. 
 \end{lemma}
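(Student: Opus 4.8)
The plan is to identify the kernel of $a_{\theta,H}$ explicitly and to observe that it automatically lies in $H$, so that it can be compared directly with the kernel of the $H$-action $\lambda_H$. The argument is purely group-theoretic, since all the smoothness and Lie-group content is already provided by Lemma~\ref{lem:canonical_morphism_into_bisections} and the isomorphism \eqref{eq: ident:AutBis}.

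First I would use that $a_{\theta,H}$ factors as the homomorphism $K \rightarrow \Aut(\pi\colon K/H \rightarrow M)$, $g \mapsto \lambda_K(g)$, followed by the Lie group isomorphism \eqref{eq: ident:AutBis} onto $\Bis(\cR(\theta,H))$ (cf.\ Lemma~\ref{lem:canonical_morphism_into_bisections}). As the latter is an isomorphism, $a_{\theta,H}$ is injective if and only if $g \mapsto \lambda_K(g)$ is injective, that is, if and only if $\lambda_K(g) = \id_{K/H}$ forces $g = 1$.

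Next I would unwind the condition $\lambda_K(g) = \id_{K/H}$, which by definition reads $(g\cdot k)H = kH$ for all $k \in K$. Evaluating at the base coset $k = 1$ yields $gH = H$, hence $g \in H$; thus the kernel of $g \mapsto \lambda_K(g)$ is automatically contained in $H$. This containment is the only genuine observation in the proof. For $g \in H$ the map $\lambda_K(g)$ coincides with $\lambda_H(g,\cdot)\colon kH \mapsto (gk)H$, since $\lambda_H$ is by construction the restriction of the left-multiplication action to $H$. Therefore
\[
 \Ker a_{\theta,H} = \{g \in H \mid \lambda_K(g) = \id_{K/H}\} = \{h \in H \mid \lambda_H(h,\argument) = \id_{K/H}\},
\]
which is precisely the kernel of the $H$-action $\lambda_H$. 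This set is trivial exactly when $\lambda_H$ is effective, giving the claimed equivalence.

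I do not expect a real obstacle here: the only point requiring care is the identification $\Ker a_{\theta,H} \se H$ obtained from evaluation at $eH$, after which everything is formal. I would also remark that this common kernel equals the normal core $\bigcap_{k \in K} k H k^{-1}$ of $H$ in $K$, i.e.\ the kernel of the transitive pair in the sense of Definition~\ref{defn: tgpair}; so both conditions in the Lemma are in fact equivalent to the kernel of the transitive pair being trivial.
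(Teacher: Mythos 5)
Your proof is correct and follows essentially the same route as the paper: both reduce $a_{\theta,H}$ to the left-multiplication homomorphism $\lambda_K$ via the isomorphism \eqref{eq: ident:AutBis}, and both use evaluation at the base coset to see that $\Ker a_{\theta,H}\se H$, where it coincides with the kernel of $\lambda_H$. Your closing remark identifying this kernel with the normal core of $H$ in $K$, i.e.\ the kernel of the transitive pair, is also consistent with Proposition \ref{prop: kernel}.
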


 \begin{proof}
 The isomorphism $ \Aut (\pi \colon K/H \xrightarrow{\Lambda_m} M) \cong \Bis (\cR (\theta,H))$ allows us to rewrite $a_{\theta,H}$ as the left multiplication $\lambda_K \colon K \rightarrow  \Aut (\pi \colon K/H \xrightarrow{\Lambda_m} M)$, where $\lambda_K (k) \colon K/H \rightarrow K/H , gH \mapsto (kg)H$. 
  Assume first that $a_{\theta,H}$ (and thus also $\lambda_K$) is injective. 
  Now consider $k \in H$ such that $(hg)H = gH$ for all $g \in G$. 
  This implies $\lambda_K (h) (gH) = (hg)H = gH = \id_{K/H} (gH)$, whence $h = 1_K$ as the group homomorphism $\lambda_K$ is injective.
  
  Conversely assume that  the action $H \times K/H \rightarrow K/H, (h,gH) \mapsto (hg)H$ is effective and consider $g \in \ker \lambda_K$, i.e.\ $\lambda_K (g) = \id_{G/H}$.
  Then $(g\cdot k)H = kH$ holds for all $kH \in G/H$.
  As this entails $gH = 1_G H$, we deduce $g \in H$. 
  Now the left action of $H$ on $G/H$ by multiplication is effective, forcing $g \in H$ to be the identity $1_K$.
 \end{proof}

 \begin{proposition}\label{prop: kernel}
  Consider a transitive pair $(\theta,H)$ and denote by $\lambda_H$ the left action on the quotient $K/H$ as in Lemma \ref{lem: tp:eff}.
  Then the kernel of $a_{\theta,H}$ is given by 
    \begin{displaymath}
    \Ker a_{\theta,H} = \{h \in H \mid \lambda_H (h,\cdot) \equiv \id_{K/H}\}
    \end{displaymath}
  and coincides with the kernel of the transitive pair $(\theta,H)$.
  In particular every transitive pair admits a unique kernel.
 \end{proposition}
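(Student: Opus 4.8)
The plan is to reduce everything to the description of $a_{\theta,H}$ as left multiplication and then to recognise the resulting subgroup as the normal core of $H$ in $K$. First I would invoke Lemma~\ref{lem: tp:eff}, which rewrites $a_{\theta,H}$ as the homomorphism $\lambda_K \colon K \to \Aut(\pi\colon K/H \to M)$ given by $\lambda_K(g)(kH) = (gk)H$. Thus $g \in \Ker a_{\theta,H}$ exactly when $(gk)H = kH$ for all $k \in K$. Evaluating this identity at the base coset $k = 1_K$ forces $gH = H$, i.e.\ $g \in H$; conversely, for $g \in H$ the condition $\lambda_K(g) = \id_{K/H}$ is literally the statement $\lambda_H(g,\cdot) \equiv \id_{K/H}$. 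This already yields the first displayed formula $\Ker a_{\theta,H} = \{h \in H \mid \lambda_H(h,\cdot) \equiv \id_{K/H}\}$.

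Next I would rewrite the defining condition in conjugation form: $(hk)H = kH$ for all $k$ is equivalent to $k^{-1}hk \in H$ for all $k \in K$, so that, writing $N \coloneq \Ker a_{\theta,H}$, one obtains $N = \bigcap_{k \in K} kHk^{-1}$. This is precisely the normal core of $H$ in $K$. Being the kernel of a group homomorphism, $N$ is automatically a normal subgroup of $K$, and by the first step $N \subseteq H$; hence $N$ is a subgroup of $H$ that is normal in $K$.

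For the maximality (and therefore the identification with the kernel of the transitive pair), I would take an arbitrary subgroup $L \leq H$ that is normal in $K$ and show $L \subseteq N$: for $h \in L$ and $k \in K$, normality gives $k^{-1}hk \in L \subseteq H$, whence $h \in N$ by the conjugation description. Thus $N$ is the largest subgroup of $H$ normal in $K$, i.e.\ the kernel of $(\theta,H)$ in the sense of Definition~\ref{defn: tgpair}; as a largest element it is unique, which also establishes the final sentence of the statement. The argument is essentially group-theoretic and carries no analytic difficulty; the only point requiring a little care --- the mild ``obstacle'' --- is the reduction showing $\Ker a_{\theta,H} \subseteq H$ (so that $\lambda_H$, rather than $\lambda_K$, governs the kernel) together with the observation that the conjugation condition reproduces exactly the normal core.
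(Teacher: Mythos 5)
Your proposal is correct and follows essentially the same route as the paper: identify $a_{\theta,H}$ with left multiplication on $K/H$ via Lemma \ref{lem: tp:eff}, deduce $\Ker a_{\theta,H}\subseteq H$ and the first displayed formula, and then show that any subgroup of $H$ normal in $K$ acts trivially on $K/H$ and hence lies in the kernel. Your explicit identification of the kernel with the normal core $\bigcap_{k\in K}kHk^{-1}$ is a harmless reformulation of the same argument.
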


 \begin{proof}
 Recall from the proof of Lemma
 \ref{lem: tp:eff} that $\Ker a_{\theta,H}$ is contained in $H$ and consists of
 all elements of $H$ which act trivially by left multiplication on $K/H$.
 Thus we obtain the first description of $\Ker a_{\theta,H}$.
 
 As a kernel of a Lie group morphism, $\Ker a_{\theta, H}$ is a closed and normal subgroup of $K$.
 Let us now prove that every subgroup $G$ of $H$ which is normal in $K$ is contained in $\Ker a_{\theta,H}$.
 Then for $k\in K$ and
 $g \in G \subseteq H$ we derive from $G$ being normal in $K$ that $g k = k g'$
 for $g' \in H$, i.e.\ $gkH=kH$ for all $k \in K$. Thus elements in $G$ act
 trivially on $K/H$, whence $G \subseteq \Ker a_{\theta,H}$.
 We conclude that $\ker a_{\theta,H}$ is the kernel of the transitive pair $(\theta, H)$.
 \end{proof}

 \begin{definition}
  A transitive pair $(\theta , H)$ is called \emph{effective} if $H$ acts effectively on the quotient $K/H$ by left multiplication, i.e.\ the kernel of the transitive pair is trivial. 
 \end{definition}
 
 The characterisation of the kernel of a transitive pair in Proposition \ref{prop: kernel} can be used to compute it. 
 For the examples considered in this section we obtain:
 
 \begin{example}
  The transitive pairs in Example \ref{ex: tgp:bis} a) and \ref{ex: Lhom:bad} a) are effective, whence the kernel is trivial.
  For the transitive pair $(\ev_m \circ \pr_1 , \Diff(M)_m \times B)$ from Example \ref{ex: tgp:bis} b) the kernel is $\{\id_M\} \times B$ (by Example \ref{ex: Lhom:bad} b)).
  In Theorem \ref{thm:integrating_extensions_to_transitive_pairs} below the kernel of a class of transitive pairs arising from extensions of diffeomorphism groups is computed.
 \end{example}

 Although the criterion for
 the injectivity of $a_{\theta,H}$ gives rise to a very natural condition on
 the transitive pair, the question of surjectivity is much more subtle.
 
\begin{remark}\label{rem:klein_geometries_vs_transitive_pairs}
 We now describe the relation between Klein geometries \cite[Chapter
 3]{Sharpe97Differential-geometry} and transitive pairs. First note that our
 setting is infinite-dimensional, and the notion of a transitive pair takes the
 additional analytical issues caused by this into account.
 
 Recall that a \emph{Klein geometry} is a pair $(K,H)$, where $K$ is a
 finite-dimensional Lie group (called the \emph{principal group}) and $H$ is a
 closed subgroup such that the manifold $K/H$ is connected. The \emph{kernel}
 of a Klein geometry is the largest subgroup $L$ of $H$ which is normal in $K$.
 A Klein geometry is called \emph{effective} if $L$ is trivial. Klein geometries are constructed to model geometry via the principal $H$-bundle
 $K \rightarrow K/H$.
 
 Note that the principal group of a Klein geometry is finite-dimensional. Hence
 the quotient $K/H$ inherits a canonical manifold structure turning
 $K \rightarrow K/H$ into a submersion. 
 In our infinite-dimensional setting the quotient does not automatically inherit a manifold structure,
 whence a transitive pair has to guarantee this behaviour 
 via extra assumptions (cf.\ Proposition \ref{prop: const:pbund}).
 
 In studying a transitive pair $(\theta,H)$, we are interested in the principal
 $\Stab{m}/H$-bundle $K/H \rightarrow K/\Stab{m} = M$. Thus a transitive pair
 encodes more information than a Klein geometry, as the principal $H$-bundle
 $K \rightarrow K/H$ is obtained as additional information.
 To some extend, one can interpret a transitive pair as a ``Klein geometry for principal bundles''.
 In particular, the notion of a transitive pair also covers the concept of an
 (infinite-dimensional) Klein geometry in the case that $K_{m}=H$ and the
 quotient $K/H$ is connected.
 
 Finally, there is a close connection between effective
 transitive pairs and effective Klein geometries. Namely, the kernel of the transitive pair (i.e.\ the kernel of the
 Lie group morphism $a_{\theta,H} \colon K \rightarrow \cR (\theta,H)$) is by definition the largest closed subgroup of $H$ which is normal in $K$.
 Hence the kernel of the transitive pair $(\theta,H)$
 corresponds to the kernel of a Klein geometry. Let us stress again that contrary to the finite dimensional case, the
 kernel of a transitive pair will only be a closed subgroup and not automatically a closed Lie subgroup. 
 Summing up, if the pair $(\theta,H)$ is
 effective and $H = K_m$ and $K/H$ is connected, then the transitive pair
 corresponds to an (infinite-dimensional) effective Klein geometry.
\end{remark}

\begin{tabsection}
 Let us now return to Example \ref{ex: tgp:bis} a) and consider
 $\cR(\theta ,H)$ and $a_{\theta, H}$ for the action of a bisection group on
 $M$. We will see that the constructions given in this section are in a certain
 sense inverse to computing the bisections of a locally trivial groupoid.
\end{tabsection}

\begin{example}\label{ex:
 BisR:inv} Let $\cG = (G \toto M)$ be a locally trivial Banach-Lie groupoid
 such that there is a bisection through every $g \in G$. Consider the group
 action
 $\beta \circ \ev \colon \Bis (\cG) \times M \rightarrow M , (\tau,m) \mapsto \beta \circ \tau (m)$.
 We have seen in Example \ref{ex: tgp:bis} a) that
 $(\beta \circ \ev , \Bisf{m} (\cG))$ is a transitive pair. Moreover, since
 every locally trivial Lie groupoid is transitive, Lemma \ref{lem: eff:act2}
 shows that $(\beta \circ \ev , \Bisf{m} (\cG))$ is an effective transitive
 pair. For this special effective transitive pair we note the following
 consequences (a detailed verification for the claims made in this example can
 be found in Lemma \ref{lem: relchi:a} below):\medskip
 
 \textbf{The groupoid $\cG$ is isomorphic to
 $\cR(\beta \circ \ev , \Bisf{m} (\cG))$.} We have already seen in Example
 \ref{ex: Bis:pbun} that the vertex bundles associated to the locally trivial
 Lie groupoids $\cG$ and $\cR(\beta \circ \ev , \Bisf{m} (\cG))$ are
 isomorphic. Moreover, from Proposition \ref{prop: quot:fibre} we recall that
 $\Bis (\cG) / \Bisf{m} (\cG) \cong \alpha^{-1} (m)$ and
 $\Loop{m} (\cG) / \Bisf{m} (\cG) \cong \Vtx{m}(\cG)$. Hence the gauge groupoid
 $\cR(\beta \circ \ev , \Bisf{m} (\cG))$ is canonically isomorphic to the gauge
 groupoid of a vertex bundle of $\cG$. Consider the canonical map over $M$ from
 the gauge groupoid $\cR (\beta \circ \ev , \Bisf{m} (\cG))$ to the locally
 trivial Lie groupoid $\cG$
 \begin{equation}\label{eq: chi} 
  \chi_\cG \colon \frac{\Bis (\cG) / \Bisf{m}(\cG) \times \Bis (\cG) / \Bisf{m}(\cG)}{\Loop{m} (\cG)/\Bisf{m}(\cG)} \rightarrow G , \langle \sigma \Bisf{m}(\cG) , \tau \Bisf{m}(\cG)\rangle \mapsto \sigma (m)\cdot(\tau(m))^{-1}.
 \end{equation}
 Since $\cG$ admits bisections through each arrow and $\cG$ is locally trivial,
 Proposition \ref{prop: quot:fibre} shows that the image of $\chi_\cG$
 coincides with $G$. One then proves that $\chi_\cG$ is an isomorphism of Lie
 groupoids over $M$. We can thus recover the locally trivial Lie groupoid $\cG$
 (up to isomorphism depending on $m$) as the groupoid
 $\cR (\beta \circ \ev, \Bisf{m}(\cG))$. \medskip
 
 \textbf{The map
 $a_{\beta \circ \ev , \Bisf{m} (\cG)} \colon \Bis (\cG) \rightarrow \Bis (\cR(\beta \circ \ev , \Bisf{m} (\cG)))$
 is a Lie group isomorphism.} For this special transitive pair, an inverse of
 $a_{\beta \circ \ev , \Bisf{m} (\cG)}$ is given by
 $\Bis (\chi_\cG) \colon \Bis (\cR(\theta,H)) \rightarrow \Bis (\cG) , \sigma \mapsto \chi_\cG \circ \sigma$.
\end{example}

\begin{lemma}\label{lem: relchi:a}
 Let $\cG = (G \toto M)$ be a locally trivial Banach-Lie groupoid such that the action $\beta \circ \ev$ of the bisections on $M$ is transitive, e.g.\ $\cG$ is source-connected. 
 Then
 \begin{enumerate} 
  \item the Lie groupoid morphism $\chi_\cG$ from \eqref{eq: chi} induces an isomorphism onto the open subgroupoid $\ev (\Bis (\cG) \times M)$ of $\cG$.
  Hence $\ev$ is surjective if and only if $\chi_\cG \colon \cR (\beta \circ \ev , \Bisf{m} (\cG)) \rightarrow \cG$ is an isomorphism.  
  \item the Lie group morphism $\Bis (\chi_\cG) \colon \Bis (\cR ( \beta \circ \ev, \Bisf{m} (\cG))) \rightarrow \Bis (\cG)$ is an isomorphism with inverse $a_{\beta \circ \ev, \Bisf{m} (\cG)}$. 
 \end{enumerate}
\end{lemma}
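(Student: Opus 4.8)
The plan is to identify $\chi_\cG$ with a composite of two isomorphisms of Lie groupoids and then deduce part b) functorially. First I would record the principal-bundle picture underlying Example \ref{ex: Bis:pbun}, but under the present weaker hypothesis. By Proposition \ref{prop: quot:fibre} a) and Lemma \ref{lem: ev:equiv}, the induced map $\widetilde{\ev_m}\colon \Bis(\cG)/\Bisf{m}(\cG)\to \cU_m$ is a diffeomorphism onto the open subset $\cU_m\coloneq \im\ev_m\opn \alpha^{-1}(m)$ which is equivariant for the right $\Lambda_m$-action and the right $\Vtx{m}(\cG)$-action. Hence it is an isomorphism of principal bundles over $M$ from $\pi\colon \Bis(\cG)/\Bisf{m}(\cG)\to M$ onto $\beta|_{\cU_m}\colon \cU_m\to M$, whose structure group is the open subgroup $\im\psi_m\cong\Lambda_m$ of $\Vtx{m}(\cG)$ (Proposition \ref{prop: quot:fibre} b)).

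Next I would invoke that $\cU\coloneq\ev(\Bis(\cG)\times M)$ is an open, wide Lie subgroupoid of $\cG$ (Theorem \ref{thm: bis:points} a)) which, since $\beta\circ\ev$ is transitive, is transitive and therefore locally trivial with vertex bundle $\beta|_{\cU_m}\colon \cU_m\to M$. As in the finite-dimensional theory, which carries over verbatim to the Banach setting, the gauge groupoid $\frac{\cU_m\times\cU_m}{\im\psi_m}$ of this vertex bundle is canonically isomorphic to $\cU$ via $\langle a,b\rangle\mapsto a\cdot b^{-1}$; surjectivity onto $\cU$ is the observation that for $g\in\cU$ and any $b\in\cU_m$ with $\beta(b)=\alpha(g)$ one has $g=(g\cdot b)\cdot b^{-1}$ with $g\cdot b\in\cU_m$. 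Applying the functorial gauge construction to $\widetilde{\ev_m}$ and composing with this identification yields an isomorphism of Lie groupoids sending $\langle \sigma\Bisf{m}(\cG),\tau\Bisf{m}(\cG)\rangle$ to $\widetilde{\ev_m}(\sigma\Bisf{m}(\cG))\cdot\widetilde{\ev_m}(\tau\Bisf{m}(\cG))^{-1}=\sigma(m)\tau(m)^{-1}$, which is exactly $\chi_\cG$. This proves a): $\chi_\cG$ is an isomorphism onto $\cU$; and since $\cU=\im\ev$, the map $\ev$ is surjective precisely when $\cU=G$, i.e.\ precisely when $\chi_\cG$ is an isomorphism onto $\cG$.

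For b) I would first note that every $\rho\in\Bis(\cG)$ satisfies $\rho(x)=\ev(\rho,x)\in\im\ev=\cU$ for all $x\in M$, so $\Bis(\cG)=\Bis(\cU)$ as Lie groups; hence the groupoid isomorphism $\chi_\cG\colon\cR(\beta\circ\ev,\Bisf{m}(\cG))\to\cU$ induces a Lie group isomorphism $\Bis(\chi_\cG)$ onto $\Bis(\cG)$. It then remains to check that it inverts $a_{\beta\circ\ev,\Bisf{m}(\cG)}$, a direct computation from Lemma \ref{lem:canonical_morphism_into_bisections}: writing $s_i(x)=\sigma_x\Bisf{m}(\cG)$ with $\pi(s_i(x))=\beta(\sigma_x(m))=x$, one obtains for $\rho\in\Bis(\cG)$
\begin{align*}
 \chi_\cG\bigl(a_{\beta\circ\ev,\Bisf{m}(\cG)}(\rho)(x)\bigr)
 &= \chi_\cG\bigl(\langle (\rho\star\sigma_x)\Bisf{m}(\cG),\sigma_x\Bisf{m}(\cG)\rangle\bigr)\\
 &= (\rho\star\sigma_x)(m)\cdot\sigma_x(m)^{-1}
  = \rho(\beta(\sigma_x(m)))\cdot\sigma_x(m)\cdot\sigma_x(m)^{-1}
  = \rho(x).
\end{align*}
Thus $\Bis(\chi_\cG)\circ a_{\beta\circ\ev,\Bisf{m}(\cG)}=\id_{\Bis(\cG)}$, and since $\Bis(\chi_\cG)$ is already an isomorphism this forces $a_{\beta\circ\ev,\Bisf{m}(\cG)}=\Bis(\chi_\cG)^{-1}$, which is b).

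The routine preliminaries to dispatch quickly are the well-definedness of $\chi_\cG$ on cosets and on $\Lambda_m$-classes (via $(\sigma\star s)(m)=\sigma(m)$ for $s\in\Bisf{m}(\cG)$ and $(\sigma\star\ell)(m)=\sigma(m)\ell(m)$ for $\ell\in\Loop{m}(\cG)$) together with its smoothness; all of these are subsumed once $\chi_\cG$ is recognised as the above composite. I expect the main obstacle to be the groupoid-theoretic bookkeeping: establishing that $\cU$ is locally trivial with the asserted vertex bundle, and that the identification of a transitive Banach-Lie groupoid with the gauge groupoid of a vertex bundle is legitimately available in the infinite-dimensional setting, so that the functoriality of the gauge construction can be applied to $\widetilde{\ev_m}$.
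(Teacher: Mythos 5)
Your proposal is correct, but it reorganises the argument in a genuinely different way from the paper. For a), the paper does not factor $\chi_\cG$ through the gauge functor; it works directly with the commutative square \eqref{diag: chi}, showing that $\chi_\cG \circ q$ equals the division map composed with $\widetilde{\ev_m}\times\widetilde{\ev_m}$, verifying by hand (via \cite[Lemmas 1.6, 1.10]{hg2015} and the transfer of \cite[Proposition 1.3.3]{Mackenzie05General-theory-of-Lie-groupoids-and-Lie-algebroids}) that all the maps involved are submersions, and then proving $\im\chi_\cG=\ev(\Bis(\cG)\times M)$ by the same transitivity computation you sketch. Your route instead packages the analytic content into the single statement that the transitive Banach--Lie subgroupoid $\cU$ is canonically isomorphic to the gauge groupoid of its vertex bundle $\beta|_{\cU_m}$ via $\langle a,b\rangle\mapsto a\cdot b^{-1}$; that is exactly the point you flag as the ``main obstacle'', and it is not free --- it is precisely what the paper's submersion diagram is there to establish (bijectivity plus the submersion property of the division map and of $\widetilde{\ev_m}\times\widetilde{\ev_m}$ is what makes the induced map a diffeomorphism in the Banach setting), so your proof is complete only modulo supplying that diagram or an equivalent argument. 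Where your approach clearly pays off is b): the paper verifies \emph{both} composites, and the verification of $a_{\beta\circ\ev,\Bisf{m}(\cG)}\circ\Bis(\chi_\cG)=\id$ is a rather long computation with bundle automorphisms and the identification \eqref{eq: ident:AutBis}; your observation that $\Bis(\cG)=\Bis(\cU)$ (every bisection takes values in $\im\ev$) together with functoriality of $\Bis$ applied to the isomorphism from a) makes $\Bis(\chi_\cG)$ an isomorphism for free, so that checking the single composite $\Bis(\chi_\cG)\circ a_{\beta\circ\ev,\Bisf{m}(\cG)}=\id_{\Bis(\cG)}$ (which agrees with the paper's computation) already forces $a_{\beta\circ\ev,\Bisf{m}(\cG)}=\Bis(\chi_\cG)^{-1}$. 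This is a genuine simplification of the second half of the paper's proof.
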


\begin{proof}\begin{enumerate}
              \item Clearly $\chi_\cG$ is injective and after restricting it to its image, it becomes a bijection. 
              Now consider the commutative diagram 
		\begin{equation}\label{diag: chi} \begin{aligned}
		 \begin{xy}
			  \xymatrix{
 	\Bis (\cG) / \Bisf{m} (\cG) \times \Bis (\cG) / \Bisf{m} (\cG) \ar[d]^{q} \ar[rrr]^-{\widetilde{\ev_m} \times \widetilde{\ev_m}}& & &\alpha^{-1} (m) \times \alpha^{-1} (m) \ar[d]^{(\xi,\eta) \mapsto \xi \cdot \eta^{-1}} \\
 	 \frac{\Bis (\cG) / \Bisf{m} (\cG) \times \Bis (\cG) / \Bisf{m} (\cG)}{\Loop{m} (\cG) / \Bisf{m} (\cG)} \ar[rrr]^-{\chi_\cG}& & & G
   }
  \end{xy}        \end{aligned} .
		\end{equation}
   Here $q$ is the canonical quotient map which is a submersion. 
   The map $\widetilde{\ev_m} \times \widetilde{\ev_m}$ is the map induced on the quotient via $\widetilde{\ev_m} \times \widetilde{\ev_m} \circ (p_m \times p_m) = \ev_m \times \ev_m$ where $p_m \colon \Bis (\cG) \rightarrow \Bis (\cG) / \Bisf{m} (\cG)$ is the canonical quotient map.
   Now $\ev_m \times \ev_m$ is a submersion by \cite[Lemma 1.6]{hg2015} and Corollary \ref{cor: evx:subm}). 
   Since $\cG$ is a Banach-Lie groupoid and $q_m$ is a submersion, we deduce with \cite[Lemma 1.10]{hg2015} that $\widetilde{\ev_m} \times \widetilde{\ev_m}$ is a submersion.
   Finally, \cite[Proposition 1.3.3]{Mackenzie05General-theory-of-Lie-groupoids-and-Lie-algebroids} shows that the division map $\alpha^{-1} (m) \times \alpha^{-1} (m) \rightarrow G, (\xi, \eta) \mapsto \xi \cdot \eta^{-1}$ is a surjective submersion as $\cG$ is a locally trivial Lie groupoid.
   
   Note that the division map restricts to a map $\im (\ev_m \times \ev_m) \rightarrow \ev (\Bis (\cG) \times M)$ because for $\sigma , \tau \in \Bis (\cG) $
   we have $\sigma (m) \cdot (\tau (m))^{-1} = (\sigma \star \tau^{-1}) (\beta (\tau (m))) \in \ev (\Bis (\cG) \times M)$.
   In particular $\im (\chi_\cG) \subseteq \ev(\Bis (\cG) \times M)$
   We claim that the image of $\chi_\cG$ coincides with $\ev (\Bis (\cG) \times M)$.
   If the claim is true, then the proof can be finished as follows: 
   The set $\ev (\Bis (\cG) \times M)$ is an open and wide subgroupoid of $\cG$ by Theorem \ref{thm: bis:points}.
   Hence \eqref{diag: chi} proves that $\chi_\cG$ induces a Lie groupoid isomorphism from $\cR (\beta \circ \ev, \Bisf{m} (\cG))$ onto the subgroupoid $\ev (\Bis (\cG) \times M)$ of $\cG$.
   \medskip
   
   \textbf{Proof of the claim:} Choose a bisection $\gamma \in \Bis (\cG)$ and $x \in M$ and let us show that $\gamma (x) \in \im \chi_\cG$. 
   The action $\beta \circ \ev \colon \Bis (\cG) \times M \rightarrow M , (\sigma , y ) \mapsto \beta (\sigma (y))$ is transitive.
   Hence there are $\sigma, \tau \in \Bis (\cG)$ with $\beta (\tau (m)) = x$ and $\beta (\sigma (m)) = \beta (\gamma (x))$. 
   We compute 
    \begin{displaymath} 
     (\gamma (x))^{-1} \cdot \sigma (m) \cdot (\tau(m))^{-1} = (\gamma (x))^{-1} \cdot (\sigma \star \tau^{-1}) (x) = \underbrace{\gamma^{-1} \star \sigma \star \tau^{-1}}_{\equalscolon l_x \in \Bis (\cG)} (x).
    \end{displaymath}
   By construction $\gamma \star l_x (x) = \sigma (m) \cdot (\tau (m))^{-1}$ and thus $\gamma (x) = \sigma (m) \cdot ((l_x \star \tau) (m))^{-1} \in \im \chi_\cG$.
   As $\gamma$ and $x$ were arbitrary this establishes $\ev (\Bis (\cG) \times M) = \im \chi_\cG$.          
   \item Set $\Lambda_m \coloneq \Loop{m} (\cG) / \Bisf{m} (\cG)$.
   To compute on the $\Lambda_m$-principal bundle $\beta \circ \ev_m \colon \Bis (\cG) / \Bisf{m}(\cG) \rightarrow M$ we choose a section atlas $(s_i,U_i)_{i \in I}$ and obtain   
  \begin{align*}
   \Bis (\chi_\cG) \circ a_{\beta \circ \ev, \Bisf{m} (\cG)} (\sigma) &= \chi_\cG \circ a_{\beta \circ \ev, \Bisf{m} (\cG)} (\sigma) = \chi_\cG \circ (x \mapsto \langle \sigma \star s_i (x), s_i (x)\rangle , x \in U_i)\\
					   &= (x \mapsto (\sigma \star s_i (x)) (m) \cdot (s_i (x) (m))^{-1},x \in U_i)\\
					   &= (x \mapsto \sigma (\underbrace{\beta (s_i (x)(m))}_{=x}) \cdot (s_i (x)(m)) \cdot (s_i (x)(m))^{-1} ,  x \in U_i ) = \sigma.
  \end{align*}
 Hence $\Bis (\chi_\cG) \circ a_{\beta \circ \ev, \Bisf{m} (\cG)} = \id_{\Bis (\cG)}$.
 Let us show  $a_{\beta \circ \ev, \Bisf{m} (\cG)} \circ \Bis (\chi_\cG) = \id_{\Bis (\cR (\theta,H))}$. Denote by $\psi \colon \Aut (\beta \circ \ev_m \colon \Bis (\cG) / \Bisf{m}(\cG) \rightarrow M) \rightarrow \Bis (\cR (\theta ,H))$ the isomorphism from \eqref{eq: ident:AutBis}.
 We fix a $\Lambda_m$-principal bundle automorphism $f$ and compute the image of $\psi (f)$ under $a_{\beta \circ \ev, \Bisf{m} (\cG)} \circ \Bis (\chi_\cG)$ as  
  \begin{align}
   a_{\beta \circ \ev, \Bisf{m} (\cG)} \circ \Bis (\chi_\cG) (\psi(f)) &= a_{\beta \circ \ev, \Bisf{m} (\cG)} (\chi_\cG (x \mapsto \langle f(s_i (x)), s_i (x) \rangle , x \in U_i ) \notag \\ 
					    &= \psi \circ \psi^{-1} \circ a_{\beta \circ \ev, \Bisf{m} (\cG)} (x \mapsto (f(s_i (x))(m)) \cdot (s_i(x)(m))^{-1}, x \in U_i) \notag \\
				            &= \psi (\tau \Bisf{m}(\cG) \mapsto \left(((f(s_i (\bullet))(m)) \cdot (s_i(\bullet)(m))^{-1}) \star \tau \right) \Bisf{m}(\cG)) .\label{eq: lastline}
  \end{align}
 In passing from the second to the third line we have used that $\psi^{-1}$ takes $a_{\theta, H}$ to the left action $\lambda_{\Bis (\cG)}$ discussed in Lemma \ref{lem: eff:act}.
 We will now prove that the argument of $\psi$ in \eqref{eq: lastline} coincides with the principal bundle automorphism $f$.
 If this is true, then $\Bis (\chi_\cG)$ is an isomorphism with inverse $a_{\beta \circ \ev, \Bisf{m} (\cG)}$.
 
 The equivariant map $\widetilde{\ev_m} \colon \Bis (\cG) / \Bisf{m} (\cG) \rightarrow \alpha^{-1} (m)$ from Lemma \ref{lem: ev:equiv} identifies $\Bis (\cG) / \Bisf{m}(\cG)$ with the open subset $\ev_m (\Bis(\cG))$ of $\alpha^{-1} (m)$.
 Recall from  Proposition \ref{prop: quot:fibre} that $\Lambda_m$ is isomorphic to an open subgroup $e_m (\Lambda_m) \opn \Vtx{m}(\cG)$ and $e_m (\Lambda_m) = \{\tau (m) \mid \tau \in \Loop{m} (\cG)\}$.
 Since $\beta \circ \ev$ is transitive, the $\Vtx{m}(\cG)$-principal bundle $\beta|_{\alpha^{-1} (m)} \colon \alpha^{-1} (m) \rightarrow M$ restricts on $\ev_m (\Bis (\cG))$ to a $e_m (\Lambda_m)$-principal bundle. 
 Identifying the groups, we obtain a $\Lambda_m$-principal bundle $\beta|_{\ev_m (\Bis (\cG))} \colon \ev_m (\Bis (\cG)) \rightarrow M$ and $\widetilde{\ev_m}$ induces an isomorphism of $\Lambda_m$-principal bundles (cf.\ Example \ref{ex: Bis:pbun}).
 
 The principal bundle isomorphism $\widetilde{\ev_m}$ allows us to associate to the automorphism $f$ a $e_m(\Lambda_m)-$bundle automorphism $\tilde{f} \colon \ev_m (\Bis (\cG)) \rightarrow \ev_m (\Bis (\cG))$ via $\tilde{f} \circ \widetilde{\ev_m} = \widetilde{\ev_m} \circ f $.
 Evaluating the argument of $\psi$ from \eqref{eq: lastline} at $\tau \Bisf{m}(\cG)$, our preparations allow us to compute as follows. 
  \begin{align*}
   \widetilde{\ev_m}^{-1} (((f(s_i (\bullet))(m)) \cdot (s_i(\bullet)(m))^{-1}) \star \tau (m)) &=  \widetilde{\ev_m}^{-1} ((f(s_i (\beta (\tau (m))(m)) \cdot(s_i(\beta (\tau (m))(m))^{-1} \cdot\tau (m)) \\
											      &=  \widetilde{\ev_m}^{-1} (\tilde{f}(s_i (\beta (\tau (m))(m))) \cdot \underbrace{(s_i(\beta (\tau (m))(m))^{-1} \cdot\tau (m)}_{\in e_m (\Lambda_m) \opn \Vtx{m}(\cG)}))\\
											      &=  \widetilde{\ev_m}^{-1} (\tilde{f}(\underbrace{s_i (\beta (\tau (m))(m)) \cdot (s_i(\beta (\tau (m))(m))^{-1}}_{=1_{\beta (\tau (m))}} \cdot\tau (m)))\\
											      &=  \widetilde{\ev_m}^{-1} (\tilde{f}(\tau(m)) = f(\tau \Bisf{m}(\cG)) 
  \end{align*} 
             \end{enumerate}
 \end{proof}
  
\begin{problem}\label{prob1}
 \begin{enumerate}
  \item It would be interesting to develop a notion/theory of infinitesimal
        transitive pairs, i.e., an infinitesimal action $\fk\to \cV(M)$
        together with an ideal $\fh$ of $\fk_{m}$. In particular, a derivation
        of the Lie algebra morphism $\Lf(a_{\theta,H})$ directly from these
        data in case of an effective transitive pair would be a valuable tool.
  \item It would also be interesting to develop the theory of this section for
        not necessarily locally trivial Lie groupoids (analogously perhaps to
        the notion of Lie-Rinehart algebras in the infinitesimal setting, see
        \cite{Huebschmann90Poisson-cohomology-and-quantization}).
 \end{enumerate}
\end{problem}

\section{Integrating extensions of Lie groups to transitive pairs} %
\label{sec:application_to_integration_of_extensions}

\begin{tabsection}
 We now study the application of the previously developed theory to the
 integration theory of extensions of Lie algebroids and Lie groupoids.
 Throughout this section, $M$ denotes a compact and \emph{1-connected} manifold
 for which we choose some fixed base-point $m\in M$. We will heavily use the
 integration theory of abelian extensions of infinite-dimensional Lie groups,
 for which we refer to
 \cite{Neeb04Abelian-extensions-of-infinite-dimensional-Lie-groups} (see also
 the Appendix in
 \cite{Neeb04Abelian-extensions-of-infinite-dimensional-Lie-groups} for some of
 the terminology that we are using).
\end{tabsection}

\begin{tabsection}
 Throughout the rest of this section, $K$ will be a
 connected and $C^{\infty}$-regular Lie subgroup of $\Diff(M)$ such that
 $(\theta,K_{m})$ is a 2-fold transitive pair, where
 $\theta \from K\times M\to M$, is the natural action of $K\leq\Diff(M)$ on $M$
 and $K_{m}$ is the stabiliser of the base-point $m$ in $K$. In particular,
 $\ev_{m}\from K\to M$ then factors through a diffeomorphism $K/K_{m}\to M$ and
 $K_{m}$ acts transitively on $M\setminus \{m\}$.
 
 The Lie group $\Diff(M)$ acts naturally and smoothly on
 $C^{\infty}(M):=C^{\infty}(M,\R)$ via $\varphi.f:=f \circ \varphi^{-1}$. This
 turns $\fa:=C^{\infty}(M)$ into a $K$-module containing the constant functions
 $\fa_{0}=\fa^{K}\cong\R$. If we set $\fk=\Lf(K)$ and $\fk_{m} = \Lf (K_m)$, then $\fa$ is also a
 $\fk$-module for the derived action
 \begin{equation*}
  (X.f)(n):=df_{n}(X(n)).
 \end{equation*}
 Moreover, $\fa_{m}\coloneq C^{\infty}_{m}(M)$ is a $\fk_{m}$-submodule for
 \begin{equation*}
  C^{\infty}_{m}(M) :=\{f\in C^{\infty}(M)\mid f(m)=0\}.
 \end{equation*}
 Clearly, $\fa_{0}$ is a module complement for this $\fk_{m}$-submodule, so
 that we have $\fa\cong \fa_{0}\oplus \fa_{m}$ as $\fk_{m}$-modules.
 
 The last piece of information that we choose is a closed 2-form
 $\omega\in \Omega^{2}(M):= \Omega^{2}(M,\R)$. This gives rise to the abelian cocycle
 \begin{equation*}
  \ol{\omega}\from \fk\times \fk\to \fa,\quad(X,Y)\mapsto (n\mapsto \omega_{n}(X(n),Y(n))),
 \end{equation*}
 i.e.,
 \begin{equation*}
  [(f,X),(g,Y)]:=(X.g-Y.f+\ol{\omega}(X,Y),[X,Y])
 \end{equation*}
 defines on $\fa\oplus \fk$ the structure of a Lie algebra. We denote this Lie
 algebra by $\fa\oplus_{\ol{\omega}}\fk$, and the canonical maps give rise to
 an abelian extension
 \begin{equation*}
  \fa\to \fa\oplus_{\ol{\omega}}\fk\to \fk
 \end{equation*}
 of topological Lie algebras. Moreover, as elements of $\fk_m$ vanish when they are evaluated in $m$, we have
 $\ol{\omega}(\fk_{m}\times\fk_{m})\se\fa_{m}$. Hence there is a subalgebra
 \begin{equation*}
  \fa_{m}\oplus_{\ol{\omega}_{m}}\fk_{m}\leq \fa\oplus_{\ol{\omega}}\fk,
 \end{equation*}
 where $\ol{\omega}_{m}\from \fk_{m}\times\fk_{m}\to\fa_{m}$ denotes the
 restriction (and corestriction) of $\ol{\omega}$ to $\fk_{m}\times\fk_{m}$
 (and $\fa_{m}$).
 
 Before we go on we give the two examples of the above situations that we have
 in mind.
\end{tabsection}

\begin{example}
 \begin{enumerate}
  \item The easiest example is that where $K=\Diff(M)_{0}$ and
        $\fa=C^{\infty}(M)$. Then we have seen in Example
        \ref{exemp:Diff_m_is_Lie_subgroup} that $\Diff_{m}(M)$ is a regular Lie
        subgroup of $\Diff(M)$. Moreover, $\Diff(M)$ acts smoothly transitively
        on $M$ by Corollary \ref{cor: diff0:trans} and Corollary \ref{cor:
        evx:subm}. Finally, $\Diff(M)$ acts 2-fold transitively on $M$
        \cite{MichorVizman94n-transitivity-of-certain-diffeomorphism-groups}.
        In this case $\omega\in \Omega^{2}(M)$ can be an arbitrary closed
        2-form on $M$ (cf.\ \cite[Section
        9]{Neeb04Abelian-extensions-of-infinite-dimensional-Lie-groups}).
  \item For the next example, let $\omega\in \Omega^{2}(M)$ be symplectic and
        $\dim(M)=2d$. Then
        \begin{equation*}
         \Symp(M,\omega)\coloneq\{\varphi\in \Diff(M)\mid \varphi^{*}\omega-\omega=0 \}
        \end{equation*}
        is a closed Lie subgroup of $\Diff(M)$ \cite[Theorem 43.12]{conv1997}
        with Lie algebra
        \begin{equation*}
         \symp(M,\omega)\coloneq\{X\in \cV(M)\mid \cL_{X}\omega=d(i_{X}\omega)=0\}
        \end{equation*}
        the symplectic vector fields on $M$. This Lie algebra has the
        Hamiltonian vector fields
        \begin{equation*}
         \ham(M,\omega)\coloneq\{X\in \symp(M)\mid i_{X}\omega\text{ is exact}\}
        \end{equation*}
        as closed subalgebra. Moreover, the chart constructed in the proof of
        \cite[Theorem 43.12]{conv1997} maps
        \begin{equation*}
         \fk_{m}:=\symp_{m}(M,\omega)\coloneq\{X\in\symp(M,\omega)\mid X(m)=0\}
        \end{equation*}
        to
        \begin{equation*}
         \Symp_{m}(M,\omega)\coloneq\{\varphi\in \Symp(M,\omega)\mid \varphi(m)=m\}.
        \end{equation*}
        Thus $K\coloneq\Symp(M,\omega)_{0}$ has $K_{m}=\Symp_{m}(M,\omega)_{0}$
        as Lie subgroup with Lie algebra $\fk_{m}$. That $(\theta,K_{m})$ is
        indeed a transitive pair will follow from the following two
        propositions. Finally, $K$ acts 2-fold transitively by
        \cite{MichorVizman94n-transitivity-of-certain-diffeomorphism-groups}.
 \end{enumerate}
\end{example}

\begin{proposition}
 Let $(M,\omega)$ be a compact symplectic manifold and $x\in M$. Then the
 evaluation map
 \begin{equation*}
  \ev_{x}\from  \Symp(M,\omega)_{0}\to M,\quad \varphi\mapsto \varphi(x)
 \end{equation*}
 is a submersion. If, moreover, $M$ is connected, then $\ev_{x}$ is also
 surjective.
\end{proposition}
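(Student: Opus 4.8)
The plan is to prove the submersion statement by checking that $\ev_{x}$ is smooth and has surjective tangent map at every point, and then to invoke the submersion criterion for finite-dimensional targets; surjectivity of $\ev_{x}$ will follow from an open-orbit argument. Since $\Symp(M,\omega)_{0}$ is a (closed) Lie subgroup of $\Diff(M)$ by \cite[Theorem 43.12]{conv1997}, the map $\ev_{x}$ is the restriction to this subgroup of the evaluation $\Diff(M)\to M$, $\varphi\mapsto \varphi(x)$, which is a submersion (in particular smooth) by Corollary \ref{cor: evx:subm} applied to the pair groupoid $\cP(M)$. Hence $\ev_{x}$ is smooth, and as $M$ is finite-dimensional it will suffice, by \cite[Theorem A]{hg2015} (cf.\ \cite[Lemma 1.8]{hg2015}), to show that $T_{\varphi}\ev_{x}$ is surjective for every $\varphi\in \Symp(M,\omega)_{0}$.

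For the differential I would first treat the identity, where $T_{\id}\Symp(M,\omega)_{0}\cong \symp(M,\omega)$ and $T_{\id}\ev_{x}\from \symp(M,\omega)\to T_{x}M$ is $X\mapsto X(x)$. Here it is enough to hit every $v\in T_{x}M$ already by a Hamiltonian field: as $\omega_{x}$ is nondegenerate, $\omega_{x}(v,\cdot)\in T_{x}^{*}M$ equals $dH_{x}$ for some $H\in C^{\infty}(M)$ (take in a chart about $x$ a linear function with this differential, cut off by a bump function), and the corresponding $X_{H}\in \ham(M,\omega)\se\symp(M,\omega)$, defined by $i_{X_{H}}\omega=dH$, then satisfies $\omega_{x}(X_{H}(x),\cdot)=dH_{x}=\omega_{x}(v,\cdot)$, so $X_{H}(x)=v$. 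Thus $T_{\id}\ev_{x}$ is onto. To reach an arbitrary $\varphi$ I would differentiate the equivariance $\ev_{x}\circ L_{\varphi}=\varphi\circ \ev_{x}$ (with $L_{\varphi}$ left translation) at $\id$, giving $T_{\varphi}\ev_{x}\circ T_{\id}L_{\varphi}=T_{x}\varphi\circ T_{\id}\ev_{x}$; since $T_{\id}L_{\varphi}$ and $T_{x}\varphi$ are isomorphisms, surjectivity at $\id$ forces surjectivity at $\varphi$. Together with \cite[Theorem A]{hg2015} this yields the submersion property.

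For the last assertion, assume $M$ connected. A submersion is an open map, so the image of $\ev_{x}$, which is the $\Symp(M,\omega)_{0}$-orbit of $x$, is open in $M$. Since the proposition applies verbatim at every base-point, every orbit is open; as the orbits partition $M$, each orbit is also closed, and connectedness forces a single orbit, i.e.\ $\ev_{x}$ is onto. (Alternatively one can build an explicit local section near $x$ by $v\mapsto \Fl^{X_{H_{v}}}_{1}$ for a continuous linear choice $v\mapsto H_{v}$ as above---these time-one Hamiltonian flows lie in $\Symp(M,\omega)_{0}$---and chain them along a path.)

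The only substantive step is the surjectivity of $T_{\id}\ev_{x}$, i.e.\ realising prescribed tangent vectors by Hamiltonian symplectic fields; everything else is formal. In particular the kernel splits automatically because the target $T_{x}M$ is finite-dimensional (a closed finite-codimensional subspace of a locally convex space is complemented), which is exactly what makes \cite[Theorem A]{hg2015} applicable. The only point requiring a little care is that the Hamiltonian flows used stay in the identity component, which holds since $M$ is compact (so the fields are complete) and each flow connects $\id$ to the desired symplectomorphism.
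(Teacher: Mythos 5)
Your proposal is correct and follows essentially the same route as the paper: reduce via \cite[Theorem A]{hg2015} to surjectivity of the tangent map, identify $T_{\id}\ev_{x}$ with $X\mapsto X(x)$ on $\symp(M,\omega)$, realise any $v\in T_{x}M$ by a Hamiltonian vector field $X_{H}$ with $dH_{x}=\omega_{x}(v,\argument)$, and deduce surjectivity from the openness of all orbits together with connectedness. The only difference is that you spell out the equivariance step transporting surjectivity of the differential from $\id$ to an arbitrary $\varphi$, which the paper leaves implicit.
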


\begin{proof}
 We show that $\ev_{x}$ is a submersion by showing that
 $T_{\id}\ev_{x}\from T_{\id}\Symp(M,\omega)\to T_{x}M$ is surjective. This
 suffices by \cite[Theorem A]{hg2015}. To this end, note that $T_{\id}\ev_{x}$
 is given with respect to the identification
 $\symp(M,\omega)\cong T_{\id}\Symp(M,\omega)$ by
 \begin{equation*}
  \ev_{x}\from \symp(M,\omega)\to T_{x}M,\quad X\mapsto X(x)
 \end{equation*}
 (cf.\ \cite[Corollary 42.18]{conv1997} or \cite[Theorem
 7.9]{SchmedingWockel14}). Thus the claim follows from observing that for each
 $v\in T_{x}M$ there exists a function $H\in C^{\infty}(M)$ with
 $dH_{x}=\omega_{x}(v,\argument)$, and thus a Hamiltonian vector field $X$ (which
 is then in particular symplectic) with $X(x)=v$.
 
 Since the point $x\in M$ in the above argument was arbitrary, this shows in
 particular that each orbit of the natural action of $\Symp(M,\omega)$ on $M$ is
 open and thus consist of unions of path components. In particular, $\ev_{x}$
 is surjective if $M$ is connected.
\end{proof}

\begin{proposition}
 Let $(M,\omega)$ be a compact symplectic manifold and
 $k\in \N_{0}\cup\{\infty\}$. Then $\Symp(M,\omega)$ and
 \begin{equation*}
  \Symp_{m}(M,\omega)\coloneq \{\varphi\in \Symp(M,\omega)\mid \varphi(m)=m\}
 \end{equation*}
 are $C^{k}$-regular Lie subgroups of $\Diff(M)$.
\end{proposition}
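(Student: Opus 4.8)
The plan is to reduce both regularity statements to the single principle, already exploited in Proposition \ref{prop: stabsubgp:reg}, that a closed $C^{k}$-semiregular subgroup of a $C^{k}$-regular Lie group is again $C^{k}$-regular (Lemma \ref{lem: semisub:reg}). The ambient group $\Diff(M)$ is $C^{k}$-regular for every $k\in\N_{0}\cup\{\infty\}$: indeed, $\Diff(M)\cong \Bis(\cP(M))$ (Example \ref{exemp:Diff_m_is_Lie_subgroup}) is the bisection group of the pair groupoid $\cP(M)$, which is locally trivial with trivial, hence locally exponential and $C^{k}$-regular, vertex groups, so Remark \ref{rem: reg:sbgp} b) applies. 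It thus remains to identify the two subgroups as closed Lie subgroups and to verify $C^{k}$-semiregularity, i.e.\ that the evolution in $\Diff(M)$ of a curve with values in the respective Lie subalgebra stays inside the subgroup.

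That $\Symp(M,\omega)$ is a closed Lie subgroup of $\Diff(M)$ with Lie algebra $\symp(M,\omega)$ is \cite[Theorem 43.12]{conv1997}. For the point stabiliser I would first upgrade the preceding proposition: since $\ev_{m}$ is equivariant for left translation and has surjective differential at $\id$, it is a submersion on all of $\Symp(M,\omega)$, not merely on the identity component. Hence $\Symp_{m}(M,\omega)=\ev_{m}^{-1}(\{m\})$ is a closed split Lie subgroup of $\Symp(M,\omega)$ by the regular value theorem \cite[Theorem D]{hg2015}, with Lie algebra $\symp_{m}(M,\omega)=\{X\in\symp(M,\omega)\mid X(m)=0\}$.

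For semiregularity of $\Symp(M,\omega)$, fix $\eta\in C^{k}([0,1],\symp(M,\omega))$ and let $\gamma$ be its evolution in $\Diff(M)$, solving \eqref{eq: reg:eq}. Writing $\omega_{t}:=\gamma(t)^{*}\omega$ and computing the logarithmic derivative as in Proposition \ref{prop: stabsubgp:reg}, one obtains the linear evolution equation $\tfrac{d}{dt}\omega_{t}=\cL_{\eta(t)}\omega_{t}$ with $\omega_{0}=\omega$, where $\gamma$ is recognised as the flow of the time-dependent vector field associated to $\eta$. Since $\eta(t)\in\symp(M,\omega)$ means $\cL_{\eta(t)}\omega=0$, the constant curve $\omega_{t}\equiv\omega$ solves this equation with the same initial value, so by uniqueness $\gamma(t)^{*}\omega=\omega$ and therefore $\gamma(t)\in\Symp(M,\omega)$ for all $t$. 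For $\Symp_{m}(M,\omega)$ I would argue exactly as in Proposition \ref{prop: stabsubgp:reg} a): for $\eta\in C^{k}([0,1],\symp_{m}(M,\omega))$ the curve $\ev_{m}\circ\gamma$ has vanishing derivative because $\eta(t)(m)=0$, so $m$ is a rest point of the flow and $\gamma(t)(m)=m$ throughout. Thus both groups are $C^{k}$-semiregular, and Lemma \ref{lem: semisub:reg} then yields $C^{k}$-regularity.

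The main obstacle is the semiregularity step, and more precisely the bookkeeping of the logarithmic derivative: the evolution equation \eqref{eq: reg:eq} is written in the left-translation convention, so one must check that after transporting $\eta$ the resulting time-dependent vector field is still pointwise symplectic (respectively still vanishes at $m$) before invoking $\cL_{\eta(t)}\omega=0$. The geometric content—flows of symplectic fields preserve $\omega$, and flows vanishing at $m$ fix $m$—is elementary, but the conventions must be matched carefully to those of \eqref{eq: reg:eq}.
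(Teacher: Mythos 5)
Your proposal is correct and follows essentially the same route as the paper: reduce everything to $C^{k}$-semiregularity inside the $C^{k}$-regular group $\Diff(M)$ and invoke Lemma \ref{lem: semisub:reg}, with \cite[Theorem 43.12]{conv1997} supplying the closed Lie subgroup structure of $\Symp(M,\omega)$ and the ``flows of symplectic fields preserve $\omega$, flows of fields vanishing at $m$ fix $m$'' computations supplying semiregularity. The only difference is one of detail: where you spell out the evolution-equation argument and the Lie subgroup structure of $\Symp_{m}(M,\omega)$, the paper outsources the former to the proof of \cite[Theorem 43.12]{conv1997} and the latter to Proposition \ref{prop: stabsubgp:reg} applied to $\Diff_{m}(M)=\Bisf{m}(\cP(M))$, so your convention-matching caveat is exactly the point the paper absorbs by citation.
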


\begin{proof}
 By \cite[Theorem 43.12]{conv1997} the Lie group $\Symp(M,\omega)$ is a closed
 Lie subgroup of $\Diff (M)$. Recall from \cite[Theorem 5.5]{SchmedingWockel14}
 that $\Diff (M)$ is $C^k$-regular. Arguing as in the proof of \cite[Theorem
 43.12]{conv1997} (where $C^\infty$-(semi)regularity for $\Symp(M,\omega)$ was
 established), one proves that $\Symp(M,\omega)$ is $C^k$-semiregular. Hence
 Lemma \ref{lem: semisub:reg} implies that $\Symp(M,\omega)$ is
 $C^{k}$-regular.
 
 Fix some $\eta\in C^{k}([0,1],\symp_m(M,\omega))$ and let $\gamma_{\eta}$ be a
 solution in $\Symp(M,\omega)$ of the corresponding initial value problem from
 Definition \ref{def:regularity}. Note that $\gamma_\eta$ is also a solution of
 the corresponding initial value problem in $\Diff(M)$. By Proposition
 \ref{prop: stabsubgp:reg}, $\eta_{\gamma}$ takes its image in
 $\Symp_{m}(M,\omega)$, whence $\Symp_{m}(M,\omega)$ is $C^{k}$-semiregular.
 Again by Lemma \ref{lem: semisub:reg}, $\Symp_{m}(M,\omega)$ is also
 $C^{k}$-regular.
\end{proof}

For later reference we also record the following fact (see also \cite[Corollary 3.5]{Janssens15Universal-Central-Extension-of-the-Lie-Algebra-of-Hamiltonian-Vector-Fields}).

\begin{proposition}\label{prop:restriction_of_canonical_cocycle_to_ham_is_coboundary}
 If $(M,\omega)$ is a compact and connected symplectic manifold, then the
 restriction of the abelian cocycle
 \begin{equation*}
  \ol{\omega}\from \cV(M)\times \cV(M)\to C^{\infty}(M),\quad (X,Y)\mapsto \omega(X,Y)
 \end{equation*}
 to $\ham(M,\omega)$ is a coboundary. This also applies to the
 restriction of $\ol{\omega}$ to $\symp(M,\omega)$ if $M$ is 1-connected.
\end{proposition}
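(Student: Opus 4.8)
The plan is to exhibit an explicit $1$-cochain $\ell\colon \ham(M,\omega)\to C^{\infty}(M)$ whose Chevalley--Eilenberg coboundary
\[
 (\delta\ell)(X,Y)=\cL_{X}\ell(Y)-\cL_{Y}\ell(X)-\ell([X,Y])
\]
equals $\ol{\omega}(X,Y)=\omega(X,Y)$ on $\ham(M,\omega)\times\ham(M,\omega)$; here I use that the module action of $\cV(M)$ on $C^{\infty}(M)$ is $X.f=\cL_{X}f$. The natural candidate for $\ell(X)$ is, up to sign, a Hamiltonian function of $X$. The only freedom is an additive constant, and the heart of the argument is to pin this constant down so that $\ell$ is simultaneously well defined, linear, and produces no spurious constant term in $\delta\ell$.

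First I would fix the Liouville volume $\mu\coloneq \tfrac{1}{d!}\omega^{\wedge d}$, where $\dim M=2d$. Since $M$ is compact and connected, $V\coloneq\int_{M}\mu>0$, and for each $X\in\ham(M,\omega)$ the equation $i_{X}\omega=dH$ determines $H$ up to an additive constant. I would select the unique primitive $H_{X}$ normalised by $\int_{M}H_{X}\,\mu=0$ and set $\ell(X)\coloneq -H_{X}$. Uniqueness of the normalised primitive on the connected manifold $M$ makes $X\mapsto H_{X}$, and hence $\ell$, linear. (The alternative normalisation $H_{X}(m)=0$ would \emph{not} work, precisely because it leaves a pointwise remainder; the integral normalisation is what lets the divergence theorem enter below.)

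Next I would record the standard Cartan-calculus identity for symplectic vector fields. Using $\cL_{X}\omega=0$, $d\omega=0$ and $\cL_{X}=di_{X}+i_{X}d$ one computes
\[
 i_{[X,Y]}\omega=\cL_{X}(i_{Y}\omega)=d\,(i_{X}i_{Y}\omega)=d\bigl(\omega(Y,X)\bigr)=-\,d\,\omega(X,Y),
\]
so $[X,Y]$ is Hamiltonian with primitive $-\omega(X,Y)$ up to a constant; after normalising, $H_{[X,Y]}=-\omega(X,Y)+c$ with $c=\tfrac1V\int_{M}\omega(X,Y)\,\mu$. Plugging $\ell(X)=-H_{X}$ into $\delta\ell$ and using $\cL_{X}H_{Y}=dH_{Y}(X)=\omega(Y,X)=-\omega(X,Y)$ (and symmetrically for $\cL_{Y}H_{X}$) yields
\[
 (\delta\ell)(X,Y)=2\,\omega(X,Y)-\bigl(\omega(X,Y)-c\bigr)=\omega(X,Y)+c.
\]
The main obstacle is therefore to show that $c$ vanishes, i.e.\ that the Poisson bracket integrates to zero. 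Writing $\omega(X,Y)=-\cL_{X}H_{Y}$ and using that a symplectic $X$ preserves $\mu$ (so $\cL_{X}\mu=0$), Cartan's formula together with Stokes' theorem on the closed manifold $M$ gives $\int_{M}\omega(X,Y)\,\mu=-\int_{M}\cL_{X}(H_{Y}\mu)=-\int_{M}d\,i_{X}(H_{Y}\mu)=0$. Hence $c=0$ and $\delta\ell=\ol{\omega}|_{\ham}$, proving the first claim.

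Finally, for the second claim I would reduce the symplectic case to the Hamiltonian one. For $X\in\symp(M,\omega)$ the form $i_{X}\omega$ is closed, and $X\mapsto i_{X}\omega$ is a linear isomorphism $\cV(M)\xrightarrow{\sim}\Omega^{1}(M)$ carrying $\symp(M,\omega)$ onto the closed $1$-forms and $\ham(M,\omega)$ onto the exact ones; thus $\symp(M,\omega)/\ham(M,\omega)\cong H^{1}_{\mathrm{dR}}(M)$. Since $M$ is $1$-connected, $H^{1}_{\mathrm{dR}}(M)=0$, so $\symp(M,\omega)=\ham(M,\omega)$, and the cochain $\ell$ constructed above already trivialises $\ol{\omega}$ on all of $\symp(M,\omega)$.
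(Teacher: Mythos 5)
Your proposal is correct and takes essentially the same route as the paper: the paper's coboundary $b(X)=\int_{M}\sigma(X)\,\omega^{d}-\sigma(X)$, built from a continuous linear section $\sigma$ of $f\mapsto X_{f}$, is precisely (minus) your integral-normalised Hamiltonian $\ell(X)=-H_{X}$, and the paper kills the same constant term by the same Stokes computation $\int_{M}\{f,g\}\,\omega^{d}=0$, before reducing the $\symp(M,\omega)$ case to $\ham(M,\omega)=\symp(M,\omega)$ via $H^{1}_{\op{dR}}(M)=0$. The only point you leave implicit is that $X\mapsto H_{X}$ is \emph{continuous} (needed for a cochain in continuous Lie algebra cohomology), which follows from the open mapping theorem for the Fr\'echet spaces involved and is the reason the paper phrases the construction via a continuous linear section.
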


\begin{proof}
 We consider the continuous linear surjection
 $\xi\from C^{\infty}(M)\to \ham(M,\omega)$, $f\mapsto X_{f}$, where $X_{f}$ is
 the unique vector field such that $i_{X_{f}}\omega=df$. Since
 $\ker(\xi)\cong\R$ is finite-dimensinoal and thus complemented, $\xi$ has a
 continuous linear section $\sigma\from \ham(M,\omega)\to C^{\infty}(M)$. We
 claim that
 \begin{equation*}
  b\from \ham(M,\omega)\to C^{\infty}(M),\quad X\mapsto  \int_{M} \sigma(X)\, \omega^{d}-\sigma(X)
 \end{equation*}
 satisfies $d_{\op{CE}}b=\omega$, where $d=\frac{\dim(M)}{2}$ and we consider
 $\R$ as constant functions on $M$. To this end we note that we have the
 equality
 \begin{equation}\label{eqn14}
  \omega(X,Y)-\int_{M}\omega(X,Y)\, \omega^{d}=\sigma([X,Y])-\int_{M}\sigma([X,Y]) \omega^{d}.
 \end{equation}
 Indeed, both sides of \eqref{eqn14} are uniquely determined by the property of
 being mapped to $[X,Y]$ under $\xi$ and having vanishing integral over $M$. If
 $X=X_{f}$ and $Y=X_{g}$ for some $f,g\in C^{\infty}(M)$, then we also have
 \begin{equation}\label{eqn13}
  \int_{M}\omega(X,Y)\; \omega^{d}=
  \int_{M}\{f,g\} \;\omega^{d}=
  \int _{M}(\cL_{X_{f}}g)\;\omega^{d}=
  \int_{M}\cL_{X_{f}}(g \;\omega^{d})=
  \int_{M}d(i_{X_{f}}g \;\omega^{n})=0.
 \end{equation}
 Thus we conclude from \eqref{eqn13} and \eqref{eqn14} that
 \begin{equation*}
  d_{\op{CE}}b(X,Y)=-X.\sigma(Y)+Y.\sigma(X)- \big(\int_{M}\sigma([X,Y])\omega^{d}-\sigma([X,Y])\big)=\omega(X,Y).
 \end{equation*}
 If $M$ is also simply connected, then $\ham(M,\omega)=\symp(M,\omega)$ and the
 assertion follows.
\end{proof}

\begin{remark}\label{rem:prequant}
 We shortly fix our conventions about the periods and prequantisation of
 presymplectic manifolds and integration of abelian extensions of Lie algebras
 to Lie group extensions. Let $N$ be an arbitrary manifold, $V$ be a locally
 convex space and $\omega\in \Omega^{2}(M,V)$ be a $V$-valued closed 2-form.
 Then the \emph{period homomorphism} associated to the cohomology class
 $[\omega]\in H^{2}_{\op{dR}}(N,V)$ is the homomorphism
 \begin{equation*}
  \per_{[\omega]}\from \pi_{2}(N)\to V,\quad [\sigma]\mapsto \int_{S^{2}}\sigma^{*}\omega,
 \end{equation*}
 where $\sigma\from S^{2}\to N$ is a smooth representative of $[\sigma]$ (cf.\
 \cite[Appendix
 A.3]{Neeb02Central-extensions-of-infinite-dimensional-Lie-groups} or
 \cite[Corollary
 14]{Wockel06A-Generalisation-of-Steenrods-Approximation-Theorem}). Moreover,
 let $\Gamma\se\R$ an arbitrary but fixed discrete subgroup and set
 $A_{\Gamma}:=\fa/\Gamma$ denote by $q_{\Gamma}\from \fa\to A_{\Gamma}$ the
 canonical quotient homomorphism. In addition, we assume that $V$ comes along
 with a distinguished embedding $\R\hookrightarrow V$.
 
 We now consider two special cases of this. At first, assume that $V=\R$ and
 that $N$ is finite-dimensional (or more generally smoothly paracompact) and
 1-connected. Then the subgroup $\per_{[\omega]}(\pi_{2}(M))$ is called the
 group of \emph{periods} (or shortly just the periods) of $(M,[\omega])$.
 Moreover, we say that $(M,\omega)$ is $\Gamma$-\emph{prequantisable} if there
 a principal $\TGamma{\Gamma}:=\R/\Gamma$-bundle $P\to N$ that admits a
 connection with curvature $\omega$. By the general theory (see, e.g.,
 \cite[Chapter 8]{Woodhouse92Geometric-quantization} or
 \cite{Kostant70Quantization-and-unitary-representations.-I.-Prequantization})
 this is the case if and only if the periods $\per_{[\omega]}(\pi_{2}(M))$ are
 contained in $\Gamma$ (or more generally if $[\omega]$ is contained in the
 image of $H^{2}(N,\Gamma)\to H^{2}(M,V)$ if $N$ is not simply connected).
 
 The other case is that $N=K$ and $V=\fa$ for $K$ and $\fa$. Then we consider
 the equivariant extension $\ol{\omega}^{\op{eq}}\in \Omega^{2}(K,\fa)$ of
 $\ol{\omega}$, which is given by
 \begin{equation*}
  (\ol{\omega}^{\op{eq}})_{k}\from T_{k}K\times T_{k}K\to \fa,\quad(X,Y)\mapsto k.\ol{\omega}(X\cdot k^{-1},Y\cdot k^{-1}).
 \end{equation*}
 Note that we have here used the right trivialisation of the tangent bundle of
 a Lie group, which is more adapted to our setting than the left trivialisation
 (cf.\ \cite[Remarks 2.4 and 2.6]{SchmedingWockel14}). Then we call
 $\per_{[\ol{\omega}^{\op{eq}}]}(\pi_{2}(K))$ the group of \emph{primary
 periods} (or shortly just the primary periods) of $(K,[{{\omega}]})$. Note
 that they really only depend on the cohomology class of $[{\omega}]$ in
 $H^{2}_{\op{dR}}(M,\R)$ by the following lemma. Moreover, the primary periods
 $\per_{[\ol{\omega}^{\op{eq}}]}(\pi_{2}(K))$ are contained in the fixed points
 $\fa^{K}$ \cite[Lemma
 4.2]{Neeb04Abelian-extensions-of-infinite-dimensional-Lie-groups}. If they are
 also contained in $\Gamma$, then there exists an extension of Lie groups
 \begin{equation}\label{eqn17}
  A_{\Gamma}\to K^{\sharp}\xrightarrow{q^{\sharp}} \wt{K},
 \end{equation}
 whose underlying extension of Lie algebras is equivalent to
 $\fa\to \fa\oplus_{\omega}\fg\to\fg$ \cite[Theorem
 6.7]{Neeb04Abelian-extensions-of-infinite-dimensional-Lie-groups}. Here and in
 the sequel, $q_{\pi_{1}(K)}\from \wt{K}\to K$ denotes the universal covering
 morphism. The extension \eqref{eqn17} is uniquely determined (up to
 equivalence) by the associated Lie algebra extension \cite[Theorem
 7.2]{Neeb04Abelian-extensions-of-infinite-dimensional-Lie-groups}. Moreover,
 $K^{\sharp}\to \wt{K}$ is a principal $A_{\Gamma}$-bundle that admits a
 connection with curvature $\omega^{\op{eq}}$, so $\omega^{\op{eq}}$ is
 $\Gamma$-prequantisable on $\wt{K}$ in this case (cf.\ the proof of Lemma
 \ref{lem:secondary_discreteness}). The question, whether
 $A_{\Gamma}\to K^{\sharp}\to \wt{K}$ factors to an extension of $K$ by
 $A_{\Gamma}$ is then controlled by the flux homomorphism
 \begin{equation*}
  F_{[\ol{\omega}]}\from \pi_{1}(K)\to H^{1}_{\op{dR}}(M,\R)\se H^{1}_{c}(\fk,\fa).
 \end{equation*}
 (cf.\ \cite[Lemma 6.2, Proposition 6.3 and Proposition
 9.13]{Neeb04Abelian-extensions-of-infinite-dimensional-Lie-groups}). Since $M$
 is assumed to be 1-connected, $F_{[\ol{\omega}]}$ vanishes automatically.
 Almost all flux phenomena will be irrelevant for this paper since we only work
 with 1-connected $M$. The flux will only occur shortly in Remark
 \ref{rem:relation_of_extension_groups}. One main observation of this section
 is that the integration of the Lie algebra extension
 $\fa\to \fa\oplus_{\ol{\omega}}\fk\to\fk$ to a transitive pair is \emph{not}
 governed by the flux, but rather by the \emph{secondary} periods that we will
 introduce in Remark \ref{rem:secondary_periods}.
\end{remark}

\begin{lemma}\label{lem:injective}
 The map
 \begin{equation}\label{eqn9}
  H^{2}_{\op{dR}}(M,\R)\to H^{2}_{c}(\cV(M),C^{\infty}(M)),\quad [\omega]\mapsto [\ol{\omega}].
 \end{equation}
 is injective.
\end{lemma}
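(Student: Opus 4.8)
The plan is to realise the map in the statement as the one induced by the morphism of cochain complexes
\begin{equation*}
 \Phi\from \bigl(\Omega^{\bullet}(M),d\bigr)\to \bigl(C^{\bullet}_{c}(\cV(M),C^{\infty}(M)),d_{\op{CE}}\bigr),\quad \Phi(\alpha)(X_{1},\dots,X_{k})=\alpha(X_{1},\dots,X_{k}).
\end{equation*}
This is a chain map, because the global Cartan formula for the exterior derivative is \emph{literally} the Chevalley--Eilenberg differential once one uses that the module structure on $C^{\infty}(M)$ is $X.f=\cL_{X}f$; in degree two one has $\Phi(\omega)=\ol{\omega}$. Since a form is recovered from $\Phi(\alpha)$ by evaluating on vector fields with prescribed values at a point, $\Phi$ is injective on cochains, so it suffices to prove: \emph{if $\omega\in\Omega^{2}(M)$ is closed and $\ol{\omega}=d_{\op{CE}}b$ for a continuous linear $b\from\cV(M)\to C^{\infty}(M)$, then $\omega$ is exact.}

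First I would reduce exactness to a period condition. As $M$ is compact and $1$-connected, Hurewicz gives $H_{2}(M;\Z)\cong\pi_{2}(M)$ and $H^{2}_{\op{dR}}(M,\R)\cong\Hom(H_{2}(M;\R),\R)$, with $H_{2}(M;\R)$ spanned by spherical classes. Hence $\omega$ is exact if and only if $\per_{[\omega]}([\sigma])=\int_{S^{2}}\sigma^{*}\omega=0$ for every smooth $\sigma\from S^{2}\to M$. Representing $[\sigma]$ by a based map $\sigma\from D^{2}\to M$ with $\sigma(\partial D^{2})=\{m\}$, and using that $\ev_{m}\from\Diff(M)_{0}\to M$ is a submersion (Corollary~\ref{cor: diff0:trans} together with the submersion results of Section~\ref{sec:reconstruction_theorem}), I would patch local smooth sections of $\ev_{m}$ over the contractible disc to a smooth family $g\from D^{2}\to\Diff(M)_{0}$ with $g(z)(m)=\sigma(z)$ and $g|_{\partial D^{2}}$ valued in the stabiliser $\Diff_{m}(M)$.

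Writing the right-logarithmic derivatives of $g$ in coordinates $(s,t)$ as time-dependent vector fields $X,Y\in\cV(M)$, one has $\partial_{s}\sigma=X(\sigma)$, $\partial_{t}\sigma=Y(\sigma)$ and the Maurer--Cartan flatness relation $\partial_{s}Y-\partial_{t}X=-[X,Y]$. Setting $F:=b(X)(\sigma)$ and $G:=b(Y)(\sigma)$ and differentiating under the continuous linear map $b$ yields $\partial_{t}F=b(\partial_{t}X)(\sigma)+(\cL_{Y}b(X))(\sigma)$ and $\partial_{s}G=b(\partial_{s}Y)(\sigma)+(\cL_{X}b(Y))(\sigma)$, so that the coboundary relation $\ol{\omega}=d_{\op{CE}}b$ gives
\begin{equation*}
 \partial_{s}G-\partial_{t}F=b(\partial_{s}Y-\partial_{t}X)(\sigma)+\bigl(\cL_{X}b(Y)-\cL_{Y}b(X)\bigr)(\sigma)=\omega(X,Y)(\sigma).
\end{equation*}
Since $\sigma^{*}\omega=\omega(X,Y)(\sigma)\,ds\wedge dt$, this says precisely that $\sigma^{*}\omega=d\eta$ on $D^{2}$ with $\eta:=F\,ds+G\,dt$, whence by Stokes $\int_{S^{2}}\sigma^{*}\omega=\int_{D^{2}}d\eta=\int_{\partial D^{2}}\eta$.

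The hard part will be this last boundary term. The difficulty is exactly that $b$ is only $\R$-linear, not $C^{\infty}(M)$-linear: the defect operator $\gamma_{Y}(f):=b(fY)-f\,b(Y)$ satisfies $[\cL_{X},\gamma_{Y}]=\gamma_{[X,Y]}$ and does not vanish in general, and it is this non-tensoriality that prevents $\eta$ from being manifestly closed on the nose. On $\partial D^{2}$ one has $\sigma\equiv m$, so $\eta|_{\partial D^{2}}$ is controlled by the restriction of $b$ to the stabiliser $\Diff_{m}(M)=\Loop{m}(\cP(M))$ and by the chosen loop $g|_{\partial D^{2}}$ of stabiliser elements; I would show $\int_{\partial D^{2}}\eta=0$ by exploiting the freedom in $g|_{\partial D^{2}}$ (e.g.\ normalising it to the constant $\id_{M}$ after adjusting by a coboundary $X\mapsto\cL_{X}c$, which does not change $\ol{\omega}$) and the regularity of $\Loop{m}(\cP(M))$ from Proposition~\ref{prop: stabsubgp:reg}. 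Equivalently, and perhaps more conceptually, $C^{\infty}(M)\oplus_{\ol{\omega}}\cV(M)$ is the Atiyah algebra of a connection of curvature $\omega$ on the (necessarily trivial) principal $\R$-bundle over $M$, $\ol{\omega}=d_{\op{CE}}b$ says that this extension splits into a flat $\cV(M)$-connection, and flatness along the family realising $\sigma$ forces the curvature integral to vanish; turning this infinitesimal splitting into the honest vanishing of the spherical period is the crux.
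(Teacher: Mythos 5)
Your strategy is entirely different from the paper's: the paper disposes of this lemma in a few lines by factoring \eqref{eqn9} through the \emph{diagonal} cohomology $H^{2}_{c,\Delta}(\cV(M),C^{\infty}(M))$ and quoting injectivity results of Losik and of Fuks from Gelfand--Fuks theory; in particular the paper's argument is purely algebraic and does not use that $M$ is $1$-connected. Within your geometric approach, the reduction to the statement ``$\ol{\omega}=d_{\op{CE}}b$ with $b$ continuous linear implies $\omega$ exact'', the passage to spherical periods via Hurewicz, the smooth lift $g\from D^{2}\to\Diff(M)_{0}$ of a disc representative, and the Maurer--Cartan/Stokes computation giving $\int_{S^{2}}\sigma^{*}\omega=\int_{\partial D^{2}}\eta$ with $\eta=b(X)(\sigma)\,ds+b(Y)(\sigma)\,dt$ are all correct.

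However, the step you defer, $\int_{\partial D^{2}}\eta=0$, is not a normalisation issue but the entire difficulty, and neither of your suggestions closes it. On $\partial D^{2}$ the integrand is $b(Z(\theta))(m)$, where $Z(\theta)\in\cV_{m}(M)$ is the logarithmic derivative of the boundary loop; replacing $b$ by $b+\cL_{(\cdot)}c$ changes this by $dc_{m}(Z(\theta)(m))=0$, so adjusting by a coboundary has no effect, and the boundary loop cannot be homotoped to the constant $\id_{M}$ because its class in $\pi_{1}(\Diff_{m}(M))$ is $\delta([\sigma])$ for the connecting map of the fibration $\ev_{m}$, which is nonzero already for $M=\bS^{2}$ (where $\delta$ is multiplication by $2$, cf.\ Example \ref{exmp:hoppf_fibration_1}). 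The flat-connection heuristic fails for the same reason: flatness controls the disc integral, which Stokes has already converted into precisely this boundary holonomy. That the boundary term does not vanish for formal reasons is shown by running your argument verbatim with $\Symp(\bS^{2},\omega)_{0}$ in place of $\Diff(\bS^{2})_{0}$: by Proposition \ref{prop:restriction_of_canonical_cocycle_to_ham_is_coboundary} the restriction of $\ol{\omega}$ to $\symp(\bS^{2},\omega)$ \emph{is} a coboundary, the disc still lifts through the principal bundle $\ev_{m}\from\Symp(\bS^{2},\omega)_{0}\to\bS^{2}$, and the identical computation yields $\int_{\partial D^{2}}\eta=\int_{\bS^{2}}\omega=1\neq 0$. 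Hence any proof along these lines must exploit, in an essential and as yet unidentified way, that $b$ is defined on all of $\cV(M)$ rather than on a transitive subalgebra. Indeed $\int_{\partial D^{2}}\eta$ is exactly the secondary period $\per^{\flat}_{[\omega]}(\delta([\sigma]))$ of Remark \ref{rem:secondary_periods} and Lemma \ref{lem:secondary_discreteness}, whose possible non-vanishing is the central theme of this section; the gap is therefore genuine.
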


\begin{proof}
 We may assume without loss of generality that $\dim(M)\geq 2$. Let
 $H^{2}_{c,\Delta}(\cV(M),C^{\infty}(M))$ be the diagonal cohomology of $\cV(M)$
 with coefficients in $C^{\infty}(M)$, i.e., the cohomology of the subcomplex
 of $\R$-linear alternating cochains
 \begin{equation*}
  \xi\from\cV(M)\times\cV(M)\to C^{\infty}(M)
 \end{equation*}
 for which $\xi(X,Y)(m)$ only depends on the germs of $X$ and $Y$ at $m$. Since
 $\omega\in \Omega^{2}(M)$ is $C^{\infty}$-linear in both arguments,
 $\ol{\omega}$ is a cocycle of this kind, so that the map \eqref{eqn9} factors
 as
 \begin{equation}\label{eqn10}
  H^{2}_{\op{dR}}(M,\R)\to H^{2}_{c,\Delta}(\cV(M),C^{\infty}(M)) \to H^{2}_{c}(\cV(M),C^{\infty}(M)).
 \end{equation}
 Now the first map in \eqref{eqn10} is injective by \cite[Corollary
 2]{Losik70The-cohomology-of-infinite-dimensional-Lie-algebras-of-vector-fields.}
 and the second map in \eqref{eqn10} is injective by \cite[Theorem
 2.4.10]{Fuks86Cohomology-of-infinite-dimensional-Lie-algebras}, and the
 assertion follows.
\end{proof}

The following is the only result on the flux homomorphism that we shall need in the sequel.

\begin{lemma}\label{lem:pointed_flux_vanishes}
 The flux
 \begin{equation*}
  F_{[\ol{\omega}_{m}]}\from \pi_{1}(K_{m})\to H^{1}_{c}(\fk_{m},\fa_{m})
 \end{equation*}
 also vanishes. More generally, if $M$ is only assumed to be compact, then
 $F_{[\ol{\omega}_{m}]}$ vanishes if $F_{[\ol{\omega}]}$ does so.
\end{lemma}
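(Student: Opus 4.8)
The plan is to reduce the vanishing of $F_{[\ol{\omega}_{m}]}$ to that of $F_{[\ol{\omega}]}$ through a naturality argument; this simultaneously proves the general compact statement, and the first assertion then follows because $F_{[\ol{\omega}]}$ vanishes for $1$-connected $M$ by Remark \ref{rem:prequant}. First I would record the module data. Since every $\varphi\in K_{m}$ fixes $m$, the subspace $\fa_{m}=C^{\infty}_{m}(M)$ is a $K_{m}$-submodule of $\fa$ and $\fa\cong\fa_{0}\oplus\fa_{m}$ as $K_{m}$-modules (and as $\fk_{m}$-modules), with $\fa_{0}\cong\R$ the constants; let $p\from\fa\to\fa_{m}$ denote the resulting $K_{m}$-equivariant projection. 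As already observed in the running text, elements of $\fk_{m}$ vanish at $m$, so the restriction of $\ol{\omega}$ to $\fk_{m}\times\fk_{m}$ takes values in $\fa_{m}$ and is exactly $\ol{\omega}_{m}$. Write $j\from K_{m}\hookrightarrow K$ for the inclusion, with derivative the inclusion $\fk_{m}\hookrightarrow\fk$.

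The key step is to establish the naturality identity
$$F_{[\ol{\omega}_{m}]}=(p_{*}\circ\res)\circ F_{[\ol{\omega}]}\circ j_{*},$$
where $j_{*}\from\pi_{1}(K_{m})\to\pi_{1}(K)$ is induced by $j$, $\res\from H^{1}_{c}(\fk,\fa)\to H^{1}_{c}(\fk_{m},\fa)$ is restriction of cochains along $\fk_{m}\hookrightarrow\fk$, and $p_{*}\from H^{1}_{c}(\fk_{m},\fa)\to H^{1}_{c}(\fk_{m},\fa_{m})$ is induced by the $\fk_{m}$-module map $p$. To verify this I would use the explicit description of the flux from \cite{Neeb04Abelian-extensions-of-infinite-dimensional-Lie-groups}: for a smooth loop $\gamma$ based at the identity, $F_{[\ol{\omega}]}([\gamma])$ is the class of the $1$-cochain $\fk\ni X\mapsto\int_{0}^{1}(\ol{\omega}^{\op{eq}})_{\gamma(t)}(X_{r}(\gamma(t)),\dot\gamma(t))\,dt$, with $X_{r}$ the right-invariant vector field with value $X$ at the identity. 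Inserting the formula $(\ol{\omega}^{\op{eq}})_{k}(v,w)=k.\ol{\omega}(v\cdot k^{-1},w\cdot k^{-1})$ from Remark \ref{rem:prequant}, one sees that for a loop $\gamma$ in $K_{m}$ and $X\in\fk_{m}$ both right trivialisations $X_{r}(\gamma(t))\cdot\gamma(t)^{-1}=X$ and $\dot\gamma(t)\cdot\gamma(t)^{-1}$ lie in $\fk_{m}$, so the integrand equals $\gamma(t).\ol{\omega}_{m}(X,\dot\gamma(t)\cdot\gamma(t)^{-1})\in\fa_{m}$, using that $K_{m}$ preserves $\fa_{m}$. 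Hence the flux cochain of $j_{*}[\gamma]$, restricted to $\fk_{m}$, already takes values in $\fa_{m}$ and agrees there with the flux cochain of $[\gamma]$ computed from $(\ol{\omega}_{m})^{\op{eq}}=j^{*}\ol{\omega}^{\op{eq}}$; this is precisely the cochain-level form of the displayed identity. Since $p$ is a $\fk_{m}$-module map, $p_{*}$ commutes with $d_{\op{CE}}$, and passing to cohomology classes gives the naturality identity.

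Granting the identity, the conclusion is immediate. In the standing case $M$ is $1$-connected, so $F_{[\ol{\omega}]}=0$ by Remark \ref{rem:prequant}; in the general compact case $F_{[\ol{\omega}]}=0$ by hypothesis. In either situation $F_{[\ol{\omega}]}\circ j_{*}=0$, whence $F_{[\ol{\omega}_{m}]}=(p_{*}\circ\res)\circ(F_{[\ol{\omega}]}\circ j_{*})=0$.

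I expect the only genuine work to be the naturality identity, i.e. checking that Neeb's equivariant $2$-form and the associated flux cochain restrict compatibly from $(K,\fa)$ to $(K_{m},\fa_{m})$. Once the right-trivialisation conventions and the module splitting $\fa\cong\fa_{0}\oplus\fa_{m}$ are fixed, this is a direct computation with the formula for $\ol{\omega}^{\op{eq}}$; the remaining bookkeeping (that $\res$ and $p_{*}$ are chain maps, and that $\ol{\omega}_{m}$ is literally the corestriction of $\ol{\omega}$) has already been set up in the running text preceding the lemma.
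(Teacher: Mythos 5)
Your proposal is correct and follows essentially the same route as the paper: the displayed naturality identity $F_{[\ol{\omega}_{m}]}=(p_{*}\circ\res)\circ F_{[\ol{\omega}]}\circ j_{*}$ is precisely the commutativity of the diagram the paper invokes, with your projection $p$ agreeing with the paper's $\fk_{m}$-module map $f\mapsto f-f(m)$. The only difference is that you verify the commutativity explicitly at the cochain level via the formula for $\ol{\omega}^{\op{eq}}$, whereas the paper asserts it without computation.
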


\begin{proof}
 This follows immediately from the commuting diagram
\begin{equation*}
  \xymatrix{
  \wt{K_{m}}\ar[r] \ar@{=}[d] & 
  \wt{K}\ar[r]^(.35){F_{[\ol{\omega}]}} & H^{1}_{\op{dR}}(M,\R)\xyhookrightarrow{1.8em} & H^{1}_{c}(\fk,\fa)\ar[r]^{\res_{\fk_{m}}} &  H^{1}_{c}(\fk_{m},\fa)\ar[d]_(.45){\pr_{m}} \\
   \wt{K_{m}} \ar[rrrr]^-{F_{[\ol{\omega}_{m}]}}  & & & &  H^{1}_{c}(\fk_{m},\fa_{m}),
  }
 \end{equation*}
 where $\wt{K_{m}}\to \wt{K}$ is induced by the inclusion $K_{m}\hookrightarrow K$,
 $\res_{\fk_{m}}$ is induced by the inclusion $\fk_{m}\hookrightarrow \fk$ and
 $\pr_{m}$ is induced by the morphism $\fa\to\fa_{m}$, $f\mapsto f-f(m)$ of
 $\fk_{m}$-modules.
\end{proof}

\begin{tabsection}
 The previous remark introduces the most important concepts from the
 integration theory of abelian extensions of Lie groups that occur in the
 sequel, for the rest see
 \cite{Neeb04Abelian-extensions-of-infinite-dimensional-Lie-groups}. From
 Proposition \ref{prop:restriction_of_canonical_cocycle_to_ham_is_coboundary}
 we immediately obtain the following
\end{tabsection}

\begin{corollary}\label{cor:primary_periods_vasish_for_symp}
 If $K=\Symp(M,\omega)_{0}$ for $\omega\in \Omega^{2}(M)$ symplectic, then
 $\per_{[\ol{\omega}]}(\pi_{2}(K))=0$.
\end{corollary}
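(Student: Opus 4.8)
The plan is to deduce the vanishing of the primary periods from the coboundary statement of Proposition \ref{prop:restriction_of_canonical_cocycle_to_ham_is_coboundary}, together with the standard fact that the period homomorphism factors through Chevalley--Eilenberg cohomology. Since the standing assumption of this section is that $M$ is $1$-connected, and since $K=\Symp(M,\omega)_{0}$ has Lie algebra $\fk=\symp(M,\omega)$, Proposition \ref{prop:restriction_of_canonical_cocycle_to_ham_is_coboundary} applies to the restriction of $\ol{\omega}$ to $\fk$: it furnishes a continuous linear map $b\from \fk\to \fa$ with $\ol{\omega}|_{\fk\times\fk}=d_{\op{CE}}b$, so that $[\ol{\omega}]=0$ in $H^{2}_{c}(\fk,\fa)$.

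Next I would recall from Remark \ref{rem:prequant} that the primary period homomorphism $\per_{[\ol{\omega}]}\from \pi_{2}(K)\to \fa$ is computed by integrating the equivariant $2$-form $\ol{\omega}^{\op{eq}}\in \Omega^{2}(K,\fa)$ over smooth representatives of classes in $\pi_{2}(K)$. The key observation is that a coboundary on the Lie-algebra level produces an \emph{exact} equivariant form on the group: writing $b^{\op{eq}}\in \Omega^{1}(K,\fa)$ for the equivariant $1$-form associated to $b$ via the right trivialisation of Remark \ref{rem:prequant}, the compatibility of the de Rham differential of equivariant forms with the Chevalley--Eilenberg differential gives $\ol{\omega}^{\op{eq}}=d\,b^{\op{eq}}$.

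Granting this, the conclusion is immediate from Stokes' theorem: for any $[\sigma]\in \pi_{2}(K)$ with smooth representative $\sigma\from S^{2}\to K$ one computes
\[
 \per_{[\ol{\omega}]}([\sigma])=\int_{S^{2}}\sigma^{*}\ol{\omega}^{\op{eq}}=\int_{S^{2}}d(\sigma^{*}b^{\op{eq}})=0,
\]
because $S^{2}$ is closed, whence $\per_{[\ol{\omega}]}(\pi_{2}(K))=0$.

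The step that deserves the most care --- and which I would handle by citing \cite{Neeb04Abelian-extensions-of-infinite-dimensional-Lie-groups} rather than re-deriving it --- is the passage from the Chevalley--Eilenberg coboundary to the exact equivariant form, equivalently the assertion that the primary period homomorphism depends only on the class $[\ol{\omega}]\in H^{2}_{c}(\fk,\fa)$ and therefore vanishes on coboundaries. This is part of the standard construction of the period homomorphism in loc.\ cit., so the cleanest route is to invoke that directly; the symplectic hypothesis enters only through Proposition \ref{prop:restriction_of_canonical_cocycle_to_ham_is_coboundary}, which supplies the coboundary.
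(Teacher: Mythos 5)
Your proof is correct and takes essentially the same route as the paper, which deduces the corollary ``immediately'' from Proposition \ref{prop:restriction_of_canonical_cocycle_to_ham_is_coboundary}; you merely spell out the standard intermediate step (correctly attributed to \cite{Neeb04Abelian-extensions-of-infinite-dimensional-Lie-groups}) that a Chevalley--Eilenberg coboundary yields an exact equivariant $2$-form, so the period integrals vanish by Stokes. Your remark that the section's standing $1$-connectedness of $M$ is what upgrades the coboundary statement from $\ham(M,\omega)$ to $\symp(M,\omega)$ is exactly the point the paper relies on.
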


\begin{remark}\label{rem:extension_of_K_m}
 Keeping the notation as in Remark \ref{rem:prequant}, we can also integrate
 the restricted extension
 \begin{equation*}
  \fa_{m}\to \fa_{m}\oplus_{\ol{\omega}_{m}}\fk_{m}\to\fk_{m}
 \end{equation*}
 to a unique extension
 \begin{equation*}
  \fa_{m}\to K^{\sharp}_{m}\to \wt{K_{m}}.
 \end{equation*}
 Indeed, $\per_{\ol{\omega}_{m}}(\pi_{2}(K_{m}))$ is contained in the
 fixed-points of $K_{m}$. Since $K_{m}$ is assumed to act transitively on
 $M\setminus \{m\}$, if follows that
 \begin{equation*}
  (\fa_{m})^{K_{m}}=\{f\in C^{\infty}(M)\mid f\text{ is constant on }M\setminus\{m\}\text{ and }f(m)=0\}=\{0\}
 \end{equation*}
 by the continuity of $f$. Consequently,
 $\per_{\ol{\omega}_{m}}(\pi_{2}(K_{m}))=\{0\}$ and \cite[Theorem
 6.7]{Neeb04Abelian-extensions-of-infinite-dimensional-Lie-groups} implies that
 there exists an extension $\fa_{m}\to {K}_{m}^{\sharp}\to \wt{K_{m}}$ whose
 derived extension is equivalent to
 $\fa_{m}\to \fa_{m}\oplus_{\ol{\omega}_{m}}\fk_{m}\to \fk_{m}$.
\end{remark}

\begin{tabsection}
 Now we have the extensions ${K}^{\sharp}\to \wt{K}$ from Remark
 \ref{rem:extension_of_K_m} and $K_{m}^{\sharp}\to \wt{K_{m}}$ from Remark
 \ref{rem:extension_of_K_m} at hand, and we want to build our transitive pair
 from them. The question is now how these two extensions are relate to one
 another. The crucial information about this in contained in the modification
 of the period homomorphism $\per_{[\omega]}^{\flat}$ and the associated
 secondary periods that we introduce and analyse now.
\end{tabsection}

\begin{remark}\label{rem:secondary_periods}
 From the transitive action of $K$ on $M$ we obtain the principal
 $K_{m}$-bundle $\ev_{m}\from K\to M$. This gives in particular rise to the
 long exact sequence
 \begin{equation*}
  \cdots\to \pi_{2}(K_{m})\to \pi_{2}(K)\xrightarrow{\ev_{m}} \pi_{2}(M)\xrightarrow{\delta} \pi_{1}(K_{m})\to \pi_{1}(K)\to\cdots
 \end{equation*}
 At $\pi_{2}(M)$, this induces a short exact sequence
 \begin{equation*}
  0 \to\Lambda\xrightarrow{\ev_{m}}  \pi_{2}(M)  \xrightarrow{\delta} \Delta\to 0
 \end{equation*}
 with $\Delta:=\ker(\pi_{1}(K_{m})\to \pi_{1}(K))$,
 $\Lambda:=\im(\pi_{2}(K)\to \pi_{2}(M))$. From \cite[Theorem
 3.18]{Neeb11Lie-groups-of-bundle-automorphisms-and-their-extensions} it
 follows that
 $\per_{[\ol{\omega}]}([\gamma])=\per_{[\omega]}([\ev_{m}\circ \gamma])$ holds
 for $\gamma\in C_{*}^{\infty}(\bS^{2},K)$ and thus $\per_{[\ol{\omega}]}$
 factors through $\left.\per_{[\omega]}\right|_{\Lambda}$.
 
 If now $\Gamma\leq\R$ is a (not necessarily discrete) subgroup that contains
 $\per_{[\ol{\omega}]}(\pi_{2}(K))$ and $q_{\Gamma}\from \R\to \R/\Gamma$ is
 the quotient map, then $q_{\Gamma} \circ \per_{[\omega]}$ factors through
 $\delta$ and a homomorphism
 $\per^{\flat}_{[\omega]}\from \Delta\to \R/\Gamma$. This gives rise to a
 morphism
 \begin{equation*}
  \xymatrix{
  0\ar[r] & \Lambda \ar[r] \ar[d]^{\per_{[{\ol{\omega}}]}}& \pi_{2}(M) \ar[r]\ar[d]^{\per_{[\omega]}} & \Delta \ar[r]\ar[d]^{\per^{\flat}_{[\omega]}} & 0\\
  0\ar[r] & \Gamma \ar[r] & \R \ar[r] & \R/\Gamma \ar[r] & 0
  }
 \end{equation*}
 of short exact sequences.
\end{remark}

\begin{definition}
 If, in the setting of the previous remark,
 $\Gamma:=\per_{[\ol{\omega}]}(\pi_{2}(K))$ is the group of primary
 periods (cf.\ Remark \ref{rem:prequant}), then we call
 \begin{equation*}
  \per^{\flat}_{[\omega]}(\pi_{1}(K_{m}))\se\R/\Gamma
 \end{equation*}
 the group of \emph{secondary periods} (or shortly the secondary periods) of
 $(K,[\omega])$.
\end{definition}

\begin{remark}\label{rem:relation_of_extension_groups}
 We keep the notation from Remark \ref{rem:prequant} and Remark
 \ref{rem:extension_of_K_m}. The inclusion
 $\fa_{m}\oplus _{\ol{\omega}_{m}}\fk_{m}\hookrightarrow \fa\oplus_{\omega} \fk$
 induces a unique homomorphism
 \begin{equation*}
  \varphi_{m}\from K_{m}^{\sharp}\to K
 \end{equation*}
 (cf.\ \cite[Theorem 8.1]{Milnor84Remarks-on-infinite-dimensional-Lie-groups})
 that makes the diagram
 \begin{equation}
  \label{eqn15} \vcenter{  \xymatrix{ 
  \fa_{m}\ar[r] \ar[d]& \ar[d]^{\varphi_{m}} K_{m}^{\sharp}\ar[r]^{q^{\sharp}_{m}}&\wt{K_{m}}\ar[d]^{q_{\Delta}}\\
  A_{\Gamma} \ar[r]& K^{\sharp}\ar[r] & \wt{K}
  }}
 \end{equation}
 commute. Here, $\fa_{m}\hookrightarrow A_{\Gamma}=\fa/{\Gamma}$ is the
 morphism induced from the embedding $\fa_{m}\hookrightarrow \fa$ and the
 quotient $\fa\to\fa/\Gamma$ (note that $\fa_{m}\cap \Gamma=\{0\}$).
 The morphism $q_{\Delta}\from \wt{K_{m}}\to\wt{K}$ in \eqref{eqn15} is induced
 from the inclusion $K_{m}\hookrightarrow K$. The kernel $\ker(q_{\Delta})$ can
 be identified with $\Delta=\ker(\pi_{1}(K_{m})\to \pi_{1}(K))$ and the image
 with $\wt{K_{m}}/\Delta$. Then the latter is a closed Lie subgroup of
 $\wt{K}$. This gives rise to an extension
 \begin{equation*}
  A_{m}^{\sharp}\longrightarrow K^{\sharp}_{m}\xrightarrow{q_{\Delta}\circ q^{\sharp}_{m}} \wt{K_{m}}/\Delta
 \end{equation*}
 of $\wt{K_{m}}/\Delta$ by $A^{\sharp}_{m}:=(q_{m}^{\sharp})^{-1}(\Delta)$.
 Now $F_{[\omega_{m}]}$ vanishes by Lemma \ref{lem:pointed_flux_vanishes}, whence
 there exists a homomorphism $\sigma\from \Delta\to Z(K_{m}^{\sharp})$ with
 $q_{m}\circ \sigma =\id_{\Delta}$ \cite[Corollary
 6.6]{Neeb04Abelian-extensions-of-infinite-dimensional-Lie-groups}.
\end{remark}

\begin{lemma}\label{lem:secondary_discreteness}
 In the situation of Remark \ref{rem:relation_of_extension_groups} and Remark
 \ref{rem:secondary_periods} we have
 \begin{equation*}
  \varphi_{m}( \sigma( x))+\fa_{m}=\per_{[{\omega}]}^{\flat}(x)+\fa_{m}
 \end{equation*}
 for each $x\in \Delta$.
\end{lemma}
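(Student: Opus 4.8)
The plan is to reduce the identity to a holonomy computation in the central extensions and then to invoke the period–transfer formula \cite[Theorem 3.18]{Neeb11Lie-groups-of-bundle-automorphisms-and-their-extensions}. First note that the two sides indeed live in the same group. From the commuting diagram \eqref{eqn15} we get $q^{\sharp}(\varphi_{m}(\sigma(x)))=q_{\Delta}(q^{\sharp}_{m}(\sigma(x)))=q_{\Delta}(x)=1$, since $x\in \Delta=\ker q_{\Delta}$; hence $\varphi_{m}(\sigma(x))\in \ker q^{\sharp}=A_{\Gamma}$. Because $\fa\cong \fa_{0}\oplus \fa_{m}$ with $\fa_{0}\cong\R$ and $\Gamma\se \fa_{0}$, reduction modulo $\fa_{m}$ gives $A_{\Gamma}/\fa_{m}\cong \R/\Gamma$, which is the target of $\per^{\flat}_{[\omega]}$. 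Moreover the class $\varphi_{m}(\sigma(x))+\fa_{m}$ is independent of the chosen lift of $x$: any $\tilde{x}\in K^{\sharp}_{m}$ with $q^{\sharp}_{m}(\tilde{x})=x$ differs from $\sigma(x)$ by an element of $\ker q^{\sharp}_{m}=\fa_{m}$, and by the left square of \eqref{eqn15} the map $\varphi_{m}$ restricts on $\fa_{m}$ to the embedding $\fa_{m}\hookrightarrow A_{\Gamma}$. Thus I may replace $\sigma(x)$ by any convenient lift $\tilde{x}$ and work modulo $\fa_{m}$ from now on.

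Next I would produce a geometric representative of $x$. Since $x\in\Delta=\ker(\pi_{1}(\Stab{m})\to \pi_{1}(K))$, choose a smooth loop $\gamma\from \bS^{1}\to \Stab{m}$ representing $x$ which is contractible in $K$, together with a filling disk $F\from D^{2}\to K$ with $F|_{\partial D^{2}}=\gamma$. As $\gamma$ takes values in $\Stab{m}=\ev_{m}^{-1}(m)$, the composite $\ev_{m}\circ F\from (D^{2},\partial D^{2})\to (M,m)$ represents a class $z\in \pi_{2}(M)$, and by the standard description of the connecting homomorphism of the principal bundle $\ev_{m}\from K\to M$ one has $\delta(z)=x$. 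By Remark \ref{rem:secondary_periods} this gives $\per^{\flat}_{[\omega]}(x)=q_{\Gamma}(\per_{[\omega]}(z))=q_{\Gamma}\big(\int_{D^{2}}(\ev_{m}\circ F)^{*}\omega\big)$, the right-hand term I want to match.

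The third step identifies $\varphi_{m}(\tilde{x})$ with a holonomy. Recall from Remark \ref{rem:prequant} that $K^{\sharp}\to \wt{K}$ and $K^{\sharp}_{m}\to \wt{K_{m}}$ carry principal connections with curvatures $\ol{\omega}^{\op{eq}}$ and $\ol{\omega}_{m}^{\op{eq}}$, and that by naturality of the construction $\varphi_{m}$ is connection-preserving over $q_{\Delta}$. Lift $F$ to $\hat{F}\from D^{2}\to \wt{K}$ based at $1$ (possible since $D^{2}$ is simply connected); then $\ell:=\hat{F}|_{\partial D^{2}}$ is a loop at $1$ in $\wt{K}$, and unique path-lifting shows it coincides with $q_{\Delta}\circ \hat{\gamma}$, where $\hat{\gamma}$ is the lift of $\gamma$ to $\wt{K_{m}}$ from $1$ to $x$. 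Taking $\tilde{x}$ to be the endpoint of $\ol{\omega}_{m}^{\op{eq}}$-parallel transport along $\hat{\gamma}$ starting at the unit, one has $q^{\sharp}_{m}(\tilde{x})=x$, and since $\varphi_{m}$ preserves connections, $\varphi_{m}(\tilde{x})$ is the parallel transport along $\ell$. As $\ell$ is a loop at $1$, this is the holonomy of $\ell$, which by Stokes equals $q_{\Gamma}\big(\int_{D^{2}}\hat{F}^{*}\ol{\omega}^{\op{eq}}\big)\in A_{\Gamma}$.

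It then remains to show that the class of $\int_{D^{2}}\hat{F}^{*}\ol{\omega}^{\op{eq}}$ in $A_{\Gamma}/\fa_{m}\cong \R/\Gamma$ equals $q_{\Gamma}\big(\int_{D^{2}}(\ev_{m}\circ F)^{*}\omega\big)$; combining this with the previous two displays yields the assertion. This identity is a disk (relative) version of \cite[Theorem 3.18]{Neeb11Lie-groups-of-bundle-automorphisms-and-their-extensions}: gluing two such filling disks along $\ell$ produces a sphere in $\wt{K}$, hence in $K$, whose $\ol{\omega}^{\op{eq}}$-period is a primary period and thus lies in $\Gamma$, which reduces the disk statement to the sphere statement $\per_{[\ol{\omega}]}([\gamma])=\per_{[\omega]}([\ev_{m}\circ \gamma])$ already quoted. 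I expect this last step, together with the precise identification of the section via parallel transport, to be the main obstacle: both require unwinding Neeb's explicit path-integral model of the extensions and its canonical connection and checking that $\varphi_{m}$ respects it, so that the $\fa$-valued curvature integral over $\hat{F}$ has constant part equal, modulo $\Gamma$, to the scalar integral of $\omega$ transported from $M$ along $\ev_{m}$.
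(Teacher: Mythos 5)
Your argument is correct and follows essentially the same route as the paper: both identify $\varphi_{m}(\sigma(x))$ modulo $\fa_{m}$ with the holonomy of the projected loop for the connection on $K^{\sharp}\to\wt{K}$ induced by the canonical linear splitting, compute that holonomy as the curvature integral $\int_{F}\ol{\omega}^{\op{eq}}$ over a filler of the loop, and then pass to $\per^{\flat}_{[\omega]}(x)$ via the relation between $\ol{\omega}^{\op{eq}}$- and $\omega$-periods. The only real difference is presentational: where you assert that $\varphi_{m}$ is connection-preserving ``by naturality'' (and correctly flag this as the point needing verification), the paper establishes it by unwinding Milnor's integration of the inclusion $\iota$ through the initial value problem $\delta^{l}(\Gamma_{x})=\iota\circ\delta^{l}(\gamma_{x}^{\sharp})$, and it replaces your observation that the class is independent of the lift by the explicit formula $\sigma(x)=\gamma_{x}^{\sharp}(1)+b_{\sigma}(x)$ with $b_{\sigma}(x)\in\fa_{m}$.
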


\begin{proof}
 We first recall the following fact about abelian extensions of Lie groups. If
 $A\to \wh{G}\xrightarrow{q} G$ is such an extension and if
 $\tau\from \fg:=\Lf(G)\to \wh{\fg}:=\Lf(\wh{G})$ is a continuous linear
 splitting of the induced extension $\Lf(A)\to\Lf(\wh{G})\to\Lf(G)$, then we
 obtain a connection on the principal right $A$-bundle $q\from \wh{G}\to G$,
 induced for each $\wh{g}\in \wh{G}$ by the horizontal lift of tangent vectors
 \begin{equation*}
  \tau_{\wh{g}}\from  T_{g}G\to T_{\wh{g}}\wh{G},\quad v\mapsto \tau(v\cdot g^{-1})\cdot \wh{g}
 \end{equation*}
 where $g:=q(\wh{g})$. The curvature of this connection is given by
 $\omega^{\op{eq}}_{\tau}$, where $\omega_{\tau}\from \fg\times\fg\to\Lf(A)$ is
 the abelian Lie algebra cocycle $(x,y)\mapsto \tau([x,y])-[\tau(x),\tau(y)]$.
 Note again that we have here used the right trivialisation of the tangent
 bundle.\medskip
 
 For each $x\in \Delta$ we now choose a representative
 $\gamma_{x}\in C^{\infty}_{*}(\bS^{1},K_{m})$ in $\pi_{1}(K_{m})$ such that
 there exists a smooth map $F_{x}\from [0,1]\times\bS^{1}\to K$ with
 $F_{x}(0,\cdot)\equiv e_{K}$, $F_{x}(1,\cdot)=\gamma_{x}$ and
 $F_{x}(t,*)=\id_{M}$ for all $t\in [0,1]$. If $\gamma_{x}^{\sharp}$ denotes
 the horizontal lift of $\gamma_{x}$, induced by the canonical linear splitting
 $\fk_{m}\to \fa_{m}\oplus_{\ol{\omega}_{m}}\fk_{m}$ and the Lie algebra
 isomorphism $\Lf(K_{m}^{\sharp})\cong \fa_{m}\oplus_{\ol{\omega}_{m}}\fk_{m}$,
 then there exists some $b_{\sigma}\from \Delta\to \fa_{m}$ such that
 $\sigma(x)=\gamma_{x}^{\sharp}(1)+b_{\sigma}(x)$ (cf.\ \cite[Corollary
 6.6]{Neeb04Abelian-extensions-of-infinite-dimensional-Lie-groups}).
 
 By the construction of the morphism $\varphi_{m}$ (cf.\ \cite[Theorem
 8.1]{Milnor84Remarks-on-infinite-dimensional-Lie-groups}),
 \begin{equation*}
  \varphi_{m}(\sigma(x))=\varphi_{m}(\gamma_{x}^{\sharp}(1))+\varphi_{m}(b_{\sigma}(x))= \Gamma_{x}(1)+\varphi_{m}(b_{\sigma}(x)),
 \end{equation*}
 where $\Gamma_{x}\in C^{\infty}([0,1],K^{\sharp})$ is the solution of the
 initial value problem in $K^{\sharp}$ with $\Gamma_{x}(0)=e_{K^{\sharp}}$ and
 $\delta^{l}(\Gamma_{x})=\iota \circ \delta^{l}(\gamma^{\sharp}_{x})$ for
 $\iota\from \fa_{m}\oplus_{\ol{\omega}_{m}}\fk_{m}\to \fa\oplus_{\ol{\omega}}\fk$
 the canonical embedding. Note that $\Gamma_{x}$ exists as $K^{\sharp}$ is
 $C^\infty$-regular as an extension of $C^\infty$-regular Lie groups (cf.\
 \cite[Appendix
 B]{NeebSalmasian12Differentiable-vectors-and-unitary-representations-of-Frechet-Lie-supergroups}).
 This implies in particular that $\Gamma_{x}$ is a horizontal lift of the
 smooth loop $\gamma'_{x}:= q_{\Delta} \circ \gamma_{x}$ for the connection on
 the principal $A_{\Gamma}$-bundle $K^{\sharp}\to \wt{K}$ that is induced by
 the canonical linear splitting $\fk\to \fa\oplus \fk$ and the Lie algebra
 isomorphism $\Lf(K^{\sharp})\cong \fa\oplus_{\omega}\fk$. Consequently,
 $\varphi_{m}(\gamma_{x}^{\sharp}(1))=\Gamma_{x}(1)$ equals the holonomy of the
 loop $\gamma_{x}'$ for this connection.
 
 If we take the restriction $\left.K^{\sharp}\right|_{\wt{K_{m}}/\Delta}$ of
 the $A_{\Gamma}$-bundle $K^{\sharp}\to K$ to the Lie subgroup
 $\wt{K_{m}}/\Delta$, then the curvature of the above connection takes values
 in the subspace $\fa_{m}$ of $A_{\Gamma}\cong \fa_{m}\times \TGamma{\Gamma}$.
 Consequently, the connection is flat modulo $\fa_{m}$ and thus the
 $\TGamma{\Gamma}$-component of the holonomy can be computed as the integral of
 the curvature over any filler of the loop $\gamma_{x}'$. This implies
 \begin{equation*}
  \Gamma_{x}(1)+\fa_{m}=\op{hol}(\gamma'_{x})+\fa_{m}=\int_{F} \omega^{\op{eq}}+\fa_{m} =\per_{[{\omega}]}^{\flat}(x)+\fa_{m}.
 \end{equation*}
 Since $\varphi_{m}(b_{\sigma}(x))\se\varphi_{m}(\fa_{m})\se\fa_{m}$, this
 establishes the claim.
\end{proof}

\begin{tabsection}
 The following example illustrates the r\^ole of $\per^{\flat}_{[\omega]}$
 pretty well.
\end{tabsection}

\begin{example}\label{exmp:hoppf_fibration_1}
 Let $\omega$ be the standard volume form on $M:=\bS^{2}$ with total volume
 $1$. Then we consider the subgroup $K:=\Diff(\bS^{2})_{0}$. The action of
 $\SO_{3}(\R)$ on $\bS^{2}$ by rotations induces a map
 $\SO_{3}(\R)\to \Diff(\bS^{2})_{0}$, which is a homotopy equivalence
 \cite{Smale59Diffeomorphisms-of-the-2-sphere}. Consequently, $\pi_{2}(K)=0$
 and the primary periods vanish. Thus we may take $\Gamma=0$ to integrate the
 extension
 \begin{equation*}
  C^{\infty}(\bS^{2})\to C^{\infty}(\bS^{2})\oplus_{\ol{\omega}}\cV(\bS^{2})\to\cV(\bS^{2})
 \end{equation*}
 to an extension
 \begin{equation*}
  C^{\infty}(\bS^{2})\to K^{\sharp}\to \wt{K}
 \end{equation*}
 of Lie groups (cf.\ Remark \ref{rem:prequant}). From the five lemma and the
 long exact sequence in homotopy groups of the fibrations
 $K_{m}\to K\to \bS^{2}$ and $\SO_{2}(\R)\to \SO_{3}(\R)\to \bS^{2}$ it follows
 that the induced map $\SO_{2}(\R)\to K_{m}$ is also a homotopy equivalence.
 From this it follows that the exact sequence
 \begin{equation*}
  \pi_{2}(K) \to \pi_{2}(\bS^{2})\to \pi_{1}(K_{m})\to \pi_{1}(K)\to \pi_{1}(\bS^{2})
 \end{equation*}
 identifies with
 \begin{equation*}
  0\to \Z\xrightarrow{\cdot 2}\Z\to \Z/2\Z\to 0
 \end{equation*}
 and with respect to this identification we have $\Delta=2\Z$. Since
 $\per_{[\omega]}$ is given by the natural embedding $\Z \hookrightarrow \R$,
 it follows that $  \per^{\flat}_{\ol{\omega}}$ is given by
 \begin{equation*}
  \per^{\flat}_{[{\omega}]}\from \Delta\cong 2\Z\to \R,\quad 2x\mapsto x.
 \end{equation*}
 Thus the secondary periods coincide with $\Z$ and
 $\varphi_{m}(A^{\sharp}_{m})=C_{m}^{\infty}(\bS^{2})\times\Z$.
\end{example}

We now put all the bits and pieces that we have collected so far together.

\begin{theorem}\label{thm:integrating_extensions_to_transitive_pairs}
 Let $M$ be a compact and 1-connected manifold, $\omega\in \Omega^{2}(M)$ be
 closed and let $K\leq\Diff(M)$ be a connected and $C^{\infty}$-regular Lie
 subgroup such that $\ev_{m}\from K\to M$ is a subjective submersion and $K$
 acts 2-fold transitively on $M$. Let $\Delta$ be the kernel of the map
 $\wt{K_{m}}\to \wt{K}$ induced on the universal cover by the inclusion
 $K_{m}\hookrightarrow K$. Suppose $\Gamma,\Pi\se\R$ are discrete subgroups
 with $\per_{[\ol{\omega}]}\se \Gamma\se \Pi$, set
 $A_{\Gamma}:=C^{\infty}(M,\R)/\Gamma$ and identify $A_{\Gamma}$ with
 $C^{\infty}_{m}(M)\times \TGamma{\Gamma}$ (as abelian Lie groups or as
 $K_{m}$-modules). Then the following assertions are equivalent:
 \begin{enumerate}
  \item \label{item:integrating_extensions_to_transitive_pairs_1}
        $\per^{\flat}_{[\omega]}(\Delta)\se\Pi/\Gamma$.
  \item \label{item:integrating_extensions_to_transitive_pairs_2} Let
        \begin{equation*}
         A_{\Gamma} \to {K}^{\sharp}\xrightarrow{q^{\sharp}} \wt{K}
        \end{equation*}
        be the unique extension of $\wt{K}$ by $A_{\Gamma}$ whose Lie algebra
        extension is equivalent to $\fa\to\fa\oplus_{\ol{\omega}}\fk\to\fk$
        with $\fa:=C^{\infty}(M)$. If $\fa_{m}:=C^{\infty}_{m}(M)$, then the
        closed Lie subalgebra $\fa_{m}\oplus_{\ol{\omega}_{m}}\fk_{m}$ of
        $\fa\oplus\fk$ integrates to a closed Lie subgroup $I_{m}$ of
        $K^{\sharp}$ such that
        $I_{m}\cap A_{\Gamma}\se\fa_{m}\times \Pi/\Gamma$.
  \item \label{item:integrating_extensions_to_transitive_pairs_3} Let
        \begin{equation*}
         \fa_{m} \to {K}^{\sharp}_{m}\xrightarrow{q_{m}^{\sharp}} \wt{K_{m}}
        \end{equation*}
        be the unique extension of $\wt{K_{m}}$ by $\fa_{m}$ whose Lie algebra
        extension is equivalent to $\fa_{m}\oplus_{\ol{\omega}_{m}}\fk_{m}$ and
        set $A^{\sharp}_{m}:=(q_{m}^{\sharp})^{-1}(\Delta)$. Then the image of
        the Lie group morphism $\varphi_{m}\from K^{\sharp}_{m}\to K^{\sharp}$
        induced by the canonical embedding
        $\fa_{m}\oplus_{\ol{\omega}_{m}}\fk_{m}\to \fa\oplus_{\ol{\omega}}\fk$
        is a closed Lie subgroup and
        $\varphi_{m}(A^{\sharp}_{m})\se C_{m}^{\infty}(M,\R)\times \Pi/\Gamma$.
 \end{enumerate}
 If one (and thus all) of these conditions is satisfied, then the composition
 of the maps
 \begin{equation*}
  \theta^{\wedge}\from K^{\sharp}\xrightarrow{q^{\sharp}}\wt{K}\xrightarrow{q_{\pi_{1}(K)}} K\hookrightarrow\Diff(M)
 \end{equation*}
 gives rise to a transitive pair
 $(\theta,(\varphi_{m}(K^{\sharp}_{m})\cdot\Pi/\Gamma))$ with kernel
 $\Pi/\Gamma$. Moreover, the associated principal $\T_{\Pi}$-bundle
 $K^{\sharp}/(\varphi_{m}(K^{\sharp}_{m})\cdot\Pi/\Gamma)\to M$ admits a
 connection whose curvature equals $\omega$ and thus is (together with the
 choice of such a connection) a $\Pi$-prequantisation of $\omega$.
\end{theorem}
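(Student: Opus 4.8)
The plan is to show that conditions \ref{item:integrating_extensions_to_transitive_pairs_1}, \ref{item:integrating_extensions_to_transitive_pairs_2} and \ref{item:integrating_extensions_to_transitive_pairs_3} are essentially literal reformulations of each other through Lemma~\ref{lem:secondary_discreteness}, and then to assemble the transitive pair by hand. First I would record that the Lie algebra morphism $\Lf(\varphi_m)$ is the canonical inclusion $\fa_m\oplus_{\ol{\omega}_m}\fk_m\hookrightarrow \fa\oplus_{\ol{\omega}}\fk$, so that whenever $\im\varphi_m$ is a closed Lie subgroup it is precisely an integration $I_m$ of $\fa_m\oplus_{\ol{\omega}_m}\fk_m$; this identifies the groups in \ref{item:integrating_extensions_to_transitive_pairs_2} and \ref{item:integrating_extensions_to_transitive_pairs_3}. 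A short chase in \eqref{eqn15} then gives $I_m\cap A_\Gamma=\varphi_m(A^{\sharp}_m)$: if $\varphi_m(g)\in\ker q^{\sharp}=A_\Gamma$, then $q_\Delta(q^{\sharp}_m(g))=e$, so $q^{\sharp}_m(g)\in\Delta$ and $g\in A^{\sharp}_m$. The heart is the computation of $\varphi_m(A^{\sharp}_m)$: using the central splitting $\sigma\from\Delta\to Z(K^{\sharp}_m)$ of Remark~\ref{rem:relation_of_extension_groups}, the group $A^{\sharp}_m$ is generated by $\fa_m$ and $\sigma(\Delta)$, while $\varphi_m|_{\fa_m}$ is the canonical embedding $\fa_m\hookrightarrow A_\Gamma$ and $\varphi_m(\sigma(x))\equiv \per^{\flat}_{[\omega]}(x)$ modulo $\fa_m$ by Lemma~\ref{lem:secondary_discreteness}. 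Hence
\[
 \varphi_m(A^{\sharp}_m)=\fa_m\times\per^{\flat}_{[\omega]}(\Delta)\se \fa_m\times(\R/\Gamma)=A_\Gamma,
\]
so that the inclusion clause $\varphi_m(A^{\sharp}_m)\se\fa_m\times\Pi/\Gamma$ holds exactly when $\per^{\flat}_{[\omega]}(\Delta)\se\Pi/\Gamma$. This simultaneously yields the inclusion parts of \ref{item:integrating_extensions_to_transitive_pairs_1}, \ref{item:integrating_extensions_to_transitive_pairs_2} and \ref{item:integrating_extensions_to_transitive_pairs_3}.

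It remains to see that, under \ref{item:integrating_extensions_to_transitive_pairs_1}, $\im\varphi_m$ is genuinely a closed Lie subgroup; this is the main analytic obstacle. Since $\Gamma\se\Pi$ are discrete in $\R$, the group $\Pi/\Gamma$ is discrete in $\R/\Gamma$, so $\per^{\flat}_{[\omega]}(\Delta)$ is discrete and $\varphi_m(A^{\sharp}_m)=\fa_m\times\per^{\flat}_{[\omega]}(\Delta)$ is closed in $A_\Gamma$. I would then restrict the extension $A_\Gamma\to K^{\sharp}\to\wt{K}$ to the closed Lie subgroup $\wt{K_m}/\Delta\se\wt{K}$ (closed by Remark~\ref{rem:relation_of_extension_groups}); over this base the connection induced by the splitting $\fk\to\fa\oplus\fk$ is flat modulo $\fa_m$, as in the proof of Lemma~\ref{lem:secondary_discreteness}. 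Dividing by the closed subgroup $\varphi_m(A^{\sharp}_m)$ therefore exhibits $\im\varphi_m/\varphi_m(A^{\sharp}_m)\cong\wt{K_m}/\Delta$ as the image of a homomorphic, dimension-preserving section, hence as a closed embedded copy; pulling back shows $\im\varphi_m$ is closed, and as $\Lf(\varphi_m)$ is a topological embedding it is a co-Banach Lie subgroup. Together with the trivial reverse implication and the identity $I_m=\im\varphi_m$, this establishes the equivalence of the three conditions.

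For the concluding statement I let $\theta\from K^{\sharp}\times M\to M$ be the action through $\theta^{\wedge}$ and set $H:=\varphi_m(K^{\sharp}_m)\cdot \Pi/\Gamma$. Property \ref{defn: tgpair_1} is immediate, since $\theta_m=\ev_m\circ q_{\pi_1(K)}\circ q^{\sharp}$ is a composite of surjective submersions. For \ref{defn: tgpair_2} I would compute Lie algebras: writing $S=\{g\in K^{\sharp}\mid\theta(g,m)=m\}$ for the stabiliser, one has $\Lf(S)=\fa\oplus_{\ol{\omega}}\fk_m$ and $\Lf(H)=\fa_m\oplus_{\ol{\omega}_m}\fk_m$, so $\Lf(S)/\Lf(H)\cong\fa/\fa_m\cong\R$; moreover a direct bracket check (each summand of $X.g-Y.f+\ol{\omega}(X,Y)$ vanishes at $m$ when $X,Y\in\fk_m$) shows $\Lf(H)$ is an ideal of $\Lf(S)$. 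Thus $H$ is normal of codimension one, in particular co-Banach, in $S$, and its regularity follows from the regularity of $K^{\sharp}_m$ (an extension of the $C^{\infty}$-regular $\wt{K_m}$ by $\fa_m$) and of the discrete factor $\Pi/\Gamma$, via Lemma~\ref{lem: semisub:reg}. Hence $(\theta,H)$ is a transitive pair with $\Lambda_m=S/H\cong\R/\Pi=\TGamma{\Pi}$.

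To identify the kernel I invoke Proposition~\ref{prop: kernel}: the central subgroup $\Pi/\Gamma\se Z(K^{\sharp})$ lies in $H$ and is therefore contained in the kernel. Conversely, a subgroup $L\se H$ normal in $K^{\sharp}$ projects under $q^{\sharp}$ to a normal subgroup of $\wt{K}$ inside $\wt{K_m}/\Delta$; since $K\le\Diff(M)$ acts effectively and $2$-transitively, $K_m$ contains no nontrivial normal subgroup of $K$, forcing $q^{\sharp}(L)$ to be trivial, so $L\se H\cap A_\Gamma=\fa_m\times\Pi/\Gamma$. The equality $(\fa_m)^{K_m}=\{0\}$ from Remark~\ref{rem:extension_of_K_m} annihilates the $\fa_m$-component, whence $L\se\Pi/\Gamma$ and the kernel equals $\Pi/\Gamma$. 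Finally, Proposition~\ref{prop: const:pbund} produces the principal $\TGamma{\Pi}$-bundle $K^{\sharp}/H\to M$; pushing the connection of curvature $\ol{\omega}^{\op{eq}}$ on $K^{\sharp}\to\wt{K}$ (from the splitting $\fk\to\fa\oplus\fk$) down to $K^{\sharp}/H$ and using $\per_{[\ol{\omega}]}([\gamma])=\per_{[\omega]}([\ev_m\circ\gamma])$ from Remark~\ref{rem:secondary_periods} identifies the induced curvature on $M$ with $\omega$, giving the asserted $\Pi$-prequantisation. I expect the closedness of $\im\varphi_m$ via the flat-modulo-$\fa_m$ argument, and the curvature identification, to be the two genuinely delicate points.
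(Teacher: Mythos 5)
Your handling of the equivalence of \ref{item:integrating_extensions_to_transitive_pairs_1}--\ref{item:integrating_extensions_to_transitive_pairs_3} is essentially the paper's route: both hinge on Lemma \ref{lem:secondary_discreteness} to compute $\varphi_{m}(A^{\sharp}_{m})$ modulo $\fa_{m}$, and on realising $\varphi_{m}(K^{\sharp}_{m})$ as a reduction of the restricted extension $\left.K^{\sharp}\right|_{\wt{K_{m}}/\Delta}$ to the closed subgroup $\fa_{m}\times\im(\per^{\flat}_{[\omega]})$. (Your phrasing ``dividing by the closed subgroup $\varphi_{m}(A^{\sharp}_{m})$'' should be replaced by this reduction statement, since quotients of infinite-dimensional Lie groups are precisely what the paper avoids, but the underlying idea is the same.) The second half, however, has two genuine gaps. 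First, normality of $H_{m}=\varphi_{m}(K^{\sharp}_{m})\cdot\Pi/\Gamma$ in the stabiliser: you deduce it from $\Lf(H_{m})$ being an ideal in the Lie algebra of the stabiliser, but in this setting an ideal does not automatically integrate to a normal subgroup --- the stabiliser need not be connected, and even on its identity component one must integrate the adjoint action. The paper instead writes each stabiliser element as $a_{0}\cdot g_{m}$ with $a_{0}\in\TGamma{\Gamma}$ central and $g_{m}\in\varphi_{m}(K^{\sharp}_{m})$ (possible exactly because of the reduction above), so that $\Ad(g)=\Ad(g_{m})$ visibly preserves $\Lf(\varphi_{m}(K^{\sharp}_{m}))$, and then applies Lemma \ref{lem: Lmorph:subgp} to the conjugation maps to pass from the Lie algebra to the group; some argument of this kind is indispensable.

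Second, the curvature. A connection on $q^{\sharp}\from K^{\sharp}\to\wt{K}$ cannot be ``pushed down'' to a connection on $K^{\sharp}/H_{m}\to M$, because the base changes from $\wt{K}$ to $M$: one must first choose a horizontal distribution for $\ev_{m}\from\wt{K}\to M$, form the composite horizontal lift $\sigma^{\sharp}=\sigma_{\ol{\omega}}\circ\wt{\sigma}$, and compute $F_{\sigma^{\sharp}}=\sigma_{\ol{\omega}}(F_{\wt{\sigma}})+F_{\sigma_{\ol{\omega}}}(\wt{\sigma}\cdot,\wt{\sigma}\cdot)$, observing that the first summand is tangent to the fibre $H_{m}$ and so dies after projecting to $P$, while the second yields $\omega$. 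The period identity $\per_{[\ol{\omega}]}([\gamma])=\per_{[\omega]}([\ev_{m}\circ\gamma])$ that you invoke only controls the cohomology class of the curvature, not the 2-form itself, so it cannot replace this computation. Finally, your kernel argument is looser than it should be: a normal subgroup of $\wt{K}$ contained in $\wt{K_{m}}/\Delta$ need not project to the identity under $q^{\sharp}$ alone --- since $M$ is 1-connected one has $\pi_{1}(K)\se\wt{K_{m}}/\Delta$, so effectiveness of the $K$-action only forces $q^{\sharp}(L)\se\pi_{1}(K)$, i.e.\ $L\se\ker(q_{\pi_{1}(K)}\circ q^{\sharp})$; the paper then computes $a_{\theta,H_{m}}$ on this kernel explicitly and identifies it with the projection $\TGamma{\Gamma}\to\TGamma{\Pi}$, which is the step your sketch skips.
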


\begin{proof}
 We will use throughout that $C^{\infty}$-regularity is an extension property
 \cite[Appendix
 B]{NeebSalmasian12Differentiable-vectors-and-unitary-representations-of-Frechet-Lie-supergroups},
 so that all extensions that appear will automatically be $C^{\infty}$-regular
 (which suffices to integrate Lie algebra morphisms to 1-connected Lie groups).
 \begin{description}
  \item[\ref{item:integrating_extensions_to_transitive_pairs_1}$\Rightarrow$\ref{item:integrating_extensions_to_transitive_pairs_3}:]
        Consider $\wt{K_{m}}/\Delta$ as a closed Lie subgroup of $\wt{K}$ and
        obtain from this the extension
        \begin{equation*}
         A_{\Gamma}\to \left.K^{\sharp}\right|_{\wt{K_{m}}/\Delta}\to \wt{K_{m}}/\Delta.
        \end{equation*}
        Since $A_{\Gamma}\cong \fa_{m}\times \TGamma{\Gamma}$ is a
        decomposition of $A_{\Gamma}$ as a $\wt{K_{m}}/\Delta$-module and
        $\sigma\from \Delta\to K^{\sharp}$ induces the isomorphism
        $A^{\sharp}_{m}\cong \fa_{m}\times \Delta$, we obtain from Remark
        \ref{rem:relation_of_extension_groups} and Lemma
        \ref{lem:secondary_discreteness} the morphism
        \begin{equation*}
         \xymatrix{	 
         \fa_{m}\times \Delta\ar[rr]\ar[d]_{\id_{\fa_{m}}\times \per_{[\omega]}^{\flat}} & & K^{\sharp}_{m}\ar[r]\ar[d]^{\varphi_{m}} & \wt{K_{m}}/\Delta\ar@{}[d]|{\ensuremath{\rotatebox[origin=c]{270}{$\xhookrightarrow{~~~~~}$}}}\\
         A_{\Gamma}  \ar[rr] & & K^{\sharp} \ar[r] & \wt{K}
         }
        \end{equation*}
        of extensions of Lie groups. Since
        $\wt{K_{m}}/\Delta\hookrightarrow \wt{K}$ is injective and
        $\im(\per_{[\omega]}^{\flat})\se  \Pi /\Gamma$ is discrete in
        $\TGamma{\Gamma}$, this implies that
        $\left.K^{\sharp}\right|_{\wt{K_{m}}/\Delta}$ reduces to an extension
        of $\wt{K_{m}}/\Delta$ by the closed Lie subgroup
        $\fa_{m}\times \im(\per_{[\omega]}^{\flat})$ of
        $\fa_{m}\times \TGamma{\Gamma}$. This reduction is itself a closed Lie
        subgroup which equals $\varphi_{m}(K^{\sharp}_{m})$ by construction.
        Moreover,
        $\varphi_{m}(A^{\sharp}_{m})\se C_{m}^{\infty}(M)\times \Pi/\Gamma$
        follows from
        $\fa_{m}\times\im(\per_{[\omega]}^{\flat})\se \fa_{m}\times \Pi/\Gamma$.
  \item
        [\ref{item:integrating_extensions_to_transitive_pairs_3}$\Rightarrow$\ref{item:integrating_extensions_to_transitive_pairs_2}:]
        As above we see that $\varphi_{m}(K^{\sharp}_{m})\se K^{\sharp}$ is a
        reduction of $\left.K^{\sharp}\right|_{\wt{K_{m}}/\Delta}$ to an
        extension of $\wt{K_{m}}/\Delta$ by
        $\fa_{m}\times \im(\per_{[\omega]}^{\flat})$, and thus in particular a
        closed Lie subgroup. Thus we may take
        $I_{m}:=\varphi_{m}(K^{\sharp}_{m})$.
  \item
        [\ref{item:integrating_extensions_to_transitive_pairs_2}$\Rightarrow$\ref{item:integrating_extensions_to_transitive_pairs_1}:]
        By construction we have that $\wt{I_{m}}\cong K^{\sharp}_{m}$, so that
        $\varphi_{m}$ factors through the inclusion
        $I_{m}\hookrightarrow K^{\sharp}$ and the universal covering map
        $K^{\sharp}_{m}\to I_{m}$. Thus
        $\per^{\flat}_{[\omega]}(\Delta)\se\Pi/\Gamma $ follows from
        $I_{m}\cap A_{\Gamma}\se \fa_{m}\times \Pi/\Gamma$.
 \end{description}
 It remains to show the assertion that if
 \ref{item:integrating_extensions_to_transitive_pairs_3} is satisfied, then
 $(\theta, \varphi_{m}(K^{\sharp}_{m})\cdot \Pi/\Gamma)$ is a transitive pair
 with kernel $\Pi/\Gamma$ and that
 $K^{\sharp}/(\varphi_{m}(K^{\sharp}_{m})\cdot \Pi/\Gamma)\to M$ is a
 $\Pi$-prequantisation of $\omega$.
 
 Set $H_{m}:=\varphi_{m}(K^{\sharp}_{m})\cdot \Pi/\Gamma$. To show that
 $(\theta,H_{m})$ is a transitive pair, we first show that $H_{m} $ is in fact
 a normal subgroup of $(\theta^{\wedge})^{-1}(K_{m})$. In fact, each
 $g\in (\theta^{\wedge})^{-1}(K_{m})$ may be written as a product
 $a_{0}\cdot g_{m}$ for $g_{m}\in \varphi_{m}(K^{\sharp}_{m})$ and
 $a_0\in \TGamma{\Gamma}$. This is due to the fact that
 $\varphi_{m}(K^{\sharp}_{m})$ is a reduction of
 $ \left.K^{\sharp}\right|_{\wt{K_{m}}/\Delta} $ to an extension by
 $C^{\infty}_{m}(M,\R)\times \im(\per^{\flat}_{[\omega]})$ and that
 $C_{m}^{\infty}(M,\R)\cdot \TGamma{\Gamma}\cong C^{\infty}(M,\R)/\Gamma=A_{\Gamma}$.
 Since the action of $k\in \wt{K}$ on $C^{\infty}(M,\TGamma{\Gamma})$ coincides
 with conjugation action of an arbitrary lift of $k$ to $K^{\sharp}$ it follows
 that
 $a_{0}\in \TGamma{\Gamma}= C^{\infty}(M,T_{\Gamma})^{\wt{K}}\se Z(K^{\sharp})$
 and thus $\Ad(a_{0})=\id_{K^{\sharp}}$.
 Consequently, $\Ad(g)=\Ad(g_{m})$ and thus $\Ad(g)$ preserves the subalgebra
 $\Lf(\varphi_{m}(K^{\sharp}_{m}))$. Furthermore, $\Ad (g) = \Lf (c_g)$ (where
 $c_g$ is conjugation by $g$ and the groups $K^{\sharp}$ and
 $\varphi_{m}(K^{\sharp}_{m})$ are connected and regular Lie groups. Thus Lemma
 \ref{lem: Lmorph:subgp} implies that conjugation by $g$ preserves
 $\varphi_{m}(K^{\sharp}_{m})$. Since
 $H_{m}=\varphi_{m}(K^{\sharp}_{m})\cdot  \Pi/\Gamma$ and
 $\Pi/\Gamma\se \TGamma{\Gamma}\se Z(K^{\sharp})$ follows as above, it also
 follows that conjugation by $g$ preserves $H_{m}$.

 Since $g\in (\theta^{\wedge})^{-1}(K_{m})$ was arbitrary, this shows that
 $H_{m}$ is normal in $(\theta^{\wedge})^{-1}(K_{m})$. To conclude that
 $(\theta,H_{m})$ is a transitive pair it thus suffices to observe that
 $\theta (\cdot,m)= \op{ev}_{m}\op{\circ} q_{\pi_{1}(K)} \op{\circ} q^{\sharp}$
 clearly is a submersion, and $\Lf(\wh{G})/\Lf(H_{m})$ is finite-dimensional,
 so $H_{m}$ in particular co-Banach.
 
 It remains to show that $P:=K^{\sharp}/(H_{m})\to M$ is a
 $\Pi$-prequantisation. First note that $H_{m}$ is an extension of
 $\wt{K_{m}}/\Delta$ by $\fa_{m}\times \Pi/\Gamma$ and
 $(\theta^{\wedge})^{-1}(K_{m})$ is an extension of $\wt{K_{m}}/\Delta$ by
 $\fa_{m}\times \TGamma{\Gamma}$, so that the morphism of Lie groups
 \begin{equation}\label{eqn18}
  \TGamma{\Pi}\to ((\theta^{\wedge})^{-1}(K_{m}))/H_{m},
 \end{equation}
 induced by mapping an element of $\TGamma{\Pi}$ to the respective constant
 function, is an isomorphism. With respect to this isomorphism we endow
 $P\to M$ with the structure of a $\TGamma{\Pi}$-bundle over $M$.
 
 We now construct a connection on $P\to M$ with curvature $\omega$ as follows.
 Let $H\leq T\wt{K}$ be a horizontal distribution on the bundle
 $\ev_{m}\from \wt{K}\to M$, i.e., we have each $k\in \wt{K}$ a subspace
 $\wt{H}_{k}\leq T_{k}K$ such that $\wt{H}_{k\cdot k'}=\wt{H}_{k}\cdot k'$ for
 $k'\in \wt{K_{m}}/\Delta$ and that $T \ev_{m}\from \wt{H}_{k}\to T_{k(m)}M$ is
 a linear isomorphism. Denote by
 $\wt{\sigma}\from \cV(M)\to \cV(\wt{K})^{\wt{K_{m}}/\Delta}$ the corresponding
 horizontal lift of vector fields. On the bundle
 $q^{\sharp}\from K^{\sharp}\to \wt{K}$ we have the connection which is induced
 by the isomorphisms $TR_{k^{-1}}\from T_{k}\wt{K}\to T_{e}\wt{K} \cong\fk$,
 $TR_{\ol{k}^{-1}}\from T_{\ol{k}}K^{\sharp}\to T_{e}K^{\sharp}\cong \fa\oplus_{\ol{\omega}}\fk$
 and the canonical linear splitting
 $\sigma\from \fk\to\fa\oplus_{\ol{\omega}}\fk$. Denote the corresponding
 horizontal lift by
 $\sigma_{\ol{\omega}}\from \cV(\wt{K})\to \cV(K^{\sharp})^{A_{\Gamma}}$. If we
 now set
 \begin{equation*}
  H^{\sharp}_{\ol{k}}:= \sigma(\wt{H}_{k}\cdot k^{-1})\cdot \ol{k}
 \end{equation*}
 for $\ol{k}\in \wt{K}$ and $k:=q^{\sharp}(\ol{k})$ (where we suppressed the
 isomorphisms $T_{k}\wt{K}\cong \fk$ and
 $T_{\ol{k}}K^{\sharp}\cong \fa\oplus_{\ol{\omega}}\fk$), then
 $T_{\ol{k}} (\ev_{m}\circ q^{\sharp})$ also restricts to a linear isomorphism
 on $H^{\sharp}_{\ol{k}}$ and we have
 \begin{equation*}
  H^{\sharp}_{\ol{k} \ol{k}'}=\sigma(\wt{H}_{kk'}(kk')^{-1})\cdot \ol{k}\ol{k}'=H^{\sharp}_{\ol{k}}\cdot \ol{k}'
 \end{equation*}
 for each $\ol{k}'\in (q^{\sharp})^{-1}(\wt{K}_{m}/\Delta)$ and
 $k':=q^{\sharp}(\ol{k}')$. Consequently, $H^{\sharp}$ defines a horizontal
 distribution on the bundle $ \ev_{m}\circ q^{\sharp}\from K^{\sharp}\to M$. If
 $\sigma^{\sharp}\from \cV(M)\to \cV(K^{\sharp})^{(\theta^{\wedge})^{-1}(K_{m})}$
 denotes the corresponding horizontal lift of vector fields, then we clearly
 have $\sigma^{\sharp}= \sigma_{\ol{\omega}} \circ \wt{\sigma}$.
 
 From this we obtain a connection
 \begin{equation*}
  \sigma_{P}\from \cV(M)\to \cV(P)^{\TGamma{\Pi}},\quad X\mapsto TQ_{*}( \sigma^{\sharp}(X))
 \end{equation*}
 on $P\to M$, where $Q\from K^{\sharp}\to P=K^{\sharp}/(H_{m})$ is the
 canonical quotient morphism. For the curvature of the connection $\sigma_{P}$
 we then have
 \begin{equation*}
  F_{\sigma_{P}}(X,Y):=\sigma_{P}([X,Y])-[\sigma_{P}(X),\sigma_{P}(Y)]=TQ_{*} (\sigma ^{\sharp}([X,Y])-[\sigma_{P}(X),\sigma_{P}(Y)])=TQ_{*}( F_{\sigma^{\sharp}}(X,Y))
 \end{equation*}
 for $X,Y\in \cV(M)$, and, furthermore
 \begin{multline*}
  F_{\sigma^{\sharp}}(X,Y)=\sigma_{\ol{\omega}}(\wt{\sigma}([X,Y]))-[\sigma_{\ol{\omega}}(\wt{\sigma}(X)),\sigma_{\ol{\omega}}(\wt{\sigma}(Y))]=\sigma_{\ol{\omega}}(\wt{\sigma}([X,Y]))-\sigma_{\ol{\omega}}([\wt{\sigma}(X),\wt{\sigma}(Y)])+\\F_{\sigma_{\ol{\omega}}}(\wt{\sigma}(X),\wt{\sigma}(Y))=
  \sigma_{\ol{\omega}}(F_{\wt{\sigma}}(X,Y))+F_{\sigma_{\ol{\omega}}}(\wt{\sigma}(X),\wt{\sigma}(Y)).
 \end{multline*}
 Since $F_{\sigma_{\ol{\omega}}}=\ol{\omega}^{\op{eq}}$ (cf.\ the proof of
 Lemma \ref{lem:secondary_discreteness}) and since
 $\sigma_{\ol{\omega}}(F_{\wt{\sigma}}(X,Y))$ is at each point tangential to
 the fibre $H_{m}$ of $Q$, it follows that
 \begin{multline*}
  TQ( F_{\sigma^{\sharp}}(X,Y)(k(m)))=TQ(F_{\sigma_{\ol{\omega}}}(\wt{\sigma}(X)(k),\wt{\sigma}(Y)(k)))
  =TQ(\ol{\omega}^{\op{eq}}(\wt{\sigma}(X)(k),\wt{\sigma}(Y)(k)))
  \\
  =\ev_{m}(k.\ol{\omega}(\wt{\sigma}(X)(k)\cdot k^{-1}, \wt{\sigma}(Y)(k)\cdot k^{-1}))
  =\omega_{k(m)}(X(k(m)),X(k(m)))
 \end{multline*}
 for each $\ol{k}\in K^{\sharp}$ and $k:=q^{\sharp}(\ol{k})\in \wt{K}$. Thus
 $F_{\sigma^{P}}=\omega$.
 
 It remains to check that the kernel actually coincides with $\Pi/\Gamma$. To
 this end we define $q \coloneq q_{\pi_1 (K)} \circ q^{\sharp}$ and consider
 the diagram
 \begin{equation*}
  \xymatrix{
  \ker(q^{\sharp})\ar@{}[d]|{\ensuremath{\rotatebox[origin=c]{270}{$\xhookrightarrow{~~~~~}$}}}
\ar[rr]^{\left.a_{\theta,H_{m}}\right|_{\ker(q^{\sharp})}} && \ker((\beta_\cR)_{*})\ar@{}[d]|{\ensuremath{\rotatebox[origin=c]{270}{$\xhookrightarrow{~~~~~}$}}}
\\ 
  K^\sharp\ar[rr]^(.35){a_{\theta,H_{m}}}\ar[d]^{q^{\sharp}} \ar[rrd]^{\theta^\wedge}&& \Bis(\cR(\theta,H_{m}))\ar[d]^{(\beta_\cR)_{*}}\\
  K\xyhookrrightarrow{4em} && \Diff(M),
  }
 \end{equation*}
 which commutes by Lemma \ref{lem:canonical_morphisms_commute} and the
 construction of $\theta^\wedge$. From this it follows that the kernel of
 $a_{\theta,H_{m}}$ is contained in $\ker(q^{\sharp})$. Moreover, we have
 $\ker(q^{\sharp})=A_{\Gamma}= C^{\infty}(M,\TGamma{\Gamma})$ by definition. To
 determine $a_{\theta,H_{m}}(\gamma)$ for $\gamma\in \ker(q)$, we first note
 that the element in $\Aut(K^\sharp/H_{m}\to M)$ corresponding to
 $a_{\theta,H_{m}}(\gamma)$ is given by
 $\ol{k} H_{m}\mapsto (\gamma\cdot \ol{k}) H_{m}  $ (cf.\ Lemma
 \ref{lem:canonical_morphism_into_bisections}). On the other hand, an element
 $\eta\in \ker((\beta_\cR)_{*})\cong C^{\infty}(M,\TGamma{\Pi})$ acts on
 $K^\sharp/ H_{m} $ by
 \begin{equation*}
  \ol{k} H_{m} \mapsto \ol{k}H_{m}\cdot \eta(\theta(\ol{k},m))= (\ol{k}\cdot  \eta(\theta (\ol{k},m))) H_{m} ,
 \end{equation*}
 since the bundle projection $K^\sharp/ H_{m} \to M$ is given by
 $\ol{k}  H_{m} \mapsto \theta(\ol{k},m)$ and $\TGamma{\Gamma}$ is contained in
 $Z(K^{\sharp})$ (cf.\ \eqref{eqn18}). From this it follows that the value of
 $a_{\theta, H_{m} }(\gamma)$ in $\theta(\ol{k},m)$ has to satisfy
 \begin{equation*}
  (\ol{k}^{-1}\cdot\gamma \cdot \ol{k}) H_{m} =a_{\theta, H_{m} }(\gamma)(\theta(\ol{k},m)) H_{m}.
 \end{equation*}
 Since
 $\ol{k}^{-1}\cdot\gamma \cdot \ol{k}=\gamma \circ \theta^{\wedge}(\ol{k})$
 follows from the fact that $K^\sharp\to K$ is an abelian extension for the
 natural action of $K$ on $C^{\infty}(M,\TGamma{\Gamma})$, we conclude that
 $\left.a_{\theta, H_{m} }\right|_{\ker(q)}$ coincides with the map that is
 induced by the projection
 $\TGamma{\Gamma}\to \TGamma{\Pi}=\TGamma{\Gamma}/(\Pi/\Gamma)$ and the
 isomorphisms $C^{\infty}(M,\TGamma{\Gamma})\cong \ker(q^{\sharp})$ and
 $C^{\infty}(M,\TGamma{\Pi})\cong \ker((\beta_\cR)_{*})$. Consequently,
 $\ker(a_{\theta,H_{m}})=\ker(a_{\theta,H_{m}})\cap \ker(q^{\sharp})\cong \Pi/\Gamma$ (cf.\ Proposition \ref{prop: kernel}).
\end{proof}

\begin{corollary}
 With the notation and under the assumptions of Theorem
 \ref{thm:integrating_extensions_to_transitive_pairs} the following assertions
 are equivalent:
 \begin{enumerate}
  \item The primary periods $\per_{\ol{\omega}}(\pi_{2}(M))\se \R$ are
        discrete.
  \item The extension $\fa\to\fa\oplus_{\ol{\omega}}\fk\to\fk$ integrates to an
        extension $A_{\Gamma}\to K^{\sharp}\to\wt{K}$ of Lie groups.
 \end{enumerate}
 If one (and thus both) of these conditions is satisfied, then the following
 assertions are equivalent:
 \begin{enumerate}
  \item [i)] The secondary periods $\per_{[\omega]}(\pi_{1}(K_{m}))\se \R$ are
        discrete.
  \item [ii)] The closed subalgebra $\fa_{m}\oplus_{\ol{\omega}_{m}}\fk$ of
        $\fa\oplus_{\ol{\omega}}\fk$ integrates to a closed Lie subgroup of
        $K^{\sharp}$.
 \end{enumerate}
\end{corollary}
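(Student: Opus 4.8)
The plan is to read both equivalences off the machinery already assembled, treating the corollary as a repackaging of Theorem \ref{thm:integrating_extensions_to_transitive_pairs} together with the cited integration theory.

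For the first equivalence I would argue that it is essentially a restatement of \cite[Theorem 6.7]{Neeb04Abelian-extensions-of-infinite-dimensional-Lie-groups}, as recorded in Remark \ref{rem:prequant}: the abelian extension $\fa\to\fa\oplus_{\ol{\omega}}\fk\to\fk$ integrates to a Lie group extension $A_{\Gamma}\to K^{\sharp}\to\wt{K}$ (with $\Gamma$ discrete, so that $A_{\Gamma}=\fa/\Gamma$ really is a Lie group) exactly when the primary periods $\per_{[\ol{\omega}^{\op{eq}}]}(\pi_{2}(K))$ are contained in $\Gamma$. The integration takes place over the simply connected group $\wt{K}$, so it is unobstructed by flux and the period condition is the only one. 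For $(2)\Rightarrow(1)$ I would observe that if such an integration exists for discrete $\Gamma$, then the primary periods lie in $\Gamma$ and hence are discrete, since any subgroup of a discrete group is discrete. For $(1)\Rightarrow(2)$ I would take $\Gamma$ to be the discrete primary period group itself, which trivially contains itself, and invoke the quoted theorem.

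For the second equivalence I would fix $\Gamma$ to be the primary periods and use Theorem \ref{thm:integrating_extensions_to_transitive_pairs}. The direction $(i)\Rightarrow(ii)$ is the easy one: assuming the secondary periods $\per^{\flat}_{[\omega]}(\Delta)\se\R/\Gamma$ are discrete, I would set $\Pi:=q_{\Gamma}^{-1}(\per^{\flat}_{[\omega]}(\Delta))$ for the quotient map $q_{\Gamma}\from\R\to\R/\Gamma$. Because $\Delta=\im(\pi_{2}(M)\to\pi_{1}(K_{m}))$ is finitely generated, being a quotient of the finitely generated group $\pi_{2}(M)\cong H_{2}(M;\Z)$ of the compact $1$-connected $M$, its image under $\per^{\flat}_{[\omega]}$ is a finitely generated subgroup of $\R/\Gamma$, and a discrete such subgroup pulls back to a discrete subgroup $\Pi\se\R$ containing $\Gamma$. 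Then condition \ref{item:integrating_extensions_to_transitive_pairs_1} of the theorem holds with equality, and \ref{item:integrating_extensions_to_transitive_pairs_1}$\Rightarrow$\ref{item:integrating_extensions_to_transitive_pairs_2} produces the closed Lie subgroup integrating $\fa_{m}\oplus_{\ol{\omega}_{m}}\fk_{m}$, which is $(ii)$.

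The converse $(ii)\Rightarrow(i)$ is where the real work sits, and the decisive identity is the one of Lemma \ref{lem:secondary_discreteness}, $\varphi_{m}(\sigma(x))+\fa_{m}=\per^{\flat}_{[\omega]}(x)+\fa_{m}$, which (as in the proof of \ref{item:integrating_extensions_to_transitive_pairs_1}$\Rightarrow$\ref{item:integrating_extensions_to_transitive_pairs_3}, and using diagram \eqref{eqn15} for $\varphi_{m}^{-1}(A_{\Gamma})=A^{\sharp}_{m}$) gives $\varphi_{m}(K^{\sharp}_{m})\cap A_{\Gamma}=\varphi_{m}(A^{\sharp}_{m})=\fa_{m}\times\im(\per^{\flat}_{[\omega]})$. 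Given a closed Lie subgroup $I_{m}\se K^{\sharp}$ with Lie algebra $\fa_{m}\oplus_{\ol{\omega}_{m}}\fk_{m}$, I would identify its identity component with the integral subgroup $\varphi_{m}(K^{\sharp}_{m})$: since $K^{\sharp}_{m}$ is $1$-connected, being an extension of the $1$-connected $\wt{K_{m}}$ by the vector group $\fa_{m}$, the morphism $\varphi_{m}$ corestricts to a morphism $K^{\sharp}_{m}\to I_{m}$ inducing an isomorphism on Lie algebras, hence a covering onto $(I_{m})_{0}$, so that $\varphi_{m}(K^{\sharp}_{m})=(I_{m})_{0}$ is closed in $K^{\sharp}$. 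Intersecting with the closed subgroup $A_{\Gamma}$ then forces $\fa_{m}\times\im(\per^{\flat}_{[\omega]})$ to be closed in $A_{\Gamma}\cong\fa_{m}\times\TGamma{\Gamma}$, whence $\im(\per^{\flat}_{[\omega]})$ is closed in $\TGamma{\Gamma}=\R/\Gamma$; being finitely generated it is then discrete, as a finitely generated closed subgroup of $\R$ or of the circle cannot be the whole group and is therefore a finite-rank lattice. The step I expect to be the main obstacle is exactly this identification $(I_{m})_{0}=\varphi_{m}(K^{\sharp}_{m})$ together with the closedness of the integral subgroup, since in the infinite-dimensional setting one must appeal to regularity of the groups involved to conclude that a Lie algebra isomorphism integrates to a covering of identity components; once that is in hand, everything reduces to the finite-rank bookkeeping for subgroups of $\R/\Gamma$.
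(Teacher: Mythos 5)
The paper states this corollary without proof, as an immediate consequence of the integration theorem recalled in Remark \ref{rem:prequant} (for the first equivalence) and of Theorem \ref{thm:integrating_extensions_to_transitive_pairs} applied with $\Gamma$ the primary periods and $\Pi$ the preimage of the secondary periods (for the second); your proposal reconstructs exactly this intended derivation. The only step that is not a mechanical specialisation is (ii)$\Rightarrow$(i), since the corollary's hypothesis omits the containment $I_{m}\cap A_{\Gamma}\se \fa_{m}\times \Pi/\Gamma$ built into condition \ref{item:integrating_extensions_to_transitive_pairs_2} of the theorem; your bridge --- identifying $(I_{m})_{0}$ with $\varphi_{m}(K^{\sharp}_{m})$ via the same covering argument the paper itself uses in \ref{item:integrating_extensions_to_transitive_pairs_2}$\Rightarrow$\ref{item:integrating_extensions_to_transitive_pairs_1} (which does require the regularity you flag), and then deducing discreteness of $\im(\per^{\flat}_{[\omega]})$ from its closedness in $\TGamma{\Gamma}$ together with finite generation of $\Delta$ --- supplies precisely the missing ingredient and is correct.
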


\begin{corollary}
 If $M$ is a compact and 1-connected manifold and $\omega\in \Omega^{2}(M)$ is
 closed, then the extension $\R\to \R\oplus_{\omega}TM \to TM$ of Lie
 algebroids integrates to an extension of Lie groupoids if and only if the
 extension of Lie algebras
 $C^{\infty}(M)\to C^{\infty}(M)\oplus_{\ol{\omega}}\cV(M)\to\cV(M)$ integrates
 to an extension $A\to \wh{K}\to \Diff(M)_{0}$ of Lie groups and
 $C^{\infty}_{m}(M)\oplus_{\ol{\omega}_{m}}\cV_{m}(M)$ integrates to a closed
 Lie subgroup in $\wh{K}$.
\end{corollary}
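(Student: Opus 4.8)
The plan is to funnel both sides of the asserted equivalence through the single numerical condition that the period group $P\coloneq \per_{[\omega]}(\pi_{2}(M))\se\R$ be discrete. I would run everything for the choice $K=\Diff(M)_{0}$, for which all standing hypotheses of Theorem \ref{thm:integrating_extensions_to_transitive_pairs} and of the preceding corollary are satisfied: $\Diff(M)_{0}$ is connected and $C^{\infty}$-regular, $\ev_{m}$ is a surjective submersion by Corollary \ref{cor: diff0:trans} together with Corollary \ref{cor: evx:subm}, and $\Diff(M)$ (hence its identity component) acts $2$-fold transitively on $M$. With this choice $\fk=\cV(M)$, $\fa=C^{\infty}(M)$, $\fk_{m}=\cV_{m}(M)$ and $\fa_{m}=C^{\infty}_{m}(M)$, so the two conditions on the right-hand side are precisely the hypotheses studied in the preceding corollary.

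First I would treat the right-hand side. By the preceding corollary, the algebra extension $\fa\to\fa\oplus_{\ol{\omega}}\fk\to\fk$ integrates to an extension of $\wt{K}$ by $A_{\Gamma}$ if and only if the primary periods are discrete; and since $M$ is $1$-connected the flux $F_{[\ol{\omega}]}$ vanishes, so this extension factors through an extension over $K=\Diff(M)_{0}$ itself (the flux being exactly the obstruction to this factorisation, cf.\ Remark \ref{rem:prequant}). Thus condition (a) is equivalent to discreteness of the primary periods. Granting (a), the preceding corollary then yields that the subalgebra $\fa_{m}\oplus_{\ol{\omega}_{m}}\fk_{m}$ integrates to a closed Lie subgroup of $\wh{K}$ if and only if the secondary periods $\per^{\flat}_{[\omega]}(\Delta)\se\R/\Gamma$ are discrete (with $\Gamma$ the primary periods). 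Hence the right-hand side holds if and only if both the primary and the secondary periods are discrete.

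Next I would identify the left-hand side with discreteness of $P$. As $M$ is $1$-connected, $TM$ integrates to the pair groupoid $\cP(M)=(M\times M\toto M)$, and an integration of $\R\to\R\oplus_{\omega}TM\to TM$ is a central extension of $\cP(M)$ by a bundle of abelian Lie groups integrating $\R$. Such a central extension is exactly the gauge groupoid of a principal $\TGamma{\Pi}$-bundle over $M$ equipped with a connection of curvature $\omega$, i.e.\ a $\Pi$-prequantisation of $(M,\omega)$, whose existence is governed by the periods as in Remark \ref{rem:prequant}. Differentiating a groupoid extension produces such a prequantisation and forces $P\se\Pi$ with $\Pi$ discrete; conversely, when $P$ is discrete one takes $\Pi\supseteq P$ discrete, and the $\Pi$-prequantisation produced in the last paragraph of Theorem \ref{thm:integrating_extensions_to_transitive_pairs} has a gauge groupoid integrating the extension. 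Therefore the left-hand side holds if and only if $P$ is discrete.

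It remains to glue the two reductions, i.e.\ to show $P$ is discrete if and only if the primary and secondary periods are both discrete; this elementary step is the only place where real care is needed. Writing $\Gamma=\per_{[\ol{\omega}]}(\pi_{2}(K))$, the morphism of short exact sequences in Remark \ref{rem:secondary_periods} gives $\Gamma=\per_{[\omega]}(\Lambda)\se P$ and $q_{\Gamma}(P)=\per^{\flat}_{[\omega]}(\Delta)$. If $P$ is discrete then so are $\Gamma\se P$ and $q_{\Gamma}(P)$. Conversely, if $\Gamma=c\Z$ is discrete and $q_{\Gamma}(P)$ is discrete, then either $c=0$ and $P=q_{\Gamma}(P)$ is discrete, or $c>0$ and $q_{\Gamma}(P)$ is a discrete, hence finite, subgroup of the compact group $\R/c\Z$, so $P$ contains $c\Z$ with finite index and is discrete. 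Chaining the three reductions gives: left-hand side $\iff P$ discrete $\iff$ primary and secondary periods discrete $\iff$ right-hand side. The main obstacle is not this bookkeeping but the clean identification in the third paragraph of ``integration of the algebroid extension'' with ``$\Pi$-prequantisation'', and in particular ruling out an integration by a non-discrete structure group when $P$ fails to be discrete; I would secure this via the monodromy obstruction for transitive Lie algebroids, whose value here is exactly $P$.
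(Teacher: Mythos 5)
Your proposal is correct and follows exactly the route the paper intends (the paper states this corollary without proof, as an immediate consequence of Theorem \ref{thm:integrating_extensions_to_transitive_pairs}, the preceding corollary, and Remark \ref{rem:prequant}): you reduce the right-hand side to discreteness of the primary and secondary periods, the left-hand side to discreteness of $\per_{[\omega]}(\pi_{2}(M))$ via the prequantisation/monodromy picture, and glue them with the elementary observation that a subgroup $P\se\R$ containing a discrete $\Gamma$ is discrete iff $\Gamma$ and $q_{\Gamma}(P)$ are. The two pieces you make explicit that the paper leaves implicit — the identification of algebroid-extension integrability with $\Pi$-prequantisability over the $1$-connected base, and the two-step discreteness lemma read off from the morphism of short exact sequences in Remark \ref{rem:secondary_periods} — are both handled correctly.
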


\begin{example}\label{exmp:hoppf_fibration_2}
 This is a continuation of Example \ref{exmp:hoppf_fibration_1}. Of course,
 $(\bS^{2},\omega)$ is $\Z$-prequantisable, a (suitably normalised)
 prequantisation is the Hopf fibration $\bS^{3}\to\bS^{2}$, viewed as a
 $U(1):=T_{\Z}$-principal bundle (together with the standard contact form on
 $\bS^{3}$). From this we obtain the extension$ $
 \begin{equation*}
  C^{\infty}(\bS^{2},U(1))\to \Aut(\bS^{3}\to \bS^{2})\to \Diff(\bS^{2})_{0}
 \end{equation*}
 (cf.\ Example \ref{exmp:evaluation_for_Aut(P)}) whose Lie algebra extension is
 equivalent to
 \begin{equation}\label{eqn16}
  C^{\infty}(\bS^{2},\R)\to C^{\infty}(\bS^{2},\R)\oplus_{\ol{\omega}}\cV(\bS^{2})\to\cV(\bS^{2}).
 \end{equation}
 Since the primary periods vanish, the extension \eqref{eqn16} integrates to an
 extension (unique up to equivalence)
 \begin{equation*}
  C^{\infty}(\bS^{2},\R)\to K^{\sharp}\to \wt{\Diff(\bS^{2})_{0}}
 \end{equation*}
 and the identity on $C^{\infty}(\bS^{2},\R)\oplus_{\ol{\omega}}\cV(\bS^{2})$
 integrates to a Lie group morphism
 $\psi\from K^{\sharp}\to \Aut(\bS^{3}\to \bS^{2})$. This morphism makes
 \begin{equation*}
  \xymatrix{
  C^{\infty}(\bS^{2},\R)\ar[r]\ar[d]^{q_{\Z}} & K^{\sharp} \ar[r]\ar[d]^{\psi} & \wt{\Diff(\bS^{2})_{0}}\ar[d]^{q_{\pi_{1}}}\\
  C^{\infty}(\bS^{2},U(1)) \ar[r] & \Aut(\bS^{3}\to \bS^{2}) \ar[r] & \Diff(\bS^{2})_{0}
  }
 \end{equation*}
 commute, where $q_{\Z}$ is induced by the quotient map $\R\to U(1)=\R/\Z$ and
 $q_{\pi_{1}}$ is the universal covering morphism of $\Diff(\bS^{2})_{0}$. If
 now $o\in \bS^{3}$ is mapped to the base-point $m\in \bS^{2}$, then
 $\Aut_{o}(\bS^{3}\to \bS^{2})$ is a closed Lie subgroup of
 $\Aut(\bS^{3}\to\bS^{2})$ (cf.\ Example \ref{exmp:evaluation_for_Aut(P)}) and
 since $q_{\pi_{1}}$ is a covering morphism,
 $\psi^{-1}(\Aut_{o}(\bS^{3}\to \bS^{2}))$ is also a closed Lie subgroup of
 $K^{\sharp}$. Since $\left.\psi\right|_{C^{\infty}(\bS^{2},\R)}=q_{\Z}$, we
 have
 \begin{equation*}
  \psi^{-1}(C^{\infty}(\bS^{2},U(1))\cap \Aut_{o}(\bS^{3}\to \bS^{2}))= C_{m}^{\infty}(\bS^{2},\R)\times \Z,
 \end{equation*}
 and thus $\psi^{-1}(\Aut_{o}(\bS^{3}\to \bS^{2}))$ gives rise to an extension
 \begin{equation*}
  C^{\infty}_{m}(\bS^{2},\R)\times\Z  \to\psi^{-1}(\Aut_{o}(\bS^{3}\to \bS^{2})) \to  q_{\pi_{1}}^{-1}(\Diff_{m}(\bS^{2})_{0}).
 \end{equation*}
 If we identify $q_{\pi_{1}}^{-1}((\Diff(\bS^{2})_{0})_{m})$ with
 $\wt{\Diff_{m}(\bS^{2})_{0}}/\Delta$ (for $\Delta$ as in Remark
 \ref{rem:secondary_periods}), then we deduce from Example
 \ref{exmp:hoppf_fibration_1} and Theorem
 \ref{thm:integrating_extensions_to_transitive_pairs} that
 $\psi^{-1}(\Aut_{o}(\bS^{3}\to \bS^{2}))$ is precisely the Lie subgroup
 $\varphi_{m}((\Diff(\bS^{2})_{0})_{m}^{\sharp})$.
\end{example}

\begin{example}
 An example where the conditions of Theorem
 \ref{thm:integrating_extensions_to_transitive_pairs} are not fulfilled is the
 following (cf.\ \cite[Example
 1]{TsengZhu06Integrating-Lie-algebroids-via-stacks}). Let $\eta$ be the
 standard volume form on $\bS^{2}$ with total volume $1$. On
 $M:=\bS^{2}\times \bS^{2}$, consider the form
 $\omega\in \Omega^{2}(\bS^{2}\times \bS^{2})$
 \begin{equation*}
  \omega_{(p,q)}\from T_{(p,q)}\bS^{2}\times \bS^{2}\cong T_{p}\bS^{2}\times T_{q}\bS^{2}\to \R,\quad
  (x,y)\mapsto \eta(x)+\lambda \eta(y).
 \end{equation*}
 Then $\pi_{2}(\bS^{2}\times \bS^{2})= \Z\times \Z$ and we have
 $\per_{[\omega]}((1,0))=1$ and $\per_{[\omega]}((0,1))=\lambda$. Thus
 $\per_{[\omega]}(\pi_{2}(S^{2}\times S^{2}))$ is the subgroup of $\R$ which is
 generated by $1$ and $\lambda$. If $\lambda\notin \Q$, then this is not
 contained in any discrete subgroup.
 
 If we take $K=\Symp(M,\omega)$, then the primary periods vanish by Corollary
 \ref{cor:primary_periods_vasish_for_symp} and we may take $\Gamma=\{0\}$ in Theorem
 \ref{thm:integrating_extensions_to_transitive_pairs}. Consequently, the
 secondary periods $\per_{[\omega]}^{\flat}(\pi_{1}(K_{m}))$ are not contained
 in any discrete subgroup of $\R$ and the subalgebra
 $C_{m}^{\infty}(M)\oplus _{\ol{\omega}_{m}}\fk_{m}$ does not integrate to a
 closed Lie subgroup in the extension
 \begin{equation*}
  C^{\infty}(M,\R)\to {K}^{\sharp}\to \wt{K}.
 \end{equation*}
\end{example}
 
\begin{problem}
 If one takes the results of this section, then the following questions seem to
 be natural and interesting.
 \begin{enumerate}
  \item How do the primary and secondary periods
        $\per_{[\ol{\omega}]}(\pi_{2}(K))$ and
        $\per^{\flat}_{[\omega]}(\pi_{1}(K_{m}))$ vary if one varies the
        subgroup $K$? It is clear that for $K,K'$ with $K\leq K'$ we have
        $\per_{[\ol{\omega}]}(\pi_{2}(K))\leq \per_{[\ol{\omega}]}(\pi_{2}(K'))$
        and
        $\per^{\flat}_{[\omega]}(\pi_{1}(K_{m}))\leq \per^{\flat}_{[\omega]}(\pi_{1}(K'_{m}))$,
        but under which assumptions does one have equality here? In particular,
        it would be interesting to have a symplectic manifold with
        non-vanishing primary periods for $K=\Diff(M)_{0}$ (since for
        $K=\Symp(M,\omega)$ the primary periods always vanish by Proposition
        \ref{cor:primary_periods_vasish_for_symp}).
  \item It would be interesting to to develop an integration theory for
        infinitesimal transitive pairs (cf.\ Problem \ref{prob1}). In
        particular, this should shed some further light on the precise relation
        between the integration theory of Lie algebroids, Lie algebras (of
        sections) and the associated obstructions.
  \item What is the interplay between the primary and secondary periods and the
        flux group
        \begin{equation*}
         F_{[\omega]}(\pi_{1}(\Symp(M,\omega)))\se H^{1}(M,\R)
        \end{equation*}
        in the case that $M$ is only assumed to be connected? Conjecturally,
        there might be a relation of the long exact homotopy sequence of the
        evaluation fibration and the one induced by $\Gamma\to\R\to\R/\Gamma$
        \begin{equation*}
         \xymatrix{\pi_{2}(\Symp(M,\omega))\ar[r]\ar[d]^{\per_{[\ol{\omega}]}} & \pi_{2}(M) \ar[r]\ar[d]^{\per_{[\omega]}} & \pi_{1}(\Symp_{m}(M,\omega))\ar[r]\ar@{-->}[d]^{?} & \pi_{1}(\Symp(M,\omega))\ar[d]^{F_{[\omega]}}\\
         H^{0}(M, \Gamma)\ar[r] & H^{0}(M,\R) \ar[r] & H^{0}(M,\R/\Gamma)\ar[r] & H^{1}(M,\Gamma)
         }
        \end{equation*}
        in case that the primary periods
        $\per_{[\ol{\omega}]}(\pi_{2}(\Symp(M,\omega)))\leq \R$ are contained
        in the discrete subgroup $\Gamma\leq \R$ and the flux group is contained in
        $H^{1}(M,\Gamma)$. Note that the flux group is known to be discrete by the
        proof of the flux conjecture
        \cite{Ono06Floer-Novikov-cohomology-and-the-flux-conjecture} and that
        both, the secondary periods and the flux subgroup are related to the
        integrability of Lie subalgebras to closed Lie subgroups (cf.\
        \cite[Proposition
        10.20]{McDuffSalamon98Introduction-to-symplectic-topology}). The
        conjectural homomorphism
        $\xymatrix{\pi_{1}(\Symp_{m}(M,\omega))\ar@{-->}[r]& H^{0}(M,\R/\Gamma)}$
        should be related to the fluxes $F_{[\ol{\omega}]}$,
        $F_{[\ol{\omega}_{m}]}$ and the homomorphism
        $\varphi_{m}\from K^{\sharp}_{m}\to K^{\sharp}$, restricted to the
        pre-image $(q_{m}^{\sharp})^{-1}(\pi_{1}(\Symp_{m}(M,\omega)))$.
        However, this involves the (continuous) Lie algebra cohomology of
        $\fk_{m}$ with coefficients in $C_{m}^{\infty}(M)$, a topic
        that goes beyond the scope of the present paper.
 \end{enumerate}
\end{problem}

\newpage
  \appendix
  
  \section{Local bisections for infinite-dimensional Lie groupoids}
  
  In this appendix we prove that (infinite-dimensional) Lie groupoids over a finite-dimensional manifold admit local bisections through each point.
  Consequently, we are able to derive that their vertex groups are in a natural way Lie groups. 
  These results are standard for finite-dimensional Lie groupoids. 
  We repeat them here for the readers convenience since some details of proofs need to be adapted for our infinite-dimensional setting. 
  
  \begin{definition}
 Let $\cG = (G \toto M)$ be a locally convex Lie groupoid. For $U \opn M$, a \emph{local bisection of $\cG$ on $U$} is a smooth map $\sigma \colon U \rightarrow G$ such that $\alpha \circ \sigma = \id_U$ and $\beta \circ \sigma \colon U \rightarrow (\beta \circ \sigma) (U) \opn M$ is a diffeomorphism. 
  \end{definition}
  
  \begin{lemma}[{{\cite[Proposition 1.4.9]{Mackenzie05General-theory-of-Lie-groupoids-and-Lie-algebroids}}}]\label{lem: enough:locbis}
         Let $\cG = (G \toto M)$ be a locally convex Lie groupoid such that $M$ is a finite dimensional manifold. 
         For each $g \in G$ there exists an $\alpha (g)$-neighborhood $U \opn M$ and a local bisection $\sigma$ of $\cG$ on $U$ such that $\sigma (\alpha (g)) = g$.
        \end{lemma}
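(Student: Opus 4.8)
The plan is to produce a local section $\sigma$ of $\alpha$ through $g$ and then arrange, by a suitable choice of its $1$-jet at $x\coloneq\alpha(g)$, that $\beta\circ\sigma$ is a local diffeomorphism. Write $y\coloneq\beta(g)$ and $n\coloneq\dim M$. Since $\alpha$ is a submersion it is, by definition, locally a projection near $g$; hence on a neighbourhood $U_{0}\opn M$ of $x$ there is a smooth section $\tau\from U_{0}\to G$ of $\alpha$ with $\tau(x)=g$, and moreover one may prescribe $T_{x}\tau$ to be an arbitrary continuous linear right inverse of $T_{g}\alpha$ (realise it as $x'\mapsto \tau(x')$ by writing $\alpha$ as $\pr_{1}$ in a chart and choosing the fibre component to have the prescribed derivative at $x$ and to vanish there). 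Thus the whole problem reduces to finding a subspace $S\se T_{g}G$ with $\dim S=n$ on which both $T_{g}\alpha$ and $T_{g}\beta$ restrict to isomorphisms onto $T_{x}M$ and $T_{y}M$: realising such an $S$ as $T_{x}\tau(T_{x}M)$ yields $T_{x}(\beta\circ\tau)=T_{g}\beta|_{S}\circ(T_{g}\alpha|_{S})^{-1}$, an isomorphism, so that $\beta\circ\tau$ is a local diffeomorphism at $x$ by the (finite-dimensional) inverse function theorem, and $\sigma\coloneq\tau|_{U}$ for a small $U\opn M$ does the job.

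The heart of the argument is therefore the linear-algebraic construction of $S$. Both $a\coloneq T_{g}\alpha\from T_{g}G\to T_{x}M$ and $b\coloneq T_{g}\beta\from T_{g}G\to T_{y}M$ are continuous surjections with complemented kernels (as $\alpha,\beta$ are locally projections), and $\dim T_{x}M=\dim T_{y}M=n$. I would choose a basis $e_{1},\dots,e_{n}$ of $T_{y}M$ and lift it through $b$ to vectors $v_{1},\dots,v_{n}\in T_{g}G$ with $b(v_{i})=e_{i}$, so that $b$ already restricts to an isomorphism on $\op{span}(v_{1},\dots,v_{n})$. The only possible defect is that the vectors $a(v_{1}),\dots,a(v_{n})$ need not be linearly independent, and the key point is that this can be repaired without disturbing the $b$-values: replacing $v_{i}$ by $v_{i}+k_{i}$ with $k_{i}\in\Ker b$ leaves $b(v_{i})=e_{i}$ unchanged while shifting $a(v_{i})$ by an arbitrary element of $R'\coloneq a(\Ker b)$.

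It remains to see that the $k_{i}$ can be chosen so that $a(v_{1}+k_{1}),\dots,a(v_{n}+k_{n})$ form a basis of $T_{x}M$, and here one uses surjectivity of $b$ together with that of $a$: because $b|_{\op{span}(v_{i})}$ is already onto, $T_{g}G=\op{span}(v_{1},\dots,v_{n})+\Ker b$, and applying $a$ gives $\op{span}(a(v_{1}),\dots,a(v_{n}))+R'=a(T_{g}G)=T_{x}M$. A greedy selection then produces the required shifts: at the stage where $a(v_{1}+k_{1}),\dots,a(v_{j}+k_{j})$ already span a $j$-dimensional subspace $U_{j}$, if no admissible shift of $a(v_{j+1})$ left $U_{j}$ one would have $a(v_{j+1})\in U_{j}$ and $R'\se U_{j}$, whence all $a(v_{i})$ lie in $U_{j}$ and $\op{span}(a(v_{i}))+R'\se U_{j}$ has dimension $<n$, contradicting the identity just derived. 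Setting $S\coloneq\op{span}(v_{1}+k_{1},\dots,v_{n}+k_{n})$ then makes both $a|_{S}$ and $b|_{S}$ isomorphisms; being finite-dimensional, $S$ is the graph of a continuous linear map over $T_{x}M$, which is exactly the form needed to realise it as $T_{x}\tau$ in the first paragraph.

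I expect the only genuine subtlety to be the infinite-dimensional bookkeeping: one must exploit that $\alpha$ and $\beta$ are honest submersions (locally projections, hence with complemented kernels and with local sections whose jets at a point may be prescribed), and that $M$ is finite-dimensional so that both the inverse function theorem and the dimension count $\dim T_{x}M=\dim T_{y}M=n$ are available. It is worth emphasising that no transitivity or orbit hypothesis on $\cG$ enters: surjectivity of $T_{g}\beta$ by itself supplies the relation $\op{span}(a(v_{i}))+R'=T_{x}M$ that drives the correction step, so the construction applies to every $g\in G$.
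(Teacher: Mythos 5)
Your argument is correct and is essentially the proof the paper intends: the paper simply defers to Mackenzie's Proposition 1.4.9 and observes that its ``algebraic argument'' survives for finite-codimensional subspaces of arbitrary locally convex spaces, and that algebraic argument is precisely your construction of an $n$-dimensional $S\se T_{g}G$ on which both $T_{g}\alpha$ and $T_{g}\beta$ restrict to isomorphisms, followed by realising $S$ as the image of the $1$-jet of a local $\alpha$-section and invoking the finite-dimensional inverse function theorem. One intermediate claim in your greedy step is overstated: from $a(v_{j+1})\in U_{j}$ and $R'\se U_{j}$ you may conclude $a(v_{i})\in U_{j}$ only for $i\le j+1$ (the unprocessed $a(v_{j+2}),\dots,a(v_{n})$ need not lie in $U_{j}$), but the contradiction survives because $\operatorname{span}(a(v_{1}),\dots,a(v_{n}))+R'\se U_{j}+\operatorname{span}(a(v_{j+2}),\dots,a(v_{n}))$ still has dimension at most $j+(n-j-1)=n-1<n$.
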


 \begin{proof}
  Note that in \cite{Mackenzie05General-theory-of-Lie-groupoids-and-Lie-algebroids} only finite-dimensional Lie groupoids are discussed. 
 However, to prove the assertion, one can copy the proof of \cite[Proposition 1.4.9]{Mackenzie05General-theory-of-Lie-groupoids-and-Lie-algebroids} verbatim to our setting, since the Lie groupoid is assumed to have a finite-dimensional base.
 The crucial point here is that the algebraic argument used in the proof of \cite[Proposition 1.4.9]{Mackenzie05General-theory-of-Lie-groupoids-and-Lie-algebroids} carries over to subspaces of finite codimension of arbitrary locally convex spaces.
 Assuming that the base $M$ is finite-dimensional ensures exactly this property.
 \end{proof}
 
 For mappings into finite-dimensional manifolds (whose domain is an infinite-dimensional manifold), one can define maps of constant rank analogously to the finite-dimensional case (cf.\ \cite[Theorem F]{hg2015}). 
 \begin{corollary}[{{\cite[Corollary 1.4.10]{Mackenzie05General-theory-of-Lie-groupoids-and-Lie-algebroids}}}]\label{cor: const:rk}
  Let $\cG = (G \toto M)$ be a locally convex Lie groupoid over a finite dimensional manifold $M$.
  Then for each $m\in M$ the maps
    \begin{displaymath}
     \beta|_{\alpha^{-1} (m)} \colon \alpha^{-1} (m) \rightarrow M, g \mapsto \beta (g) \text{ and } \alpha|_{\beta^{-1} (m)} \colon \beta^{-1} (m) \rightarrow M, g \mapsto \alpha (g)
    \end{displaymath}
  are maps of constant rank.
 \end{corollary}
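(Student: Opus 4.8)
The plan is to follow the argument for \cite[Corollary 1.4.10]{Mackenzie05General-theory-of-Lie-groupoids-and-Lie-algebroids}, using the local bisections supplied by Lemma \ref{lem: enough:locbis} to produce, through any point of a source fibre, a local diffeomorphism of that fibre which intertwines $\beta$ with a diffeomorphism of the base. Since pre- and post-composing with diffeomorphisms leaves the rank of a differential unchanged, this forces the rank to be the same at every point of the fibre, which is exactly the constant-rank property.

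First I would fix $m \in M$ and reduce the claim to a statement about differentials. Because the target $M$ is finite-dimensional, a smooth map into $M$ is of constant rank in the sense of \cite[Theorem F]{hg2015} precisely when the numerical rank of its differential is independent of the base point: the kernel of such a differential is a closed subspace of finite codimension, hence complemented, while its image lies in the finite-dimensional space $T_{\beta(g)}M$ and is therefore closed and complemented as well. Thus it suffices to show that $\rank T_g(\beta|_{\alpha^{-1}(m)})$ is the same for all $g \in \alpha^{-1} (m)$, and I would compare each $g$ with the distinguished point $1_m$.

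Next, given $g \in \alpha^{-1}(m)$, apply Lemma \ref{lem: enough:locbis} to obtain a local bisection $\sigma \colon U \to G$ on an open $m$-neighbourhood $U$ with $\sigma(m) = g$. The associated left translation $L_\sigma \colon h \mapsto \sigma(\beta(h))\cdot h$ is defined and smooth on $\{h \in G \mid \beta(h) \in U\}$, since $\alpha \circ \sigma = \id_U$ makes $(\sigma(\beta(h)),h)$ a composable pair and multiplication is smooth; it is a local diffeomorphism with local inverse given by the inverse bisection. From the groupoid identities $\alpha(ab)=\alpha(b)$ and $\beta(ab)=\beta(a)$ one reads off $\alpha \circ L_\sigma = \alpha$ and $\beta \circ L_\sigma = (\beta \circ \sigma) \circ \beta$. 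Hence $L_\sigma$ restricts to a local diffeomorphism of $\alpha^{-1}(m)$ with $L_\sigma(1_m) = \sigma(m)\cdot 1_m = g$, and on $\alpha^{-1}(m)$ it satisfies $\beta|_{\alpha^{-1}(m)} \circ L_\sigma = (\beta \circ \sigma) \circ \beta|_{\alpha^{-1}(m)}$, where $\beta \circ \sigma \colon U \to M$ is a diffeomorphism onto its image. Differentiating this identity at $1_m$ and using that $T_{1_m} L_\sigma$ and $T_m(\beta \circ \sigma)$ are isomorphisms yields $\rank T_g(\beta|_{\alpha^{-1}(m)}) = \rank T_{1_m}(\beta|_{\alpha^{-1}(m)})$; as $g$ was arbitrary, $\beta|_{\alpha^{-1}(m)}$ has constant rank. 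The statement for $\alpha|_{\beta^{-1}(m)}$ then follows by precomposing with the inversion $\iota$: since $\alpha \circ \iota = \beta$ and $\beta \circ \iota = \alpha$, the diffeomorphism $\iota$ restricts to a diffeomorphism $\beta^{-1}(m) \to \alpha^{-1}(m)$ with $\beta|_{\alpha^{-1}(m)} \circ \iota = \alpha|_{\beta^{-1}(m)}$, and precomposition with a diffeomorphism preserves the rank.

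The main obstacle is technical rather than conceptual: I must verify with care that $L_\sigma$ is a genuine smooth local diffeomorphism restricting to the source fibre in the locally convex setting (smoothness of multiplication on the fibre product, which is a submanifold by the submersion hypothesis on $\alpha,\beta$, together with the smooth local inverse), and that the constant-rank notion of \cite[Theorem F]{hg2015} is indeed controlled solely by the numerical rank of the differential once the codomain is finite-dimensional. Both points are routine consequences of the finite-dimensionality of $M$ and the standing submersion assumptions, so I expect no essential difficulty beyond this bookkeeping.
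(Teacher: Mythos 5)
Your proof is correct and follows exactly the route the paper intends: the paper gives no proof of its own but cites Mackenzie's Corollary 1.4.10, whose argument is precisely the left-translation trick with local bisections from Lemma \ref{lem: enough:locbis} that you carry out, and you correctly supply the two infinite-dimensional checks (smoothness and local invertibility of $L_\sigma$, and the fact that for a finite-dimensional codomain the kernel of the differential is closed of finite codimension, hence complemented, so constant numerical rank suffices for the notion in \cite[Theorem F]{hg2015}). Nothing to add.
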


 The next proof follows \cite[Corollary 1.4.11]{Mackenzie05General-theory-of-Lie-groupoids-and-Lie-algebroids} but we need to adapt the arguments. 
 \begin{lemma}\label{lem: vertex:Lie}
  Let $\cG = (G \toto M)$ be a locally convex Lie groupoid over a finite dimensional manifold $M$.
  Then for all $m,n \in M$, $\alpha^{-1} (m) \cap \beta^{-1} (n)$ is a split submanifold (of finite codimension) of $\alpha^{-1} (m)$, of $\beta^{-1} (n)$ and of $G$. 
  In particular, each vertex group $\Vtx{m}(\cG) \coloneq \alpha^{-1} (m) \cap \beta^{-1} (m)$ is a closed submanifold of $G$ and this structure turns it into a Lie group.
 \end{lemma}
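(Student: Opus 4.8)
The plan is to realise each set $\alpha^{-1}(m) \cap \beta^{-1}(n)$ as a fibre of one of the constant-rank maps from Corollary \ref{cor: const:rk} and then to invoke the constant-rank theorem for maps into finite-dimensional manifolds. Concretely, I would begin from the identity
\[
\alpha^{-1}(m) \cap \beta^{-1}(n) = \bigl(\beta|_{\alpha^{-1}(m)}\bigr)^{-1}(n) = \bigl(\alpha|_{\beta^{-1}(n)}\bigr)^{-1}(m).
\]
By Corollary \ref{cor: const:rk} the map $\beta|_{\alpha^{-1}(m)} \colon \alpha^{-1}(m) \to M$ has constant rank, and since its target $M$ is finite-dimensional this rank is some finite number $r \le \dim M$. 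Applying the constant-rank theorem \cite[Theorem F]{hg2015} (this is exactly where the adaptation to infinite dimensions enters, replacing the finite-dimensional rank theorem used in \cite[Corollary 1.4.11]{Mackenzie05General-theory-of-Lie-groupoids-and-Lie-algebroids}), the fibre over $n$ is a split submanifold of $\alpha^{-1}(m)$ whose codimension equals $r$, hence is finite. If $n$ lies outside the image, the fibre is empty and there is nothing to prove. Running the same argument for $\alpha|_{\beta^{-1}(n)}$ shows that $\alpha^{-1}(m)\cap\beta^{-1}(n)$ is likewise a split submanifold of finite codimension in $\beta^{-1}(n)$.

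To upgrade this to a statement about $G$ itself, I would first note that $\alpha^{-1}(m)$ is a split submanifold of finite codimension in $G$, because $\alpha$ is a submersion onto the finite-dimensional manifold $M$, so its fibres are split and of codimension $\dim M$. Then I would use the transitivity of the notion of a split submanifold of finite codimension---precisely the kind of statement recorded in \cite[Lemma 1.4]{hg2015}---to conclude that a split finite-codimension submanifold of $\alpha^{-1}(m)$ is again a split finite-codimension submanifold of $G$. Thus $\alpha^{-1}(m)\cap\beta^{-1}(n)$ is a split submanifold of finite codimension in $G$, as claimed.

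Finally, for the vertex group I would specialise to $n=m$. Closedness of $\Vtx{m}(\cG) = \alpha^{-1}(m)\cap\beta^{-1}(m)$ in $G$ is immediate, since $\alpha$ and $\beta$ are continuous and $M$ is Hausdorff, so $\alpha^{-1}(m)$ and $\beta^{-1}(m)$ are closed. It then remains to check that the submanifold structure turns $\Vtx{m}(\cG)$ into a Lie group. For $g,h\in\Vtx{m}(\cG)$ one has $\alpha(g)=m=\beta(h)$, so $(g,h)$ lies in the fibre product $G\times_{\alpha,\beta}G$, and the groupoid multiplication restricts to a map $\Vtx{m}(\cG)\times\Vtx{m}(\cG)\to\Vtx{m}(\cG)$; since $\Vtx{m}(\cG)$ is a submanifold of $G$ and the multiplication and inversion of $\cG$ are smooth, these restrictions are smooth. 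Hence the group operations on $\Vtx{m}(\cG)$ are smooth, and $\Vtx{m}(\cG)$ is a Lie group.

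I expect the only genuine obstacle to be the correct invocation of the infinite-dimensional constant-rank and regular-value machinery: one must ensure that constant rank into a finite-dimensional target really yields \emph{split} fibres of finite codimension (and not merely topological fibres), and that the transitivity used to pass from $\alpha^{-1}(m)$ to $G$ is available for split submanifolds of finite codimension. Both are covered by the results of \cite{hg2015} cited above, so that the remainder of the argument is essentially a transcription of the finite-dimensional proof in \cite{Mackenzie05General-theory-of-Lie-groupoids-and-Lie-algebroids}.
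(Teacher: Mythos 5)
Your proposal is correct and follows essentially the same route as the paper's proof: both realise $\alpha^{-1}(m)\cap\beta^{-1}(n)$ as a fibre of the constant-rank map $\beta|_{\alpha^{-1}(m)}$ from Corollary \ref{cor: const:rk}, apply Gl\"ockner's constant-rank theorem \cite[Theorem F]{hg2015} to get a split finite-codimension submanifold of $\alpha^{-1}(m)$, pass to $G$ via the regular value theorem and the transitivity statement \cite[Lemma 1.4]{hg2015}, and obtain the Lie group structure on $\Vtx{m}(\cG)$ by restricting the smooth groupoid operations (the paper phrases this last step via the universal property of the pullback $G\times_{\alpha,\beta}G$, which is the precise justification for the smoothness of the restricted multiplication that you assert).
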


 \begin{proof}
  The set $\alpha^{-1} (m) \cap \beta^{-1} (n)$ is the preimage of $n$ under the constant rank map $\beta|_{\alpha^{-1} (m)}$.
  As $M$ is a finite dimensional manifold we can apply Gl{\"o}ckner's constant rank theorem \cite[Theorem F]{hg2015}.
  Thus $\alpha^{-1} (m) \cap \beta^{-1} (n)$ is a split submanifold of finite codimension in $\alpha^{-1} (m)$.
  Moreover, $\alpha^{-1} (m)$ is a split submanifold of $G$ (by the regular value theorem \cite[Theorem D]{hg2015}) and since $M$ is finite-dimensional, $\alpha^{-1} (m)$ is even of finite-codimension in $G$.
  Thus \cite[Lemma 1.4]{hg2015} implies that also $\alpha^{-1} (m) \cap \beta^{-1} (n)$ is a split submanifold (of finite-codimension) of $G$.
    Analogously one shows that $\alpha^{-1} (m) \cap \beta^{-1} (n)$ is a split submanifold (of finite-codimension) of $\beta^{-1} (n)$.
  
  Groupoid multiplication and inversion induce a group structure on $\Vtx{m}(\cG)$.
  By the universal property of the pullback, the inclusion $\Vtx{m}(\cG) \times \Vtx{m}(\cG) \rightarrow G \times_{\alpha,\beta} G$ is smooth, whence this group structure turns $\Vtx{m}(\cG)$ into a Lie group.
 \end{proof}  
 
 \newpage
 
  \section{Locally convex manifolds, Lie groups and spaces of smooth maps}\label{Appendix:
   MFD}
   In this appendix we collect the necessary background on the theory of manifolds and Lie groups
   that are modelled on locally convex spaces and how spaces of smooth maps can be
   equipped with such a structure. Let us first recall some basic facts concerning
   differential calculus in locally convex spaces. We follow
   \cite{hg2002a,BertramGlocknerNeeb04Differential-calculus-over-general-base-fields-and-rings}.
   \begin{definition}\label{defn:
   deriv} Let $E, F$ be locally convex spaces, $U \subseteq E$ be an open subset,
   $f \colon U \rightarrow F$ a map and $r \in \N_{0} \cup \{\infty\}$. If it
   exists, we define for $(x,h) \in U \times E$ the directional derivative
   $$df(x,h) \coloneq D_h f(x) \coloneq \lim_{t\rightarrow 0} t^{-1} (f(x+th) -f(x)).$$
   We say that $f$ is $C^r$ if the iterated directional derivatives
   \begin{displaymath}
   d^{(k)}f (x,y_1,\ldots , y_k) \coloneq (D_{y_k} D_{y_{k-1}} \cdots D_{y_1}
   f) (x)
   \end{displaymath}
   exist for all $k \in \N_0$ such that $k \leq r$, $x \in U$ and
   $y_1,\ldots , y_k \in E$ and define continuous maps
   $d^{(k)} f \colon U \times E^k \rightarrow F$. If $f$ is $C^\infty$ it is also
   called smooth. We abbreviate $df \coloneq d^{(1)} f$.
   From this definition of smooth map there is an associated concept of locally
   convex manifold, i.e., a Hausdorff space that is locally homeomorphic to open
   subsets of locally convex spaces with smooth chart changes. Accordingly, a locally convex Lie group is a manifold, equipped with a group structure such that all group operations are smooth. See
   \cite{Wockel13Infinite-dimensional-and-higher-structures-in-differential-geometry,neeb2006,hg2002a}
   for more details.
   \end{definition}

   \begin{definition}\label{defn:
   conno} Let $M$ be a smooth manifold. Then $M$ is called \emph{Banach} (or
   \emph{Fr\'echet}) manifold if all its modelling spaces are Banach (or
   Fr\'echet) spaces. The manifold $M$ is called \emph{locally metrisable} if the
   underlying topological space is locally metrisable (equivalently if all
   modelling spaces of $M$ are metrisable). It is called \emph{metrisable} if it
   is metrisable as a topological space (equivalently locally metrisable and
   paracompact).
   \end{definition}

 \begin{definition}\label{def:regularity}
  Let $H$ be a Lie group modelled on a locally convex space, with identity
  element $\one$, and $r\in \N_0\cup\{\infty\}$. We use the tangent map of the
  right translation $\rho_h\colon H\to H$, $x\mapsto xh$ by $h\in H$ to define
  $v.h\coloneq T_{\one} \rho_h(v) \in T_h H$ for $v\in T_{\one} (H) \equalscolon \Lf (H)$.
  Following \cite{HGRegLie15}, $H$ is called
  \emph{$C^r$-semiregular} if for each $C^r$-curve
  $\gamma\colon [0,1]\rightarrow \Lf(H)$ the initial value problem
  \begin{displaymath}
   \begin{cases}
   \eta'(t)&= \gamma(t).\eta(t)\\ \eta(0) &= \one
   \end{cases}
  \end{displaymath}
  has a (necessarily unique) $C^{r+1}$-solution
  $\Evol (\gamma)\coloneq\eta\colon [0,1]\rightarrow H$. 
  If in addition the map
  \begin{displaymath}
   \evol \colon C^r([0,1],\Lf(H))\rightarrow H,\quad \gamma\mapsto \Evol
   (\gamma)(1)
  \end{displaymath}
  is smooth, we call $H$ \emph{$C^k$-regular}. 
 \end{definition}

 \begin{remark}
  If $H$ is $C^r$-regular and $r\leq s$, then $H$ is also $C^s$-regular. A
  $C^\infty$-regular Lie group $H$ is called \emph{regular} \emph{(in the sense
  of Milnor}) -- a property first defined in
  \cite{Milnor84Remarks-on-infinite-dimensional-Lie-groups}. Every finite
  dimensional Lie group is $C^0$-regular (cf. \cite{neeb2006}). Several
  important results in infinite-dimensional Lie theory are only available for
  regular Lie groups (see
  \cite{Milnor84Remarks-on-infinite-dimensional-Lie-groups}, \cite{neeb2006},
  \cite{hg2015}, cf.\ also \cite{conv1997} and the references therein).
 \end{remark}

 \begin{lemma}\label{lem: semisub:reg}
  Let $G$ be a $C^k$-regular Lie group for $k \in \N_0 \cup \{\infty\}$ and $H$ be a closed Lie subgroup of $G$.
  If $H$ is $C^k$-semiregular, then $H$ is $C^k$-regular.
 \end{lemma}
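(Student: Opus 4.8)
The plan is as follows. By hypothesis $H$ is already $C^k$-semiregular, so that $\Evol_H$, and hence the map $\evol_H\from C^k([0,1],\Lf(H))\to H$, is well defined; it only remains to prove that $\evol_H$ is smooth. The idea is to obtain this smoothness from the smoothness of $\evol_G$, which is available because $G$ is $C^k$-regular, by transporting everything through the inclusion $\iota\from H\hookrightarrow G$.

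First I would record the compatibility of the two evolution equations. Writing $T_{\one}\iota\from \Lf(H)\to \Lf(G)$ for the induced inclusion of Lie algebras, the fact that $\iota$ is a smooth homomorphism gives $\iota\circ\rho_h^H=\rho_{\iota(h)}^G\circ\iota$ and hence $T\iota(v.h)=(T\iota\, v).\iota(h)$ for $v\in\Lf(H)$ and $h\in H$. Consequently, for a curve $\gamma\in C^k([0,1],\Lf(H))$ the $H$-valued solution $\eta=\Evol_H(\gamma)$ satisfies, after composition with $\iota$,
\[
 (\iota\circ\eta)'(t)=T\iota\big(\gamma(t).\eta(t)\big)=(T\iota\,\gamma(t)).\,\iota(\eta(t)),\qquad (\iota\circ\eta)(0)=\one,
\]
so that $\iota\circ\eta$ solves the evolution equation in $G$ for the pushed-forward curve $\iota_{*}\gamma\coloneq T\iota\circ\gamma$. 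By uniqueness of solutions in the (semi)regular group $G$ this forces $\iota\circ\Evol_H(\gamma)=\Evol_G(\iota_{*}\gamma)$ and, evaluating at $t=1$, yields the key identity $\iota\circ\evol_H=\evol_G\circ\iota_{*}$ on $C^k([0,1],\Lf(H))$.

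From here I would conclude as follows. The push-forward $\iota_{*}\from C^k([0,1],\Lf(H))\to C^k([0,1],\Lf(G))$ is continuous and linear, hence smooth; composing with the smooth map $\evol_G$ shows that $\iota\circ\evol_H=\evol_G\circ\iota_{*}$ is smooth as a map into $G$. Since this smooth map factors through the subset $H$, its corestriction $\evol_H\from C^k([0,1],\Lf(H))\to H$ is smooth, which is exactly what $C^k$-regularity of $H$ requires.

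The step that needs care, and which I expect to be the main obstacle, is precisely this last corestriction of the codomain: passing from ``smooth into $G$ with image contained in $H$'' to ``smooth into $H$'' is not automatic in infinite dimensions and requires $H$ to be an \emph{initial} submanifold of $G$. This is guaranteed here because $H$ is a closed Lie subgroup, hence an embedded submanifold admitting submanifold charts; in such a chart $G$ looks locally like $H$ times a complement and the corestriction is manifestly smooth. I would also note that $T_{\one}\iota$ realises $\Lf(H)$ as a closed topological subspace of $\Lf(G)$, so that $\iota_{*}$ is a topological embedding and the manifold structure on $C^k([0,1],\Lf(H))$ underlying $\evol_H$ is compatible with the ambient one used for $\evol_G$.
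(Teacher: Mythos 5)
Your argument is correct and follows essentially the same route as the paper: both proofs rest on the intertwining identity $\iota\circ\Evol_H=\Evol_G\circ\Lf(\iota)_*$ obtained from uniqueness of solutions in $G$, the smoothness of the push-forward and of the evolution in the ambient $C^k$-regular group, and a corestriction step justified by $H$ being a closed (sub)manifold. The only cosmetic difference is that you corestrict the time-$1$ map $\evol_H$ directly in $H\subseteq G$, whereas the paper corestricts $\Evol_H$ inside the mapping group $C^{k+1}([0,1],H)\subseteq C^{k+1}([0,1],G)$ and then invokes a lemma of Gl\"ockner to pass back to regularity; both hinge on the same point, which you correctly identify as the delicate one.
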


 \begin{proof}
   Denote by $i_H \colon H \rightarrow G$ and $I_H \colon C^{k+1}([0,1] ,H) \rightarrow C^{k+1} ([0,1], G)$ the canonical inclusions. 
   Then $\Lf (i_H) \colon \Lf (H) \rightarrow \Lf (G)$ allows us to identify curves $\eta \in C^k([0,1],\Lf (H))$ with $C^k$-curves $\Lf (i_H) \circ \eta$ in $\Lf (G)$.
   As $G$ and $H$ are $C^k$-semiregular, we obtain maps $\Evol_J \colon C^k ([0,1],\Lf (J)) \rightarrow C^{k+1} ([0,1], J)$ for $J \in \{G,H\}$ which map a curve to the solution of the initial value problem 
     \begin{displaymath}
      \begin{cases}
       \gamma' (t) &= \gamma(t).\eta (t) \quad \forall t \in [0,1] \\
       \gamma (0) &= \one 
       \end{cases}
     \end{displaymath}
   in the respective group. 
   Consider the map $\Lf (i_H)_* \colon C^k ([0,1] , \Lf (H)) \rightarrow C^k ([0,1], \Lf (\Bis (\cG))), \eta \mapsto \Lf (i_H) \circ \eta$, which is smooth by \cite[Lemma 1.2]{GN2012}.
   By \cite[1.16]{HGRegLie15} we have 
     \begin{displaymath}
      \Evol_G \circ \Lf (i_H) = I_H \circ \Evol_H 
     \end{displaymath}
   As $H$ is a closed subgroup of $G$, the same holds for $C^{k+1} ([0,1],H) \subseteq C^{k+1} ([0,1] , G)$ (cf.\ \cite[1.8 and 1.10]{HGRegLie15})
   Hence $\Evol_H$ is smooth if $I_H \circ \Evol_H$ is smooth.
   Observe that since $G$ is $C^k$-regular, $\Evol_G$ and thus $\Evol_G \circ \Lf (i_H)_* = I_H \circ \Evol_H$ is smooth.
   We deduce that $\Evol_H$ is smooth and \cite[Lemma 3.1]{HGRegLie15} shows that $H$ is $C^k$-regular.
 \end{proof}

  \begin{lemma}\label{lem: Lmorph:subgp}
  Suppose $H,K$ are Lie groups with Lie subgroups $H_{*}\leq H$, $K_{*}\leq K$ such that $K$ and $K_{*}$ are regular and $H_{*}$ is connected.
  Let $\varphi\from H\to K$ be a
  morphism of Lie groups. If $\Lf(\varphi)(\Lf(H_{*}))\se \Lf(K_{*})$, then
  $\varphi(H_{*})\se K_{*}$.\footnote{The authors believe that this result is well-known to experts in the field but were unable to locate a reference.}
 \end{lemma}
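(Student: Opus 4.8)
The plan is to reduce the containment $\varphi(H_{*})\se K_{*}$ to a uniqueness statement for the regularity equation of Definition \ref{def:regularity}, exploiting that the (left) logarithmic derivative is natural with respect to morphisms of Lie groups. The guiding idea is that we cannot connect $\one$ to a given $h\in H_{*}$ by an exponential map (none is available in the locally convex setting), but we can connect them by a smooth path and then transport the solved differential equation along $\varphi$.

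First I would fix an arbitrary $h\in H_{*}$ and use that $H_{*}$, being a connected locally convex manifold, is smoothly path-connected: the relation ``joinable by a smooth path'' is an open equivalence relation (two points in a common chart are joined by a straight segment), so its classes are open and connectedness leaves a single class. Hence there is a smooth curve $\gamma\from [0,1]\to H_{*}$ with $\gamma(0)=\one$ and $\gamma(1)=h$, built from chart segments whose junctions are smoothed by a reparametrisation with sitting instants. Passing to the left logarithmic derivative $\eta(t)\coloneq \gamma(t)^{-1}.\gamma'(t)$, which lies in $\Lf(H_{*})$ since $\gamma$ runs in the Lie subgroup $H_{*}$, the curve $\gamma$ solves $\gamma'(t)=\gamma(t).\eta(t)$, $\gamma(0)=\one$ in $H_{*}$.

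Next I would invoke naturality of the logarithmic derivative: because $\varphi$ is a homomorphism we have $\lambda_{\varphi(g)^{-1}}\circ \varphi=\varphi\circ \lambda_{g^{-1}}$, and differentiating shows that the left logarithmic derivative of $\varphi\circ\gamma$ equals $\Lf(\varphi)\circ\eta$. By the hypothesis $\Lf(\varphi)(\Lf(H_{*}))\se \Lf(K_{*})$ this is a $C^{\infty}$-curve with values in $\Lf(K_{*})$. Since $K_{*}$ is regular (hence semiregular), it admits an evolution $\zeta\from [0,1]\to K_{*}$ solving $\zeta'(t)=\zeta(t).(\Lf(\varphi)\circ\eta)(t)$, $\zeta(0)=\one$ inside $K_{*}$. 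Composing with the inclusion $K_{*}\hookrightarrow K$ gives a solution of this same initial value problem in $K$; but $\varphi\circ\gamma$ is also such a solution by the preceding paragraph. As $K$ is regular, solutions in $K$ are unique, so $\varphi\circ\gamma=\zeta$ as curves in $K$, and in particular $\varphi\circ\gamma$ runs entirely in $K_{*}$. Evaluating at $t=1$ yields $\varphi(h)=\zeta(1)\in K_{*}$, and since $h\in H_{*}$ was arbitrary, $\varphi(H_{*})\se K_{*}$.

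The main obstacle I anticipate is analytical and concentrated in the first step: without an exponential map one genuinely needs smooth path-connectedness of the connected group $H_{*}$, and one must be careful that the transported right-hand side $\Lf(\varphi)\circ\eta$ really lands in $\Lf(K_{*})$ so that the $K_{*}$-evolution exists. Here it is essential that $H_{*}$ and $K_{*}$ are honest Lie subgroups, so that their Lie algebras embed as the relevant subspaces and the logarithmic derivatives are computed intrinsically; once the existence of the $K_{*}$-evolution and the uniqueness of the $K$-evolution are secured, the conclusion is immediate.
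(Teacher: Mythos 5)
Your proof is correct and follows essentially the same route as the paper's: choose a smooth path in the connected subgroup $H_{*}$ from $\one$ to $h$, push its logarithmic derivative through $\Lf(\varphi)$ into $\Lf(K_{*})$, solve the evolution equation in the regular group $K_{*}$, and identify the result with $\varphi\circ\gamma$ by uniqueness of evolutions in $K$. The only cosmetic difference is that the paper phrases the identification step via the uniqueness of the integrated Lie group morphism (citing Neeb) rather than via the naturality of $\delta^{l}$ under homomorphisms, but these are the same underlying fact.
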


 \begin{proof}
  Since $H_{*}$ is connected, it is contained in the identity component of $H$.
  Restricting to that component we may assume that also $H$ is connected. Then
  \cite[Proposition II.4.1]{neeb2006} implies that $\Lf(\varphi)$ has at most
  one integration to a morphism of Lie groups, which thus has to coincide with
  $\varphi$. 
   Recall that the left logarithmic derivative $\delta^l (\gamma) \colon [0,1] \rightarrow \Lf (H)$ of a $C^1$-curve $\gamma \colon [0,1] \rightarrow H$ in a Lie group is defined via $\delta^l (\gamma) (t) \coloneq \gamma(t)^{-1}.\gamma'(t)$.
  Thus $\Evol (\eta) = \gamma$ if and only if $\delta^l (\gamma) = \eta$ and $\gamma (0) = \one_H$.
  The integration of $\Lf(\varphi)$ is constructed by taking a smooth
  path $\gamma\from [0,1]\to H$ with $\gamma(0)=\one_{H}$, applying $\Lf(\varphi)$
  to $\delta^{l}(\gamma)$, solving the initial
  value problem
  \begin{equation}\label{eq: reg:int}
   \begin{cases}
   \delta^{l}(\eta) &=\Lf(\varphi)\circ \delta^{l}(\gamma) \\
   \eta(0) &=\one_{K}
   \end{cases}
  \end{equation}
  in $K$ and setting $\varphi(\gamma(1))=\eta(1)$ (which is possible since $K$
  is regular, cf.\ \cite[Theorem
  8.1]{Milnor84Remarks-on-infinite-dimensional-Lie-groups}). Under the
  assumptions made, this only depends on $\gamma(1)$.
 
  Since $H_{*}$ is assumed to be connected, we can choose for each $h\in H_{*}$
  a path $\gamma$ with $\gamma(0)=e_{H}$ and $\gamma(1)=h$ such that
  $\gamma(t)\in H_{*}$ for all $t\in [0,1]$. Consequently,
  $\delta^{l}(\gamma)(t)\in \Lf(H_{*})$, and thus
  $\Lf(\varphi) (\delta^{l}(\gamma)(t))\in \Lf(K_{*})$ for all $t\in[0,1]$. Now
  $K_{*}$ is regular and $\Lf(\varphi) \circ\delta^{l}(\gamma)$ takes its image
  in $\Lf(K_{*})$ by assumption. Thus we can solve \eqref{eq: reg:int} in $K_{*}$. From
  \cite[Lemma 10.1]{HGRegLie15}, we deduce that the solution to the initial
  value problem \eqref{eq: reg:int} for $\eta$ in $K$ coincides with the
  solution in $K_{*}$, and thus takes its values in $K_{*}$. Summing up,
  $\varphi(h)=\eta(1)\in K_{*}$ for each $h\in H_{*}$.
 \end{proof}

   \begin{definition}\label{def:local_addition}
   Suppose $M$ is a smooth manifold. Then a \emph{local addition} on $M$ is a
   smooth map $\A\from U\opn TM\to M$, defined on an open neighbourhood $U$ of
   the submanifold $M\se TM$ such that
   \begin{enumerate}
   \item \label{def:local_addition_a} $\pi\times \A\from U\to M\times M$,
   $v\mapsto (\pi(v),\A(v))$ is a diffeomorphism onto an open
   neighbourhood of the diagonal $\Delta M\se M\times M$ and
   \item \label{def:local_addition_b} $\A(0_{m})=m$ for all $m\in M$.
   \end{enumerate}
   We say that $M$ \emph{admits a local addition} if there exist a local addition
   on $M$.
   \end{definition}
  
   \begin{definition}(cf.\
  \cite[10.6]{michor1980}) Let $s\from Q\to N$ be a surjective submersion. Then
  a \emph{local addition adapted to $s$} is a local addition
  $\A \from U\opn TQ\to Q$ such that the fibres of $s$ are additively closed
  with respect to $\A$, i.e.\ $\A (v_{q})\in s^{-1}(s(q))$ for all $q\in Q$ and
  $v_{q}\in T_{q}s^{-1}(s(q))$ (note that $s^{-1}(s(q))$ is a submanifold of
  $Q$).
 \end{definition}
    \begin{tabsection}
   An important tool will be the following excerpt from \cite[Theorem
   7.6]{Wockel13Infinite-dimensional-and-higher-structures-in-differential-geometry}.
   \end{tabsection}
   \begin{theorem}\label{thm:
   MFDMAP} Let $M$ be a compact manifold and $N$ be a locally convex and locally
   metrisable manifold that admits a local addition $\A\from U\opn TN\to N$. Set
   $V:=(\pi\times \A)(U)$, which is an open neighbourhood of the diagonal
   $\Delta N$ in $N\times N$. For each $f\in C^{\infty}(M,N)$ we set
   \begin{equation*}
   O_{f}\coloneq\{g\in C^{\infty}(M,N)\mid (f(x),g(x))\in V \}.
   \end{equation*}
   Then the following assertions hold.
   \begin{enumerate}
   \item \label{thm:manifold_structure_on_smooth_mapping_a} The set $O_{f}$
   contains $f$, is open in $C^{\infty}(M,N)$ and the formula
   $(f(x),g(x))=(f(x),\A(\varphi_{f}(g)(m)))$ determines a homeomorphism
   \begin{equation*}
   \varphi_{f}\from  O_{f}\to \{h\in C^{\infty}(M,TN)\mid \pi(h(x))=f(x)\}\cong \Gamma(f^{*}(TN))
   \end{equation*}
   from $O_{f}$ onto the open subset
   $\{h\in C^{\infty}(M,TN)\mid \pi(h(x))=f(x)\}\cap C^{\infty}(M,U)$ of
   $\Gamma(f^{*}(TN))$.
   \item \label{thm:manifold_structure_on_smooth_mapping_b} The family
   $(\varphi_{f}\from O_{f}\to \varphi_{f}(O_{f}))_{f\in C^{\infty}(M,N)}$
   is an atlas, turning $C^{\infty}(M,N)$ into a smooth locally convex and
   locally metrisable manifold.
   The manifold structure is independent of the
   choice of the local addition.
   \item \label{thm:manifold_structure_on_smooth_mapping_d} If $L$ is another
   locally convex and locally metrisable manifold, then a map
   $f\from L\times M\to N$ is smooth if and only if
   $f^{\wedge}\from L\to C^{\infty}(M,N)$ is smooth. In other words,
   \begin{equation*}
   C^{\infty}(L\times M,N)\to C^{\infty}(L,C^{\infty}(M,N)), \quad f\mapsto f^{\wedge}
   \end{equation*}
   is a bijection (which is even natural).
   \end{enumerate}
   \end{theorem}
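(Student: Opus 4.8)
The plan is to build the manifold structure on $C^{\infty}(M,N)$ chart by chart, using the local addition $\A$ to transport the linear structure of spaces of sections onto neighbourhoods of individual maps, and then to deduce the exponential law by reducing everything, via $\A$, to the already-available cartesian closedness for maps with values in a fixed locally convex space.

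First I would fix $f\in C^{\infty}(M,N)$ and make sense of $\varphi_{f}$. Since $\pi\times \A\from U\to V$ is a diffeomorphism (by \ref{def:local_addition_a}), for $g\in O_{f}$ the assignment $x\mapsto (\pi\times \A)^{-1}(f(x),g(x))$ is a smooth map $M\to TN$ whose image lies in $U$ and which projects under $\pi$ to $f$; thus it is a section $\varphi_{f}(g)\in \Gamma(f^{*}(TN))$ with image in $U$. Conversely $\tau\mapsto \A\circ \tau$ inverts this, so $\varphi_{f}$ is a bijection of $O_{f}$ onto the set of sections with image in $U$. Because $M$ is compact and $U\opn TN$ is open, this set is open in $\Gamma(f^{*}(TN))$ (equipped with its natural compact-open $C^{\infty}$-topology, which is locally convex and locally metrisable since the fibres are), and $f\in O_{f}$ with $\varphi_{f}(f)$ the zero section because $\A(0_{m})=m$ by \ref{def:local_addition_b}. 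This gives item a).

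For item b) the key point is the smoothness of the chart changes $\varphi_{f}\circ \varphi_{h}^{-1}$. Unwinding definitions, this map sends a section $\tau$ to $x\mapsto (\pi\times \A)^{-1}(f(x),\A(\tau(x)))$; that is, it is the pushforward of $\tau$ along the fibrewise-smooth map $(x,v)\mapsto (\pi\times \A)^{-1}(f(x),\A(v))$. The crucial analytic input is that such superposition (push-forward) operators on spaces of sections over a compact base are smooth in the locally convex calculus; granting this, the transitions are smooth on their open domains, the sets $O_{g}$ cover $C^{\infty}(M,N)$ (each $g$ lies in $O_{g}$), and the modelling spaces $\Gamma(f^{*}(TN))$ are locally convex and locally metrisable, yielding the manifold. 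Independence of the chosen $\A$ follows by the same pushforward argument applied to the transition between two such atlases.

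Finally, for the exponential law d) I would argue locally around a point $(l_{0},x_{0})$ by setting $f_{0}:=f(l_{0},\argument)$ and working in the chart $O_{f_{0}}$, in which $\varphi_{f_{0}}(f^{\wedge}(l))(x)=(\pi\times \A)^{-1}(f_{0}(x),f(l,x))$. If $f$ is smooth, then $(l,x)\mapsto (\pi\times \A)^{-1}(f_{0}(x),f(l,x))$ is a smooth map into the vector bundle $f_{0}^{*}(TN)$, so its adjoint $l\mapsto \varphi_{f_{0}}(f^{\wedge}(l))$ is smooth by the \emph{linear} (bundle-valued) exponential law, whence $f^{\wedge}$ is smooth; conversely, if $f^{\wedge}$ is smooth the same linear exponential law makes $(l,x)\mapsto \varphi_{f_{0}}(f^{\wedge}(l))(x)$ smooth, and postcomposing with $\A$ recovers $f$, with naturality being routine. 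I expect the main obstacle to be exactly the smoothness of the nonlinear pushforward operators on section spaces over a compact base (the $\Omega$-lemma) in the general locally convex --- not merely Banach or Fr\'echet --- setting: everything else is a reduction, through the local addition $\A$, to cartesian closedness for maps into a single locally convex space, which may be taken as the foundational building block.
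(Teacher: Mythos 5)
This theorem is not proved in the paper at all --- it is quoted verbatim as an ``excerpt from \cite[Theorem 7.6]{Wockel13Infinite-dimensional-and-higher-structures-in-differential-geometry}'' (which in turn builds on Michor's construction), so there is no in-paper argument to compare against. Your sketch follows precisely the standard route of that reference: charts obtained by transporting section spaces through the local addition, smoothness of the chart changes via the smoothness of superposition (pushforward) operators over a compact base (the $\Omega$-lemma in the locally convex setting), and reduction of the exponential law to the vector-bundle-valued case in a single chart; you correctly isolate the nonlinear pushforward smoothness as the one genuinely nontrivial analytic input, so modulo that imported lemma the proposal is a faithful outline of the accepted proof.
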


  \section*{Acknowledgements}

\begin{tabsection}
 The authors would like to thank Helge Gl\"{o}ckner for various stimulating
 discussions about submersion properties for maps between infinite-dimensional
  manifolds and Friedrich Wagemann for help with Lemma \ref{lem:injective}.
 The research on this paper was partially supported by the
 DFG Research Training group 1670 \emph{Mathematics inspired by String Theory
 and Quantum Field Theory}, the Scientific Network \emph{String Geometry} (DFG
 project code NI 1458/1-1) and the project \emph{Topology in Norway} (Norwegian
 Research Council project 213458).
\end{tabsection}

  \addcontentsline{toc}{section}{References}

 \end{document}